\newtheorem{prop}{Proposition}[section]
\newtheorem{cor}[prop]{Corollary}
\newtheorem{thm}[prop]{Theorem}
\newtheorem{lemma}[prop]{Lemma}
\newtheorem{conj}[prop]{Conjecture}
\newenvironment{casework}{
    \begin{enumerate}
    \renewcommand{\labelenumi}{\bf{\text{Case } \arabic{enumi}\text{.}}}}
{\end{enumerate}}
\theoremstyle{remark}
\theoremstyle{definition}
\newtheorem{definition}[prop]{Definition}
\newtheorem{remark}[prop]{Remark}
\newtheorem{eg}[prop]{Example}
\newtheorem{qn}[prop]{Question}
\newcommand{\oddeq}{\underset{\text{odd}}{\sim}}
\newcommand{\oddreq}{\underset{\text{odd}}{\sim^r}}
\newcommand{\eveneq}{\underset{\text{even}}{\sim}}
\newcommand{\evenreq}{\underset{\text{even}}{\sim^r}}
\newcommand{\sea}[1]{\underset{#1-\text{ASE}}{\sim}}
\newcommand{\sesa}[1]{\underset{#1-\text{SASE}}{\sim}}
\DeclareMathOperator\asc{Asc}
\DeclareMathOperator\des{Des}
\title{Beyond alternating permutations: Pattern avoidance in Young diagrams and tableaux}
\author{Nihal Gowravaram, Ravi Jagadeesan\\
Mentor: Joel Brewster Lewis}
\date{November 28, 2012}
\begin{document}

\maketitle

\begin{abstract}
We investigate pattern avoidance in alternating permutations and generalizations thereof.
First, we study pattern avoidance in an alternating analogue of Young diagrams. In particular,
we extend Babson-West's notion of shape-Wilf equivalence to apply to alternating
permutations and so generalize results of Backelin-West-Xin and Ouchterlony to alternating
permutations. Second, we study pattern avoidance in the more general context of
permutations with restricted ascents and descents.
We consider a question of Lewis regarding
permutations that are the reading words of thickened staircase Young tableaux,
that is, permutations that have $k-1$ ascents followed by a descent, followed
by $k-1$ ascents, et cetera.  We determine
the relative sizes of the sets of pattern-avoiding $(k-1)$-ascent permutations
in terms of the forbidden pattern.  Furthermore, inequalities in the
sizes of sets of pattern-avoiding permutations in this context
arise from further extensions of shape-equivalence type enumerations.
\end{abstract}

\section{Introduction}
The theory of pattern avoidance in permutations
is concerned with enumerative problems and has connections to computer science,
algebraic combinatorics, algebraic geometry, and representation theory.
The fundamental question is to determine the number of permutations
of a given length that avoid a certain type of forbidden subsequence.
For example, the only permutations that avoid 21 are the identity permutations.
The theory first arose in the study of stack-sortable permutations; for
example, Knuth~\cite{Knuth}
showed that stack-sortable permutations are exactly those that avoid the pattern 231.
Additionally, generalized stack-sortable permutations are characterized by the avoidance
of longer patterns; for an exposition, see \cite[Chapter 8]{BonaBook}.
MacDonald~\cite{MacD} demonstrated that vexillary permutations, objects of interest
in algebraic combinatorics, are characterized by 2143-avoidance.
Furthermore, Lakshmibai and Sandhya \cite{SchubertV} proved that permutations
that simultaneously avoid 3412 and 4231
index smooth Schubert varieties, which are studied in algebraic geometry.
Billey and Warrington~\cite{BilleyWarrington} showed that an interesting
class of Kazhdan-Lusztig polynomials, which arise in representation
theory, are indexed by permutations that simultaneously avoid 321 and
four longer patterns.  These applications motivate the study of permutations
that avoid patterns of arbitrary length.

Herb Wilf asked the question of when two patterns are equally difficult to avoid.
The first non-trivial result of this type is the remarkable fact that all patterns of length 3 are equally difficult to avoid.
Simion and Schmidt~\cite{SimionSchmidt} gave a particularly elegant bijective proof.
The bijections in Section~\ref{sec:AD-Young diagrams} 
can be viewed as generalizations of \cite{SimionSchmidt}.

Pattern-avoiding alternating permutations were first studied by Mansour~\cite{Mansour} and
by Deutsch and Reifegerste (as documented in \cite[Problem h$^7$]{CatalanEC})
who proved that the number of alternating permutations of a given length that avoid a pattern of length $3$ is
a Catalan number.  The enumeration is particularly interesting in that
the number of permutations of a given length that avoid a pattern of length 3
is also a Catalan number.  This suggests that pattern-avoiding alternating permutations have interesting enumerative
properties both independently and in relation to ordinary pattern avoidance.  In this paper, we develop
further connections between the pattern avoidance of ordinary and alternating permutations while
also generalizing beyond alternating permutations.

We build on the work of Backelin, West, and Xin; their result is the following theorem.
\begin{thm}[\cite{BWX}, Theorem 2.1]\label{BWXThm2.1}
For all $t \ge k$ and permutations $q$ of $\{k+1,k+2,\cdots,t\}$,
the patterns $(k-1)(k-2)(k-3)\cdots 1kq$ and $k(k-1)(k-2)\cdots1q$ are
Wilf-equivalent.\end{thm}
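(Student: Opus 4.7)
The plan is to prove the theorem by reducing it to a shape-Wilf equivalence statement on Young diagrams, following the Backelin-West-Xin strategy. Recall that a transversal of a Young diagram $\lambda$ (or more generally a $0/1$ filling with exactly one $1$ per row and column) is said to contain a pattern $\sigma$ if some sub-board obtained by selecting rows and columns whose intersection is an $|\sigma| \times |\sigma|$ rectangle fully inside $\lambda$ has its $1$'s forming $\sigma$. Two patterns $\alpha, \beta$ are \emph{shape-Wilf equivalent} if for every Young diagram $\lambda$ the number of transversals of $\lambda$ avoiding $\alpha$ equals that avoiding $\beta$. The basic Babson--West-style lemma that I would invoke (or prove in one page) says: if $\alpha \sim_{sW} \beta$, then for any pattern $q$ on a disjoint value set, $\alpha q$ and $\beta q$ are Wilf-equivalent as permutation patterns. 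The reason is that an occurrence of $\alpha q$ in $\pi$ is determined by first choosing the positions and values of a $q$-occurrence in a suffix-like manner, and then the remaining restricted positions/values form a Young-diagram-shaped region in which the $\alpha$-part of $\pi$ must avoid $\alpha$; shape-Wilf equivalence applied board by board gives the bijection.

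Given that lemma, it suffices to prove the following: the decreasing pattern $I_k = k(k-1)\cdots 1$ and the pattern $J_k = (k-1)(k-2)\cdots 1\,k$ are shape-Wilf equivalent. Then taking $\alpha = I_k$, $\beta = J_k$, and the suffix $q$ gives exactly the two patterns in the theorem statement.

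To prove the shape-Wilf equivalence $I_k \sim_{sW} J_k$, I would induct on $k$. The base case $k=1$ is trivial, and even $k=2$ just says $I_2 \sim_{sW} J_2$, i.e.\ $21 \sim_{sW} 12$, which is a standard fact (the numbers are equal because transversals containing no $21$, resp.\ no $12$, of a Young diagram are equinumerous, a classical result via RSK-like arguments on diagrams). For the inductive step I would build a bijection $\Phi_\lambda$ between $I_k$-avoiding and $J_k$-avoiding transversals of an arbitrary Young diagram $\lambda$ as follows. Given an $I_k$-avoiding transversal $T$, locate the set of $1$-cells that participate in \emph{some} $I_{k-1}$-occurrence; these cells are confined to a sub-staircase region of $\lambda$ that is itself a Young diagram $\mu \subseteq \lambda$ once the other rows/columns are peeled away. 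On $\mu$, a further analysis is needed to decide what ``$k$-th row from the top'' really means; the induction hypothesis applied to $\mu$ (together with the $k=2$ case to handle how the topmost element of an $I_{k-1}$-chain interacts with the unique extra entry) lets us transform the relative order of the top two levels from a $21$-style arrangement into a $12$-style one, converting $I_k$-avoidance into $J_k$-avoidance. The inverse is constructed symmetrically.

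The main obstacle, as always in this circle of arguments, is step~2: making the bijection on Young diagrams actually well-defined and compatible with the recursive structure. The subtlety is that the ``relevant'' sub-diagram $\mu$ depends on $T$, and one has to verify that the operation applied to the top two levels preserves the transversal property (exactly one $1$ per row and column) and that the result really avoids $J_k$ and nothing stronger. Once this localized $21 \leftrightarrow 12$ swap is carried out carefully on the correctly-identified sub-board, extending by the fixed suffix $q$ via the Babson-West lemma yields the Wilf equivalence in the statement.
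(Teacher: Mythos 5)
The paper does not prove this theorem; it is quoted from Backelin--West--Xin \cite{BWX} as background, so there is no ``paper's own proof'' to compare against. Your overall two-step strategy --- reduce to a shape-Wilf equivalence of permutation matrices and then extend by the fixed suffix $q$ via Babson--West --- is indeed the strategy BWX use, and it is also the framework that this paper generalizes in Section~\ref{sec:AD-Young diagrams} (the Extension Theorem~\ref{ShapeExtend} is the AD-Young analogue of the Babson--West step you invoke).

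However, step~2 as you describe it has a genuine gap, and it is precisely the hard part. The claim that the $1$-cells participating in some $I_{k-1}$-occurrence ``are confined to a sub-staircase region of $\lambda$ that is itself a Young diagram $\mu \subseteq \lambda$ once the other rows/columns are peeled away'' is not substantiated, and it is not true in the way you need it: the set of cells lying in some length-$(k-1)$ decreasing chain is not cut out by deleting rows and columns, and there is no canonical ``top two levels'' on which you can perform a local $21 \leftrightarrow 12$ swap. Removing or re-routing a copy of $J_k$ inside a transversal is an inherently global operation --- it moves many entries, and one must show simultaneously that the result is still a transversal of the same shape, still avoids the target pattern, and that the forward and backward moves are mutually inverse. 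The actual BWX argument (and its generalization in this paper's Proposition~\ref{JF3Strong}) handles this by a carefully designed iterated-removal bijection, choosing a canonical minimal copy of $J_k$ (resp.\ $F_k$), proving the removal map decreases a lexicographic potential, and proving the two removal maps are inverse on ``separable'' transversals. The length of Appendices~\ref{sec:PropFullPhi} and~\ref{sec:PropFullPsi}, needed here just for $k=3$ with ascent/descent constraints, is an indication of how much is being elided by the phrase ``a further analysis is needed.'' You correctly identify that this is the obstacle, but the sketch does not overcome it; as written, the inductive step does not yield a well-defined bijection, and the proposed induction-on-$k$ structure does not match any known proof of $I_k \sim_{sW} J_k$ that I am aware of.
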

B\'{o}na \cite{Bona1221} proved a variant of Theorem~\ref{BWXThm2.1} for alternating permutations in the case
of $k = 2$ and $q = 345\cdots t,$ while Ouchterlony \cite{Ouch}
proved a similar result for
doubly alternating permutations (alternating permutations whose inverse is alternating)
in the case of $k = 2$.  The organization of the paper is as follows.
In Section~\ref{sec:Definitions}, we recall the basic definitions.
In Section~\ref{sec:AD-Young diagrams}, we generalize the method of \cite{BW, BWX}
to apply to permutations with restricted ascents and descents, using
objects that we call \emph{AD-Young diagrams}.  This provides
a framework of alternating shape-equivalence that attempts to fully extend Theorem~\ref{BWXThm2.1}
to alternating permutations.  In Section~\ref{sec:ShapeEquivs}, we use AD-Young diagrams
to prove our main results: Theorems~\ref{1221Alt} and~\ref{213321AltRAlt}, which
are variants of Theorem~\ref{BWXThm2.1}
for alternating and reverse alternating permutations in the cases of $k = 2, 3$.
We also consider patterns of short length, and in Section~\ref{sec:Nonequivalences} we prove certain non-equivalences
of patterns that relate to our AD-Young diagram equivalences.

In \cite{Lewis2011}, Lewis proved basic enumerations of pattern-avoiding
generalized alternating permutations; in particular, he considered pattern
avoidance in permutations that have
$k-1$ ascents followed by a descent, followed by $k-1$ ascents, et cetera.
He computed the number of such
permutations that avoid certain identity patterns.
Lewis asked questions about this and further generalizations of alternating
permutation pattern avoidance in \cite{Lewis2012}.
In Section~\ref{Generalized alternating permutations},
we determine the relative sizes of the sets of pattern-avoiding
$(k-1)$-ascent permutations of lengths $n$ and $n+1$ in terms of the forbidden pattern;
the results of this section constitute our main results regarding $(k-1)$-ascent permutations.
In Section~\ref{sec:ShapeEquivGenAlt}, we give applications of the AD-Young diagram framework to generalizations of alternating permutations,
and we conclude the paper by posing open questions.

\subsection*{Acknowledgements}
We would like to thank the
PRIMES program of the MIT Math Department, where this research was done.
Furthermore, we would like to thank our mentor Dr. Joel Lewis of the University
of Minnesota for his incredibly helpful guidance and insight, and for
suggesting the topic of pattern-avoiding alternating permutations to us.
Lastly, we would like to thank our parents for helpful discussions and research guidance.

\section{Definitions and background}
\label{sec:Definitions}

For a nonnegative integer $n$, let $[n]$ denote the set $\{1,2,3,\ldots,n\},$
and let $S_n$ denote the set of permutations of $[n]$.
We treat a permutation $w \in S_n$ as a sequence $w_1w_2w_3\cdots w_n$ that contains
every element of $[n]$ exactly once.  A permutation $w$ is said to \emph{contain}
a permutation $q$ if there is a subsequence of $w$ that is order-isomorphic to $q$;
for example, the subsequence $246$ of $214536$ shows that $214536$ contains $123$.
If $w$ does not contain $q$, we say that $w$ \emph{avoids} $q$.  Given
a pattern $q$, let $S_n(q)$ denote the set of permutations of length $n$
that avoid $q$.  If patterns $p$ and $q$ are such that $|S_n(p)| = |S_n(q)|$
for all $n$, we say that $p$ and $q$ are \emph{Wilf-equivalent}.

A permutation $w \in S_n$ is called \emph{alternating} if
$w_1 < w_2 > w_3 < \cdots$
and \emph{reverse alternating} if
$w_1 > w_2 < w_3 > \cdots.$
Reverse alternating permutations can be transformed into alternating permutations
(and vice versa) by the \emph{complementation} map that sends
a permutation $w = w_1w_2\cdots w_n$ to $w^c = (n+1-w_1)(n+1-w_2)\cdots (n+1-w_n)$.
Given a pattern $q$, let $A_n(q)$ (resp. $A'_n(q)$) denote the set of alternating
(resp. reverse alternating)
permutations of length $n$ that avoid $q$.  If $p$ and $q$ are such that
$|A_n(p)| = |A_n(q)|$ (resp. $|A'_n(p)| = |A'_n(q)|$) for all even $n$, we say
that $p$ and $q$ are \emph{equivalent for even-length alternating} (resp. \emph{reverse alternating})
permutations and we write $p \eveneq q$ (resp. $p \evenreq q$).  We make similar definitions for
odd-length permutations.  Furthermore, because $|A_n(q)| = |A'_n(q^c)|$ for all $n, q,$
we have that $p \eveneq q$ if and only if $p^c \evenreq q^c$, and similarly
for the odd length.

A permutation $w$ is said to have \emph{descent type $k$} if
\[w_1 < w_2 < \cdots < w_k > w_{k+1} < w_{k+2} < \cdots < w_{2k} > w_{2k+1} < w _ {2k+2} < \cdots.\]
Thus, such a permutation may be thought of as a series of \emph{rows} of length $k$ with values in strictly increasing order, with a possibly \emph{incomplete} final row, as in Figure~\ref{DnkSSYT}.
\begin{figure}
\center{
\begin{ytableau}
\none & \none & \none & \none & 1 & 6\\
\none & \none & 3 & 7 & 8 \\
2 & 4 & 5
\end{ytableau}}
\caption{The permutation $24537816,$ which has descent type 3, is obtained by reading
the entries of a skew standard Young tableau of shape $(6,5,3)/(4,2)$ from
left to right and bottom to top.  Because
the final (top) row has only 2 entries, it is incomplete.}
\label{DnkSSYT}
\end{figure}
Given pattern $q$, let $D^k_n(q)$ denote the set of permutations with descent type $k$ that avoid $q$. For example, alternating permutations have descent type 2,
and therefore we have $D^2_n(q) = A_n(q)$ for all $q$. In Section~\ref{Generalized alternating permutations}, we consider the enumerations of permutations of a fixed descent type that avoid a fixed pattern, and we study the relative sizes of $D^k_n(q)$ and $D^k_{n+1}(q)$.

\section{The AD-Young diagram framework}
\label{sec:AD-Young diagrams}
Given a permutation $p$, let $M(p)$ denote its permutation matrix,
and given matrices $A$ and $B$, let $A \oplus B = \begin{bmatrix}
A & 0 \\
0 & B
\end{bmatrix}.$
We assume that the reader is familiar with the basic terminology
of Young diagrams and tableaux; see, for example, \cite[Chapters 2 and 6]{BonaBook}.
We draw Young diagrams in English notation and use matrix coordinates, and
for example $(1,2)$ is the second square in the first row of a Young diagram.  Furthermore,
we require all Young diagrams to have the same number of rows and columns.

In \cite{BW, BWX}, the notion of pattern avoidance is extended
to transversals of a Young diagram, and analogue of the Wilf-equivalence
of permutations is the shape-Wilf equivalence of permutation matrices.
The critical theorem of \cite{BW}
is that if $M$ and $N$ are permutation
matrices that are shape-Wilf
equivalent, and $C$ is any permutation matrix, then the matrices
$M \oplus C$ and $N \oplus C$
are shape-Wilf equivalent.  We generalize the idea of a transversal of a Young diagram
and refine shape-Wilf equivalence to apply to alternating permutations.

\begin{definition}
\label{ADYoung}
Let $Y$ be a Young diagram with $k$ rows. 
If $A$ and $D$ are disjoint subsets of $[k-1]$
such that if $i \in A \cup D$, then the $i$th and $(i+1)$st rows of $Y$
have the same length, then we call the triple $\mathcal{Y} = (Y, A, D)$ an
\emph{AD-Young diagram}.
We call $Y$ the Young diagram of $\mathcal{Y}$,
$A$ the \emph{required ascent set} of $\mathcal{Y}$, and $D$ the \emph{required descent set} of $\mathcal{Y}$.
\begin{figure}
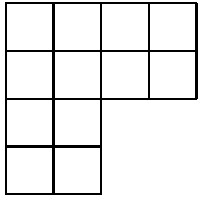

\center{
\begin{ytableau}
*(clear) & & &\\
& & &\\
&\\
&
\end{ytableau}}
\caption{If $Y = (4^2,2^2)$, $A = \emptyset$, and $D = \{3\}$, then $(Y, A, D)$ is an AD-Young diagram.}
\label{fig:ADYoungDef}
\end{figure}
See Figure~\ref{fig:ADYoungDef}.
\end{definition}

As in \cite{BW, BWX, StanWest}, a \emph{transversal} of Young diagram $Y$ is a set of squares $T = \{(i,t_i)\}$
such that
every row and every column of $Y$ contains exactly one member of $T$.
\begin{definition}
Given a transversal $T = \{(i,t_i)\}$, let
$\asc(T) = \{i \in [k-1] \mid t_i < t_{i+1}\}$ and $
\des(T) = \{i \in [k-1] \mid t_i > t_{i+1}\}.$
We call $\asc(T)$ the \emph{ascent set} of $T$ and $\des(T)$ the \emph{descent set} of $T$.
If $A \subseteq A'$ and $D \subseteq D'$, then we say that $T$ a \emph{valid transversal} of $\mathcal{Y}$.
\end{definition}
\begin{eg}
If $T$ is a transversal of a Young diagram $Y$, then $T$ is a valid transversal of
the AD-Young diagram $(Y, \emptyset, \emptyset)$.
\end{eg}

Except for a brief digression
in Section~\ref{sec:ShapeEquivGenAlt}, we restrict ourselves to the AD-Young
analogues of alternating and reverse alternating permutations.

\begin{definition}
Given positive integers $x,y$ and an AD-Young diagram $(Y, A, D)$ such
that $Y$ has $k$ rows, we say that $(Y, A, D)$ is $x,y$\emph{-alternating}
if $A,D$ satisfy the property that
if $x - 1 \le i \le k-y$, then $i \in A$ if and only if $i + 1 \in D$.
\end{definition}

If $\mathcal{Y}$ is $x,y$-alternating, then $\mathcal{Y}$
is $a,b$-alternating for all $a,b$ with $a \ge x$ and $b \ge y$.
If $\mathcal{Y}$ is $1,y$-alternating, then we say that $\mathcal{Y}$ is $y$-alternating,
while if $\mathcal{Y}$ is $2,y$-alternating, then we say that $\mathcal{Y}$ is $y$-semialternating.
Alternating AD-Young diagrams will be the counterpart of alternating permutations, while
semialternating AD-Young diagrams allow reverse alternating permutations.

\begin{eg}
Let $Y = (4^4)$.  Then, $(Y, \{1\}, \{2\})$ is 1-alternating, while
$(Y, \{1,3\}, \{2\})$ is 2-alternating but not 1-alternating.
Furthermore, $(Y, \{2,4\}, \{1, 3\})$ is 1-semialternating but not $y$-alternating
for $y \le 4$.
\end{eg}

The notion of pattern avoidance is exactly as in \cite{BW, BWX, StanWest}; if a transversal
$T = \{(i,t_i)\}$ of a
Young diagram $Y$ \emph{contains} a $r \times r$ permutation matrix $M$ if there are rows $a_1 < a_2 < \cdots < a_r$
and columns $b_1 < b_2 < \cdots < b_r$ of $Y$ such that $(a_r,b_r) \in Y$ and
the restriction of $T$ to the rows $a_i$ and the columns
$b_i$ has 1's exactly where $M$ has 1's.
If $T$ does not contain $M$, then we say that $T$ \emph{avoids} $M$.
\begin{figure}
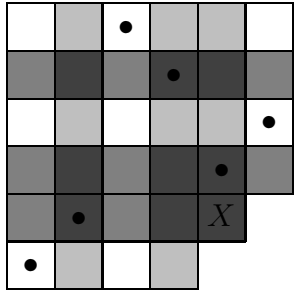

\center{
\begin{ytableau}
*(clear) & *(lightgray) & \bullet & *(lightgray) & *(lightgray) & \\
*(gray) & *(darkgray) & *(gray) & *(darkgray) \bullet & *(darkgray) & *(gray) \\
 & *(lightgray) & & *(lightgray) & *(lightgray) & \bullet \\
*(gray) & *(darkgray) & *(gray) & *(darkgray) & *(darkgray) \bullet & *(gray) \\
*(gray) & *(darkgray) \bullet & *(gray) & *(darkgray) & *(darkgray) X\\
 \bullet & *(lightgray) & & *(lightgray)
\end{ytableau}}
\caption{The transversal $T = \{(1,3),(2,4),(3,6),(4,5),(5,2),(6,1)\}$ of $Y = (6^4,5,4)$
contains $M(231)$ because the restriction of $T$ to the gray columns and light gray
rows is a copy of $M(231)$ in $T$; we require that $X \in Y$.  However, $T$ does not
contain $M(4321)$; for example, the restriction of $T$ to rows $3,4,5,6$
and columns $1,2,5,6$ is not a copy of $M(4321)$ in $T$ because $(6,6) \notin Y$.}
\label{fig:MatrixContainment}
\end{figure}
See Figure~\ref{fig:MatrixContainment}.  Given an AD-Young diagram
$\mathcal{Y}$ and a permutation matrix $M$, let $S_{\mathcal{Y}}(M)$ denote the set of valid transversals
of $\mathcal{Y}$ that avoid $M$.
\begin{definition}
If $M$ and $N$ are permutation matrices such that $|S_{\mathcal{Y}}(M)| = |S_{\mathcal{Y}}(N)|$
for all $x$-alternating AD-Young diagrams $\mathcal{Y}$, we say that $M$ and $N$ are
\emph{shape-equivalent for $x$-alternating AD-Young diagrams}; we write $M \sea{x} N$.
If we have $|S_{\mathcal{Y}}(M)| = |S_{\mathcal{Y}}(N)|$ for all $x$-semialternating AD-Young diagrams
$\mathcal{Y}$, then we say that $M$ and $N$ are \emph{shape-equivalent for $x$-semialternating AD-Young diagrams};
we write $M \sesa{x} N$.
\end{definition}

If $M \sea{y} N$, then we have $M \sea{x} N$ for all positive integers $x \le y$,
while if $M \sesa{y} N$, then we have that $M \sea{x} N$ and $M \sesa{x} N$ for
all positive integers $x \le y$.
Because $(Y, \emptyset, \emptyset)$ is an alternating AD-Young diagram for every Young diagram
$Y$, we have that if $M$ and $N$ are shape-equivalent for $1$-alternating AD-Young diagrams,
then $M$ and $N$ are shape-Wilf equivalent;
that is, for all Young diagrams $Y$, the number of transversals of $Y$ that avoid $M$ is the same as the
number of transversals of $Y$ that avoid $N$.
We explicitly give the connection of alternating and semialternating AD-Young diagrams to
alternating and reverse alternating permutations, respectively.

\begin{prop}
\label{AltPermToDiag}
Let $p$ and $q$ be permutations.
\begin{enumerate}[(a)]
\item If $M(p) \sea{1} M(q)$,
then $p \oddeq q$.
\item If $M(p) \sesa{1} M(q)$,
then $p \evenreq q$.
\item If $M(p) \sea{2} M(q)$,
then $p \eveneq q$.
\item If $M(p) \sesa{2} M(q)$,
then $p \oddreq q$.
\end{enumerate}
\end{prop}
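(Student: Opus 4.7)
The plan is to realize alternating and reverse alternating permutations of each parity as the valid transversals of a suitable square AD-Young diagram and then read off the conclusion. Throughout, take $Y = (n^n)$. Under the standard map $w \mapsto \{(i, w_i)\}$, permutations of $[n]$ biject with transversals of $Y$, and because every cell of $Y$ lies inside $Y$ the bottom-right-corner condition in the definition of matrix containment is automatic; hence a transversal contains $M(p)$ exactly when the corresponding permutation contains the pattern $p$. Consequently $|S_{(Y,A,D)}(M(p))|$ counts the $p$-avoiding permutations whose ascent set contains $A$ and whose descent set contains $D$, and to prove each part it suffices to choose $(A,D)$ so that the validity condition carves out exactly the relevant class of (reverse) alternating permutations and so that $(Y,A,D)$ has the required alternating index.

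For part (a), take $n$ odd, $A = \{1,3,\ldots,n-2\}$, $D = \{2,4,\ldots,n-1\}$. Since $A \cup D = [n-1]$, the containment conditions $A \subseteq \asc(T)$ and $D \subseteq \des(T)$ force equality, so the valid transversals are exactly the alternating permutations of odd length $n$. The $1$-alternating condition asks that $i \in A \Leftrightarrow i+1 \in D$ for $0 \le i \le n-1$; when $n$ is odd this checkerboard identity holds at every index, including the boundary values where both sides vanish. Hence $(Y,A,D)$ is $1$-alternating, and the hypothesis $M(p) \sea{1} M(q)$ yields $|A_n(p)| = |A_n(q)|$ for every odd $n$, which is exactly $p \oddeq q$.

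Parts (b), (c), and (d) follow the same recipe. For (c), alternating permutations of even $n$ have an ascent at position $n-1$ with no matching descent at $n$, so the $1$-alternating condition fails at $i = n-1$; but the $2$-alternating condition only demands the biconditional for $0 \le i \le n-2$, precisely the range where the checkerboard works, so $(Y,A,D)$ is $2$-alternating. For (b), reverse alternating of even $n$ gives $A = \{2,4,\ldots,n-2\}$ and $D = \{1,3,\ldots,n-1\}$; the $1$-semialternating range $1 \le i \le n-1$ is exactly where this dual checkerboard holds, so the diagram is $1$-semialternating. For (d), reverse alternating of odd $n$ has the troublesome ascent at $i = n-1$, so one restricts to $1 \le i \le n-2$, obtaining a $2$-semialternating diagram.

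There is no deep combinatorial step here: the only things to watch are the allowed range of $i$ in the $x,y$-alternating condition, which positions of $[n-1]$ are ascents versus descents for each of the four (type, parity) combinations, and whether the chosen index in $\sea{x}$ or $\sesa{x}$ is large enough to skip the one awkward boundary index while small enough to still force alternation at every interior position.
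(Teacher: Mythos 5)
Your proof is correct and follows essentially the same approach as the paper's: represent (reverse) alternating permutations as the valid transversals of an $n \times n$ square AD-Young diagram with $A \cup D = [n-1]$, verify the appropriate alternating or semialternating index, and read off the count. The paper carries out only part (a) explicitly and declares the rest ``similar,'' whereas you have spelled out the parity and boundary-index bookkeeping for all four cases, which is a welcome elaboration but not a different argument.
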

\begin{proof}
We prove the first part; the remaining parts are similar.
Fix a nonnegative integer $n$, and we will show that $|A_{2n+1}(p)| = |A_{2n+1}(q)|$.
Consider the AD-Young diagram $\mathcal{Y} = (Y, A, D)$ given by
$Y = (2n+1^{2n+1})$, $A = \{1,3,5,\cdots,2n-1\}$, and $D = \{2,4,6,\cdots,2n\}$.
It is clear that $\mathcal{Y}$ is $1$-alternating.
Furthermore, a set $T = \{(i,b_i)\}$ is a valid transversal of $\mathcal{Y}$ if and only
if $b = b_1b_2\cdots b_{2n+1} \in A_{2n+1}$, and $T$ avoids $M(p)$ if and only if
$b$ avoids $p$.  Hence, we have
\[|A_{2n+1}(p)| = |S_{\mathcal{Y}}(M(p))| = |S_{\mathcal{Y}}(M(q))| = |A_{2n+1}(q)|,\]
as desired.
\end{proof}

\subsection{Generalization of Babson-West}
\label{sec:Extension}
The extension of shape-equivalences from $M \sim N$ to
$M \oplus C \sim N \oplus C$
is the analogue of \cite[Theorems 1.6 and 1.9]{BW}.  It is critical in generating infinite sets of nontrivial shape-equivalences.
We have two variants, one for alternating AD-Young diagrams and one for semialternating AD-Young diagrams.

\begin{thm}[Extension Theorem]
\label{ShapeExtend}
If permutation matrices $M$ and $M'$ are shape-equivalent for $x$-alternating (resp. $x$-semialternating) AD-Young diagrams and $C$
is an $r \times r$ permutation matrix, then we have
$M \oplus C \sea{(x+r)} M' \oplus C$
(resp. $\sesa{(x+r)}$).
\end{thm}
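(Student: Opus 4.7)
The proof is an adaptation of \cite[Theorems 1.6 and 1.9]{BW} to the AD-Young setting. I will sketch the $x$-alternating case; the $x$-semialternating case is identical after shifting ascent/descent roles in the top row. Fix an $(x+r)$-alternating AD-Young diagram $\mathcal{Y} = (Y,A,D)$ with $k$ rows, write $m$ for the size of $M$, and let $T$ be a valid transversal of $\mathcal{Y}$ that avoids $M \oplus C$. The goal is a bijection $T \mapsto T'$ onto the valid transversals of $\mathcal{Y}$ avoiding $M' \oplus C$.

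Following Babson-West, I would first identify a canonical copy $C_0$ of $C$ inside $T$, chosen to be extremal (say, occupying the lex-latest collection of rows and columns) so that its rows $R \subseteq [k]$ are the last $r$ rows of $Y$ and its columns $S$ some $r$ columns of $Y$. (If no copy of $C$ exists in the last $r$ rows, $T$ trivially avoids $M \oplus C$ and a degenerate bijection handles this case.) Deleting $R$ and $S$ yields a sub-Young-diagram $Y^{*}$ with $k - r$ rows, and the restriction $T^{*}$ of $T$ to $Y^{*}$ is a transversal of $Y^{*}$. Since any copy of $M$ in $T^{*}$ sitting to the upper-left of $C_0$ would combine with $C_0$ to form a forbidden $M \oplus C$, $T^{*}$ avoids $M$ in the standard Babson-West sense.

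The new content beyond \cite{BW} is verification of the alternating structure. Set $\mathcal{Y}^{*} = (Y^{*}, A^{*}, D^{*})$, where $A^{*}$ and $D^{*}$ are the restrictions of $A$ and $D$ to the surviving row-indices $1, 2, \ldots, k-r$ (no renumbering is needed since we removed bottom rows). The alternation constraints of $\mathcal{Y}$ extend through row $k - (x+r) + 1 = (k-r) - x + 1$, so after removing the last $r$ rows the alternation constraints on $\mathcal{Y}^{*}$ extend through row $(k-r) - x + 1$, making $\mathcal{Y}^{*}$ an $x$-alternating AD-Young diagram. Apply the hypothesized bijection from $M \sea{x} M'$ on $\mathcal{Y}^{*}$ to convert $T^{*}$ into a valid transversal $(T')^{*}$ of $\mathcal{Y}^{*}$ avoiding $M'$, and then reglue the rows and columns housing $C_0$ to form the desired $T'$. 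The inverse is obtained by running the $M' \to M$ bijection on $\mathcal{Y}^{*}$.

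\textbf{Main obstacle.} The heart of the argument, inherited from Babson-West, is that the choice of $C_0$ is canonical in the sense that the $M \leftrightarrow M'$ bijection on $T^{*}$ does not disturb the identity of $C_0$ in the reassembled transversal: that is, $C_0$ remains the lex-latest copy of $C$ in $T'$, so the inverse map selects the same rows and columns. This requires a careful compatibility check between the extremal-$C_0$ construction and the $M \leftrightarrow M'$ bijection, and is precisely where the shift from $x$ to $x+r$ enters: the last $r$ rows of $\mathcal{Y}$ must be free of mandated ascents and descents in order for $C_0$ to be housed there without conflicting with $A$ or $D$, and for the regluing step to produce a valid transversal. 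This freedom is exactly what $(x+r)$-alternation of $\mathcal{Y}$ guarantees, and consequently $\mathcal{Y}^{*}$ is $x$-alternating, permitting the use of the hypothesis.
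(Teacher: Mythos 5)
There is a genuine gap, and it sits at the very start of the construction. You assume that a canonical copy $C_0$ of $C$ (``the lex-latest collection of rows and columns'') will occupy the last $r$ rows of $Y$, and you also assert that if no copy of $C$ exists in the last $r$ rows then $T$ automatically avoids $M\oplus C$. Neither is true. A copy of $M\oplus C$ consists of rows $a_1 < \cdots < a_{m+r}$ and columns $b_1 < \cdots < b_{m+r}$ with the $C$-part occupying rows $a_{m+1},\dots,a_{m+r}$; those rows only need to come after $a_1,\dots,a_m$, not be the bottom $r$ rows of $Y$. So there is no reason the ``relevant'' copy of $C$ sits at the bottom, and deleting the bottom $r$ rows (together with whichever columns you picked) does not reduce $M\oplus C$-avoidance of $T$ to $M$-avoidance of a restriction. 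Concretely, a copy of $M$ in $T^{*}$ need not combine with your chosen $C_0$ to give $M\oplus C$, because the $C_0$ you selected may be in the wrong rows or columns relative to that copy of $M$.

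This is exactly why the paper (following Babson–West) does not select a single copy of $C$. Instead it uses the \emph{dominant squares} construction: a square $(a,b)$ is called dominant with respect to $T$ if the region strictly below and to the right of it contains a copy of $C$, and one deletes from $Y$ every row and column containing a non-dominant element of $T$. The resulting successor diagram $f^C(N)=(Y',A',D')$ depends on the transversal only through the set $N$ of non-dominant elements. The number of deleted rows and columns is not constant (it is not always $r$), so there is no global bijection on $\mathcal{Y}$ itself; rather one gets $|S_\mathcal{Y}(P\oplus C)| = \sum_{N\in\mathcal{D}^C}|S_{f^C(N)}(P)|$ (Proposition~\ref{Embed2}). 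The substantial new work relative to Babson–West is showing that the required ascent/descent sets $A',D'$ can be defined so that (i) a valid transversal goes to a valid transversal under the row/column deletion and reinsertion (Lemma~\ref{Embed1}, whose proof is casework on ascent/descent preservation via Lemmas~\ref{BW1} and~\ref{ShiftingC}), and (ii) if $\mathcal{Y}$ is $(x+r)$-alternating then $f^C(N)$ is $x$-alternating (Lemma~\ref{AltExtend}, which needs Proposition~\ref{AltTechnical}). Your proposal sidesteps all of this machinery by assuming the deletion removes exactly the last $r$ rows, and that assumption is false; as written the argument does not go through, and the ``main obstacle'' you flag is not a compatibility check on a valid construction but a symptom of the construction being wrong.
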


The remainder of this section will be devoted to the proof of Theorem~\ref{ShapeExtend}.
The first idea in the proof is to pass avoidance of
$M \oplus C$
by a transversal of a large parent AD-Young diagram $\mathcal{Y}$ to avoidance of $M$
by a transversal of a smaller successor AD-Young diagram; this idea stems
from the proof of \cite[Theorems 1.6 and 1.9]{BW}.  The successor map
mostly preserves
the alternating property of AD-Young diagrams
in the sense that if the parent is $(x+r)$-alternating
and $C$ is an $r \times r$ matrix, then the successor
is $x$-alternating.  Furthermore, it sends valid transversals to valid transversals.

The successor AD-Young diagram depends on the choice of transversal of $\mathcal{Y}$.
However, similar to \cite{BW}, we give an injection of the set of $M$-avoiding
transversals of the successor diagram into the set of $M \oplus C$-avoiding transversals of $\mathcal{Y}$;
the ascent set and the descent set of the successor diagram are chosen to facilitate this
reinsertion procedure.  We can then completely reduce the proof that $|S_\mathcal{Y}(M\oplus C)|
= |S_\mathcal{Y}(N \oplus C)|$ to a statement about $M$-avoiding transversals of $x$-alternating
AD-Young diagrams.

Fix an $r\times r$ permutation matrix $C$, a permutation matrix $P$, and an AD-Young diagram
$\mathcal{Y} = (Y, A, D)$ ($\mathcal{Y}$ need not be alternating).
Let $T$ be a valid transversal of $\mathcal{Y}$. In the language of~\cite{BW},
call a square $(a,b) \in Y$
\emph{dominant with respect to }$T$ if the restriction of $T$ to the
region of squares $(x, y) \in Y$ with $x > a$ and $y > b$
contains $C$.  Let $\mathcal{N}^C(T)$ denote the set of elements of $T$ that are
not dominant with respect to $T$, and let $\mathcal{D}^C$ denote the family
of sets $\mathcal{N}^C(T)$ as $T$ ranges over the valid transversals of $\mathcal{Y}$.

\begin{lemma}
\label{BW1}
The set of dominant squares of $\mathcal{Y}$ form a Young diagram.
Furthermore, given the set $\mathcal{N}^C(T)$ and the permutation matrix $C$, one can recover
the Young diagram of dominant squares.
\end{lemma}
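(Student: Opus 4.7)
Throughout, let $R_{a,b}$ denote the region $\{(x,y)\in Y : x>a,\ y>b\}$, so that $(a,b)\in Y$ is dominant precisely when $T\cap R_{a,b}$ contains a copy of $C$.

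For the first claim, my plan is to show that the dominant set is closed under the coordinatewise partial order on $Y$, which since $Y$ itself is a Young diagram forces it to be a Young sub-diagram. Concretely, if $(a,b)\in Y$ is dominant and $(a',b')\in Y$ satisfies $a'\le a$ and $b'\le b$, then $R_{a',b'}\supseteq R_{a,b}$, so any copy of $C$ inside $T\cap R_{a,b}$ is automatically a copy of $C$ inside $T\cap R_{a',b'}$. Hence $(a',b')$ is also dominant, and the set of dominant squares is up-left-closed inside $Y$; call the resulting Young sub-diagram $Y'$.

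For the second claim, I would proceed by induction, processing the squares of $Y$ in some reverse linear extension of the coordinatewise order (say, starting at the bottom-right corner of $Y$ and moving toward the upper-left). At the step that examines $(a,b)$, the dominance of every square strictly below-right of $(a,b)$ will already have been determined, so $Y'\cap R_{a,b}$ is known; we also have the full set $\mathcal{N}^C(T)$ as input. To decide dominance of $(a,b)$ we must determine $T\cap R_{a,b}$, which decomposes as
\[ T\cap R_{a,b} \;=\; \bigl(\mathcal{N}^C(T)\cap R_{a,b}\bigr) \;\sqcup\; \bigl(T\cap Y'\cap R_{a,b}\bigr), \]
after which checking for a copy of $C$ is direct.

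The main obstacle is justifying that the dominant entries of $T$ in $R_{a,b}$ (the second summand above) are unambiguously recoverable from the inductive data. The key point is that since $T$ is a transversal, the rows and columns of $R_{a,b}$ not already occupied by $\mathcal{N}^C(T)\cap R_{a,b}$ must be filled by dominant $T$-entries, and these entries are further constrained to lie inside the already-reconstructed region $Y'\cap R_{a,b}$; combining this with the Young-diagram structure of $Y'$ and the column-uniqueness of $T$ should pin these entries down. Once $T\cap R_{a,b}$ is recovered, dominance of $(a,b)$ is decided by testing against $C$, and iterating the procedure reconstructs $Y'$ in full from $\mathcal{N}^C(T)$ and $C$ alone.
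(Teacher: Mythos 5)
The first half of your argument is correct: the monotonicity observation ($a' \le a$ and $b' \le b$ imply $R_{a',b'} \supseteq R_{a,b}$) shows that dominance is inherited upward and leftward in $Y$, so the dominant squares form a Young diagram.

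The second half has a genuine gap. You propose to recover $T \cap R_{a,b}$ from $\mathcal{N}^C(T)$ and the inductively-known piece of $Y'$, but this intersection is not determined by that data. For example, take $C = M(12)$, let $Y$ be the $4\times4$ square, and set $T_1 = \{(1,1),(2,2),(3,3),(4,4)\}$ and $T_2 = \{(1,2),(2,1),(3,3),(4,4)\}$. One checks that $\mathcal{N}^C(T_1) = \mathcal{N}^C(T_2) = \{(3,3),(4,4)\}$ and that both have dominant Young diagram equal to the $2\times 2$ square, yet $T_1 \cap R_{1,1} = \{(2,2),(3,3),(4,4)\}$ while $T_2 \cap R_{1,1} = \{(3,3),(4,4)\}$. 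The flaw in your ``pinning down'' step is that an open row $e > a$ does carry a dominant entry of $T$, but nothing forces that entry's column to exceed $b$; in $T_2$ the open rows $1$ and $2$ contribute nothing to $R_{1,1}$. So the dominant portion of $T \cap R_{a,b}$ is genuinely transversal-dependent, and the induction as stated cannot be closed.

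What actually makes recovery possible is a cleaner fact: a square $(a,b)$ is dominant if and only if $\mathcal{N}^C(T) \cap R_{a,b}$ already contains a copy of $C$. The ``if'' direction is immediate. For ``only if,'' among all copies of $C$ inside $T \cap R_{a,b}$ choose one whose smallest row index $e_1$ is maximal. If some cell $(e,f)$ of that copy were a dominant square, then $T \cap R_{e,f}$ would contain a copy of $C$, all of whose rows exceed $e \ge e_1$ and whose columns exceed $f > b$; this copy lies in $R_{a,b}$ and has strictly larger smallest row, contradicting maximality. Thus the extremal copy consists entirely of non-dominant entries. This characterization expresses $Y'$ directly in terms of $\mathcal{N}^C(T)$, $C$, and the shape of $Y$, with no need to reconstruct $T$.
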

\begin{proof}
See the proof of \cite[Theorems 1.6 and 1.9]{BW}.
\end{proof}

Given a set $N = \mathcal{N}^C(T)$, let $d^C(N)$ denote the set of squares of $Y$
that are dominant with respect to $T$; the fact that $d$ is well-defined follows from Lemma~\ref{BW1}.
For a set $N$ of squares of $Y$ and a permutation
matrix $P$, let $S^{N,C}_{\mathcal{Y}}(P)$ denote the set of valid transversals $T$ of $\mathcal{Y}$
that avoid the matrix
$\begin{bmatrix}
P & 0\\
0 & C\end{bmatrix}$
such that $\mathcal{N}^C(T) = N$.  It is clear that we have
\begin{equation}\left|S_{\mathcal{Y}}\left(\begin{bmatrix}
P & 0\\
0 & C\end{bmatrix}\right)\right|
=
\sum_{N \in \mathcal{D}} \left|S^N_{\mathcal{Y}}(P)\right|.
\label{SummingWithoutf}
\end{equation}

We will define a function $f^C$ on $\mathcal{D}^C$ (the value of $f$ is independent of $P$) with the following key properties,
to be proven after defining $f^C$.
The value $f(N)$ is the successor diagram.
\begin{lemma}
~\label{fIsAD}
For all $N \in \mathcal{D}^C$, $f^C(N)$ is an AD-Young diagram.
\end{lemma}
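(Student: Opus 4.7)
The plan is to define $f^C(N)$ explicitly by adapting the successor construction used in the proofs of \cite[Theorems 1.6 and 1.9]{BW}, and then to check the three conditions of Definition~\ref{ADYoung} in turn.

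To define $f^C(N)$, suppose $N = \mathcal{N}^C(T)$ and let $d^C(N)\subseteq Y$ be the dominant Young subdiagram, which is recoverable from $N$ by Lemma~\ref{BW1}. The Backelin-West analysis then singles out a canonical copy of $C$ inside $T$: its $r$ entries lie in rows $i_1<\cdots<i_r$ and columns $j_1<\cdots<j_r$, which sit on the ``staircase boundary'' of $d^C(N)$. Form $Y_N$ by deleting rows $i_1,\ldots,i_r$, columns $j_1,\ldots,j_r$, and the surviving portion of the dominant region; the result is a Young diagram by the same argument used for the successor construction in \cite[Theorems 1.6 and 1.9]{BW}. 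Let $\sigma\colon[k-r]\to[k]\setminus\{i_1,\ldots,i_r\}$ be the order-preserving bijection, and set
\[
A_N=\{\,j\in[k-r-1]:\sigma(j+1)=\sigma(j)+1 \text{ and } \sigma(j)\in A\,\},
\]
with $D_N$ defined analogously using $D$. Put $f^C(N)=(Y_N,A_N,D_N)$. By Lemma~\ref{BW1} this depends only on $N$ and $C$, not on the particular $T$ with $\mathcal{N}^C(T)=N$.

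Two of the three conditions of Definition~\ref{ADYoung} are then essentially free: $Y_N$ is a Young diagram by the construction above, and $A_N\cap D_N=\emptyset$ follows from $A\cap D=\emptyset$. The content of the lemma lies in showing that whenever $j\in A_N\cup D_N$, rows $j$ and $j+1$ of $Y_N$ have equal length. Fix such a $j$, so $\sigma(j)\in A\cup D$ and $\sigma(j+1)=\sigma(j)+1$. By the AD-Young property of $\mathcal{Y}$, rows $\sigma(j)$ and $\sigma(j)+1$ of $Y$ share a common length $\ell$, so they contain exactly the same set of columns of $Y$; hence the column-deletion step strips the same set $\{j_l: j_l\le\ell\}$ from each. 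It therefore suffices to show that rows $\sigma(j)$ and $\sigma(j)+1$ have the same number of dominant cells. Write $d_i$ for this count in row $i$; since $d^C(N)$ is a Young diagram, $d_1\ge d_2\ge\cdots\ge d_k$. The key structural fact supplied by the canonical choice of the $C$-copy is that $d_i>d_{i+1}$ can occur only when $i\in\{i_1,\ldots,i_r\}$, since strict drops in $d_i$ happen precisely at the lower-left corner rows of $d^C(N)$, which are exactly the rows of the canonical copy. As $\sigma(j)\notin\{i_1,\ldots,i_r\}$ by definition of $\sigma$, this forces $d_{\sigma(j)}=d_{\sigma(j)+1}$, and the dominant-region deletion removes the same number of cells from each of the two rows. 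Combined with the column observation, rows $j$ and $j+1$ of $Y_N$ then have equal length.

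The main obstacle is the structural claim that strict drops of $d_i$ occur only at the rows $i_l$ of the canonical $C$-copy. This is a property one must extract from the precise definition of the canonical copy in \cite{BW} rather than from the black-box recovery statement of Lemma~\ref{BW1}: concretely, it amounts to checking that the entries of $T$ in rows not among $i_1,\dots,i_r$ cannot witness a decrease in the dominant row length, because any such decrease would produce a corner of $d^C(N)$ and hence an entry of the canonical copy in that row. Once this structural claim is made precise, all remaining verifications are order-preserving bookkeeping on the bijection $\sigma$.
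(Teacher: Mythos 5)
The central problem is that you have replaced the paper's definition of $f^C$ with a different one, and then founded the whole argument on a structural claim that is false.

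The paper defines $f^C(N)$ by starting from the dominant Young subdiagram $d^C(N)$ and deleting \emph{every} row and column of $Y$ that contains a non-dominant element of $T$ (an element of $N = \mathcal{N}^C(T)$). The number of such rows (and columns) is $|N|$, which in general exceeds $r$. Your construction instead deletes only the $r$ rows $i_1,\dots,i_r$ and $r$ columns $j_1,\dots,j_r$ of a single canonical $C$-copy, with the index bijection $\sigma:[k-r]\to[k]\setminus\{i_1,\dots,i_r\}$. This is not the same object, and the paper's required ascent/descent sets $A', D'$ use the map $i \mapsto r_i$ ranging over \emph{all} surviving rows, with the crucial built-in condition that $r_{i+1} = r_i + 1$. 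You never see where that condition (consecutiveness of surviving rows) enters, because your $\sigma$ does not carry it.

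The structural claim you flag as the "main obstacle" — that strict drops $d_i > d_{i+1}$ in the dominant region's row lengths occur only at the rows of the canonical $C$-copy — is simply false. Take $Y = (n^n)$, $T$ the anti-diagonal transversal $\{(i,n+1-i)\}$, and $C = M(1)$, so $r=1$. Then $(a,b)$ is dominant iff some $(j,n+1-j)$ lies strictly below-right, i.e.\ iff $a+b < n$; so $d^C(N)$ is a staircase with row lengths $n-1, n-2, \dots, 1, 0$, which has a strict drop at every row, while the "canonical copy" occupies only one row. No amount of precision about Babson--West's canonical choice will make this claim hold.

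The correct argument (and the one the paper gives) is local, not global, and uses Lemma~\ref{ShiftingC} together with the consecutiveness condition baked into $A', D'$. If $j \in A' \cup D'$ then by definition $r_j \in A \cup D$ \emph{and} $r_{j+1} = r_j + 1$; the second condition means row $r_j+1$ survives, i.e.\ $(r_j+1,b_{r_j+1})$ is dominant. Writing $y$ for the length of row $r_j$ of $d^C(N)$, the Young-diagram structure of $d^C(N)$ (Lemma~\ref{BW1}) gives $b_{r_j+1} \le y$, and then Lemma~\ref{ShiftingC} shows $(r_j+1, y)$ is dominant, forcing rows $r_j$ and $r_j+1$ of $d^C(N)$ to have equal length. (The first condition $r_j\in A\cup D$ guarantees $(r_j+1, y)\in Y$, so this conclusion makes sense.) From there the equal-columns bookkeeping you sketch does finish the job, but the row-length step has to be done this way, not via a global corner claim.
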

\begin{lemma}
\label{AltExtend}
If $\mathcal{Y}$ is $(x+r)$-alternating, then $f^C(N)$ is $x$-alternating.
\end{lemma}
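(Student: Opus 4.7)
The plan rests on the numerical coincidence $(k-r) - x = k - (x+r)$: the $x$-alternating condition on the successor $f^C(N) = (Y', A', D')$ (which has $k - r$ rows) and the $(x+r)$-alternating condition on the parent $\mathcal{Y} = (Y, A, D)$ (which has $k$ rows) both require the alternating biconditional $i \in A \Leftrightarrow i+1 \in D$ on exactly the same number of consecutive-row indices, namely $i = 1, \ldots, k - (x+r)$. So the aim is to transfer this biconditional verbatim from the top $k - (x+r) + 1$ rows of $\mathcal{Y}$ to the top $k - (x+r) + 1$ rows of $f^C(N)$.

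First I would unpack the as-yet-unstated definition of $f^C$: which $r$ rows of $Y$ are deleted to form $Y'$, and how the ascent/descent sets $A'$, $D'$ are derived from $A$, $D$. Based on the Babson--West setup being generalized here, the deleted rows should be the $r$ rows containing the canonical copy of $C$ inside the non-dominant region associated with $N$, and $A'$, $D'$ should inherit the $A$, $D$ constraints under the order-preserving reindexing of the surviving rows, possibly augmented at the ``seam'' adjacent to deleted rows with extra constraints needed to make the reinsertion injection work.

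The main technical step will then be to show that the $r$ deleted rows all lie strictly below the alternating band of $\mathcal{Y}$, that is, among rows $k - (x+r) + 2, \ldots, k$. Once this is granted, the top $k - (x+r) + 1$ rows of $f^C(N)$ correspond under the reindexing to the top $k - (x+r) + 1$ rows of $\mathcal{Y}$, and $A'$, $D'$ restrict to $A$, $D$ on $\{1, \ldots, k - (x+r)\}$; the alternating biconditional for the successor then follows immediately from the $(x+r)$-alternating property of the parent.

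I expect the main obstacle to be establishing this placement of the deleted rows. The non-dominant region (the southeast complement of the dominant Young diagram, by the northwest-closure argument hinted at in Lemma~\ref{BW1}) is where the canonical $C$-copy lives, and the $(x+r)$-alternating hypothesis --- which forces rigid ascent/descent behavior in the top band --- should combine with the fact that $C$ is an $r \times r$ pattern to confine the deleted rows to the bottom $x + r - 1$ positions. If instead $f^C$ can delete rows inside the alternating band, then the extra constraints inserted into $A'$, $D'$ at the seam must be shown to preserve the biconditional by design, and the argument becomes a case analysis; but in either case it reduces to careful bookkeeping of how $A$ and $D$ are reindexed into $A'$ and $D'$ under the deletion of $r$ rows.
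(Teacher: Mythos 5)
Your plan is built on two assumptions about the successor map that are both false in this setting. First, $f^C$ does not delete exactly $r$ rows: it deletes every row whose element of $T$ fails to be dominant, and since the non-dominant squares form the skew complement of a Young diagram, there can be anywhere from $r$ up to $n$ of them. So if $\mathcal{Y}$ has $n$ rows, the successor has $k$ rows for some $k \le n - r$ but not in general $k = n - r$, and the ``numerical coincidence'' you invoke does not hold. Second, the deleted rows need not be confined below the alternating band. A row high up in $Y$ is deleted whenever its transversal entry sits too far to the right to leave room for a copy of $C$ to its southeast; the $(x+r)$-alternating constraint on $A, D$ does nothing to prevent this (e.g.\ a descent at position $2$ allows $b_2$ to be the last column, making $(2,b_2)$ non-dominant while $(1,b_1)$ is dominant). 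Consequently the top band of $Y'$ is not a literal copy of the top band of $Y$, and your clean transfer of the biconditional fails.

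You also have the mechanism of $A', D'$ backwards: the paper \emph{drops} constraints at seams rather than augmenting them, keeping $i$ in $A'$ only when $r_i \in A$ \emph{and} $r_{i+1} = r_i + 1$. The paper's actual argument does not try to show row preservation at all. It observes that the surviving rows $r_1 < \cdots < r_k$ satisfy $r_k \le n - r$ (there must be $r$ rows of $Y$ strictly below $r_k$ housing the witnessing copy of $C$), hence by strict monotonicity $r_i \le r_{k-x} \le r_k - x \le n - r - x$ for all $i \le k - x$. This places $r_i$ inside the $(x+r)$-alternating band of $\mathcal{Y}$, where the biconditional on $A,D$ is available; then Proposition~\ref{AltTechnical} (which does the seam bookkeeping via Lemma~\ref{ShiftingC}) transfers the conclusion to $A', D'$. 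Your sketch entirely skips the inequality $r_k \le n - r$ and the role of Proposition~\ref{AltTechnical}, which are the two load-bearing ideas.
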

\begin{lemma}
\label{Embed1}
For all $N^C \in \mathcal{D}^C$, we have
\[|S^{N}_\mathcal{Y}(P)| = |S_{f(N)}(P)|.\]
\end{lemma}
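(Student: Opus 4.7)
The plan is to exhibit an explicit bijection $\Phi : S^N_\mathcal{Y}(P) \to S_{f(N)}(P)$ given by $\Phi(T) := T \setminus N$, the AD-Young analogue of the reduction used in the proof of \cite[Theorems 1.6 and 1.9]{BW}. The key observation is that for $T \in S^N_\mathcal{Y}(P)$, the complement $T \setminus N$ is precisely the set of dominant elements of $T$; these all lie in $d^C(N)$ (well-defined by Lemma~\ref{BW1}) and they occupy exactly those rows and columns of $Y$ that $N$ does not use. I therefore expect $f(N)$'s Young diagram to be $d^C(N)$ restricted to these surviving rows and columns, with its required ascent and descent sets pulled back from $A, D$ under the natural re-indexing. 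Under this identification, $T \setminus N$ is automatically a set-theoretic transversal of $f(N)$.

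I would then verify that $\Phi$ lands in $S_{f(N)}(P)$. Validity as a valid transversal of $f(N)$ reduces, by the pullback definition of $f(N)$'s ascent/descent sets, to the validity of $T$ as a transversal of $\mathcal{Y}$, which holds by hypothesis. $P$-avoidance of $T \setminus N$ follows by contrapositive: a $P$-pattern in $T \setminus N$ consists entirely of dominant elements, so by the definition of dominance the south-east-most element of this $P$-pattern admits a $C$-copy strictly south-east of it in $T$; combining the $P$-pattern with such a $C$-copy produces a $P \oplus C$-pattern in $T$, contradicting $T \in S^N_\mathcal{Y}(P)$.

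The inverse map is $\Phi^{-1}(T^\ast) := N \sqcup T^\ast$. I would check that $N \sqcup T^\ast$ is a valid transversal of $\mathcal{Y}$, that $\mathcal{N}^C(N \sqcup T^\ast) = N$, and that $N \sqcup T^\ast$ avoids $P \oplus C$. The first point follows from the same pullback relationship between the ascent/descent sets. The second is a direct consequence of Lemma~\ref{BW1}: the dominant region is a function of $N$ alone, so the elements of $T^\ast \subseteq d^C(N)$ are automatically dominant in $N \sqcup T^\ast$, while the elements of $N$ (lying outside $d^C(N)$) are automatically non-dominant. The third is the symmetric form of the pattern argument: a $P \oplus C$-pattern in $N \sqcup T^\ast$ must have its $P$-part consist of dominant elements, since the $C$-part sits strictly south-east of every $P$-element; these dominant elements lie in $T^\ast$, so $T^\ast$ contains $P$.

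I expect the main obstacle to be the ascent/descent bookkeeping, that is, carefully tracking how the constraints of $\mathcal{Y}$ transfer to $f(N)$. Two rows of $\mathcal{Y}$ that become adjacent in $f(N)$ (by the removal of intervening $N$-rows) can inherit a constraint to $f(N)$ only when it is compatible with the Young-diagram structure of $d^C(N)$, and constraints between an $N$-row and a dominant row must be automatically enforced by the geometry of $d^C(N)$ rather than explicitly recorded in $f(N)$. Framing the definition of $f^C$ so that this matching is clean is precisely why Lemma~\ref{fIsAD} is stated first; once the definition of $f^C$ is in hand, the verification of Lemma~\ref{Embed1} reduces to careful tracking of rows and columns through the pullback.
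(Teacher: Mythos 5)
Your bijection is the right one and matches the paper's: the forward map deletes the non-dominant squares $N$ (equivalently, restricts to the dominant rows and columns, after which $T \setminus N$ becomes a transversal of the successor diagram $f^C(N)$), and the inverse re-inserts $N$. Your arguments that the forward map lands in $S_{f^C(N)}(P)$, that $\mathcal{N}^C(N \sqcup T^\ast) = N$ follows from Lemma~\ref{BW1}, and that $N \sqcup T^\ast$ avoids $P \oplus C$, are all essentially the paper's arguments and are correct.

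There is, however, a genuine gap in the step you treat most casually: showing that $N \sqcup T^\ast$ is a \emph{valid} transversal of $\mathcal{Y}$, i.e.\ that it respects the required ascent set $A$ and required descent set $D$. You assert this ``follows from the same pullback relationship between the ascent/descent sets,'' and that constraints between an $N$-row and a dominant row ``must be automatically enforced by the geometry of $d^C(N)$.'' Neither claim is an argument. In the paper, this verification is the bulk of the proof: for each $j$ in $A$ (and then in $D$) one must run a four-way case analysis according to whether $j$, $j+1$, both, or neither are dominant rows; the mixed cases are not handled by geometry alone but by comparing $N \sqcup T^\ast$ against an auxiliary valid transversal $T_1$ with $\mathcal{N}^C(T_1) = N$ (whose existence is exactly what $N \in \mathcal{D}^C$ gives you), and then invoking Lemma~\ref{ShiftingC} and Lemma~\ref{BW1} to transfer dominance between rows. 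In fact one of the mixed cases turns out to be impossible (e.g.\ if $j \in A$, then $j+1$ being a dominant row while $j$ is not cannot occur), which is a conclusion, not an input, of the argument. Your proposal contains none of the ingredients needed here --- no appeal to $T_1$, no use of Lemma~\ref{ShiftingC}, no recognition that the mixed cases require a real argument --- so while the framework is correct, the lemma itself is not yet proved.
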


From the Young diagram of dominant squares $d(\mathcal{N}^C(T))$, delete
every row or column that contains a non-dominant
square of $T$, and call the resulting Young diagram $Y'$.
Each row and column of $Y'$ contains exactly 1 dominant member of $T$, and
thus $Y'$ has the same number of rows and columns.
Suppose that $Y'$ has $k$ rows,
and that for all $1 \le i \le k$ the $i$th column of $Y'$ was the $c_i$th column of $Y$
before the row and column deletion; similarly, suppose that for all $1 \le i \le k$, the $i$th
row of $Y'$ was the $r_i$th row of $Y$.
Let
\[A' = \{i \in [k-1] \mid r_i \in A \text{ and } r_{i+1} = r_i+1\},\]
and let
\[D' = \{i \in [k-1] \mid r_i \in D \text{ and } r_{i+1} = r_i+1\}.\]
By construction, the triple $(Y',A',D')$ depends only on $\mathcal{N}^C(T)$, and the set of dominant
squares $d^C(\mathcal{N}^C(T))$ (the value $d^C(N)$ does not depend on $P$).  Given a set $N = d^C(\mathcal{N}^C(T) \in \mathcal{D}$, let $f^C(N)$
denote the corresponding triple $(Y',A',D')$.  We prove that $f^C$ has the desired properties.
Lemmata~\ref{fIsAD} and~\ref{AltExtend} will be immediate from the following
lemma and proposition.

\begin{lemma}
\label{ShiftingC}
Let $T = \{(i,b_i)\}$ be a valid transversal of $\mathcal{Y}$.  If
the square $(j,y)$ is dominant with respect to $T$ and $b_{j+1} \le j$,
then the square $(j+1,y)$ is dominant with respect to $T$.
\end{lemma}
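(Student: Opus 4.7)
The plan is direct: compare the dominance regions for $(j, y)$ and $(j+1, y)$, and show that, under the hypothesis, they contain the same transversal entries, so any copy of $C$ witnessing dominance of $(j, y)$ is also a copy witnessing dominance of $(j+1, y)$.

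Concretely, I set $R_a = \{(x, z) \in Y : x > a \text{ and } z > y\}$ for $a \in \{j, j+1\}$; by definition, dominance of $(a, y)$ is exactly the statement that $T \cap R_a$ contains $C$. We have $R_{j+1} \subseteq R_j$, and the set-theoretic difference $R_j \setminus R_{j+1} = \{(j+1, z) \in Y : z > y\}$ lies entirely in row $j+1$. Since $T$ is a transversal, it contains a unique cell in row $j+1$, namely $(j+1, b_{j+1})$. The argument needs this cell to fall outside $R_j \setminus R_{j+1}$, i.e., to satisfy $b_{j+1} \le y$; I read the stated hypothesis $b_{j+1} \le j$ as a typographical slip for the column-to-column comparison $b_{j+1} \le y$, since $b_{j+1}$ and $y$ are both column indices whereas $j$ is a row index. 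Either way, the hypothesis forces $(j+1, b_{j+1}) \notin R_j \setminus R_{j+1}$, and thus $T \cap R_j = T \cap R_{j+1}$.

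With this set equality in hand, any rows $a_1 < \cdots < a_r$ and columns $c_1 < \cdots < c_r$ witnessing a copy of $C$ inside $T \cap R_j$, with $(a_r, c_r) \in Y$, automatically witness the same copy inside $T \cap R_{j+1}$: the ambient diagram $Y$ is unchanged, and the transversal cells appearing in the region are the same, so the ``bottom-right in $Y$'' clause of pattern containment is preserved verbatim. The only real obstacle is notational, namely keeping rows and columns straight and confirming that the stated hypothesis translates into the needed column bound; the combinatorial content is just the one-line observation $T \cap R_j = T \cap R_{j+1}$.
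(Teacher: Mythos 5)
Your proof is correct and is the same argument as the paper's, phrased more cleanly: the paper fixes a witnessing copy of $C$ for the dominance of $(j,y)$ and splits on whether its topmost row $e_1$ equals $j+1$ (ruling that case out), while you observe directly that $T\cap R_j = T\cap R_{j+1}$ and carry over any witness. Your reading of the hypothesis $b_{j+1}\le j$ as a typo for the column bound $b_{j+1}\le y$ is confirmed by every invocation of this lemma in the paper---in Proposition~\ref{AltTechnical} and the proofs of Lemmas~\ref{fIsAD} and~\ref{Embed1}, the bound supplied is always $b_{j+1}\le y$ or $b_{j+1}<y$; relatedly, the final sentence of the paper's own proof appears to be a slip, since the derived inequality $b_{j+1}>y$ should be read as contradicting the (corrected) hypothesis rather than as yielding dominance via Lemma~\ref{BW1}.
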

\begin{proof}  See Figure~\ref{fig:ForLemmaShiftingC}.
\begin{figure}
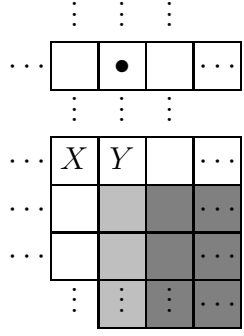

\center{
\begin{ytableau}
\none & \none [\vdots] & \none [\vdots] & \none [\vdots]\\
\none [\cdots] & & \bullet & & \cdots\\
\none & \none[\vdots] & \none[\vdots] & \none[\vdots]\\
\none[\cdots] & X & Y & & \cdots\\
\none[\cdots] & & *(lightgray) & *(gray) & *(gray) \cdots\\
\none[\cdots] & & *(lightgray) & *(gray) & *(gray) \cdots\\
\none & \none[\vdots] & *(lightgray)\vdots & *(gray)\vdots & *(gray) \cdots
\end{ytableau}}
\caption{If the bullet point is an element of $T$, then there are no elements
of $T$ among the light gray squares.  Thus, if square $X$ is dominant, there must be
a copy of $C$ among the dark gray squares, which implies that $Y$ is dominant.}
\label{fig:ForLemmaShiftingC}
\end{figure}
By the definition of dominant squares and because $(j,y)$ is dominant, there are rows
$j < e_1 < e_2 < \cdots < e_r$ and columns $y < f_1 < f_2 < \cdots < f_r$ such that the restriction
of $Y$ to the rows $e_i$ and the columns $f_k$ has members of $T$ exactly where $C$ has 1's.
If $j + 1 < e_1$, then the rows $e_i$ and the columns $f_k$ demonstrate that $(j+1,y)$
is dominant.  Otherwise, we have $j+1 \ge e_1$, which implies that $e_1 = j+1$.  The only
element of $T$ in row $j+1$ is $(j+1,b_{j+1})$, and it follows that $b_{j+1} = f_k$ for some $k$.
Regardless of $k$, we have $b_{j+1} \ge f_1 > y$, which implies that $(j+1,y)$ is dominant
by Lemma~\ref{BW1}.
\end{proof}

\begin{prop}
\label{AltTechnical}
Let $N \in \mathcal{D}^C$ and let $f^C(N) = (Y',A',D')$.
If $i \in A'$ satisfies $r_i + 1 \in D$, then $r_i+1 \in D'$.  If $i \in D'$
satisfies $r_i - 1 \in A$, then $r_i-1 \in A'$.
\end{prop}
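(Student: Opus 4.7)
The plan is first to translate the conclusion into a statement about which rows of $Y$ survive the row-and-column deletion defining $f^C(N)$. Since $i \in A'$ forces both rows $r_i$ and $r_{i+1} = r_i+1$ to be kept (so $(r_i, b_{r_i})$ and $(r_i+1, b_{r_i+1})$ are dominant), the conclusion $i+1 \in D'$ in part (i) amounts to showing that row $r_i+2$ is also kept, i.e., that $(r_i+2, b_{r_i+2})$ is a dominant square. Symmetrically, for part (ii), with $r_i, r_i+1$ kept and $(r_i, b_{r_i})$ dominant, the conclusion reduces to showing that $(r_i-1, b_{r_i-1})$ is dominant.

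I would dispatch part (ii) first, as it is the easier direction. From $r_i - 1 \in A$ and $r_i \in D$ I read off $b_{r_i-1} < b_{r_i}$, so the square $(r_i-1, b_{r_i-1})$ sits weakly above and weakly to the left of the dominant square $(r_i, b_{r_i})$ inside $Y$. By Lemma~\ref{BW1} the dominant squares of $\mathcal{Y}$ form a Young diagram, hence $(r_i-1, b_{r_i-1})$ is itself dominant; row $r_i-1$ is therefore not deleted, giving $r_{i-1} = r_i - 1 \in A$ and thus $i-1 \in A'$.

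For part (i), one cannot propagate dominance downward using Lemma~\ref{BW1} alone, and this is exactly where Lemma~\ref{ShiftingC} must do the work. From $r_i \in A$ and $r_i+1 \in D$ I would extract the key inequality $b_{r_i+2} < b_{r_i+1}$. Since $(r_i+1, b_{r_i+1})$ is dominant, sliding leftward within row $r_i+1$ via the Young-diagram property gives that $(r_i+1, b_{r_i+2})$ is also dominant. Then Lemma~\ref{ShiftingC} applied at $(j,y) = (r_i+1, b_{r_i+2})$ pushes dominance down one row to $(r_i+2, b_{r_i+2})$, because the column hypothesis involving $b_{j+1} = b_{r_i+2}$ and $y = b_{r_i+2}$ is trivially satisfied at this choice.

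The subtle point, and the only real step of the argument, is picking the right column at which to invoke Lemma~\ref{ShiftingC}: not the original column $b_{r_i+1}$ of the dominant square, but the smaller column $b_{r_i+2}$ reached after a first leftward slide. Once this choice is made, the descent condition at row $r_i+1$ automatically supplies the column hypothesis needed to shift dominance downward. The asymmetry between parts (i) and (ii) is then simply a reflection of the asymmetry of English-notation Young diagrams: moving up-and-left within a Young diagram is free, whereas moving down-and-right requires the extra combinatorial content of Lemma~\ref{ShiftingC}.
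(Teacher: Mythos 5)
Your proof is correct and takes essentially the same route as the paper: part (ii) follows from the Young-diagram property of the dominant region (Lemma~\ref{BW1}), while part (i) needs Lemma~\ref{ShiftingC} to push dominance downward. The only small difference is cosmetic: you slide left within row $r_i+1$ first and then apply Lemma~\ref{ShiftingC} at column $b_{r_i+2}$, whereas the paper applies Lemma~\ref{ShiftingC} at column $b_{r_i+1}$ and slides left afterwards; the two orderings are equivalent, so your ``the subtle point is choosing the smaller column'' remark overstates the necessity of that choice. You have also silently corrected two small typos in the paper's statements (the hypothesis ``$r_i\in D$'' in the proof should read ``$r_i+1\in D$,'' and the condition ``$b_{j+1}\le j$'' in Lemma~\ref{ShiftingC} should read ``$b_{j+1}\le y$''), and your reading is the right one.
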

\begin{proof}
Let $T = \{i,b_i\}$ be a valid transversal of $\mathcal{Y}$ with $\mathcal{N}^C(T) = N$.
If $i \in A'$ satisfies $r_i \in D$, then we have that $(r_i,b_{r_i})$ is dominant
and $b_{r_i+1} < b_{r_i}$.  By Lemma~\ref{ShiftingC}, $(r_i+1,b_{r_i+1})$ is dominant,
and it follows that $r_{i+1} = r_i + 1$ and $i + 1 \in D'$.

To prove the second part, we first prove that $(b_{r_i-1},r_i-1)$ is dominant.
Because $r_i-1 \in A$, we have that $b_{r_i-1} < b_{r_i}$.  Lemma~\ref{BW1}
implies that $(r_i-1,b_{r_i-1})$ is dominant.  This yields that $r_{i-1} = r_i - 1$,
and the fact that $i-1 \in A'$ follows by the definition of $A'.$
\end{proof}

\begin{proof}[Proof of Lemma~\ref{fIsAD}]
Let $f^C(N) = (Y',A',D')$.
By construction and Lemma~\ref{BW1}, $Y'$ is a Young diagram.
Because $A$ and $D$ are disjoint,
we have that $A'$ and $D'$ are disjoint. Let
$T = \{(i,b_i)\}$ be a valid transversal of $\mathcal{Y}$ with $\mathcal{N}^C(T) = N$.
and suppose that $j \in A' \cup D'$.  Let $y$ be the length of the $r_j$th row of $d(N)$.
By Lemma~\ref{BW1} and because $(r_j+1,b_{r_j+1})$ is dominant, we have that $y \ge b_{r_j+1}$.  Lemma~\ref{ShiftingC}
yields that $(r_j+1,y)$ is dominant, and thus the $r_j$th and $r_j+1$st rows of $d^C(N)$ have the same length.
It follows that the $j$th and $j+1$st rows of $Y'$ have the same length, as desired.
\end{proof}

\begin{proof}[Proof of Lemma~\ref{AltExtend}]
Let $f^C(N) = (Y',A',D')$ and suppose that $Y'$ has $k$ rows.
Let $i \in D'$ with $i \le k-x$, and we prove that $i - 1\in A'$. We have
$r_i \le r_{k-x} \le r_k - x \le n-r-x$.  Because $\mathcal{Y}$ is $x+r$-alternating, this implies that
that $r_i - 1 \in A$.  Proposition~\ref{AltTechnical} yields that $i - 1 \in A'$,
as desired.  The proof that $i \in A'$ with $i \le k-x$ implies $i + 1 \in D'$ is similar.
\end{proof}

\begin{proof}[Proof of Lemma~\ref{Embed1}]
We prove the equality by establishing a bijection.
Define the function $h: S^{N,C}_\mathcal{Y}(P) \rightarrow S_{f^C(N)}(P)$ by
mapping a transversal $T \in S^{N,C}_\mathcal{Y}(P)$ to the image of $T$
after deleting any row or column that contains a non-dominant member of $T$.
By definition of $N$, it is clear that $h(T)$ is a valid transversal
of $f^C(N)$.  Furthermore, if $h(T)$ contains $P$, then the set of dominant squares
of $T$ contain $P$, which implies that $T$ contains
$\begin{bmatrix}
P & 0\\
0 & C\\
\end{bmatrix}.$  Hence, we can conclude that if $T \in S^{N,C}_\mathcal{Y}(P)$, then
$h(T) \in S_{f^C(N)}(P)$.  To show that $h$ is a bijection, we will show that it
has an inverse.
Consider the function $h_2: S_{f^C(N)}(P) \rightarrow S^{N,C}_\mathcal{Y}(P)$
given by mapping a valid transversal $T  = \{(i,b_i)\}\in S_{f^C(N)}(P)$ to the transversal
$T' = N \cup \{(r_i,c_{b_i}) \mid (i,b_i) \in T\}$ of $Y$ (a priori, $h_2(T)$ is not
necessarily an element of $S^{N,C}_\mathcal{Y}(P)$).

We claim that if $T \in S_{f^C(N)}(P)$, then
$h_2(T)$ is a valid transversal of $\mathcal{Y}$.
Let $T_1$ be a valid transversal of $\mathcal{Y}$ such that $\mathcal{N}^C(T_1) = N$;
we introduce $T_1$ in order to exploit the fact that $N \in \mathcal{D}^C$.
Let $h_2(T) = \{(i,a_i)\},$ and let $T_1 = \{(i,a'_i)\}$.
Suppose that $j \in A$ and we will do
casework on which of $j, j+1$ are among the rows $r_i$ to prove that $j$ is in the ascent set of $h_2(T)$.

\begin{casework}
\item Neither $j$ nor $j+1$ are among the rows $r_i$.  Then, we have $a_j = a'_j < a'_{j+1} = a_{j+1}$,
as desired.

\item $j$ is among the rows $r_i$ but $j+1$ is not.  Assume for sake of contradiction
that $a_j > a_{j+1} = a'_{j+1}$.
Because $(j,a_j)$ is dominant with respect to $T_1$, by Lemma~\ref{ShiftingC} the square
$(j+1,a_j)$ is dominant with respect to $T_1$, which implies that $(j+1, a'_{j+1})$ is dominant
with respect to $T_1$ by Lemma~\ref{BW1}.  This contradicts the fact that $j+1$ is not among the rows
$r_i$.

\item We have that $j+1$ is among the rows $r_i$ but $j$ is not.  We claim that
this is impossible. Because $a'_j < a'_{j+1}$ and $j+1$ is among the rows $r_i$, we have that
$(j,a'_j)$ is dominant, which implies that $j$ is among the rows $r_i$.

\item Both $j$ and $j+1$ are among the rows $r_i$.  Suppose that $r_x = j$; then we have $x \in A'$,
which implies that $b_x < b_{x+1}$.  Therefore, we have $a_j = c_{b_x} < c_{b_{x+1}} = a_{j+1}$,
as desired.
\end{casework}

The casework proves that $j$ is in the ascent set of $h'(T)$.  Suppose that $j \in D$, and we will
prove that $j$ is in the descent set of $h_2(T)$ by dividing into the same cases.

\begin{casework}
\item Neither $j$ nor $j+1$ are among the indices $r_i$.  Then, we have $a_j = a'_j > a'_{j+1} = a_{j+1}$,
as desired.

\item $j$ is among the indices $r_i$ but $j+1$ is not.  Because $a'_j > a'_{j+1}$,
by Lemma~\ref{ShiftingC} the square $(j+1,a'_j)$ is dominant with respect to $T_1$.  This implies
that $(j+1,a'_{j+1})$ by Lemma~\ref{BW1}, which implies that $j+1$ is among the rows $r_i$.

\item $j+1$ is among the rows $r_i$ but $j$ is not.
By Lemma~\ref{BW1} and because $(j,a'_j)$ is not dominant with respect to $T_1$,
we have that $a_j = a'_j > a_{j+1}$, as desired.

\item Both $j$ and $j+1$ are among the indices $r_i$.  Suppose that $r_x = j$; then,
we have $x \in D'$, which implies that $b_x > b_{x+1}$.  Therefore, we have
$a_j = c_{b_x} > c_{b_{x+1}} = a_{j+1}$, as desired.
\end{casework}

The casework establishes that every element of $D$ is in the descent set of $h'(T)$, and it follows
that $h_2(T)$ is a valid transversal of $\mathcal{Y}$.  Because $T$ avoids $P$ and by the definition
of dominant squares, $h_2(T)$ avoids
$\begin{bmatrix}
P & 0\\
0 & C\end{bmatrix}.$
It is clear that $\mathcal{N}^C(h'(T)) = N$, and this implies that $h_2(T) \in S^{N,C}_{\mathcal{Y}}(P)$
for all $T \in S_{f^C(N)}(P)$.  Hence, $h$ and $h'$ are inverses, and thus
$h$ is a bijection.  The lemma follows.
\end{proof}

The following proposition is immediate from Lemma~\ref{Embed1} and Equation \ref{SummingWithoutf},
and we use it to prove Theorem~\ref{ShapeExtend} in the alternating case.
\begin{prop}
\label{Embed2}
For all permutation matrices $P, C$, we have that
\[\left|S_{\mathcal{Y}}\left(\begin{bmatrix}
P & 0\\
0 & C\end{bmatrix}\right)\right|
= \sum_{N \in \mathcal{D}^C} \left|S_{f^C(N)}(P)\right|.\]
\end{prop}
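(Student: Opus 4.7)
The plan is to derive the proposition as a one-step substitution, since all the real content has already been packaged into the preceding ingredients. First, I would recall Equation~\ref{SummingWithoutf}, which partitions the set of valid transversals of $\mathcal{Y}$ that avoid $\begin{bmatrix} P & 0 \\ 0 & C \end{bmatrix}$ according to the value of $\mathcal{N}^C(T)$:
\[
\left|S_{\mathcal{Y}}\left(\begin{bmatrix} P & 0\\ 0 & C\end{bmatrix}\right)\right| = \sum_{N \in \mathcal{D}^C} \left|S^{N,C}_{\mathcal{Y}}(P)\right|.
\]
This is a genuine partition because every valid transversal $T$ has a unique non-dominant set $\mathcal{N}^C(T)$, and $\mathcal{D}^C$ was defined precisely as the family of sets that arise in this way, so each $T$ contributes to exactly one summand on the right.

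Next, I would apply Lemma~\ref{Embed1} term by term. For each $N \in \mathcal{D}^C$, Lemma~\ref{Embed1} supplies the bijective identity $|S^{N,C}_\mathcal{Y}(P)| = |S_{f^C(N)}(P)|$. Replacing each summand yields exactly the desired equality
\[
\left|S_{\mathcal{Y}}\left(\begin{bmatrix} P & 0\\ 0 & C\end{bmatrix}\right)\right| = \sum_{N \in \mathcal{D}^C} \left|S_{f^C(N)}(P)\right|,
\]
which is the statement of Proposition~\ref{Embed2}.

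I do not expect any obstacle in this argument. All the substantive work, namely verifying that $f^C(N)$ is a well-defined AD-Young diagram (Lemma~\ref{fIsAD}), that the reinsertion map $h_2$ produces a valid transversal with the correct ascent and descent behavior (Lemma~\ref{Embed1}), and that the parent's alternating structure descends to the successor (Lemma~\ref{AltExtend}), has already been carried out. The role of Proposition~\ref{Embed2} is only to package the partition of Equation~\ref{SummingWithoutf} together with the bijection of Lemma~\ref{Embed1} into a single identity convenient for the subsequent proof of Theorem~\ref{ShapeExtend}, so no further bookkeeping or casework is required.
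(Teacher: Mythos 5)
Your argument matches the paper's exactly: the paper states that Proposition~\ref{Embed2} ``is immediate from Lemma~\ref{Embed1} and Equation~\ref{SummingWithoutf},'' and you supply precisely that substitution, with the same partition and the same term-by-term bijective replacement. No gap; this is correct.
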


\begin{proof}[Proof of Theorem~\ref{ShapeExtend} in the alternating case]
Let $\mathcal{Y}$ be an $(x+r)$-alternating AD-Young diagram.
By Lemma~\ref{AltExtend} and because $M \sea{x} M'$, we have that
$|S_{f^C(N)}(M)| = |S_{f^C(N)}(M')|$
for all $N \in \mathcal{D}^C$.  Proposition~\ref{Embed2} applied to $P = M$
and $P = M'$ then yields that
\[
\left|S_{\mathcal{Y}}\left(\begin{bmatrix}
M & 0\\
0 & C\end{bmatrix}\right)\right|=
\sum_{N \in \mathcal{D}^C} \left|S_{f^C(N)}(M)\right|
= \sum_{N \in \mathcal{D}^C} \left|S_{f^C(N)}(M')\right|
= \left|S_{\mathcal{Y}}\left(\begin{bmatrix}
M' & 0\\
0 & C\end{bmatrix}\right)\right|,\]
as desired.
\end{proof}

In fact, the
alternating AD-Young diagrams
arose as an attempt to provide a neat description for a superset
of the closure of the set of AD-Young diagrams of the form
$(Y, A, D)$ with $Y$ an $n \times n$ square, $A = [n-1] \cap (2 \mathbb{Z} + 1)$,
and $D = [n-1] \cap 2\mathbb{Z}$ under such a successor map.  The need to account for required ascents and descents
significantly complicates both the definition
of the successor map and the resulting proof of the Extension Theorem~\ref{ShapeExtend}.

The proof of Theorem~\ref{ShapeExtend} in the semialternating case is almost identical.  We simply
replace Lemma~\ref{AltExtend} by the following lemma.

\begin{lemma}
\label{SemiAltExtend}
If $\mathcal{Y}$ is $(x+r)$-semialternating and $N \in \mathcal{D}$, then
$f^C(N)$ is $x$-semialternating.
\end{lemma}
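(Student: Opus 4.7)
The plan is to mimic the proof of Lemma~\ref{AltExtend} almost verbatim, adjusting only the relevant range of indices. Recall that ``$y$-semialternating'' is synonymous with ``$2, y$-alternating'', so the alternation condition $i \in A \Leftrightarrow i + 1 \in D$ is required only for $1 \le i \le k_Y - y$, rather than for $0 \le i \le k_Y - y$ as in the $y$-alternating case. Thus, given $\mathcal{Y} = (Y, A, D)$ that is $(x+r)$-semialternating and $f^C(N) = (Y', A', D')$ with $Y'$ having $k$ rows, I would need to verify that $i \in A'$ iff $i + 1 \in D'$ for every $1 \le i \le k - x$.

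Fix a valid transversal $T$ of $\mathcal{Y}$ with $\mathcal{N}^C(T) = N$, and let $n$ denote the number of rows of $Y$. For the forward direction, let $i \in A'$ with $1 \le i \le k - x$. Borrowing the string of inequalities from the proof of Lemma~\ref{AltExtend}, namely $r_i \le r_{k-x} \le r_k - x \le n - r - x$, we see that $r_i$ lies in $[1, n - (x+r)]$. Since $\mathcal{Y}$ is $(x+r)$-semialternating and $r_i \in A$, this gives $r_i + 1 \in D$, so Proposition~\ref{AltTechnical} yields $i + 1 \in D'$.

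For the reverse direction, let $i + 1 \in D'$ with $1 \le i \le k - x$; equivalently, take $j = i+1 \in D'$ with $2 \le j \le k - x + 1$. Since $j \ge 2$ and the sequence $r_1 < r_2 < \cdots$ is strictly increasing we have $r_j \ge 2$, and the analogous chain $r_j \le r_{k-x+1} \le r_k - (x-1) \le n - (x+r) + 1$ gives $r_j - 1 \in [1, n - (x+r)]$. Thus the $(x+r)$-semialternating hypothesis applies at index $r_j - 1$, and combined with $r_j \in D$ it yields $r_j - 1 \in A$. Proposition~\ref{AltTechnical} then delivers $j - 1 = i \in A'$.

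The only substantive point to check is the range bookkeeping: the indices $r_i$ and $r_j - 1$ must land in $[1, n - (x+r)]$ so that the weaker semialternating hypothesis suffices. The estimates above do exactly this, and they are the main (and essentially only) obstacle. No new combinatorial difficulty arises compared to Lemma~\ref{AltExtend}, and the remainder of the proof of Theorem~\ref{ShapeExtend} in the semialternating case proceeds identically, with Lemma~\ref{SemiAltExtend} in place of Lemma~\ref{AltExtend}.
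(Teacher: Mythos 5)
Your proof is correct and follows the same route as the paper's: derive the chain $r_i \le r_{k-x} \le r_k - x \le n - r - x$ (or its shifted version) to land the relevant row index inside the window where the $(x+r)$-semialternating hypothesis applies, then invoke Proposition~\ref{AltTechnical} to pass the ascent/descent back to $A'$, $D'$. Worth noting: the paper's own proof only treats $i \in D'$ for $1 < i \le k-x$, which strictly speaking omits the endpoint $i = k-x+1$ needed to verify the biconditional at $i-1 = k-x$; your reverse direction correctly extends to $j \le k-x+1$ via $r_{k-x+1} \le r_k - (x-1) \le n - (x+r) + 1$, so your version is slightly more complete in its range bookkeeping.
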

\begin{proof}
Let $f^C(N) = (Y',A',D')$ and suppose that $Y'$ has $k$ rows.
Let $i \in D'$ with $1 < i \le k-x$, and we prove that $i - 1\in A'$. We have
$1 < r_i \le r_{k-x} \le r_k - x \le n-r-x$.  Because $\mathcal{Y}$ is $x+r$-alternating, this implies that
that $r_i - 1 \in A$.  Proposition~\ref{AltTechnical} yields that $i - 1 \in A'$,
as desired.  The proof that $i \in A'$ with $1 \le i \le k-x$ implies $i + 1 \in D'$ is similar.
\end{proof}

\section{Shape-equivalences for AD-Young diagrams}
\label{sec:ShapeEquivs}

We now prove two shape-equivalences.  For all positive integers $r$, let $I_r = M(123\cdots r)$ and let $J_r = M(r(r-1)(r-2) \cdots 1)$.  We will prove that $I_2 \sea{1} J_2$ and $J_3 \sesa{1} F_3$.  Using
the Extension Theorem~\ref{ShapeExtend}, we will obtain infinitely many pairs of patterns
that are equivalent for alternating and reverse alternating permutations in Theorems~\ref{1221Alt}
and~\ref{213321AltRAlt}.

\subsection{The matrices $M(12)$ and $M(21)$ are shape-equivalent}
\label{sec:M12}

We will prove that $I_2 \sea{1} J_2$; this will be the analogue
of \cite[Lemma 1.11]{BW}, which proves that $I_2$ an $J_2$ are shape-Wilf equivalent.  First, we prove an explicit enumeration of $S_{\mathcal{Y}}(I_2)$
and $S_{\mathcal{Y}}(J_2)$.

\begin{prop}
\label{Shape2Spec}
For all AD-Young diagrams $\mathcal{Y} = (Y, A, D)$ such that $Y$ has $n$ rows, we have
\[
|S_\mathcal{Y}(I_2)| =
\begin{cases}
1 & \text{if } Y \supseteq (n,n-1,n-2,\ldots,1) \text{ and } A = \emptyset\\
0 & \text{ otherwise}\end{cases}\]
and
\[
|S_\mathcal{Y}(J_2)| =
\begin{cases}
1 & \text{if } Y \supseteq (n,n-1,n-2,\ldots,1) \text{ and } D = \emptyset\\
0 & \text{ otherwise}.\end{cases}\]
\end{prop}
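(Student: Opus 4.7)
The plan is to give explicit descriptions of the unique avoiding transversals and then verify AD-validity by hand; throughout let $\lambda_1 \ge \lambda_2 \ge \cdots \ge \lambda_n$ denote the row lengths of $Y$.

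The $I_2$ half is almost immediate. The rectangle corner $(a_2,b_2)$ of an $M(12)$-pattern lies in $T$ and hence automatically in $Y$, so $T$ avoids $I_2$ iff $T$ contains no ascending pair, i.e.\ iff $T$ is strictly decreasing. The only candidate is $b_i = n+1-i$, which lies inside $Y$ exactly when $Y \supseteq (n,n-1,\ldots,1)$; its ascent set is empty and its descent set is $[n-1]$, so the AD-validity condition reduces to $A = \emptyset$ (the descent inclusion $D \subseteq [n-1]$ being automatic).

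The $J_2$ case is subtler because the corner of an $M(21)$-pattern need not lie in $Y$, so descents in a $J_2$-avoider are permitted provided the shape drops. The crucial lemma I would prove first is: \emph{if $T$ avoids $J_2$ and $b_i > b_{i+1}$, then $\lambda_i > \lambda_{i+1}$}, since otherwise $(i+1, b_i) \in Y$ would complete an $M(21)$-pattern on rows $i, i+1$. Two consequences are immediate. First, if $i \in D$ then the AD-condition forces $\lambda_i = \lambda_{i+1}$ while the lemma applied to the required descent forces $\lambda_i > \lambda_{i+1}$, which is impossible; hence $D \ne \emptyset$ gives $|S_\mathcal{Y}(J_2)| = 0$. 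Second, the contrapositive of the lemma says $\lambda_i = \lambda_{i+1}$ forces $b_i < b_{i+1}$, so $A \subseteq \asc(T)$ holds automatically for any $J_2$-avoider and AD-validity reduces entirely to the $D = \emptyset$ condition.

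It remains to show that $Y \supseteq (n,n-1,\ldots,1)$ admits exactly one $J_2$-avoiding transversal, and otherwise $Y$ admits no transversal at all (pigeonhole: if $\lambda_i < n-i+1$, rows $i,\ldots,n$ must place $n-i+1$ distinct $T$-elements among the at most $\lambda_i$ columns reachable from those rows). I would induct on $n$: the row-$n$ element must sit at $(n,\lambda_n)$, for if $b_n < \lambda_n$ then column $\lambda_n$ is occupied by some $(i,\lambda_n)$ with $i < n$, and $(n,\lambda_n) \in Y$ completes an $M(21)$-pattern. Deleting row $n$ and column $\lambda_n$ produces a Young diagram with $n-1$ rows and $n-1$ columns that still contains the shifted staircase, so the inductive hypothesis supplies the unique $J_2$-avoider of the smaller diagram, which lifts uniquely back to $Y$. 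The main obstacle is this $J_2$ case: $M(21)$-avoidance wants the shape to drop at every descent of $T$, while the AD-condition wants it flat across $A \cup D$, and the shape-drop lemma is the single technical step that reconciles them; everything else is induction and bookkeeping.
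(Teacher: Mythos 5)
Your proof is correct, and it takes a genuinely different (and somewhat cleaner) route in the $J_2$ half. The paper cites Babson--West \cite[Lemma~1.11]{BW} for the underlying facts that $Y \not\supseteq (n,n-1,\ldots,1)$ admits no transversal and that $Y \supseteq (n,n-1,\ldots,1)$ admits a unique $J_2$-avoiding transversal, and then verifies the AD-validity of the resulting transversal by a short but ad~hoc argument tied to the specific greedy right-to-left placement algorithm: for $x \in A$, one traces through the algorithm to see that when column $m = \max\{b_x,b_{x+1}\}$ is processed, row $x+1$ is still unoccupied and has $\geq m$ squares, forcing $b_{x+1} = m$. You instead re-derive the staircase and uniqueness facts from scratch (pigeonhole for nonexistence, a row-$n$ induction for uniqueness), which makes the proof self-contained, and you isolate the single structural lemma that $b_i > b_{i+1}$ forces $\lambda_i > \lambda_{i+1}$ for any $J_2$-avoider. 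That lemma does double duty: its contrapositive combined with the AD-Young condition shows $\asc(T) \supseteq A$ holds automatically for every $J_2$-avoiding transversal (not just the greedily constructed one), and its direct form shows any required descent is incompatible with $J_2$-avoidance. This is more symmetric with your $I_2$ argument and avoids reasoning about the internals of the Babson--West algorithm, at the cost of redoing the induction that the paper imports as a black box. Both routes are valid; yours buys self-containment and a uniform treatment of the $A$ and $D$ obstructions, while the paper's buys brevity by leaning on the cited lemma.
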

The analogous result for ordinary Young diagrams is in the proof of \cite[Lemma 1.11]{BW}.
\begin{proof}
It is shown in the proof of \cite[Lemma 1.11]{BW},
that if $Y \not\supseteq (n,n-1,n-2,\ldots,1)$, then $Y$ has no valid transversals,
which implies that $|S_\mathcal{Y}(I_2)| = |S_\mathcal{Y}(J_2)|=0$.

To prove the first part, suppose that $x \in A$,
and suppose that $T = \{(i,t_i)\}$ is a valid transversal of $Y$.
Then, we have $t_x < t_{x+1}$ and the $x$th and
$x+1$st rows of $Y$ have the same length.  The restriction of $T$
to the $x$th and $x+1$st rows and the $b_x$th and $b_{x+1}$th columns
of $Y$ demonstrates that $T$ contains $I_2$.  Let $Y$ have $k$ rows.
If $A = \emptyset$, then as in the proof of \cite[Lemma 1.11]{BW},
$T = \{(1,n),(2,n-1),\ldots,(n,1)\}$ is the only element of $S_\mathcal{Y}(I_2)$.

To prove the second part, suppose that $x \in D$,
and suppose that $T = \{(i,t_i)\}$ is a valid transversal of $Y$.
Because the $x$th and $x+1$st rows of $Y$ have the same length, $T$
contains $J_2$.  Suppose that $D = \emptyset$,
and as in the proof of \cite[Lemma 1.11]{BW}, let $T = \{(i,b_i)\}$
be the transversal obtained by moving from the right column to the left column;
for column $y$, select for $T$ a square in the lowest unoccupied row with
at least $y$ squares.  See Figure~\ref{fig:J2Placement}
for an example.
\begin{figure}
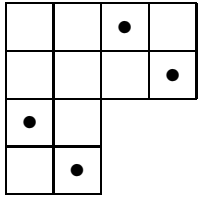

\center{
\begin{ytableau}
*(clear) & & \bullet &\\
& & & \bullet\\
\bullet &\\
& \bullet
\end{ytableau}}
\caption{Suppose that $Y = (4^2,2^2)$.  In the fourth column, we select $(2,4)$ for $T$; then, we select
$(1,3), (4,2), (3,1)$ in that order.}
\label{fig:J2Placement}
\end{figure}
Babson-West, in the proof of~\cite[Lemma 1.11]{BW}, prove that this process returns
the unique transversal of $Y$ that avoids $J_2$.  We prove that it is a valid transversal
of $\mathcal{Y}$.  If $x \in A$, then the $x$th and $(x+1)$st rows of $Y$ have the same
length, and let $m = \max\{b_x,b_{x+1}\}$.  When we selected a square for the $m$th column
of $Y$, the $x+1$st row of $Y$ was unoccupied, and by definition it has at least $m$ squares.
Thus, we have $b_{x+1} = m$ and $x$ is an ascent of $T$.  The fact that $T$ is a valid transversal
of $\mathcal{Y}$ follows.
\end{proof}

The following lemma is immediate from Proposition~\ref{Shape2Spec},
and the subsequent theorem follows easily from Proposition~\ref{AltPermToDiag},
the Extension Theorem~\ref{ShapeExtend},
and Lemma~\ref{Shape2}.

\begin{lemma}
\label{Shape2}
We have that $I_2 \sea{1} J_2$.
\end{lemma}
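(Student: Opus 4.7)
The plan is to derive Lemma~\ref{Shape2} as an essentially immediate corollary of Proposition~\ref{Shape2Spec}, by verifying that the two indicator-type enumerations furnished by that proposition in fact agree on every $1$-alternating AD-Young diagram.

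First I would unpack the $1$-alternating hypothesis. With $x = y = 1$, the definition says that for every $0 \le i \le k-1$ we have $i \in A$ iff $i+1 \in D$, where $k$ is the number of rows of $Y$. Since $A, D \subseteq [k-1]$, interpreting $0 \notin A$ and $k \notin D$ by convention, the boundary indices $i = 0$ and $i = k-1$ force $1 \notin D$ and $k-1 \notin A$, while the interior indices set up a bijection $i \mapsto i+1$ between $A$ and $D$. The key consequence I need is the equivalence $A = \emptyset \Leftrightarrow D = \emptyset$: if either side holds, the biconditional together with the boundary constraints forces the other.

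Next I would apply Proposition~\ref{Shape2Spec} directly. For a $1$-alternating AD-Young diagram $\mathcal{Y} = (Y, A, D)$, the proposition says $|S_\mathcal{Y}(I_2)| = 1$ exactly when $Y$ contains the staircase $(n,n-1,\ldots,1)$ and $A = \emptyset$, and $|S_\mathcal{Y}(J_2)| = 1$ exactly when $Y$ contains the staircase and $D = \emptyset$; both counts are $0$ in all other cases. Using the equivalence $A = \emptyset \Leftrightarrow D = \emptyset$ established from the $1$-alternating hypothesis, these two conditions are literally the same, so $|S_\mathcal{Y}(I_2)| = |S_\mathcal{Y}(J_2)|$ for every such $\mathcal{Y}$.

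Since this equality holds for all $1$-alternating AD-Young diagrams $\mathcal{Y}$, the definition of $\sea{1}$ yields $I_2 \sea{1} J_2$. There is no real obstacle to overcome here: all of the enumerative content has already been absorbed into Proposition~\ref{Shape2Spec}, and the only care required is at the boundary indices $i = 0$ and $i = k-1$ of the $1$-alternating condition, to ensure that the biconditional between $A = \emptyset$ and $D = \emptyset$ is tight rather than merely one-directional.
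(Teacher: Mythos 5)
Your argument is correct and matches the paper's intent exactly: the paper states that Lemma~\ref{Shape2} is immediate from Proposition~\ref{Shape2Spec}, and the missing step is precisely the observation that the $1$-alternating condition (with the boundary index $i=0$ included) forces $A = \emptyset \Leftrightarrow D = \emptyset$, which makes the two cases of the proposition coincide.
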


\begin{remark}
In Definition~\ref{ADYoung}, we require that if $i \in D$,
then the $i$th and $i+1$st rows of $Y$ to have the same length in order
for $(Y, A, D)$ to be an AD-Young diagram.
For the necessity of this condition, consider the AD-Young diagram $\mathcal{Y}'$ given by $Y' = (3^2,1)$,
$A = \{1\}$, $D = \{2\}$.  We have $|S_{\mathcal{Y}'}(M(12))| = 0$ but $|S_{\mathcal{Y}'}(M(21))| = 1$.
\end{remark}

\begin{thm}
\label{1221Alt}
For all $t > 2$ and all permutations $q$ of $[t]\setminus [2]$, the patterns $12q$ and $21q$
are equivalent for even- and odd-length alternating permutations.
\end{thm}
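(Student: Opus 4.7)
The plan is to combine Lemma~\ref{Shape2} (the base shape-equivalence $I_2 \sea{1} J_2$) with the Extension Theorem~\ref{ShapeExtend} to lift this to a shape-equivalence between $M(12q)$ and $M(21q)$, and then invoke Proposition~\ref{AltPermToDiag} to translate back to the language of alternating permutations.

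The key observation is a block decomposition of the relevant permutation matrices. If $q$ is a permutation of $\{3, 4, \ldots, t\}$ and $\tilde{q}$ denotes the permutation of $[t-2]$ obtained by subtracting $2$ from each entry, then the first two entries of $12q$ (resp.\ $21q$) are $1$ and $2$ (resp.\ $2$ and $1$), while the remaining entries occupy rows $3, \ldots, t$ and columns $3, \ldots, t$ with relative order $\tilde{q}$. Hence
\[M(12q) = I_2 \oplus M(\tilde{q}) \qquad \text{and} \qquad M(21q) = J_2 \oplus M(\tilde{q}).\]

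Next I would apply the Extension Theorem~\ref{ShapeExtend} to $I_2 \sea{1} J_2$ with the block $C = M(\tilde{q})$ of size $r = t-2$. This immediately yields $M(12q) \sea{t-1} M(21q)$. Because $t > 2$, we have $t - 1 \geq 2$, so the monotonicity of $\sea{x}$ in $x$ (noted right before Proposition~\ref{AltPermToDiag}) gives both $M(12q) \sea{1} M(21q)$ and $M(12q) \sea{2} M(21q)$. Parts (a) and (c) of Proposition~\ref{AltPermToDiag} then deliver $12q \oddeq 21q$ and $12q \eveneq 21q$, respectively, which are the two assertions of the theorem.

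No substantive obstacle remains at this stage: the heavy lifting has already been carried out in Lemma~\ref{Shape2} and Theorem~\ref{ShapeExtend}. The only subtle point to verify is that the hypothesis $t > 2$ is exactly what is needed: the shift $r = t-2$ in the Extension Theorem promotes a $\sea{1}$ equivalence to a $\sea{t-1}$ equivalence, and reaching even-length alternating permutations via Proposition~\ref{AltPermToDiag}(c) requires the resulting parameter to be at least $2$, which is guaranteed precisely when $t - 1 \geq 2$.
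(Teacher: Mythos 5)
Your proof is correct and follows precisely the route the paper intends when it says the theorem ``follows easily from Proposition~\ref{AltPermToDiag}, the Extension Theorem~\ref{ShapeExtend}, and Lemma~\ref{Shape2}.'' The block decomposition $M(12q) = I_2 \oplus M(\tilde q)$, $M(21q) = J_2 \oplus M(\tilde q)$, the application of the Extension Theorem with $r = t-2$ to get $\sea{t-1}$, the observation that $t>2$ guarantees $t-1 \ge 2$, and the appeal to parts (a) and (c) of Proposition~\ref{AltPermToDiag} are exactly the steps the paper leaves implicit.
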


\begin{remark}
An alternate proof of Theorem~\ref{1221Alt} via an isomorphism of generating trees is possible; see \cite{Lewis2012, West1995}
for an exposition of generating trees.  However, such an isomorphism does not exist
in the case of Theorem~\ref{213321AltRAlt}, even in the alternating case.
\end{remark}

\subsection{The matrices $M(213)$ and $M(321)$ are shape-equivalent}
\label{sec:M213}

For a positive integer $r$, let $F_r$ denote the permutation matrix $M((r-1)(r-2)\cdots 1r)$.  We will prove
the following proposition.

\begin{prop}
\label{JF3Strong}
We have $F_3 \sesa{1} J_3$.
\end{prop}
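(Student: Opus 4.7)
The plan is to prove Proposition~\ref{JF3Strong} by constructing a bijection $\phi : S_\mathcal{Y}(F_3) \to S_\mathcal{Y}(J_3)$ for each $1$-semialternating AD-Young diagram $\mathcal{Y} = (Y, A, D)$, refining the shape-Wilf equivalence of $F_3$ and $J_3$ (which follows in the ordinary case from \cite{BWX}) to respect the required ascent and descent structure of $\mathcal{Y}$. Unlike the equivalence in Lemma~\ref{Shape2}, where an explicit enumeration is available, here both $|S_\mathcal{Y}(F_3)|$ and $|S_\mathcal{Y}(J_3)|$ can be large, so a direct bijective approach is natural. I also note at the outset that the Extension Theorem~\ref{ShapeExtend} cannot be applied directly: while $F_3 = J_2 \oplus I_1$, the matrix $J_3$ is not of the form $M \oplus C$ for any nontrivial permutation matrix $C$, so the reduction machinery of Section~\ref{sec:Extension} does not immediately relate the two sides.

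First, I would establish the relevant structural characterizations of $F_3$- and $J_3$-avoiding transversals of $Y$. For $T = \{(i, t_i)\}$, the transversal $T$ avoids $J_3 = M(321)$ if and only if the entries at non-left-to-right-maximum positions form an increasing subsequence (by Erd\H{o}s--Szekeres applied to the absence of a $3$-term decreasing subsequence), while $T$ avoids $F_3 = M(213)$ if and only if it admits the following recursive decomposition: the maximum entry lies in some row $p$, the entries in rows $1, \ldots, p$ form an increasing sequence ending in this maximum, and the restriction of $T$ to rows $p+1, \ldots, k$ is itself an $F_3$-avoiding transversal on the remaining values and columns. These classical facts would be adapted to arbitrary Young diagrams in the style of \cite{BW, BWX}, taking care to interpret the recursion relative to the shape of $Y$. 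Using these descriptions, I would construct $\phi$ by a recursive algorithm that pivots on the position of the maximum entry (or on an equivalent canonical invariant such as the LR-maxima chain), transforming the $F_3$-decomposition of $T$ into a $J_3$-decomposition of $\phi(T)$, with the inverse defined symmetrically.

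The main obstacle is verifying that $\phi$ sends valid transversals of $\mathcal{Y}$ to valid transversals of $\mathcal{Y}$. Because $\phi$ rearranges entries between non-extremal rows, it can in principle alter the ascent--descent set of a transversal, potentially violating a required ascent in $A$ or a required descent in $D$. The $1$-semialternating hypothesis forces $i \in A$ if and only if $i+1 \in D$ for all $1 \le i \le k-1$, confining $(A, D)$ to a tightly alternating pattern; the goal is to show that this pattern is compatible with the blockwise structure of the $F_3$/$J_3$-avoiding decomposition, so that $\phi$ preserves it block by block. A careful case analysis around each row index in $A \cup D$ will be required, and I expect that $\phi$ must be refined locally within each block to ensure compatibility. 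This reconciliation of the global rearrangement effected by the bijection with the local alternation forced by $(A, D)$—a condition vacuous in the ordinary shape-Wilf setting of \cite{BW, BWX} but central here—will form the technical heart of the proof.
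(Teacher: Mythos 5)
Your proposal is a plan rather than a proof, and it contains a concrete error. The claimed characterization of $F_3 = M(213)$-avoidance is wrong even for ordinary permutations: take $w = 23514$. The maximum $5$ sits in position $p = 3$, the prefix $2, 3, 5$ is increasing and ends at the maximum, and the suffix $1, 4$ avoids $213$, so your criterion declares $w$ to be $213$-avoiding; but the subsequence $2, 1, 4$ at positions $1, 4, 5$ is a $213$ pattern. You are missing a value-compatibility constraint between prefix and suffix. (The standard recursion for $213$-avoidance pivots on the position of the \emph{first} entry, not the maximum, and in the transversal setting the recursion must also be interpreted relative to the shape of $Y$, since a triple is a copy of $M(213)$ only when the relevant lower-right corner lies inside $Y$.) Beyond this, you correctly identify the real difficulty, namely making the bijection respect the required ascent and descent sets, but then defer it entirely to ``careful case analysis'' and an unspecified ``local refinement.'' That case analysis \emph{is} the proof, and nothing in the proposal indicates what it should be or why a global pivot would interact manageably with the constraints coming from $(A, D)$.

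The paper's route is in fact quite different from a global recursive decomposition. It adapts the first proof of \cite[Proposition 3.1]{BWX}: one carefully selected copy of $J_3$ (resp.\ $F_3$) is removed at a time via cyclic-shift operations $\omega$ and $\theta$; there are three distinct removal procedures ($J$-types and $F$-types), chosen according to whether the rows adjacent to the rightmost entry of the selected copy lie in $A$ or $D$; a notion of \emph{separable} transversal controls the iteration and guarantees that the two directions are mutually inverse; and Propositions~\ref{FullPhi} and~\ref{FullPsi}, proved in the appendices, verify step by step that each local move preserves validity and separability. The passage from $1$-alternating to $1$-semialternating AD-Young diagrams is then a short embedding argument using Lemma~\ref{AddA1}. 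This local, one-occurrence-at-a-time structure is precisely what allows the ascent/descent constraints to be tracked at all, and it is exactly what your sketch would have to reconstruct, in full, before it could be considered a proof.
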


The following theorem is immediate from Propositions~\ref{AltPermToDiag} and~\ref{JF3Strong},
and the Extension Theorem~\ref{ShapeExtend}.

\begin{thm}
\label{213321AltRAlt}
For all $t > 3$ and all permutations $q$ of $[t]\setminus [3]$, the patterns $213q$,
and $321q$ are equivalent for even- and odd-length reverse alternating permutations.
The patterns $123q$, $213q$,
and $321q$ are equivalent for even- and odd-length alternating permutations.
\end{thm}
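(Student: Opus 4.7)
The plan is to derive the theorem by applying the Extension Theorem~\ref{ShapeExtend} to Proposition~\ref{JF3Strong}, which handles the equivalence of $213q$ and $321q$ in both the alternating and reverse alternating settings, and by invoking Theorem~\ref{1221Alt} to handle the remaining equivalence $123q \sim 213q$ in the alternating case. The final passage from shape-equivalence of matrices to equivalence of permutations is supplied by Proposition~\ref{AltPermToDiag}.

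\textbf{Equivalence of $213q$ and $321q$.} Let $q'$ be the permutation of $[t-3]$ obtained from $q$ by decrementing each entry by $3$, so that $M(213q) = F_3 \oplus M(q')$ and $M(321q) = J_3 \oplus M(q')$. Applying the Extension Theorem to the shape-equivalence $F_3 \sesa{1} J_3$ of Proposition~\ref{JF3Strong} with $C = M(q')$ of size $r = t-3$ yields $M(213q) \sesa{t-2} M(321q)$. Since $t > 3$, we have $t-2 \ge 2$, so by the monotonicity remarks following the definition of $\sea{x}$ and $\sesa{x}$ the single relation $\sesa{t-2}$ implies each of $\sesa{1}$, $\sesa{2}$, $\sea{1}$, and $\sea{2}$ between $M(213q)$ and $M(321q)$. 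The four parts of Proposition~\ref{AltPermToDiag} then translate these into the four desired permutation equivalences $213q \eveneq 321q$, $213q \oddeq 321q$, $213q \evenreq 321q$, and $213q \oddreq 321q$.

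\textbf{Equivalence of $123q$ and $213q$ in the alternating case.} Set $q^* = 3q$, which is a permutation of $[t]\setminus[2]$ since $q$ is a permutation of $[t]\setminus[3]$. Then $12q^* = 123q$ and $21q^* = 213q$, and Theorem~\ref{1221Alt} applied to $q^*$ gives $123q \eveneq 213q$ and $123q \oddeq 213q$. Chaining with the previous paragraph completes the alternating case via transitivity.

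The whole argument is essentially bookkeeping once the block decompositions of the permutation matrices are recognized; the substantial technical effort is already contained in Proposition~\ref{JF3Strong} and the Extension Theorem, so there is no genuine obstacle here beyond correctly tracking which $x$-(semi)alternating structure is preserved at each application of the Extension Theorem and then exploiting monotonicity in $x$ to feed both parts (a)/(c) and parts (b)/(d) of Proposition~\ref{AltPermToDiag}.
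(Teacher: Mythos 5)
Your proposal is correct and follows essentially the same route the paper intends: the paper states Theorem~\ref{213321AltRAlt} is immediate from Propositions~\ref{AltPermToDiag} and~\ref{JF3Strong} together with the Extension Theorem~\ref{ShapeExtend}, and you supply exactly those deductions, with the block decompositions $M(213q) = F_3 \oplus M(q')$, $M(321q) = J_3 \oplus M(q')$, the application of $F_3 \sesa{1} J_3$ with $r = t-3$ to get $\sesa{t-2}$, the monotonicity reductions to $\sesa{1},\sesa{2},\sea{1},\sea{2}$ (valid since $t>3$ gives $t-2 \ge 2$), and finally all four parts of Proposition~\ref{AltPermToDiag}. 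For the $123q \sim 213q$ leg you invoke Theorem~\ref{1221Alt} rather than re-deriving it from $I_2 \sea{1} J_2$ plus the Extension Theorem, but since Theorem~\ref{1221Alt} itself rests on exactly that machinery, this is just an equivalent packaging of the same argument.
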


\begin{remark}
Our proof that $|A_n(123q)| = |A_n(321q)|$ is essentially bijective,
but the bijection is not the restriction of Backelin-West-Xin \cite{BWX}'s bijection
to alternating permutations.
\end{remark}

Taking complements in the statement of Theorem~\ref{213321AltRAlt} for reverse alternating permutations
yields the following corollary.

\begin{cor}
\label{213321RAlt}
For all $t \ge 3$ and all permutations $q$ of $[t]$, the patterns $(t-1)t(t-2)q$
and $(t-2)(t-1)tq$ are equivalent for even- and odd-length alternating permutations.
\end{cor}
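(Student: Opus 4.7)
The plan is to deduce the corollary from Theorem~\ref{213321AltRAlt} by applying the complementation map on permutations. Section~\ref{sec:Definitions} records the identity $|A_n(p)| = |A'_n(p^c)|$, from which it follows that $p \evenreq q$ if and only if $p^c \eveneq q^c$, and likewise for odd lengths. Hence any reverse-alternating equivalence of two patterns yields an alternating equivalence of their complements.

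Theorem~\ref{213321AltRAlt} supplies $213q \evenreq 321q$ and $213q \oddreq 321q$ for each $t > 3$ and every permutation $q$ of $\{4, 5, \ldots, t\}$. I would take the complement inside length $t$, which replaces each entry $i$ by $t+1-i$. Under this map the prefix $213$ is carried to $(t-1)\,t\,(t-2)$, the prefix $321$ is carried to $(t-2)\,(t-1)\,t$, and the tail $q$ is carried to $q^c$, which is a permutation of $[t-3]$. Combining this with the biconditional above yields $(t-1)t(t-2)\,q^c \eveneq (t-2)(t-1)t\,q^c$ together with the odd-length analogue.

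Finally, I would observe that as $q$ varies over all permutations of $\{4,\ldots,t\}$, its complement $q^c$ varies over all permutations of $[t-3]$, so relabeling $q^c$ back to $q$ recovers the statement of the corollary. The argument is essentially pure bookkeeping; the one point worth checking is that complementation commutes with the juxtaposition of prefix and tail in the way claimed, which is immediate from the coordinate formula $p^c_i = t + 1 - p_i$ applied separately to the first three entries and to the tail. I therefore do not expect any substantial obstacle beyond this verification and, if desired, a careful reindexing to match the statement's use of $t$.
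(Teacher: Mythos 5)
Your proposal is correct and is precisely the paper's own argument: the paper's proof of this corollary is the single sentence that taking complements in the reverse-alternating part of Theorem~\ref{213321AltRAlt} yields the result, and you spell out exactly that computation, including the key identity $p \evenreq q \iff p^c \eveneq q^c$ and the observation that complementing $213q$ and $321q$ inside length $t$ gives $(t-1)t(t-2)q^c$ and $(t-2)(t-1)tq^c$. The reindexing caveat you flag at the end is a real (minor) mismatch in the paper's statement of the corollary, not a gap in your reasoning.
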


The idea of the proof of Proposition~\ref{JF3Strong} is to establish a bijection between $S_\mathcal{Y}(F_3)$
and $S_\mathcal{Y}(J_3)$ for $\mathcal{Y}$ a 1-alternating AD-Young diagram.
The bijection is based on the first proof of \cite[Proposition 3.1]{BWX}
in that it selects a copy of $J_3$ (resp. $F_3$) in a transversal
and removes it, but significant complications arise due to the required ascent
and descent sets.
We split into cases based on the locations of required ascents and descents
near the rightmost entry of the copy of $J_3$ (resp. $F_3$) and remove the copy in
a way that maintains required ascents and descents.  The fact that
rows of $Y$ have equal size at required ascents and descents of $\mathcal{Y}$
plays a critical role in showing that the replacement algorithm returns
a valid transversal of $\mathcal{Y}$.
Furthermore, as in \cite{BWX}, we restrict ourselves to so-called \emph{separable}
transversals (a class of transversals that contains any transversal
that avoids $F_3$ or $J_3$) because the two replacement procedures
are not inverse in general for non-separable transverals.  Due to the more elaborate process of removing
copies of $J_3$ and $F_3$, our notion of separability becomes slightly more technical than
the notion implicitly used in \cite{BWX}.  Before we state the bijection,
we must introduce the notation of cyclic shifts.

\begin{remark}
If $\mathcal{Y}$ has empty required ascent and required descent sets,
then our bijection agrees with that of \cite{BWX}.
\end{remark}

\subsubsection{Cyclic Shifts}
Fix a Young diagram $Y$ with $n$ columns for the entirety of this section and
let $T = \{(i,b_i)\}$ be a transversal of $Y$.
We define a function $\omega_{M}^{P}(T)$, for
sets $M, P \subseteq [1,n]$ with $m = \max M$ and $p = \max P$,
such that the $m$th row of $Y$ has at least $p$ squares.  Let
$i_1<i_2<\cdots<i_k$ denote the indices $i_j \in M$ with $b_{i_j} \in P$.
Take the index $j$ of $i_j$ modulo $k$, and let $\Gamma_{M}^{P}(T) = \{i_j \mid j \in [k]\}$.
Then, we define
\[\omega_{M}^{P}(T) = T \setminus \{(i_j,b_{i_j}) \mid i \in [k+1]\}
    \cup \{(i_j,b_{i_{j-1}}) \mid j \in [k]\}.\]
We define the function $\theta_{M}^{P}(T)$ taking the same arguments as $\Omega$, which will be
proven to be the inverse of $\omega$.  Let
$\{i_1<i_2<\cdots<i_k\} = \Gamma_M^P(T)$, and
we define
\[\theta_{M}^{P}(T) = T \setminus \{(i_j,b_{i_j}) \mid i \in [k+1]\}
    \cup \{(i_j,b_{i_{j+1}}) \mid j \in [k]\}.\]

Because the $m$th row of $Y$ has at least $p$ boxes, $\omega$ and $\theta$
return transversals of $Y$.  From the fact that
\[\Gamma_{M}^{P}(T) = \Gamma_{M}^{P} \left(\omega_{M}^{P}(T)\right),\]
it follows that that $\omega_{M}^{P}(\cdot)$ and $\theta_{M}^{P}(\cdot)$ are inverses.
Furthermore, if $i \notin M$, the position of the element of $T$ in the $i$th column of $Y$
is the same as that of $\omega_{M}^{P}(T)$ and in $\theta_{M}^{P}(T)$.  If $M \times P$
and $M' \times P'$ are disjoint, then it is clear that $\omega_M^P(\cdot)$ and
$\theta_M^P(\cdot)$ each commute with $\omega_{M'}^{P'}(\cdot)$ and $\theta_{M'}^{P'}(\cdot)$.
The functions $\omega$ and $\theta$ cyclically alter certain entries $b_i$ of a transversal
$T = \{(i,b_i)\}$.
\begin{figure}
\center{
\begin{ytableau}
*(clear) & & \bullet &  & & \\
& *(lightgray) & *(lightgray) & *(lightgray) & *(lightgray) & \bullet\\
& *(lightgray)\diamond & *(lightgray) & *(lightgray) \bullet & *(lightgray)\times \\
\bullet  & & & & \\
& *(lightgray) \bullet & *(lightgray) & *(lightgray)\times & *(lightgray)\diamond\\
& *(lightgray) \times & *(lightgray) & *(lightgray)\diamond & *(lightgray) \bullet\\
\end{ytableau}
}
\caption{Let $Y = (6^2,5^4)$ and let $T = \{(1,3),(2,6),(3,4),(4,1),(5,2),(6,5)\}$.
Then, $\Gamma_{[2,3] \cup [5,6]}^{[2,5]}(T) = \{3,5,6\},$ and thus $\omega_{[2,3] \cup [5,6]}^{[2,5]}(T)
= \{(1,3),(2,6),(3,5),(4,1),(5,4),(6,2)\}$ and $\theta_{[2,3] \cup [5,6]}^{[2,5]}(T)
= \{(1,3),(2,6),(3,2),(4,1),(5,4),(6,5)\}$.
Bullets mark elements of $T$ and crosses mark
elements of $\omega_{[2,3] \cup [5,6]}^{[2,5]}(T) \setminus T$, while diamonds
mark elements of $\theta_{[2,3] \cup [5,6]}^{[2,5]}(T) \setminus T$.}
\label{fig:CyclicShifts}
\end{figure}
See Figure~\ref{fig:CyclicShifts} for an example of cyclic shifts.

\subsubsection{Statement of the Bijection}

We first prove that $F_3 \sea{1} J_3$.
Fix a 1-alternating AD-Young diagram $\mathcal{Y} = (Y, A, D)$, and suppose that $Y$ has $n$ rows (columns).
We define inverse bijections $\Phi: S_\mathcal{Y}(F_3) \leftrightarrow S_\mathcal{Y}(J_3) : \Psi$.
We first define $\phi$ and $\psi$; $\Phi$ and $\Psi$ will be obtained by iterating $\phi$ and $\psi$, respectively.
Let $T = \{(i,b_i)\}$ be a transversal of $\mathcal{Y}$.  If $i < j < k \in [n]$,
then we say that $(a_1,a_2,a_3)$ is a copy of $J_3$ (resp. $F_3$) in $T$ if
$\{(a_i,b_{a_i}) \mid i \in [3]\}$ is a copy of $J_3$ (resp. $F_3$) in $T$.

Let $T = \{(i,b_i)\}$ be a transversal of $\mathcal{Y}$ that contains $J_3$.
Suppose that $(a_1,a_2,a_3)$ is a copy of $J_3$ in $T$.  We define auxiliary functions
$\phi_\ell^{(a_1,a_2,a_3)}(T)$ for $\ell \in [3]$ (the functions take arguments $(a_1,a_2,a_3)$ and $T$, and return
a priori transversals of the Young diagram $(n^n)$).
We define
\begin{align*}
\phi_1^{(a_1,a_2,a_3)}(T) &= \theta_{\{a_1,a_2,a_3\}}^{[1,b_{a_1}]}(T)\\
\phi_2^{(a_1,a_2,a_3)}(T) &= \omega_{\{a_1,a_3-1\}}^{[1,b_{a_1}]}(T)\\
\phi_3^{(a_1,a_2,a_3)}(T) &= \omega_{[1,a_1] \cup \{a_3+1\}}^{[b_{a_3},b_{a_1}]}\left(\omega_{[a_2,a_3]}^{[b_{a_3},b_{a_1}]}(T)\right).
\end{align*}
The operation $\phi_1$ is the one used by Backelin-West-Xin in their proof of
\cite[Proposition 3.1]{BWX}.

Let $U(T)$ denote the set of triples $a \in [n]^3$ that are copies of $J_3$ in $T$.
If $a = (a_1,a_2,a_3) \in U(T)$, then
define the $J$-type of $u$ in the following cases.
\begin{casework}
\item If ($a_3 - 1 \notin D$ or $b_{a_1} < b_{a_3-1}$) and $a_3 \notin A$,
    we say that $(a_1,a_2,a_3)$ is of $J$-type 1.
\item If $a_3 -1 \in D$ and $b_{a_3-1} < b_{a_1},$ we say that $(a_1,a_2,a_3)$ is of $J$-type 2.
\item If ($a_3 -1 \notin D$ or $b_{a_3-1} > b_{a_1}$), and $a_3 \in A$
    we say that $(a_1,a_2,a_3)$ is of $J$-type 3.
\end{casework}
See Figures~\ref{fig:phi1},~\ref{fig:phi2}, and~\ref{fig:phi3} for geometric
descriptions of the functions $\phi_\ell$.
\begin{figure}
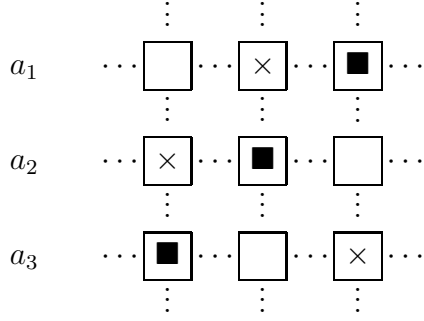

\center{
\begin{ytableau}
\none & \none & \none & \none[\vdots] & \none & \none[\vdots] & \none & \none[\vdots]\\
\none[a_1] & \none & \none[\cdots] & & \none[\cdots] &\times & \none[\cdots] &\blacksquare & \none[\cdots]\\
\none & \none & \none & \none[\vdots] & \none & \none[\vdots] & \none & \none[\vdots]\\
\none[a_2] & \none & \none[\cdots] &\times & \none[\cdots] &\blacksquare & \none[\cdots] & & \none[\cdots]\\
\none & \none & \none & \none[\vdots] & \none & \none[\vdots] & \none & \none[\vdots]\\
\none[a_3] & \none & \none[\cdots] &\blacksquare & \none[\cdots] & & \none[\cdots] &\times & \none[\cdots]\\
\none & \none & \none & \none[\vdots] & \none & \none[\vdots] & \none & \none[\vdots]\\
\end{ytableau}}
\caption{We show the effect of $\phi_1^{(a_1,a_2,a_3)}$ on a transversal $T$. Black boxes mark the
 selected elements of $T$ while crosses mark elements of $\phi_1^{(a_1,a_2,a_3)}(T) \setminus T$.}
\label{fig:phi1}
\end{figure}
\begin{figure}
\center{
\begin{ytableau}
\none & \none & \none & \none[\vdots] & \none & \none[\vdots] & \none & \none[\vdots] & \none & \none[\vdots] \\
\none[a_1] & \none & \none[\cdots] & & \none[\cdots] & & \none[\cdots] &\times & \none[\cdots] &\blacksquare & \none[\cdots]\\
\none & \none & \none & \none[\vdots] & \none & \none[\vdots] & \none & \none[\vdots] & \none & \none[\vdots] \\
\none[a_2] & \none & \none[\cdots] & & \none[\cdots] &\blacksquare & \none[\cdots] & & \none[\cdots] & & \none[\cdots]\\
\none & \none & \none & \none[\vdots] & \none & \none[\vdots] & \none & \none[\vdots] & \none & \none[\vdots] \\
\none[a_3-1] & \none & \none[\cdots] & & \none[\cdots] & & \none[\cdots] &\bullet& \none[\cdots] &\times& \none[\cdots]\\
\none[a_3] & \none & \none[\cdots] &\blacksquare & \none[\cdots] & & \none[\cdots] & & \none[\cdots]\\
\none & \none & \none & \none[\vdots] & \none & \none[\vdots] & \none & \none[\vdots] & \none & \none[\vdots] \\
\end{ytableau}}
\caption{We show the effect of $\phi_2^{(a_1,a_2,a_3)}$ on a transversal $T$. Black boxes mark the
 selected elements of $T$ and bullets mark other elements of $T$, while crosses mark elements of $\phi_1^{(a_1,a_2,a_3)}(T) \setminus T$.}
\label{fig:phi2}
\end{figure}
\begin{figure}
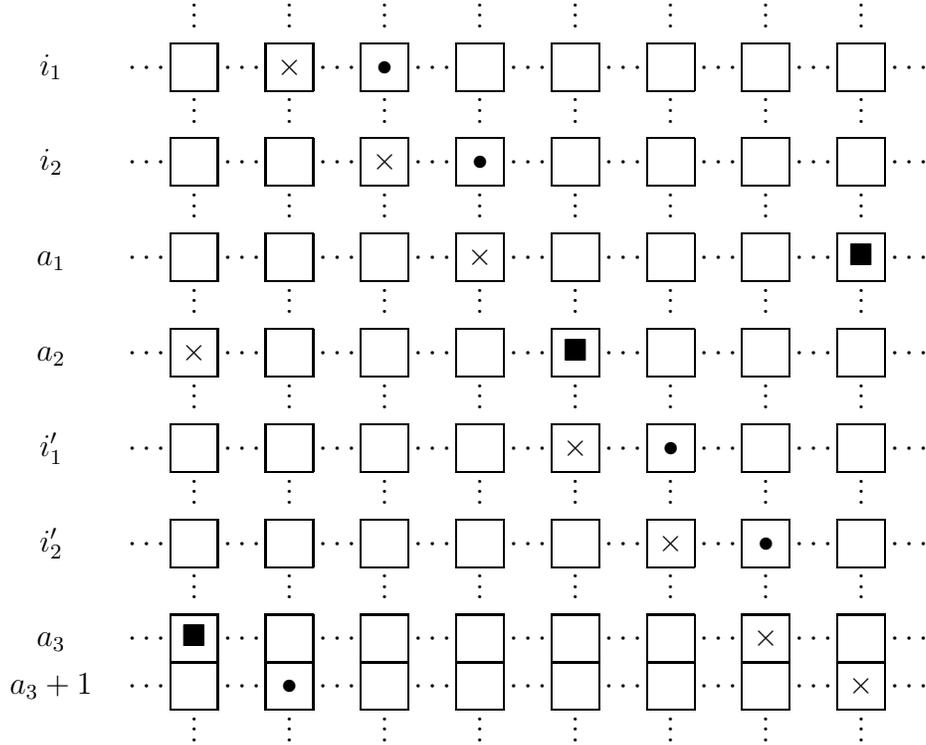

\center{
\begin{ytableau}
\none & \none & \none & \none[\vdots] & \none & \none[\vdots] & \none & \none[\vdots] & \none & \none[\vdots] & \none & \none[\vdots] & \none & \none[\vdots] & \none & \none[\vdots] & \none & \none[\vdots] & \none \\
\none[i_1] & \none & \none[\cdots] & & \none[\cdots] & \times & \none[\cdots] & \bullet & \none[\cdots] & & \none[\cdots] & & \none[\cdots] & & \none[\cdots] & & \none[\cdots] & & \none[\cdots] \\
\none & \none & \none & \none[\vdots] & \none & \none[\vdots] & \none & \none[\vdots] & \none & \none[\vdots] & \none & \none[\vdots] & \none & \none[\vdots] & \none & \none[\vdots] & \none & \none[\vdots] & \none \\
\none[i_2] & \none & \none[\cdots] & & \none[\cdots] & & \none[\cdots] & \times & \none[\cdots] & \bullet & \none[\cdots] & & \none[\cdots] & & \none[\cdots] & & \none[\cdots] & & \none[\cdots] \\
\none & \none & \none & \none[\vdots] & \none & \none[\vdots] & \none & \none[\vdots] & \none & \none[\vdots] & \none & \none[\vdots] & \none & \none[\vdots] & \none & \none[\vdots] & \none & \none[\vdots] & \none \\
\none[a_1] & \none & \none[\cdots] & & \none[\cdots] & & \none[\cdots] & & \none[\cdots] & \times & \none[\cdots] & & \none[\cdots] & & \none[\cdots] & & \none[\cdots] & \blacksquare & \none[\cdots] \\
\none & \none & \none & \none[\vdots] & \none & \none[\vdots] & \none & \none[\vdots] & \none & \none[\vdots] & \none & \none[\vdots] & \none & \none[\vdots] & \none & \none[\vdots] & \none & \none[\vdots] & \none \\
\none[a_2] & \none & \none[\cdots] & \times & \none[\cdots] & & \none[\cdots] & & \none[\cdots] & & \none[\cdots] & \blacksquare & \none[\cdots] & & \none[\cdots] & & \none[\cdots] & & \none[\cdots] \\
\none & \none & \none & \none[\vdots] & \none & \none[\vdots] & \none & \none[\vdots] & \none & \none[\vdots] & \none & \none[\vdots] & \none & \none[\vdots] & \none & \none[\vdots] & \none & \none[\vdots] & \none \\
\none[i'_1] & \none & \none[\cdots] & & \none[\cdots] & & \none[\cdots] & & \none[\cdots] & & \none[\cdots] & \times & \none[\cdots] & \bullet & \none[\cdots] & & \none[\cdots] & & \none[\cdots] \\
\none & \none & \none & \none[\vdots] & \none & \none[\vdots] & \none & \none[\vdots] & \none & \none[\vdots] & \none & \none[\vdots] & \none & \none[\vdots] & \none & \none[\vdots] & \none & \none[\vdots] & \none \\
\none[i'_2] & \none & \none[\cdots] & & \none[\cdots] & & \none[\cdots] & & \none[\cdots] & & \none[\cdots] & & \none[\cdots] & \times & \none[\cdots] & \bullet & \none[\cdots] & & \none[\cdots] \\
\none & \none & \none & \none[\vdots] & \none & \none[\vdots] & \none & \none[\vdots] & \none & \none[\vdots] & \none & \none[\vdots] & \none & \none[\vdots] & \none & \none[\vdots] & \none & \none[\vdots] & \none \\
\none[a_3] & \none & \none[\cdots] & \blacksquare & \none[\cdots] & & \none[\cdots] & & \none[\cdots] & & \none[\cdots] & & \none[\cdots] & & \none[\cdots] & \times & \none[\cdots] & & \none[\cdots] \\
\none[a_3+1] & \none & \none[\cdots] & & \none[\cdots] & \bullet & \none[\cdots] & & \none[\cdots] & & \none[\cdots] & & \none[\cdots] & & \none[\cdots] & & \none[\cdots] & \times & \none[\cdots] \\
\none & \none & \none & \none[\vdots] & \none & \none[\vdots] & \none & \none[\vdots] & \none & \none[\vdots] & \none & \none[\vdots] & \none & \none[\vdots] & \none & \none[\vdots] & \none & \none[\vdots] & \none \\
\end{ytableau}}
\caption{We show the effect of $\phi_3^{(a_1,a_2,a_3)}(T)$ on a transversal $T$. Suppose
that $\{i_1,i_2\} = \Gamma_{[1,a_1)}^{[b_{a_3},b_{a_1}]}(T)$, and
$\{i'_1,i'_2\} = \Gamma_{(a_2,a_3)}^{[b_{a_3},b_{a_1}]}(T)$.  Black boxes mark the
 selected elements of $T$ and bullets mark other elements of $T$, while crosses mark elements of $\phi_1^{(a_1,a_2,a_3)} \setminus T$.}
\label{fig:phi3}
\end{figure}

For $u = (u_1,u_2,u_3) \in \mathbb{N}^3$, let $\#(u) = (u_3,u_1,u_2)$.
Let $h_J(T)$ be the triple $a \in U(T)$ that minimizes
$\#(a)$ in the lexicographic order; this is exactly
the way in which a copy of $J_3$ is chosen to be removed in the proof
of \cite[Proposition 3.1]{BWX}.  If $h_J(T)$ is of $J$-type $t$,
let $\phi(T) = \phi_t^{h_J(T)}(T)$, and we say that $T$ is of $J$-type $t$.

We define the functions $\psi_\ell,$ which take the same arguments as the $\phi_\ell$
and return a priori transversals of $(n^n)$.
For $\ell \in [3]$, let
\begin{align*}
\psi_1^{(a_1,a_2,a_3)} &= \omega_{\{a_1,a_2,a_3\}}^{[1,b_{a_3}]}(T)\\
\psi_2^{(a_1,a_2,a_3)} &= \theta_{\{a_1,a_3\}}^{[1,b_{a_3}]}(T)\\
\psi_3^{(a_1,a_2,a_3)} &= \theta_{[a_2,a_3-1]}^{[b_{a_2},b_{a_3}]}\left(\theta_{[1,a_1] \cup \{a_3\}}^{[b_{a_2},b_{a_3}]}(T)\right).
\end{align*}
The operation $\psi_1$ is the one used by Backelin-West-Xin in their proof
of \cite[Proposition 3.1]{BWX}.

Let $V(T)$ denote the set of triples $a = (a_1,a_2,a_3) \in [n]^3$ that
are copies of $F_3$ in $T$ such that $a_3 \notin A$.
We convert copies of $F_3$ into where their copy of $J_3$ should be via a function $S(u)$ for $u \in V(T)$;
the value $S(a)$ is independent of the choice of $T$ such that $a \in V(T)$.
For $a = (a_1,a_2,a_3) \in V(T)$, define the $F$-type of $u$ in the
in the following cases.
\begin{casework}
\item If $a_3 - 1 \notin A$,
    let $S(a) = (a_3,a_1,a_2).$  We say that $a$ is of $F$-type 1.
\item If $a_3-1 \in A$ and $a_2 = a_3 - 1$,
    let $S(a) = (a_3+1, a_1, 0)$.
    We say that $a$ is of $F$-type 2.
\item If $a_3 - 1 \in A$ and $a_2 \not= a_3-1$, let $S(a) = (a_3-1,a_1,a_2)$.
We say that $a$ is of $F$-type 3.
\end{casework}
See Figures~\ref{fig:psi1},~\ref{fig:psi2}, and~\ref{fig:psi3} for geometric
descriptions of the functions $\psi_\ell$.
\begin{figure}
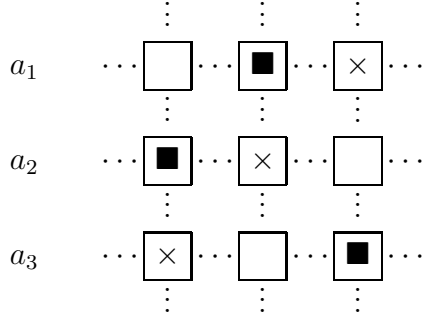

\center{
\begin{ytableau}
\none & \none & \none & \none[\vdots] & \none & \none[\vdots] & \none & \none[\vdots]\\
\none[a_1] & \none & \none[\cdots] & & \none[\cdots] &\blacksquare & \none[\cdots] &\times & \none[\cdots]\\
\none & \none & \none & \none[\vdots] & \none & \none[\vdots] & \none & \none[\vdots]\\
\none[a_2] & \none & \none[\cdots] &\blacksquare & \none[\cdots] &\times & \none[\cdots] & & \none[\cdots]\\
\none & \none & \none & \none[\vdots] & \none & \none[\vdots] & \none & \none[\vdots]\\
\none[a_3] & \none & \none[\cdots] &\times & \none[\cdots] & & \none[\cdots] &\blacksquare & \none[\cdots]\\
\none & \none & \none & \none[\vdots] & \none & \none[\vdots] & \none & \none[\vdots]\\
\end{ytableau}}
\caption{We show the effect of $\psi_1^{(a_1,a_2,a_3)}$ on a transversal $T$. Black boxes mark the
 selected elements of $T$ while crosses mark elements of $\psi_1^{(a_1,a_2,a_3)}(T) \setminus T$.}
\label{fig:psi1}
\end{figure}
\begin{figure}
\center{
\begin{ytableau}
\none & \none & \none & \none[\vdots] & \none & \none[\vdots] & \none & \none[\vdots]\\
\none[a_1] & \none & \none[\cdots] & & \none[\cdots] &\blacksquare & \none[\cdots] &\times & \none[\cdots]\\
\none & \none & \none & \none[\vdots] & \none & \none[\vdots] & \none & \none[\vdots]\\
\none[a_2] & \none & \none[\cdots] &\blacksquare & \none[\cdots] & & \none[\cdots] & & \none[\cdots]\\
\none[a_3] & \none & \none[\cdots] & & \none[\cdots] &\times & \none[\cdots] &\blacksquare & \none[\cdots]\\
\none & \none & \none & \none[\vdots] & \none & \none[\vdots] & \none & \none[\vdots]\\
\end{ytableau}}
\caption{We show the effect of $\psi_2^{(a_1,a_2,a_3)}$ on a transversal $T$. Black boxes mark the
 selected elements of $T$, while crosses mark elements of $\psi_2^{(a_1,a_2,a_3)}(T) \setminus T$.}
\label{fig:psi2}
\end{figure}
\begin{figure}
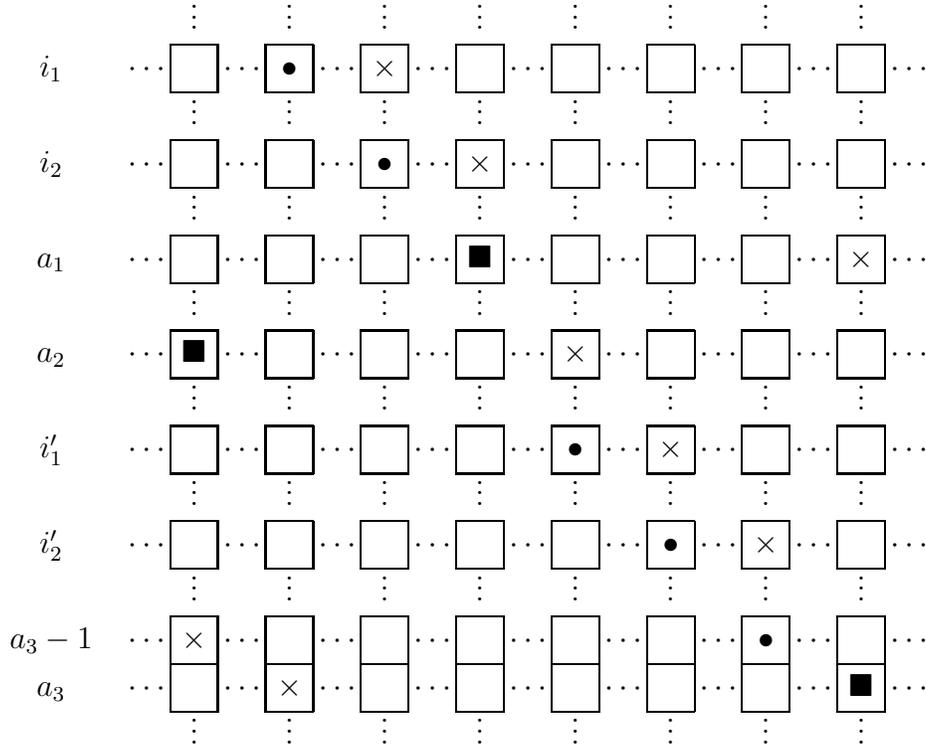

\center{
\begin{ytableau}
\none & \none & \none & \none[\vdots] & \none & \none[\vdots] & \none & \none[\vdots] & \none & \none[\vdots] & \none & \none[\vdots] & \none & \none[\vdots] & \none & \none[\vdots] & \none & \none[\vdots] & \none \\
\none[i_1] & \none & \none[\cdots] & & \none[\cdots] & \bullet & \none[\cdots] & \times & \none[\cdots] & & \none[\cdots] & & \none[\cdots] & & \none[\cdots] & & \none[\cdots] & & \none[\cdots] \\
\none & \none & \none & \none[\vdots] & \none & \none[\vdots] & \none & \none[\vdots] & \none & \none[\vdots] & \none & \none[\vdots] & \none & \none[\vdots] & \none & \none[\vdots] & \none & \none[\vdots] & \none \\
\none[i_2] & \none & \none[\cdots] & & \none[\cdots] & & \none[\cdots] & \bullet & \none[\cdots] & \times & \none[\cdots] & & \none[\cdots] & & \none[\cdots] & & \none[\cdots] & & \none[\cdots] \\
\none & \none & \none & \none[\vdots] & \none & \none[\vdots] & \none & \none[\vdots] & \none & \none[\vdots] & \none & \none[\vdots] & \none & \none[\vdots] & \none & \none[\vdots] & \none & \none[\vdots] & \none \\
\none[a_1] & \none & \none[\cdots] & & \none[\cdots] & & \none[\cdots] & & \none[\cdots] & \blacksquare & \none[\cdots] & & \none[\cdots] & & \none[\cdots] & & \none[\cdots] & \times & \none[\cdots] \\
\none & \none & \none & \none[\vdots] & \none & \none[\vdots] & \none & \none[\vdots] & \none & \none[\vdots] & \none & \none[\vdots] & \none & \none[\vdots] & \none & \none[\vdots] & \none & \none[\vdots] & \none \\
\none[a_2] & \none & \none[\cdots] & \blacksquare & \none[\cdots] & & \none[\cdots] & & \none[\cdots] & & \none[\cdots] & \times & \none[\cdots] & & \none[\cdots] & & \none[\cdots] & & \none[\cdots] \\
\none & \none & \none & \none[\vdots] & \none & \none[\vdots] & \none & \none[\vdots] & \none & \none[\vdots] & \none & \none[\vdots] & \none & \none[\vdots] & \none & \none[\vdots] & \none & \none[\vdots] & \none \\
\none[i'_1] & \none & \none[\cdots] & & \none[\cdots] & & \none[\cdots] & & \none[\cdots] & & \none[\cdots] & \bullet & \none[\cdots] & \times & \none[\cdots] & & \none[\cdots] & & \none[\cdots] \\
\none & \none & \none & \none[\vdots] & \none & \none[\vdots] & \none & \none[\vdots] & \none & \none[\vdots] & \none & \none[\vdots] & \none & \none[\vdots] & \none & \none[\vdots] & \none & \none[\vdots] & \none \\
\none[i'_2] & \none & \none[\cdots] & & \none[\cdots] & & \none[\cdots] & & \none[\cdots] & & \none[\cdots] & & \none[\cdots] & \bullet & \none[\cdots] & \times & \none[\cdots] & & \none[\cdots] \\
\none & \none & \none & \none[\vdots] & \none & \none[\vdots] & \none & \none[\vdots] & \none & \none[\vdots] & \none & \none[\vdots] & \none & \none[\vdots] & \none & \none[\vdots] & \none & \none[\vdots] & \none \\
\none[a_3-1] & \none & \none[\cdots] & \times & \none[\cdots] & & \none[\cdots] & & \none[\cdots] & & \none[\cdots] & & \none[\cdots] & & \none[\cdots] & \bullet & \none[\cdots] & & \none[\cdots] \\
\none[a_3] & \none & \none[\cdots] & & \none[\cdots] & \times & \none[\cdots] & & \none[\cdots] & & \none[\cdots] & & \none[\cdots] & & \none[\cdots] & & \none[\cdots] & \blacksquare & \none[\cdots] \\
\none & \none & \none & \none[\vdots] & \none & \none[\vdots] & \none & \none[\vdots] & \none & \none[\vdots] & \none & \none[\vdots] & \none & \none[\vdots] & \none & \none[\vdots] & \none & \none[\vdots] & \none \\
\end{ytableau}}
\caption{We show the effect of $\psi_3^{(a_1,a_2,a_3)}$ on a transversal $T$. Suppose
that $\{i_1,i_2\} = \Gamma_{[1,a_1)}^{[b_{a_2},b_{a_3}]}(T)$, and
$\{i'_1,i'_2\} = \Gamma_{(a_2,a_3)}^{[b_{a_2},b_{a_3}]}(T)$.  Black boxes mark the
 selected elements of $T$ and bullets mark other elements of $T$, while crosses mark elements of $\psi_3^{(a_1,a_2,a_3)}(T) \setminus T$.}
\label{fig:psi3}
\end{figure}

For $u, u' \in V(T)$, we write $u \triangleright u'$ if $S(u) \ge S(u')$ in the
lexicographic order.  We will select a copy of $F_3$ to eliminate by
treating $\triangleright$ as a total order on $V(T)$; to do so, we require
the following lemma.
\begin{lemma}
$S$ is injective, and thus $\, \triangleright$ is a total order on $V(T)$.
\end{lemma}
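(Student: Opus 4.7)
The plan is to show that from $S(a) = (s_1, s_2, s_3)$ together with the data of $A$ (which is fixed by $\mathcal{Y}$), one can recover both the $F$-type of $a$ and the triple $a = (a_1,a_2,a_3)$ itself. Injectivity of $S$ then follows at once, and since $\triangleright$ is the pullback of the lexicographic order on triples through $S$, it will be a total order on $V(T)$.

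First I would isolate $F$-type $2$ by the vanishing last coordinate. Types $1$ and $3$ both set $s_3 = a_2$, which is at least $1$, whereas type $2$ sets $s_3 = 0$ by definition. Hence $a$ is of $F$-type $2$ if and only if $s_3 = 0$, in which case the type-$2$ identity $a_2 = a_3 - 1$ lets me read off $a_3 = s_1 - 1$, $a_2 = s_1 - 2$, and $a_1 = s_2$, so $a$ is uniquely determined.

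Assuming $s_3 \neq 0$, I would separate types $1$ and $3$ using $A$. In type $1$ we have $s_1 = a_3$, and the hypothesis $a \in V(T)$ forces $a_3 \notin A$; so $s_1 \notin A$. In type $3$ the defining hypothesis gives $s_1 = a_3 - 1 \in A$. These two conditions are mutually exclusive, so testing whether $s_1 \in A$ pins down the type, after which $(a_1, a_2, a_3)$ is read off as $(s_2, s_3, s_1)$ in type $1$ and $(s_2, s_3, s_1 + 1)$ in type $3$.

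The only real obstacle is verifying that the three types produce structurally disjoint images, separated by the two invariants $s_3 = 0$ and $s_1 \in A$; once that check is in place, the inversion formulas above make the injectivity of $S$ transparent, and the lemma follows.
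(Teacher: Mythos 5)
Your proposal is correct and matches the paper's proof: both construct a left inverse to $S$ by casework on $F$-type, using the third coordinate being $0$ to detect $F$-type $2$ and membership of the first coordinate in $A$ to separate $F$-types $1$ and $3$ (the paper's $(d_3,d_1,d_2)$ is just your $(s_1,s_2,s_3)$ in different notation).
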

\begin{proof}
Suppose that $S(u) = (d_3,d_1,d_2)$.  If $d_2 = 0$, then $u$ and $u'$ are of $F$-type
2, and thus $u = (d_1,d_3-2,d_3-1)$.  Otherwise, if $d_3 \in A$, then $u$ and $u'$
are of $F$-type 3, and hence $u = (d_1,d_2,d_3+1)$.  Otherwise, we have $d_2 \not= 0$
and $d_3 \notin A$, which implies that $u$ is of $F$-type 1 and $u = (d_1,d_2,d_3)$.  Therefore,
$S$ has a left inverse and is thus injective.  The fact that $\, \triangleright$ is a total order
follows.
\end{proof}
Let $h_F(T)$ denote the maximum of $V(T)$ with respect to the restriction of $\, \triangleright$.
If $A = D = \emptyset$, then $h_F$ agrees with Backelin-West-Xin's selection of a copy of $F_3$ to remove
in their proof of \cite[Proposition 3.1]{BWX}, but in general, $h_F$ differs
from Backelin-West-Xin's selection.
If $h_F(T)$ is of $F$-type $t$,
let $\psi(T) = \psi_t^{h_F(T)}(T)$, and we say that $T$ is of $F$-type $t$.

A transversal $T$ is said to be \emph{separable}
if it satisfies the property that if $u \in U(T)$ and $u' = S\left(V(T)\right)$,
then $\#(u) \ge u'$ in the lexicographic order.  Any
element of $S_\mathcal{Y}(J_3)$ (resp. $S_{\mathcal{Y}}(F_3)$) is separable,
as $U(T)$ (resp. $V(T)$) is empty.  We restrict our attention
to separable transversals.

The critical properties of $\phi$ and $\psi$ are the following two propositions.

\begin{prop}
\label{FullPhi}
If $T$ is a separable valid transversal of $\mathcal{Y}$ that
contains $J_3$, then $\phi(T)$ is a separable valid transversal of $\mathcal{Y}$
and we have $\psi(\phi(T)) = T$.  Furthermore, if $T = \{(i,b_i)\}$ and $\phi(T) = \{(i, c_i)\}$,
then we have $(b_1,b_2,\ldots, b_n) > (c_1,c_2,\cdots,c_n)$ in the lexicographic order.
\end{prop}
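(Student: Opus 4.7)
The plan is a case analysis on the $J$-type $t \in \{1,2,3\}$ of $T$, with $h_J(T) = (a_1,a_2,a_3)$. In each case I need to verify four things: (i) $\phi_t^{(a_1,a_2,a_3)}(T)$ is a transversal of $Y$; (ii) it respects the required ascent set $A$ and required descent set $D$; (iii) it is separable; and (iv) $\psi(\phi(T)) = T$. The lexicographic assertion $(b_1,\ldots,b_n) > (c_1,\ldots,c_n)$ will come for free once $\phi_t$ is unwound, since in every case rows $i < a_1$ are untouched and the entry $c_{a_1}$ strictly decreases from $b_{a_1}$.

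For (i), each $\phi_t$ is a composition of cyclic shifts $\theta_M^P$ and $\omega_M^P$, which are well-defined transversal operations of $Y$ provided the indicated rows of $Y$ have length at least $b_{a_1}$. For Type~1 only rows $a_1,a_2,a_3$ are involved, so row $a_1$ containing $b_{a_1}$ suffices. The Type~2 definition uses the box $(a_3-1,b_{a_1})$, whose membership in $Y$ follows from $a_3-1\in D$ and the equal-row-length axiom for AD-Young diagrams; Type~3 is analogous using $a_3\in A$.

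Step (ii) is the main obstacle. For every $j$ adjacent to a row modified by $\phi_t$, I must check that $c_j<c_{j+1}$ when $j\in A$ and $c_j>c_{j+1}$ when $j\in D$. The ingredients are the $1$-alternating property of $\mathcal{Y}$ (which couples required ascents with required descents within the valid range), the equal-row-length axiom, and the lex-minimality of $h_J(T)$. For Type~1 only rows $a_1,a_2,a_3$ change; the conditions $a_3-1\notin D$ (or $b_{a_3-1}>b_{a_1}$) and $a_3\notin A$ were set up precisely to prevent a required-ascent/descent violation at rows $a_3-1,a_3$, while any violation at rows near $a_1$ or $a_2$ would produce a $J_3$ copy in $T$ with strictly smaller $\#$-image than $(a_3,a_1,a_2)$, contradicting the choice of $h_J(T)$. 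For Types~2 and~3, the additional shift $\omega$ reshuffles an entire vertical strip of entries with values in $[b_{a_3},b_{a_1}]$; here I would propagate required ascents and descents from $T$ across the strip by combining Lemma~\ref{ShiftingC}-style monotonicity with the $1$-alternating hypothesis, and use the equal-row-length axiom to ensure that the new columns land in $Y$.

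For (iii) and (iv), I would first exhibit the triple $u^\star = S^{-1}(\#(a_1,a_2,a_3))$ as an element of $V(\phi(T))$ of $F$-type $t$; the case division for $F$-types was engineered so that it is exactly the image of the $J$-type division under $\#$. Next I would argue that any $u \in V(\phi(T))$ with $u \triangleright u^\star$, or any $u \in U(\phi(T))$ violating separability, would translate through the appropriate $\psi_{t'}$ back to either a copy of $J_3$ in $T$ whose $\#$-image is lex-smaller than $(a_3,a_1,a_2)$, or a separability violation in $T$, contradicting the definition of $h_J(T)$ or the separability hypothesis on $T$. This simultaneously gives separability of $\phi(T)$ and the identification $h_F(\phi(T)) = u^\star$ of the correct $F$-type. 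Given this identification, $\psi(\phi(T))=T$ reduces to the routine verification $\psi_t^{u^\star}\!\circ\phi_t^{(a_1,a_2,a_3)}=\mathrm{id}$, which is immediate from $\omega_M^P$ and $\theta_M^P$ being mutual inverses at matched arguments and commuting across disjoint index-value products.
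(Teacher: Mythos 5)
Your outline—casework on $J$-type, with the four subgoals (transversal, ascent/descent compatibility, separability, and $\psi\circ\phi=\mathrm{id}$)—matches the paper's architecture, but the proposal is missing the technical engine that actually carries steps (ii)--(iv): the empty-region lemma. The paper first proves (Lemma~\ref{EPhi}) that the board
\[
E_\phi(T) = \Bigl(\bigl([1,a_1)\times[b_{a_2},Y_{a_3}]\bigr)\cup\bigl((a_1,a_2)\times[b_{a_3},b_{a_1}]\bigr)\cup\bigl((a_2,a_3)\times[1,b_{a_2}]\bigr)\cup\bigl((a_3,\infty)\times(b_{a_2},\infty)\bigr)\Bigr)\cap Y
\]
contains no element of $T$, and virtually every ascent/descent and $h_F$-identification check in the proof reads off a constraint such as $b_{a_1\pm1}\notin[b_{a_3},b_{a_1}]$ or $b_{a_2+1}>b_{a_2}$ from this lemma. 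Your sketch recovers only three of the four pieces: the first three regions come from $\#$-minimality of $h_J(T)$, as you say, but the fourth region $(a_3,\infty)\times(b_{a_2},\infty)$ relies on the \emph{separability} of $T$ (a putative element there yields an $F_3$ copy whose $S$-value is lexicographically too large). Your plan confines the role of separability to steps (iii)--(iv), but in the paper it is already indispensable in step (ii), both via $E_\phi(T)$ and via Lemma~\ref{JType3L1}, which you would need for the Type~3 strip analysis and whose proof opens by invoking separability.

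A related gap is the appeal to ``Lemma~\ref{ShiftingC}-style monotonicity'' for Types~2 and~3. Lemma~\ref{ShiftingC} is about dominant squares in the Babson--West successor construction and has no bearing here; what is actually needed is the monotonicity statement of Lemma~\ref{JType3L1} (the entries $b_{i_j}$ and $c_{i_j}$ of the cyclic-shift sets $\Gamma_{[1,a_1]}^{[b_{a_3},b_{a_1}]}(T)$ and $\Gamma_{[a_2,a_3)}^{[b_{a_3},b_{a_1}]}(T)$ increase in $j$), which again is proved from $h_J$-minimality and separability, not from any dominance argument. Finally, your description of $h_F(\phi(T))$ as $S^{-1}(\#(a_1,a_2,a_3))$ is only correct for Types~1 and~3; for Type~2 the identified triple is $(a_1,a_3-2,a_3-1)$ with $S$-value $(a_3,a_1,0)\neq\#(a)$, so the ``for free'' identification you assert has to be checked case by case as the paper does.
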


\begin{prop}
\label{FullPsi}
If $T$ is a separable valid transversal of $\mathcal{Y}$ that
contains $F_3$, then $\psi(T)$ is a separable valid transversal of $\mathcal{Y}$
and we have $\phi(\psi(T)) = T$.  Furthermore, if $T = \{(i,b_i)\}$ and $\psi(T) = \{(i, c_i)\}$,
then we have $(b_1,b_2,\ldots, b_n) < (c_1,c_2,\cdots,c_n)$ in the lexicographic order.
\end{prop}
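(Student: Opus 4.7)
The plan is to prove Proposition~\ref{FullPsi} by running the argument in parallel with Proposition~\ref{FullPhi}, interchanging the roles of $\phi$ and $\psi$, of $U(T)$ and $V(T)$, and of $J$-type and $F$-type. I fix $T$, set $(a_1,a_2,a_3) = h_F(T)$, and split into three cases according to its $F$-type $t \in \{1,2,3\}$. In each case I verify six things: (i)~$\psi_t^{(a_1,a_2,a_3)}(T)$ is a transversal of $Y$; (ii)~it respects the required ascent set $A$ and descent set $D$; (iii)~the rows named by $S(a_1,a_2,a_3)$ carry a copy of $J_3$ in $\psi(T)$ whose $J$-type matches $t$; (iv)~this copy is the one selected by $h_J(\psi(T))$; (v)~the paired $\phi_t$-operation returns $T$; and (vi)~$\psi(T)$ is separable.

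Steps (i) and (iii) are essentially bookkeeping. The column $b_{a_3}$ is the rightmost column moved in $\psi_1$ and $\psi_2$, and $(a_3,b_{a_3}) \in T \subseteq Y$ suffices for (i) there; for $\psi_3$ the maximum row involved is $a_3 - 1$, but the $F$-type~3 hypothesis $a_3 - 1 \in A$ forces the $(a_3-1)$st and $a_3$th rows of $Y$ to have equal length, so all shifted squares still lie in $Y$. For (iii), in every case the cyclic rotation rearranges the three (or more) relevant column values into strictly decreasing order across the rows named by $S(a_1,a_2,a_3)$, and the $F$-type conditions translate directly into the $J$-type conditions satisfied by the new configuration, which identifies $\phi_t$ as the inverse operation needed in step (v). Step (ii) requires tracking which entries $\psi_t$ alters and, for each $i \in A \cup D$, verifying that the new values of $b_i$ and $b_{i+1}$ preserve the prescribed inequality; the $1$-alternating hypothesis on $\mathcal{Y}$ together with the $F$-type dichotomy (which fixes the behavior near $a_3 - 1$ and $a_3$) supplies the required structure, and the longer cyclic window arising in $\psi_3$ is handled by an argument in the spirit of Proposition~\ref{AltTechnical}.

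The principal obstacle is step (iv): ruling out any copy of $J_3$ in $\psi(T)$ with $\#$-image strictly smaller than $S(a_1,a_2,a_3)$. Such a spurious copy is either already present in $T$, in which case the separability of $T$, giving $\#(u) \ge S(v)$ for every $u \in U(T)$ and $v \in V(T)$, combined with the maximality of $h_F(T)$ in $V(T)$ under $\, \triangleright$, rules it out; or it is newly created by the cyclic shift, in which case it must involve at least one row whose entry changed, and reversing the shift on such rows would produce an $F_3$-copy in $T$ whose $S$-value exceeds $S(h_F(T))$, contradicting maximality. Step (vi), the separability of $\psi(T)$, follows from the symmetric version of the same dichotomy: any troublesome new $U$- or $V$-triple would pull back via $\phi$ to violate either the separability of $T$ or the selection rules for $h_J$ and $h_F$. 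The strict lex inequality is then immediate: in every type the smallest row whose entry changes is $a_1$, and the cyclic rotation places the maximum column of the shift window at row $a_1$, so $c_{a_1} > b_{a_1}$.
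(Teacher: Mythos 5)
Your overall architecture—casework on the $F$-type of $T$, verifying in each case that $\psi(T)$ is a valid transversal, that $h_J(\psi(T))$ is the expected copy with matching $J$-type, that the paired $\phi_t$ inverts $\psi_t$, and that $\psi(T)$ is separable—mirrors the paper's proof in Appendix~\ref{sec:PropFullPsi}. But two of your claims contain genuine gaps.

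The lexicographic inequality argument fails for $F$-type~3. You assert that ``in every type the smallest row whose entry changes is $a_1$.'' This holds for $\psi_1$ and $\psi_2$, but $\psi_3^{(a_1,a_2,a_3)}$ applies $\theta$ with index set $[1,a_1]\cup\{a_3\}$, and the cyclic shift moves the entry of every row $i<a_1$ with $b_i\in[b_{a_2},b_{a_3}]$. So the first changed row is $m=\min\Gamma_{[1,a_1]}^{[b_{a_2},b_{a_3}]}(T)$, which in general is strictly less than $a_1$. The paper must therefore argue $c_m>b_m$, which follows only from a monotonicity lemma (Lemma~\ref{FType3L1}, the analogue of Lemma~\ref{JType3L1}) showing the columns $b_{i_1}<b_{i_2}<\cdots$ of the cycled rows are increasing; your sketch has no substitute for this.

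Your mechanism for steps~(ii), (iv), and (vi) also misses the main technical tool. The valid-transversal check is not an argument ``in the spirit of Proposition~\ref{AltTechnical}'': the paper's proof rests on an auxiliary empty region $E_\psi(T)\subseteq Y$ and a lemma (Lemma~\ref{EPsi}) showing it contains no element of $T$—this is what controls the entries adjacent to $a_1,a_2,a_3$ and makes the ascent/descent bookkeeping work. Likewise, for ruling out smaller $J_3$-copies (step~(iv)), your dichotomy ``already present in $T$'' versus ``newly created, so reverse the shift'' is too coarse: a copy $(d_1,d_2,d_3)$ in $\psi(T)$ may mix changed and unchanged rows, and reversing the shift on the changed rows does not automatically yield an $F_3$-copy of $T$, nor one with large $S$-value. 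The paper handles this by a detailed case analysis on where $d_3$ falls relative to $a_1,a_2,a_3$ and on the position of $b_{d_3}$ relative to $b_{a_1},b_{a_2},b_{a_3}$, repeatedly invoking $E_\psi(T)$ and the monotonicity lemmas. Without those tools the key quantitative estimates do not follow from your sketch.
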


We defer the proofs of Propositions~\ref{FullPhi}
and~\ref{FullPsi} to Appendices~\ref{sec:PropFullPhi} and~\ref{sec:PropFullPsi}, respectively.
To complete the proof of Proposition~\ref{JF3Strong},
we require a simple technical lemma to extend from alternating
to semialternating AD-Young diagrams.  It follows immediately from the definitions
of $\phi$ and $\psi$, and so we omit the proof.

\begin{lemma}
\label{AddA1}
Let $T = \{(i,b_i)\}$ be a valid transversal of $\mathcal{Y}$ with $b_1 = 1$.
If $T$ contains $J_3$ and $\phi(T) = \{(i,c_i)\}$, then we have $c_1 = 1$.
If $T$ contains $F_3$ and $\psi(T) = \{(i,c_i)\}$, then we have $c_1 = 1$.
\end{lemma}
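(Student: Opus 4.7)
The plan is to observe that every cyclic shift operation $\omega_M^P$ and $\theta_M^P$ affects only the entries $(i, b_i)$ for which $i \in M$ \emph{and} $b_i \in P$; all other entries (in particular, entries in rows not in $M$) are preserved. Since $\phi$ and $\psi$ are each built from one or two such shifts, to prove $c_1 = 1$ it suffices to check, for each of the six cases defining $\phi_\ell$ and $\psi_\ell$, that either $1 \notin M$ or $b_1 = 1 \notin P$. The only non-obvious input is the basic fact that $a_1 \ge 2$ whenever $(a_1, a_2, a_3)$ is a copy of $J_3$ or $F_3$ in $T$: indeed, in both patterns the value $b_{a_1}$ must exceed some other entry, so $b_{a_1} \ge 2$, and this rules out $a_1 = 1$ (which would force $b_{a_1} = b_1 = 1$).

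With this preliminary in hand, the analysis for $\phi$ is straightforward. For $\phi_1 = \theta_{\{a_1,a_2,a_3\}}^{[1,b_{a_1}]}$, we have $1 \notin M$ since $a_1 \ge 2$. For $\phi_2 = \omega_{\{a_1,a_3-1\}}^{[1,b_{a_1}]}$, we again have $a_1 \ge 2$, and $a_3 - 1 \ge 2$ because $a_1 < a_2 < a_3$ forces $a_3 \ge 3$; hence $1 \notin M$. For $\phi_3$, the inner shift uses $M = [a_2,a_3]$ with $a_2 \ge 2$, so row $1$ is untouched. The outer shift uses $M = [1,a_1] \cup \{a_3+1\}$, which does contain $1$, but its column set is $P = [b_{a_3}, b_{a_1}]$; since row $a_3 \ne 1$ occupies some column $b_{a_3} \ne 1$, we have $b_{a_3} \ge 2$, so $b_1 = 1 \notin P$. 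In every case the entry in row $1$ is preserved, giving $c_1 = 1$.

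The analysis for $\psi$ is entirely parallel. For $\psi_1 = \omega_{\{a_1,a_2,a_3\}}^{[1,b_{a_3}]}$, row $1$ lies outside $M$ because $a_1 \ge 2$. For $\psi_2 = \theta_{\{a_1,a_3\}}^{[1,b_{a_3}]}$, the same bound together with $a_3 > a_1$ gives $1 \notin M$. For $\psi_3$, the inner shift has $M = [1,a_1] \cup \{a_3\}$, which contains $1$, but $P = [b_{a_2}, b_{a_3}]$ and $a_2 \ne 1$ forces $b_{a_2} \ge 2$, so again $b_1 \notin P$; the outer shift uses $M = [a_2, a_3 - 1]$ with $a_2 \ge 3$, hence $1 \notin M$. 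Thus $c_1 = b_1 = 1$ in all six cases.

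There is essentially no obstacle here; the proof is a routine case-check, which is why the authors declare the lemma immediate and omit it. The only place one must be careful is the second shift inside $\phi_3$ (and analogously $\psi_3$), where row $1$ does lie in $M$ and one must use the fact that the other entries of the selected $J_3$ (resp.\ $F_3$) copy are strictly to the right of column $1$. Once the uniform observation $a_1 \ge 2$ is recorded, every case reduces to a one-line check against the explicit formulas for $M$ and $P$.
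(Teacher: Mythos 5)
Your proof is correct, and it does precisely the routine verification that the paper elects to omit (the authors state the lemma ``follows immediately from the definitions of $\phi$ and $\psi$''). The key observation that any copy $(a_1,a_2,a_3)$ of $J_3$ or $F_3$ must have $a_1 \geq 2$ (since $b_{a_1}$ exceeds another column index, forcing $b_{a_1}\ge 2$, whereas $b_1 = 1$) is exactly the uniform fact needed, and your case-by-case check that each cyclic shift either excludes row~$1$ from $M$ or excludes column~$1$ from $P$ is complete and sound, including the two-step checks for $\phi_3$ and $\psi_3$ where one must track the intermediate transversal. This is essentially the argument the authors had in mind.
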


\begin{proof}[Proof of Proposition~\ref{JF3Strong} assuming Propositions~\ref{FullPhi} and~\ref{FullPsi}]
Let $\mathcal{Y}$ be a 1-alternating AD-Young diagram and let $T \in S_\mathcal{Y}(F_3)$; because
$V(T) = \emptyset$, the transversal $T$ is separable.  Let
$M$ be the smallest integer $m$ such that $\phi^m(T) \in S_\mathcal{Y}(J_3)$; such an integer
$M$ exists because applying $\phi$ repeatedly yields valid transversals of $\mathcal{Y}$,
because $(b_1,b_2,\ldots,b_n)$ is monotonically decreasing in the lexicographic order,
the sequence $\{\phi^m(T)\}_{m \in \mathbb{N}}$ must eventually terminate.  Then, let $\Phi(T) = \phi^m(T)$,
and $\Phi$ defines a function from $S_\mathcal{Y}(F_3)$ to $S_\mathcal{Y}(J_3)$.  Define $\Psi:S_\mathcal{Y}(J_3)
\rightarrow S_\mathcal{Y}(F_3)$ analogously.  We claim that $\Phi$ and $\Psi$ are inverses.  Let $T \in S_\mathcal{Y}(F_3)$, and suppose that $\Phi(T) = \phi^m(T)$.  By $m$ applications of Proposition~\ref{FullPhi},
we have $\psi^m(\Phi(T)) = T$, and because $\psi(W)$ is defined only for $W \notin S_\mathcal{Y}(F_3)$,
we have $\Psi(\Phi(T)) = \psi^m(\Phi(T)) = T$.  A similar argument using Proposition~\ref{FullPsi}
demonstrates that $\Phi(\Psi(T)) = T$ for all $T \in S_\mathcal{Y}(J_3)$, and therefore $\Phi$ and $\Psi$
are inverse bijections.

Suppose that $\mathcal{Y'} = (Y',A',D')$ is a 1-semialternating AD-Young diagram, and let
$Y' = (Y'_1,Y'_2,\ldots,Y'_n)$.  If $1 \notin D'$,
then it is clear that $\left|S_\mathcal{Y}(F_3)\right| = \left|S_\mathcal{Y}(J_3)\right|.$  If
$1 \in D$, then let $Y = (Y'_1+1,Y'_1+1,Y'_2+1,Y'_3+1,\ldots,Y'_n+1$, $A = \{1\} \cup (A' + 1)$,
and let $D' = D + 1$.  The AD-Young diagram $\mathcal{Y} = (Y, A, D)$ is 1-alternating.
For $T' = \{(i,b'_i)\}$ a valid transversal of $\mathcal{Y'}$, let $\alpha(T) = \{(1,1)\} \cup \{(i+1,b'_i+1) \mid i \in [n]\}$; it is clear that $\alpha(T)$ is a valid transversal of $\mathcal{Y}$, and that $\alpha$ is injective.
Furthermore, if $T' \in S_{\mathcal{Y}'}(F_3)$ (resp. $S_{\mathcal{Y}'}(J_3)$), then
$\alpha(T') \in S_\mathcal{Y}(F_3)$ (resp. $S_\mathcal{Y}(F_3)$) because $(1,1)$ cannot be an element
of any copy of $F_3$ (resp. $J_3$) in $\alpha(T')$.
Define $\Phi' = \alpha^{-1} \circ \Phi \circ \alpha$.  Lemma~\ref{AddA1}
implies that $\Phi$ sends the range of $\alpha$ to the range of $\alpha$.
Together with the fact that $\alpha$ is injective, it follows that $\Phi'(T)$ is defined (and well-defined) for all $T \in S_\mathcal{Y}(F_3)$.  It is clear that
$\Phi'$ sends $S_{\mathcal{Y}'}(F_3)$ to $S_{\mathcal{Y}'}(J_3)$.  We define $\Psi'$
analogously.  Because $\Phi$ and $\Psi$ are inverses, so are $\Phi'$ and $\Psi'$. Hence, we have
that $\left|S_{\mathcal{Y}'}(F_3)\right| = \left|S_{\mathcal{Y}'}(J_3)\right|$, and the fact
that $F_3 \sesa{1} J_3$ follows.
\end{proof}

\subsection{Applications of shape-equivalence to equivalences of short patterns}
For a permutation $w = w_1w_2\cdots w_n,$
we define its reverse $w^r$ by $w_nw_{n-1}\cdots w_1$.
Because reversal is an involution on odd-length alternating
permutations, we have that $w \oddeq w^r$
for all $w$, and likewise we have $w \eveneq w^{rc}$.  Such
equivalences are called \emph{trivial equivalences.}
We consider non-trivial equivalences among patterns of length 4 and 5.

By Theorem~\ref{213321AltRAlt}, we have $1234 \oddeq 2134 \oddeq 3214$.
By Theorem~\ref{1221Alt}, we have $2143 \oddeq 1243$, which by reversal
is equivalent to $3421$, which is in turn equivalent to
$2341$ by Corollary~\ref{213321RAlt}.  These equivalences
constitute all possible equivalences for odd-length
alternating permutations among patterns of length 4 due
to the data of \cite{Lewis2012},
thereby rederiving results of \cite{Lewis2012, XuYan}.
Similar logic yields that
$1234 \eveneq 3214 \eveneq 2134 \eveneq 2143$ and $2341 \eveneq 3421$,
which recovers results of \cite{Bona1221, Lewis2012, XuYan}.

For patterns of length 5, we settled all possible equivalences
except for $23451 \oddeq 34521$, $43215 \oddeq 32145$,
and $32145 \eveneq 43215 \eveneq 23451 \eveneq 34521$;
this is 9 out of 11 possible equivalences for odd-length alternating permutations
and 9 out of 12 possible equivalences for even-length alternating permutations.
Except for $12345 \oddeq 21345$ and $12345 \eveneq 21345,$
which are proven in \cite{Bona1221}, the equivalences among
patterns of length 5 are new.
Brute-force enumerations that describe all possible nontrivial equivalences
among length 5 patterns are given in \cite{Lewis2012}.

For patterns of length 6, we described all possible nontrivial equivalences
for both odd-length and even-length alternating permutations by
brute-force enumeration. We present the list of possible equivalences
in Appendix~\ref{sec:enumerate}.  Theorems~\ref{1221Alt} and ~\ref{213321AltRAlt}
imply 35 out of 39 possible nontrivial equivalences for odd-length alternating permutations
among patterns of length 6, and 35 out of 45 possible nontrivial equivalences for even-length
alternating permutations.  Combinatorial blowup precludes the thorough examination
of equivalences between patterns of length 7.

\section{The matrix $J_k$ and the difference between 1-alternating and 2-alternating AD-Young diagrams}
\label{sec:Nonequivalences}

An interesting phenomenon is that fact that $J_3 \sesa{1} F_3$, but $321$ and $213$
are not equivalent for even-length alternating permutations.  Proposition~\ref{AltPermToDiag}
states that if $M(p) \sesa{2} M(q)$ implies that $p \eveneq q$, but equivalence for 2-alternating
AD-Young diagrams is stronger than equivalence for 1-alternating AD-Young diagrams.  We demonstrate
that $J_k$ is not equivalent $F_k$ for 2-alternating AD-Young
diagrams by proving a stronger statement: Corollary~\ref{decreasing}.  To do so, we give a coarse
method to prove non-equivalences.

Given a positive integer $k$ and a permutation $p \in S_k$, let
$d(p) = \{i \in [k-1] \mid p_{i-1} > p_i > p_{i+1} \text{ or } p_{i-1} < p_i < p_{i-1}\},$
where we let $p_0 = \infty$.  Call $d(p)$ the \emph{doubling set of } $p$, and call
$|d(p)|$ the \emph{doubling number} of $p$.  The doubling number of a permutation
is the number of double descents and double ascents of that permutation.
Furthermore, it is a measure of how far a permutation is from alternating;
alternating permutations are exactly those permutations with a doubling number of 0.

\begin{thm}
\label{Doubling}
Given a permutation $p$ of length $k$ with doubling number $t$, the length of the shortest
alternating permutation that contains $p$ is $k+t$.
\end{thm}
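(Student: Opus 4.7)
The plan is to prove both directions of the bound $n = k+t$ separately: first exhibiting an alternating permutation of this length containing $p$, then showing that no shorter one suffices.

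For the upper bound I will construct $w$ greedily by deciding, for each $i = 1, \ldots, k$, a position $j_i$ at which to place the $i$th entry of $p$. Let $\sigma_i \in \{+, -\}$ denote the direction from $p_i$ to $p_{i+1}$ (with $\sigma_0 = -$ reflecting $p_0 = \infty$), and let $\tau_m = +$ for odd $m$ and $\tau_m = -$ for even $m$, recording the forced step directions in an up-down alternating permutation. I take $j_1 \in \{1, 2\}$ so that $\tau_{j_1} = \sigma_1$, then inductively set $j_{i+1} = j_i + 1$ when $\tau_{j_i} = \sigma_i$ and $j_{i+1} = j_i + 2$ otherwise. A direct check shows that in both branches $\tau_{j_{i+1}} = -\sigma_i$, so the $+2$ branch fires at step $i \geq 2$ exactly when $\sigma_i = \sigma_{i-1}$, i.e., when $i \in d(p)$; combined with the initial offset $[1 \in d(p)]$ this yields $j_k = k + t$. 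Then $w$ itself is obtained by assigning values to the positions $j_1, \ldots, j_k$ so that they form a copy of $p$ and filling each remaining position with a value extreme enough (small at valleys, large at peaks) to preserve alternation, producing an alternating permutation of length $k + t$ containing $p$.

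For the lower bound, suppose $w$ is alternating of length $n$ containing $p$ at positions $j_1 < \cdots < j_k$; the goal is to exhibit at least $t$ positions in $[1, n] \setminus \{j_1, \ldots, j_k\}$. The key tool is a span lemma: in an up-down alternating permutation $w$, any monotonic subsequence at positions $q_1 < \cdots < q_\ell$ with $\ell \geq 2$ satisfies $q_\ell - q_1 \geq 2\ell - 3$, sharpened to $q_\ell - q_1 \geq 2\ell - 2$ when $q_1 = 1$. This follows from a short parity argument that rules out two consecutive same-direction comparisons in $w$, forcing one position to be skipped between any two comparisons of the same direction in a monotonic chain. To apply the lemma I decompose $d(p)$ into its maximal runs of consecutive integers; a maximal run $\{i, i+1, \ldots, i+r-1\} \subseteq d(p)$ corresponds to a monotonic block $p_{i-1}, p_i, \ldots, p_{i+r}$ in $p$ (with $p_0 = \infty$ interpreted as lying to the left of $w_1$ when $i = 1$). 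Applying the span lemma to this embedded block yields at least $r$ positions of $[1, n] \setminus \{j_1, \ldots, j_k\}$ in an interval associated with the run, the $i = 1$ case drawing its extra either from $[1, j_1 - 1]$ when $j_1 \geq 2$ or from the sharpened inequality when $j_1 = 1$. Since maximal runs are separated by non-$d(p)$ indices, the associated intervals for distinct runs are pairwise disjoint, and summing gives at least $t$ gaps, hence $n \geq k + t$.

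The main obstacle is the $i = 1$ boundary in the lower bound: there is no physical position of $w$ corresponding to $p_0 = \infty$, so one must carefully account for the "missing" element by splitting into the cases $j_1 = 1$ and $j_1 \geq 2$ and invoking the sharpened form of the span lemma in the former. Once this is handled, the rest of the lower bound is a bookkeeping exercise over disjoint intervals, and the upper bound's greedy construction goes through with only the routine verification that extreme values at the inserted positions can be chosen consistently with the alternating pattern.
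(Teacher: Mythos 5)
Your proposal is correct and follows essentially the same two-lemma structure as the paper's proof of Theorem~\ref{Doubling}: the lower bound is the paper's Lemma~\ref{NoSmall}, which counts the position gaps forced in $w$ by the maximal monotone runs of $p$, while the upper bound is the paper's Lemma~\ref{Construction}, which uses the position map $f(m)=m+|d(p)\cap[m-1]|$ and fills the remaining positions with extreme values. Your greedy $j_m$ coincides with $f(m)$ for $m\ge 2$, and your span lemma is exactly the inequality the paper invokes (without proof) for runs of $d(p)$.

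Two cautions on the span lemma, neither of which breaks the overall argument once repaired. First, the sharpened bound $q_\ell - q_1 \ge 2\ell - 2$ when $q_1 = 1$ is direction-dependent: it is false for increasing chains (take $(q_1,q_2)=(1,2)$) and holds only for decreasing ones. You apply it only to the decreasing chain $p_1 > \cdots > p_{r+1}$ arising from an initial run $\{1,\ldots,r\}\subseteq d(p)$, for which it does hold, but the directional hypothesis must be part of the lemma. Second, and more substantively, the justification you sketch amounts to $q_{i+2}-q_i \ge 3$ (no two consecutive unit steps), which only yields a span bound strictly weaker than $2\ell-3$ once $\ell\ge 4$: the step pattern $1,2,1$ has span $4$, not $5$, and what excludes it is a parity clash rather than the exclusion of two consecutive unit steps. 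A correct proof of $q_\ell - q_1\ge 2\ell - 3$ must track the parity of each $q_i$; for an increasing chain a unit step forces $q_i$ to be odd, so a potential such as $q_i$ plus the indicator that $q_i$ is even increases by at least $2$ at every step, giving $q_\ell - q_1 \ge 2(\ell-1)-1$. (The same point goes unaddressed in the paper, whose stated bound $2(m-\ell)+1$ for runs should read $2(m-\ell)+3$ for the final sum in Lemma~\ref{NoSmall} to come out to $k+t$.) With the span lemma so established, your bookkeeping over disjoint run-intervals is sound.
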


Theorem~\ref{Doubling} is immediate from Lemmata~\ref{NoSmall} and~\ref{Construction}.
Lemma~\ref{QuickFact} is a useful fact that will be used in the proof of Lemma~\ref{Construction}.

\begin{lemma}
\label{NoSmall}
If $p$ is a permutation of length $k$ with doubling number $t$ and $w$ is an alternating
permutation of length less than $k+t$, then $w$ does not contain $p$.
\end{lemma}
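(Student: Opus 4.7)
The plan is to prove the contrapositive: if an alternating permutation $w$ of length $n$ contains $p$ via positions $i_1 < i_2 < \cdots < i_k$, then $n \ge k + t$. The strategy is to decompose $p$ into maximal monotonic blocks and apply a minimum-span bound to each block viewed as a monotonic subsequence of $w$. The case $k = 1$ is vacuous, so assume $k \ge 2$.

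First I would partition $p$ into its $r$ maximal monotonic runs of lengths $\beta_1, \ldots, \beta_r$. Since consecutive runs share a turning point, $\sum_\rho \beta_\rho = k + r - 1$. A small bookkeeping argument then gives the identity $t = (k - r - 1) + \varepsilon$, where $\varepsilon = 1$ if the first run is descending and $\varepsilon = 0$ otherwise: each run of length $\beta_\rho$ contributes exactly $\beta_\rho - 2$ doublings strictly interior to it (three consecutive monotonic values of $p$), and the only remaining doubling occurs at $j = 1$, which happens precisely when $p_1 > p_2$ (i.e.\ the first run descends, because $p_0 = \infty$).

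The main technical ingredient is a \emph{span lemma}: in any alternating permutation, a monotonic subsequence at positions $j_1 < j_2 < \cdots < j_m$ (with $m \ge 2$) satisfies $j_m - j_1 \ge 2m - 3$, with the improvement $j_m - j_1 \ge 2m - 2$ in the case of a decreasing subsequence starting at odd $j_1$ or an increasing subsequence starting at even $j_1$. The proof is by parity: in the increasing case, a ``cheap'' step $j_{s+1} = j_s + 1$ is compatible with $w_{j_s} < w_{j_{s+1}}$ only when $j_s$ is odd (so $\sigma_{j_s}$ is an ascent), and after such a step $j_{s+1}$ is even, forcing every subsequent step to have size at least $2$. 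The optimum is thus a single cheap step plus $m - 2$ size-$2$ steps, for total span $2m - 3$; when $j_1$ has the ``wrong'' parity, the cheap step is unavailable and the span rises to $2m - 2$. The decreasing case is symmetric with parities swapped.

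Applying the span lemma to each block gives $i_k - i_1 \ge \sum_\rho (2\beta_\rho - 3) = 2(k + r - 1) - 3r = 2k - r - 2$. If the first block is ascending ($\varepsilon = 0$), then $i_1 \ge 1$ yields $n \ge i_k \ge 2k - r - 1 = k + t$, as desired. If the first block is descending ($\varepsilon = 1$), a short case split on $i_1$ suffices: either $i_1 \ge 2$, which directly gives $n \ge 2k - r$; or $i_1 = 1$ is odd, in which case the improved span lemma upgrades the first-block span to $\ge 2\beta_1 - 2$, so $i_k - i_1 \ge (2\beta_1 - 2) + \sum_{\rho \ge 2}(2\beta_\rho - 3) = 2k - r - 1$ and thus $n \ge 2k - r$. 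Since $k + t = 2k - r$ in the descending case, the bound $n \ge k + t$ holds in both situations. The main obstacle is formulating and verifying the span lemma carefully; the decomposition and the closing case analysis are then straightforward bookkeeping.
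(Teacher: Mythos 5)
Your proof is correct and reaches the same conclusion, via a decomposition that is dual to the paper's. The paper groups the maximal intervals $\{\ell_x,\ldots,m_x\}$ of consecutive doublings and bounds the span $i_{m_x+1}-i_{\ell_x-1}$ of the monotone block $p_{\ell_x-1},\ldots,p_{m_x+1}$, using the conventions $p_0=\infty$ and $i_0=0$ to absorb the boundary at $j=1$; you instead decompose $p$ into maximal monotone runs of lengths $\beta_\rho$ and telescope $i_k-i_1=\sum_\rho(\text{span of run }\rho)$. Since the interior of a length-$\beta$ run is exactly an interval of $\beta-2$ doublings, these are two bookkeepings of the same underlying decomposition, and the core content is identical: a monotone subsequence of length $m$ in an alternating permutation has span at least $2m-3$, improvable to $2m-2$ under the appropriate parity condition on its first index. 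You do one thing better: you isolate this as a standalone span lemma (with the parity refinement, which you genuinely need to close the $\varepsilon=1$ case when $i_1=1$), whereas the paper only asserts the bound inline, and as printed the displayed inequality $i_{m+1}-i_{\ell-1}\ge 2(m-\ell)+1$ has slipped (the bound actually used in the subsequent computation is $2(m-\ell)+3$).

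One caution on your justification of the span lemma: the claim that after a cheap (size-$1$) step ``every subsequent step has size at least $2$'' is not literally true, since a step of size $\ge 3$ from an even position can land on an odd position and re-enable a cheap step. The conclusion $j_m-j_1\ge 2m-3$ is nevertheless correct, because any such detour costs at least as much as it saves. A clean way to make this airtight is a potential argument: for an increasing subsequence in an alternating $w$, one checks that $\lceil (j_{s+1}-1)/2\rceil \ge \lceil(j_s-1)/2\rceil+1$ for every step, which sums to the stated bound and gives the even-start improvement for free (and symmetrically for decreasing subsequences with parities swapped).
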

\begin{proof}
Assume for sake of contradiction that $w$ contains $p$ and let $w_{i_1}w_{i_2}\cdots w_{i_k}$
be a subsequence of $w$ that is order-isomorphic to $p$.  Let $i_0 = 0$.  For every $j \in [k]$, we have
$i_{j} - i_{j-1} \ge 1$.  However, if $j \in d(p)$ for all $\ell \le j \le m$, we have $i_{m+1} - i_{\ell-1} \ge 2(m - \ell) + 1$.  Suppose that $d(p) = \cup_{1 \le x \le X} \{\ell_x,\ell_x+1,\ldots,m_x\}$ with
$1 \le \ell_x \le m_x < \ell_{x+1} - 1$ for all $x$.  Adding the inequalities
in the appropriate fashion, we have
\begin{align*}
n &\ge i_k = i_k - i_0 = \sum_{j\in [k] \setminus d(p) \setminus \{m_1+1,m_2+1,\ldots,m_X+1\}} (i_j - i_{j-1})
+ \sum_{x=1}^X (i_{m_x+1} - i_{\ell_x - 1})\\
&\ge k - |d(p)| - X + \sum_{x=1}^X (2(m_x - \ell_x) + 1) = k - |d(p)| - X + 2|d(p)| + X = k+t,\end{align*}
as desired.
\end{proof}

\begin{lemma}
\label{QuickFact}
Let $p$ be a permutation of length $k$.  Then, for all $1 \le i < k$, we have $p_i < p_{i+1}$
if and only if $i + |d(p) \cap [i-1]|$ is odd.
\end{lemma}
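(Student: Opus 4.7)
The plan is a short induction on $i$ that tracks the sign of the consecutive difference of $p$. Set $s_j := +1$ if $p_j < p_{j+1}$ and $s_j := -1$ if $p_j > p_{j+1}$ for $1 \le j \le k-1$, and, in accordance with the convention $p_0 = \infty$, extend this by $s_0 := -1$.

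The key observation, immediate from the definition of the doubling set, is that for every $j \ge 1$ we have $j \in d(p)$ if and only if $s_{j-1} = s_j$; in other words, $d(p)$ records exactly the indices at which the direction of monotonicity fails to reverse. Iterating this from $s_0 = -1$, each $j \in [1,i]$ either preserves the sign (when $j \in d(p)$) or flips it (otherwise), so
\[
s_i \;=\; -(-1)^{|\{\,j \in [1,i] \,:\, j \notin d(p)\,\}|} \;=\; -(-1)^{\,i - |d(p) \cap [1,i]|}.
\]
Thus $p_i < p_{i+1}$ holds iff $s_i = +1$ iff $i - |d(p) \cap [1,i]|$ is odd, which matches the parity condition in the statement after a brief case split on whether $i$ itself lies in $d(p)$ (this is what reconciles $[i-1]$ with $[i]$ in the intersection).

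The main (and really only) delicate point is anchoring the induction: the convention $p_0 = \infty$ is what forces $1 \in d(p)$ to be equivalent to $p_1 > p_2$, and hence pins down $s_0 = -1$ uniformly, which is needed for there to be any index-free starting sign at all. Once that is in place, every other step is a routine parity count, and I do not expect any real obstacles beyond keeping the indexing conventions straight.
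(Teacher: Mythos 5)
Your sign-tracking argument is clean and it is the same underlying mechanism as the paper's induction (both are just recording when consecutive ascent/descent directions fail to flip, i.e., when an index lies in $d(p)$), but the final sentence papers over a real discrepancy. Your derivation correctly yields that $p_i < p_{i+1}$ if and only if $i - |d(p)\cap[1,i]|$ is odd, equivalently $i + |d(p)\cap[i]|$ is odd. The statement you are asked to prove instead has $|d(p)\cap[i-1]|$. When $i \in d(p)$ these two intersections differ in cardinality by exactly one, so the two parity conditions are \emph{negations} of each other; there is no ``brief case split'' that reconciles them, because reconciling opposite statements is impossible.

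What is actually going on is that the lemma as printed contains an off-by-one error, and your version is the correct one. Concretely, take $p = 21$ and $i = 1$: since $p_0 = \infty > p_1 > p_2$, we have $d(p) = \{1\}$, so $i + |d(p)\cap[i-1]| = 1 + 0 = 1$ is odd, yet $p_1 > p_2$ — the stated lemma fails. Your formula gives $i + |d(p)\cap[i]| = 2$, even, which is right. The paper's own proof betrays this: the claim that the base case $i=1$ is ``obvious'' only makes sense for the $[i]$ version (with $[i-1]$, the parity at $i=1$ is always odd, independent of $p$). Fortunately the only place Lemma~\ref{QuickFact} is invoked, in the proof of Lemma~\ref{Construction}, is for indices $m$ with $m \notin d(p)$, where $|d(p)\cap[m-1]| = |d(p)\cap[m]|$ and both versions agree. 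You should say explicitly that the correct statement replaces $[i-1]$ by $[i]$ (and remark that this suffices for the downstream use), rather than asserting a reconciliation that does not exist.
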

\begin{proof}
For $1 \le i < k$, let $f(i) = i + |d(p) \cap [i-1]|$.  It is clear that
\[f(i) - f(i-1) = 1 + \mathbb{I}[i-1 \in d(p)].\]
We proceed by induction on $i$ for a fixed permutation $p$.  The base case $i = 1$
is obvious.  Assume the result for $i = l$, and we will prove it for $i = l+1$.  If $p_ip_{i+1}$
and $p_{i+1}p_{i+2}$ are either both ascents or both descents, we have that $i-1 \in d(p)$, which yields
that $f(i) = f(i-1) + 2$.  The result follows by the induction hypothesis.  If one of $p_ip_{i+1}$
and $p_{i+1}p_{i+2}$ is an ascent and the other is a descent, we have $f(i) = f(i-1) + 1$ and the
result follows by the induction hypothesis. This completes the proof of the inductive step,
and the induction is complete.
\end{proof}

\begin{lemma}
\label{Construction}
For every permutation $p$ of length $k$ with doubling number $t$, there is an alternating permutation
$w$ of length $k + t$ that contains $p$.
\end{lemma}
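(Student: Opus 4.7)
The plan is to construct $w$ of length $k + t$ by inserting one extremal value into $p$ at each index $j \in d(p)$, with the inserted value chosen so that it automatically sits at a local extremum of $w$.

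Partition $d(p) = P \sqcup V$, where $P$ collects the ``double descent'' indices ($p_{j-1} > p_j > p_{j+1}$, including $j = 1$ when $p_1 > p_2$, in view of the convention $p_0 = \infty$) and $V$ collects the ``double ascent'' indices ($p_{j-1} < p_j < p_{j+1}$); then $|P| + |V| = t$. Set $i_j = j + |d(p) \cap [j-1]|$ for $j \in [k]$, so that $i_1 = 1$, $i_k = k + t$, and $i_{j+1} - i_j = 1 + \mathbb{I}[j \in d(p)]$. I would define $w \in S_{k+t}$ by placing the shifted $p$-entry $p_j + |V|$ at position $i_j$ for each $j$, placing the $|P|$ largest values of $[k+t]$ at the insertion slots $\{i_j + 1 : j \in P\}$ in any order, and placing the $|V|$ smallest values of $[k+t]$ at the slots $\{i_j + 1 : j \in V\}$. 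By construction $w$ is a permutation of $[k+t]$ and the subsequence $w_{i_1}, \ldots, w_{i_k}$ is order-isomorphic to $p$, so $w$ contains $p$.

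The remaining task is to verify that $w$ is alternating, i.e.\ that $w_l < w_{l+1}$ iff $l$ is odd, for each $l \in [k+t-1]$. I would split into cases on the adjacency type. When $l = i_j$ with $j \notin d(p)$, so that $l + 1 = i_{j+1}$ and both entries are shifted $p$-values, the comparison reduces directly to Lemma~\ref{QuickFact}. When one of $w_l, w_{l+1}$ is an inserted extremal value (the cases $l = i_j$ and $l = i_j + 1$ with $j \in d(p)$), the direction of the inequality is forced by construction, since an inserted peak strictly exceeds every shifted $p$-entry and an inserted valley is strictly below every shifted $p$-entry; one then invokes Lemma~\ref{QuickFact} to identify the parity of $i_j$ and check that the forced inequality has the required parity of $l$.

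The only delicate point is adjacent insertions: if both $j$ and $j+1$ lie in $d(p)$, then $w_{i_{j+1}}$ is a shifted $p$-value sandwiched between two insertions at positions $i_j + 1$ and $i_{j+1} + 1$. However, consecutive membership in $d(p)$ forces $p_{j-1}, p_j, p_{j+1}, p_{j+2}$ to be strictly monotone in the same direction, so $j$ and $j+1$ belong to the same block of the partition $P \sqcup V$; the two inserted extrema are therefore both at the top of $[k+t]$ or both at the bottom, and the sandwiched shifted $p$-value lies strictly between them in magnitude, so the alternating check at position $i_{j+1}$ again reduces to Lemma~\ref{QuickFact}. Beyond this bookkeeping I do not anticipate any real obstacle.
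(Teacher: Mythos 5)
Your construction is correct, and it essentially repairs an error in the paper's own argument. The paper assigns the \emph{smallest} values to $T_0$ (the even positions not in $S$) and the \emph{largest} to $T_1$ (the odd ones), which forces the even non-$S$ positions to be local minima and the odd ones to be local maxima --- that is, the output is reverse alternating, not alternating. For $p = 21$ the paper's recipe gives $w = 312$, which is not alternating, whereas your assignment yields $w = 231$. Your rule (largest values at double-descent slots, smallest at double-ascent slots) is the right one. The two partitions of the slots do in fact agree --- a double-descent slot lands at an even position and a double-ascent slot at an odd one --- but the value assignment must be the reverse of what the paper writes.

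A caution about your verification: Lemma~\ref{QuickFact} is stated incorrectly, and invoking it literally for $i \in d(p)$ would lead you astray. For $p = 21$, $i = 1$, the quantity $i + |d(p)\cap[i-1]| = 1$ is odd, yet $p_1 > p_2$; the correct quantity is $i + |d(p)\cap[i]|$. The two expressions agree when $i \notin d(p)$ (so the first case of your verification is fine), but differ precisely when $i \in d(p)$ --- the very indices at which you place a filler. Read faithfully, the paper's lemma gives the parity of $i_j$ backwards ($i_j$ even for $j \in P$, odd for $j \in V$), which would push you toward the paper's erroneous assignment. Either correct the lemma or derive the parity of $i_j$ directly from $i_1 = 1$, the equivalence $1 \in d(p) \Leftrightarrow p_1 > p_2$, and the recurrence $i_{j+1} - i_j = 1 + \mathbb{I}[j \in d(p)]$. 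With that fixed, your argument goes through; the aside about sandwiched insertions is a correct observation but not actually needed, since no two filler positions are ever adjacent.
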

The idea of the proof is that we place a $p$ as a subsequence of $w$ with consecutive terms
of the subsequence consecutive in $w$ when possible, skipping entries of $w$ only when we have
elements of $d(p)$.
\begin{proof}
Firstly, we will define a function $f: [k] \rightarrow [k+t]$ that places $p$ into $w$;
let $f(m) = m + |d(p) \cap [m-1]|$ as in Lemma~\ref{QuickFact}.
It is clear that $f(m) \in [k+t]$ for all $m \in [k]$.  Let $S = f([k])$, let
$T_0 = ([k+t]\setminus S) \cap 2\mathbb{Z}$, and let $T_1 = ([k+t]\setminus S) \cap (2\mathbb{Z}+1)$.
The sets $T_0$ and $T_1$ are places where we need to add ``filler values"
that will not be part of our subsequence of $w$ order-isomorphic to $p$.
Define the permutation $w \in S_{k+t}$ as
\[w_i = \begin{cases}
p_{f^{-1}(i)} + |T_0| & \text{ if } i \in S\\
|T_0 \cap [i]| & \text{ if } i \in T_0\\
k + t - |T_1 \cap [i-1]| & \text{ if } i \in T_1.
\end{cases}\]
It is clear that the restriction of $w$
to $S$ forms a bijection between $S$ and $[|T_0| + k] \setminus T_0$, the restriction of $w$
to $T_0$ forms a bijection between $T_0$ and $[T_0]$,
and the restriction of $w$ to $T_1$ forms a bijection between $T_1$ and $[k+t] \setminus [k+t-|T_1|]$.
Hence, $w$ is a permutation of $[k]$.

To prove that $w$ is alternating, we consider consecutive entries of $w$.
If $i \in T_0$ and $i < k+t$, we have $w_i \le |T_0| < w_{i+1}$ because $i+1 \notin T_0$;
if $i \in T_0$ and $i > 1$, we have $w_i \le |T_0| < w_{i-1}$ because $i-1 \notin T_0$.
If $i \in T_1$, then we have $w_i > k + t - |T_1| \ge w_{i-1}$ because $i-1 \notin T_1$; if $i \in T_1$
and $i < k+t$, we have $w_i > k + t - |T_1| \ge w_{i+1}$ because $i+1 \notin T_1$.  Hence, it suffices
to consider the case in which $i \in [k+t-1]$ such that $i,i+1 \in S$.  Then, we have
$w_i < w_{i+1}$ if and only if $p_{f^{-1}(i)} < p_{f^{-1}(i+1)}$, which happens if and only if $i$
is odd by Lemma~\ref{QuickFact}.  It follows that $w$ is alternating, as desired.  The subsequence
$w_{f(1)}w_{f(2)}\cdots w_{f(k)}$ of $w$ demonstrates that $w$ contains $p$, so the proof of the proposition
is complete.
\end{proof}

We give a useful corollary that follows immediately from Theorem~\ref{Doubling}, and after that
we obtain the desired result.

\begin{cor}
\label{GoodFormOfMain}
Suppose that a pattern $p$ of length $k$ and a pattern $q$ of length $k'$ have doubling numbers
$t$ and $t'$, respectively.  If $\left\lceil \frac{k+t}{2} \right\rceil \not= \left\lceil \frac{k'+t'}{2}\right\rceil$,
then $p$ and $q$ are not equivalent for even-length alternating permutations.  If
$\left\lceil \frac{k+t-1}{2}\right\rceil \not= \left\lceil\frac{k'+t'-1}{2}\right\rceil$,
then $p$ and $q$ are not equivalent for odd-length alternating permutations.
\end{cor}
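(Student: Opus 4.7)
The plan is to deduce the corollary by combining Theorem~\ref{Doubling} with the observation that $|A_n(p)| < |A_n|$ for every sufficiently large $n$ (of a specified parity), the threshold being governed by $L(p) := k+t$. Set $L(q) := k'+t'$, and by symmetry between $p$ and $q$ I may assume $L(p) \le L(q)$. For every $n < L(q)$, Theorem~\ref{Doubling} gives $|A_n(q)| = |A_n|$ because no alternating permutation shorter than $L(q)$ contains $q$. So to certify non-equivalence at a length of a given parity, it suffices to exhibit an $n$ of that parity with $L(p) \le n < L(q)$ at which $|A_n(p)| < |A_n|$.

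The intermediate fact I need is that for \emph{every} $n \ge L(p)$ there exists an alternating permutation of length $n$ containing $p$. I plan to prove this inductively, starting from the length-$L(p)$ example produced by Lemma~\ref{Construction}: given an alternating permutation $w$ of length $m$ containing $p$, I extend to length $m+1$ by appending a new largest entry if the next comparison must be an ascent, or by shifting every entry of $w$ up by one and appending the value $1$ if it must be a descent. Either move preserves the embedded copy of $p$ and the alternating property. Consequently $|A_n(p)| < |A_n|$ for every $n \ge L(p)$, and in particular $|A_n(p)| < |A_n(q)|$ at every $n$ satisfying $L(p) \le n < L(q)$.

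To finish the even-length case I take $n$ to be the smallest even integer with $n \ge L(p)$, namely $n = 2\lceil L(p)/2 \rceil$; a witness of this form exists exactly when $2\lceil L(p)/2 \rceil < L(q)$, and a short parity case check (separating $L(q)$ even vs. odd) shows this to be equivalent to $\lceil L(p)/2 \rceil < \lceil L(q)/2 \rceil$, which is the hypothesis $\lceil (k+t)/2 \rceil \ne \lceil (k'+t')/2 \rceil$ once the WLOG is dropped. The odd-length case is entirely parallel, using instead $n = 2\lceil (L(p)-1)/2 \rceil + 1$ as the smallest odd integer at least $L(p)$, which matches the ceiling condition $\lceil (k+t-1)/2 \rceil \ne \lceil (k'+t'-1)/2 \rceil$. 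No step here is a real obstacle because Theorem~\ref{Doubling} has already done the substantive work; the only points requiring any care are the one-line extension argument and the parity bookkeeping that translates ``smallest even (resp.\ odd) integer at least $L(p)$'' into the stated ceiling inequality.
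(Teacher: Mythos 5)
Your proof is correct and follows essentially the same approach as the paper: choose the smallest $n$ of the required parity at or above the smaller of the two thresholds $k+t$ and $k'+t'$, then apply Theorem~\ref{Doubling} to conclude that the avoidance set for the larger-threshold pattern equals $A_n$ while the avoidance set for the smaller-threshold pattern is a proper subset. You are slightly more careful than the paper, which takes $n = 2\lceil(k'+t')/2\rceil$ and asserts $A_n(q) \subsetneq A_n$ directly, whereas that step requires an alternating permutation of length exactly $n$ (not merely $k'+t'$) containing $q$; your explicit one-step extension lemma (append the new maximum when an ascent is needed, shift up by one and append $1$ when a descent is needed) supplies exactly the fact the paper leaves implicit.
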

\begin{proof}
Firstly, suppose that $\left\lceil \frac{k+t}{2} \right\rceil \not= \left\lceil \frac{k'+t'}{2}\right\rceil$.
Assume without loss of generality that $\left\lceil \frac{k+t}{2} \right\rceil > \left\lceil \frac{k'+t'}{2}\right\rceil$. Let $n = 22\left\lceil \frac{k'+t'}{2}\right\rceil$.
Then, by Theorem~\ref{Doubling} and because $k'+t' \le n < k+t,$
we have
\[A_{n}(q) \subsetneq A_n = A_{n}(p).\]
It follows that $p$ and $q$ are not equivalent for even-length alternating permutations.

Secondly, suppose that $\left\lceil \frac{k+t-1}{2}\right\rceil \not= \left\lceil\frac{k'+t'-1}{2}\right\rceil$.
Assume without loss of generality that $\left\lceil \frac{k+t-1}{2}\right\rceil > \left\lceil\frac{k'+t'-1}{2}\right\rceil$.
Let $n = 2\left\lceil\frac{k'+t'-1}{2}\right\rceil+1$.
Then, by Theorem~\ref{Doubling} and because $k'+t' \le n < k + t$,
we have
\[A_{n}(q) \subsetneq A_n = A_n(p),\]
and it follows that $p$ and $q$ are not equivalent for odd-length alternating permutations.

\end{proof}

\begin{cor}
\label{decreasing}
For all positive integers $k$, the pattern $k(k-1)(k-2)\cdots1$ is not
equivalent to any other pattern of length $k$ for even-length
alternating permutations.  Furthermore, the permutation matrix $J_k$
is not equivalent to any other permutation matrix for 2-alternating AD-Young diagrams.
\end{cor}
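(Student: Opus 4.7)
The plan is to derive both assertions from Corollary~\ref{GoodFormOfMain}, with the key combinatorial input being that the fully decreasing pattern is the unique length-$k$ pattern whose doubling number attains the maximum value $k-1$.

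First I would compute $d(p)$ for $p = k(k-1)\cdots 1$. With the convention $p_0 = \infty$ we have the chain $p_0 > p_1 > \cdots > p_k$, so every $i \in [k-1]$ gives a double descent and $d(p) = [k-1]$. Hence the doubling number of $p$ equals $k-1$, and $\lceil (k + (k-1))/2 \rceil = k$.

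The crux is then to show that $p$ is the unique length-$k$ pattern with doubling number $k-1$. Let $q \in S_k$ satisfy $d(q) = [k-1]$. Since $q_0 = \infty > q_1$, the hypothesis $1 \in d(q)$ forces $q_1 > q_2$ (only a double descent is possible). A short induction on $i$ then shows that each successive condition $i \in d(q)$ must be realized by a double descent extending the already-decreasing prefix, so $q_1 > q_2 > \cdots > q_k$, i.e., $q = p$. Consequently any length-$k$ pattern $q \neq p$ satisfies $t' \le k-2$, giving $\lceil (k+t')/2 \rceil \le k-1 \neq k$, and Corollary~\ref{GoodFormOfMain} yields $q \not\eveneq p$. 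This settles the first assertion.

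For the AD-Young-diagram statement, suppose $J_k \sea{2} M(q)$. Proposition~\ref{AltPermToDiag}(c) gives $q \eveneq k(k-1)\cdots 1$. To invoke the first assertion I first verify that $q$ has length $k$ by testing against the vacuously 2-alternating square diagrams $\mathcal{Y}_n = ((n^n), \emptyset, \emptyset)$: at $n = k$ we have $|S_{\mathcal{Y}_k}(J_k)| = k! - 1$, whereas $|S_{\mathcal{Y}_k}(M(q))| = k!$ whenever $q$ is longer than $k$, and analogous comparisons at smaller $n$ exclude $q$ shorter than $k$. Once $q$ has length $k$, the first assertion forces $q = k(k-1)\cdots 1$, i.e., $M(q) = J_k$. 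The principal obstacle is the inductive uniqueness step in the middle paragraph, which is short once one correctly handles the $p_0 = \infty$ convention at $i = 1$; the size-matching reduction in the last paragraph is routine bookkeeping against explicit small diagrams.
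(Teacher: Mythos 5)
Your proposal is correct and follows essentially the same route as the paper: compute that $\nu_k = k(k-1)\cdots 1$ is the unique length-$k$ pattern with doubling number $k-1$, apply Corollary~\ref{GoodFormOfMain}, and pass to the matrix statement via the contrapositive of Proposition~\ref{AltPermToDiag}(c). You do add one piece of care that the paper's one-line derivation of the second claim glosses over, namely ruling out permutation matrices of size other than $k$ by comparing transversal counts on the square AD-Young diagrams $((n^n),\emptyset,\emptyset)$; this is a legitimate (if routine) gap-fill rather than a different approach.
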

\begin{proof}
Let $\nu_k = k(k-1)(k-2)\cdots1$ and let $q \in S_k \setminus \{\nu_k\}$.
It is clear that we have $|d(\nu_k)| = k-1$ and $|d(q)| < k-1$.  Because
\[\left\lceil\frac{k + k - 1}{2}\right\rceil = k > \left\lceil\frac{k + |d(q)|}{2}\right\rceil,\]
$\nu_k$ and $q$ are not equivalent for even-length alternating permutations by Corollary~\ref{GoodFormOfMain}.
The second claim follows by the contrapositive of Proposition~\ref{AltPermToDiag}.
\end{proof}

\section{Generalized alternating permutations}
\label{Generalized alternating permutations}

Throughout this section let $p = p_1p_2\cdots p_n$ be a permutation of length $n.$
Similarly, let $q$ = $q_1 q_2 q_3 \cdots q_b$ be a pattern of length $b$. In \cite{Lewis2012}, an operation called extension was used to recursively generate pattern-avoiding permutations of length $n+1$ from such permutations of length $n$. The procedure itself involved appending a new value to the end of a permutation. However, in the context of permutation of descent type $k$, this procedure restricts us to only extending values $v \geq p_n$.  We require more flexibility in choosing
which values to add, so we define a new method to add a value.
\begin{definition}
Let $p \in S_n$ be a permutation of descent type $k$ and let $v \in [n+1]$.
Define $v \mapsto p$, the \emph{injection} of $v$ into $p$ as follwos: we first increment all values of $p$
that are greater than or equal to $v$ and then append $v$ to get a permutation
$p'$.  Then if $p$ had an incomplete final row, we rearrange the elements of the
final row of $p'$ to be in increasing order.  If $p$ had a complete final row and $v \le p_n$,
we simply define $v \mapsto p$ as $p'$.  However, if $v > p_n$, we swap the last two entries
of $p'$.If $w = v \mapsto p$ for some $v$, then we say that $w$ is a \emph{child} of $v$ and
$v$ is a \emph{parent} of $w$.  However, permutations need not have a unique parent.
\end{definition}
\begin{eg}
Consider the permutation $35624718$ of descent type $k = 3$. We have $4 \mapsto 35624718 = 367258149,$ because $p' = 36725819$, and appending 4
to $p'$ gives $367258194$.  Rearranging the final row then yields $367258149$.
\end{eg}

We omit the proof that if $p$ has descent type $k$, then every child of $p$ also descent type $k$.
It is clear, however, that every child of $p$ contains $p$, and therefore
if a child of $p$ avoids a pattern $q$, then so does $p$.

In Section~\ref{nonstrict case}, the primary nontrivial result is that $|D^k_n(q)| \leq |D^k_{n+1}(q)|$ for all patterns $q$ except for the trivial counterexample of the identity permutation when $k \geq b$. Additionally, in Section~\ref{equality case}, we investigate \emph{repetitive patterns}, patterns which are characterized by pattern avoidance of a particular triplet of patterns. What is especially interesting about these patterns, as we show in Section~\ref{equality case}, is that $|D^k_n(q)| = |D^k_{n+1}(q)|$ for particular values of $n$ and repetitive patterns $q$. In conjunction with this, for all non-repetitive patterns, in Section~\ref{strict case} we show that $|D^k_n(q)| < |D^k_{n+1}(q)|$. Since $D_n^k(q)$ is trivial to understand when $n < k$, as $|D_n^k(q)| = 1$ (or 0 for short idenity permutations), it shall be assumed throughout this section $n \geq k$. For similar reasons, since $q = 12, 21$ are trivial cases as well, we shall assume that $b \geq 3$.

\subsection{Nonstrict Case: $|D^k_n(q)| \leq |D^k_{n+1}(q)|$}
\label{nonstrict case}

We shall show that $|D^k_n(q)| \leq |D^k_{n+1}(q)|$; this is fairly intuitive for as we consider longer length permutations, we would expect more permutations to avoid the fixed pattern. We prove the following theorem.

\begin{thm}
	\label{nonstrict case theorem}
Let $k \le n$ be positive integers and let $q \notin \{21, 1, 12, 123, \ldots, 123\cdots k\}$.
Then, there is an injection
$f: D^k_n(q) \hookrightarrow D^k_{n+1}(q)$ such that for all $p$,
$f(p)$ is a child of $p$. In particular, we have $|D^k_n(q)| \leq |D^k_{n+1}(q)|$.
\end{thm}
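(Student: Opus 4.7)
The plan is to construct, for each $p \in D^k_n(q)$, an explicit child $f(p) = v(p) \mapsto p$ for some $v(p) \in [n+1]$, chosen canonically so that $f(p) \in D^k_{n+1}(q)$ and so that $f$ is injective. The two tasks are showing that a valid $v$ exists for every $p$, and showing that the canonical choice of $v$ permits recovery of $p$ from $f(p)$.

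For existence, the guiding observation is that any copy of $q$ in $v \mapsto p$ that does not involve the newly injected value $v$ corresponds, after undoing the shift of values $\ge v$, to a copy of $q$ in $p$, contradicting $p \in D^k_n(q)$. Thus every new copy of $q$ in $v \mapsto p$ must assign $v$ a specific role in $q$, and since the injection rule places $v$ at a position dictated by $p$'s last-row structure (appended as a new incomplete row, swapped into the final cell of the preceding complete row, or sorted into the incomplete final row), only finitely many roles are geometrically available. I would then case-split on the shape of $q$: for $q = 12\cdots b$ with $b > k$, the choice $v = 1$ works because the inserted $1$ cannot extend an increasing subsequence to length $b$; for non-increasing $q$, either $v = 1$ or $v = n+1$ works depending on whether $\min q$ or $\max q$ occupies the last position of $q$; and the remaining boundary cases (where both $\min q$ and $\max q$ are pinned near the end of $q$) are settled by a choice of $v$ tailored to $p_n$, e.g. $v = p_n$ or $v = p_n + 1$. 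The excluded patterns $21, 1, 12, 123, \ldots, 123\cdots k$ are precisely those for which no canonical choice succeeds; for instance $q = 21$ already forces $|D^k_k(21)| = 1 > 0 = |D^k_{k+1}(21)|$, so no injection can exist.

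For injectivity, I would define $v(p)$ to be the smallest valid $v \in [n+1]$, so that it is recoverable from $p' = f(p)$: the injection rule restricts the possible positions of $v$ in $p'$ to a small set of candidate cells in $p'$'s last row, so one enumerates these candidates, applies the inverse of $v \mapsto \cdot$, and checks the minimality condition to identify the unique parent. The main obstacle is the existence step: the case analysis must cover every non-trivial $q$ uniformly, and the delicate sub-cases are those where both $\min q$ and $\max q$ lie near the end of $q$, so that neither $v = 1$ nor $v = n+1$ works in general and the choice of $v$ must depend subtly on the last entries of $p$ to block any new copy of $q$ from forming.
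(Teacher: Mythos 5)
Your overall strategy --- construct $f(p) = v(p) \mapsto p$ for a canonically chosen value $v(p)$, argue existence of a valid $v$ case by case, and then invert --- matches the paper's, but two of the steps you wave at are precisely where the work lives, and your sketch of each has a real gap.

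\textbf{Existence.} You assert that for patterns $q$ ending in their maximum, injecting $v = 1$ works. This is false. Take $q = 2134$, $k = 3$, $n = 5$, and $p = 13524 \in D^3_5(2134)$ (one checks directly that $p$ avoids $2134$). Then $1 \mapsto p = 246135$, which contains $2134$ as the subsequence $2,1,3,5$ at positions $1,4,5,6$. The obstruction is exactly what the paper's block function $B(q)$ measures: the new $1$ lands at the start of the final row with $s$ entries to its right, and when $s \ge B(q)$ a copy of $q$ can be completed to the right of the $1$. In that regime the paper injects $p_{n-B(q)+1}$ instead, so that the injected value and its successor are adjacent both in position and in value inside $f(p)$; that adjacency is what lets one push any putative copy of $q$ back into $p$. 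Similarly, your sketch does not anticipate the split at $k \mid n$, $q_b = 1$, into $q_{b-1} = 2$ versus $q_{b-1} \neq 2$, where the right choice is sometimes $n+1$ and sometimes the old $p_{km}$; neither $1$ nor $n+1$ suffices uniformly there, and ``tailored to $p_n$'' is not an argument.

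\textbf{Injectivity.} The paper's rule is designed so that the position of the injected value in $f(p)$ is determined by $q$ and $n \bmod k$ alone (e.g.\ cell $n-B(q)+1$, or cell $km$, or cell $n+1$), so inverting $f$ is unambiguous: delete that specific cell and unshift. Your replacement --- take $v(p)$ to be the smallest valid $v$ --- does not pin down the position of the injected value in $p'$ a priori, so you propose to enumerate the candidate cells, invert each, and ``check the minimality condition to identify the unique parent.'' That phrasing presupposes uniqueness rather than proving it. If $p' = v_1 \mapsto p_1 = v_2 \mapsto p_2$ with $v_1 \neq v_2$, both $p_i \in D^k_n(q)$, and each $v_i$ the smallest valid value for its own $p_i$, then $f$ is not injective, and nothing in your sketch rules this out. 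This is a genuine missing idea, not merely an omitted routine verification, and it is the reason the paper ties the injected value's landing position to $q$ and $n$ rather than to a minimality search over $p$.
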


In other words, for each parent, we are choosing a different child.
In the case where $q = 123\cdots b $ with $b \le k,$ $k$ is so large that any
sufficiently long permutation with descent type $k$ contains $q$ when $n \geq b$, since the first $k$ values of the permutation are in strictly increasing order.

\begin{proof}
Fix $q$ satisfying the theorem conditions.

Let $p \in D_n^k(q)$. Define a \emph{consecutive block} to be a subset of consecutive cells that are consecutive in value as well; i.e. $a_i < a_{i+1} < \cdots < a_j$ for $i < i+1 < \cdots < j$, and $a_{s} - a_{s-1} = 1$ when $i < s \leq j$. We call the value $a_{j}$ the \emph{anchor} of the consecutive block. We define the \emph{block function} $B(q)$ to return the length of the consecutive block anchored at $q_b$. (Note that this function is only defined for patterns with $q_b = b$.) Note that if the pattern is the identity pattern, then the function returns $b$.
The following algorithm defines $f(p)$.

If $q_b = b$:

\hspace{1 cm} If $n = km + s$, $0 \leq s < B(q)$, inject 1.

\hspace{1 cm} If $n = km + s$, $B(q) \leq s < k$, inject $p_{n-B(q)+1}$.

If $q_b \neq b$:

\hspace{1 cm} If $n = km + s$, $0 < s < k$, inject $n+1$.

\hspace{1 cm} If $n = km$:

\hspace{2 cm} If $q_b = 1$:

\hspace{3 cm} If $q_{b-1} = 2$, inject $n+1$.

\hspace{3 cm} If $q_{b-1} \neq 2$, inject $p_{km}$.

\hspace{2 cm} If $q_b \neq 1$, inject 1.

\begin{eg}
Let $q = 2134$. Then $B(q) = B(2134) = 2$. Consider permutation $p = 23514$ with $k = 3$. Then, by the algorithm, the child permutation $p' = f(p) = 346125$ since we inject the value $p_{4} = 1$. Similarly by the algorithm, $f(p') = 4572361$ as we inject the value $1$ into $p'$.
\end{eg}

We now prove $f$ is injective via casework.

\noindent{\bf Case 1: $q_b = b$}
\begin{itemize}

\item[$n = km + s$, $0 \leq s < B(q)$]\

We claim that the injection of the value 1 into the final row will result in a $p' \in D^k_{n+1}(q)$. Since 1 is the smallest value in the permutation, it must be the first value in the final row. Since $n < km + B(q)$, there exist at most $B(q)-1$ values to the right of 1 in $p'$. However, by defintion of $B(q)$, there are at least $B(q)$ values to the right of 1 in $q$. Thus, $p'$ avoids $q$ as desired.

\item[$n = km + s$, $B(q)\leq s < k $]\

Let $p_{n-B(q)+1}$ = $f$. We claim that in this subcase, the injection of the value $f$ into the final row will result in a valid $p' \in D^k_{n+1}(q)$. Assume, for the sake of contradiction, $p'$ contains $q$. As a result of the injection, $p_{n-B(q)+2} = f+1$. If only $p_{n-B(q)+1}$ is part of the subsequence, this is a contradiction because then the same
subsequence is in p, indicating that p contains q. Similarly, if only $p_{n-B(q)+2}$ is part of the subsequence, since $p_{n-B(q)+1}$ and $p_{n-B(q)+2}$ are consecutive integers, we can simply swap the corresponding position of $p_{n-B(q)+2}$ for $p_{n-B(q)+1}$, resulting in another contradiction. So, both of $p_{n-B(q)+1}$ or $p_{n-B(q)+2}$ must be in the subsequence that is order-isomorphic to $q$. There remain two nontrivival cases to consider: if both $p_{n-B(q)+1}$ and $p_{n-B(q)+2}$ are isomorphic to values part of the consecutive block or if both $p_{n-B(q)+1}$ and $p_{n-B(q)+2}$ are isomorphic to values not part of the consecutive block. If the former case were true, then $B(q) > 1$, implying that $p_{n-B(q)+3}$ exists. However since $q_b = b$, $p_{n-B(q)+2}$ is the largest value in the subsequence. But, then, we could substitute the $p_{n-B(q)+3}$ term for the $p_{n-B(q)+2}$, which is a contradiction since this then implies that the original $p$ contained $q$. The latter possibility similarly leads to contradiction as well.
\end{itemize}

\noindent {\bf Case 2: $q_b \neq b$}
\begin{itemize}

\item[$n = km + s$, $0 < s < k$]\

In this subcase, the final row has at least one cell (i.e. a value), but must still be incomplete (since $s < k$). Thus, there are no restrictions on what can be injected into the row. So, the algorithm is simply to inject $n+1$ (in this case, this is simply appending $n+1$ to the end). Clearly, since this must be the largest value in the permutation, and the value is in the last row, $p_{n+1} = n+1$. Thus, the $p'$ that results must avoid $q$, since clearly $p_{n+1}$ cannot be part of a subsequence order-isomorphic to $q$ since $q_b \neq b$ and since the original $p$ avoided $q$.


\item[$n = km$]\

This subcase is slightly more complicated. Here, he have an added restriction; by definition, $p_{km} > p_{km+1}$. We will proceed with further casework.


\textbf{If $q_b \neq 1$}, we simply inject 1 into the final row (i.e. $p_{km+1} = 1$). Clearly, then, since 1 is the smallest value in the permutation, $p_{km+1}$ cannot be part of a subsequence that is order-isomorphic to $q$ since $q_b \neq 1$. Thus, since the original permutation $p$ avoided $q$, $p'$ avoids $q$ as well, and so, $p' \in D^k_{n+1}(q)$.


\textbf{If $q_b = 1$ and $q_{b-1} = 2$}, the algorithm is to simply inject $n+1$. As a result, $p_{km} = n+1$, and then the old value of $p_{km}$ is bumped up into the next row. This swapping is essential because $p_{km} > p_{km+1}$. Clearly, $p_{km}$ cannot be part of any subsequence order-isomorphic to $q$, because the value of $n+1$ cannot correspond to the 1 nor 2 in $q$. Thus, since the original $p$ avoids $q$, and the relative positions of the values in $p'$ are invariant from $p$, $p'$ avoids $q$ as well, and so, $p' \in D^k_{n+1}(q)$.


\textbf{If $q_b = 1$ and $q_{b-1} \neq 2$}, the algorithm is to inject/append $f$ to the new row. Let $p_{km}$ = $f$. So, upon the injection, $p_{km}$ = $f+1$ and $p_{km}$ = $p_{km+1} + 1$. We claim that the resulting $p'$ avoids $q$. Assume, for the sake of contradiction, $p'$ contains $q$ and so there exists some subsequence of $p'$ that is order-isomorphic to $q$. Since $p$ avoids $q$, the only situations to consider are if only one of $p_{km}$ and $p_{km+1}$ are part of the subsequence, or if both $p_{km}$ and $p_{km+1}$ are in the subsequence. These situations are easily tractable, yielding contradictions in a manner similar to the proofs above.  Thus, $p'$ avoids $q$ as well, and so, $p' \in D^k_{n+1}(q)$.

\end{itemize}

The above procedures are all reversible as we can easily undo the injection. Thus, the casework shows that the algorithm is indeed injective and that the children $f(p)$ are pairwise distinct.
\end{proof}

\subsection{Strict Case: $|D^k_n(q)| < |D^k_{n+1}(q)|$}
\label{strict case}

Call a pattern \emph{repetitive} if it avoids $321, 132, 231$. Similarly, a pattern is \emph{non-repetitive} if it contains at least one of $321, 132, 231$.
We prove that
\begin{thm}
\label{theorem: strict case}
	For all non-repetitive patterns $q$ all all $k, n$, we have $|D^k_n(q)| < |D^k_{n+1}(q)|$.
    If $q$ is repetitive and $k \mid n$, then $|D^k_n(q)| < |D^k_{n+1}(q)|$ as well.
\end{thm}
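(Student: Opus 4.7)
The plan is to refine the injection $f\colon D^k_n(q) \hookrightarrow D^k_{n+1}(q)$ of Theorem~\ref{nonstrict case theorem} by exhibiting at least one permutation of $D^k_{n+1}(q)$ that is not in the image of $f$; since $f$ is injective, this immediately promotes the weak inequality to a strict one.

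First I would catalogue the branches of $f$ and, for each branch, describe the image as the set of permutations with a recognizable signature. In every branch $f$ injects a prescribed value into a prescribed position: when $q_b = b$ and $n = km + s$ with $0 \le s < B(q)$, the value $1$ is injected and becomes the leftmost entry of the final row; when $q_b \ne b$ and $n = km + s$ with $0 < s < k$, the value $n+1$ is appended as the last entry; when $q_b \ne b$, $n = km$, $q_b = 1$, $q_{b-1} = 2$, the value $n+1$ lands at position $km$; and analogously in the remaining subcases. In each situation, membership in $\operatorname{Im}(f)$ is detected by inspecting one or two specific coordinates of the candidate child and verifying that the putative parent obtained by deleting the injected value still lies in $D^k_n(q)$. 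A priori the putative parent can fail to be of descent type $k$, which is an extra source of non-surjectivity we can exploit.

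Second I would construct a witness $w = w_{k,n,q} \in D^k_{n+1}(q)$ whose relevant coordinates violate the signature of every applicable branch. In the non-repetitive case, the hypothesis that $q$ contains one of $321, 132, 231$ gives enough freedom to prescribe the last few entries of $w$ so as to dodge the signature while preserving $q$-avoidance; the guaranteed presence of a short forbidden sub-pattern lets us argue locally that a small modification of a ``canonical'' descent-type-$k$ permutation (for example the one whose rows are consecutive blocks of integers in a carefully chosen order) still avoids $q$. In the repetitive case with $k \mid n$, the rigidity of repetitive patterns (whose length-$3$ subpatterns lie in $\{123, 213, 312\}$) replaces this reservoir, and the divisibility $k \mid n$ ensures that $w$ has no incomplete final row, so one can append a fresh row whose placement of $1$ or $n+1$ simultaneously breaks the signature in every candidate branch of $f$.

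The main obstacle is the case proliferation: the algorithm defining $f$ has multiple subcases distinguished by $n \bmod k$ and by the structure of $q$ at its extreme entries, and the non-repetitive hypothesis splits further according to which of $321, 132, 231$ is contained in $q$. Each case demands its own witness and its own verification of descent type $k$ together with $q$-avoidance. The technical core of the proof will be reducing these constructions to a small number of archetypes so that non-surjectivity of $f$ follows from a short common argument; once the signatures are pinned down and witnesses are in hand, the strict inequality is automatic.
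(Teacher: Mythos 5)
Your high-level plan---exhibit an element of $D^k_{n+1}(q)$ missed by the injection $f$ of Theorem~\ref{nonstrict case theorem}---is the right target, but there is a genuine gap: you are missing the reduction step that the paper isolates as Lemma~\ref{inductive step between D_n and D_n+1}. That lemma says that if $q$ contains $q'$ and $|D^k_n(q')| < |D^k_{n+1}(q')|$, then $|D^k_n(q)| < |D^k_{n+1}(q)|$ as well. Its proof is a short counting argument using only the injectivity of $f$ and the fact that every child contains its parent: $f(D^k_n(q'))$ has size $|D^k_n(q')| < |D^k_{n+1}(q')|$, so some $w \in D^k_{n+1}(q')$ is outside $f(D^k_n(q'))$; if $w = f(p)$ for some $p \in D^k_n(q)$, then $p$ would avoid $q'$ (being contained in $w$), forcing $w \in f(D^k_n(q'))$, contradiction. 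This is the device that turns ``case proliferation'' from the obstacle you correctly flag into a non-issue: one only needs a witness (or a second, disjoint injection) for a short finite list of patterns---$132$, $231$, $321$, and a handful of length-$4$ patterns---and the lemma transfers the strictness to every $q$ containing them.

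Without that reduction, your plan requires, for every non-repetitive $q$ of arbitrary length, a bespoke characterization of $\operatorname{Im}(f)$ and a bespoke witness avoiding $q$. Your sketch of this---``argue locally that a small modification of a canonical descent-type-$k$ permutation still avoids $q$''---is exactly where the difficulty is hiding: checking that a modified permutation avoids an arbitrary long pattern $q$ is not a local check, and the ``signature'' of membership in $\operatorname{Im}(f)$ requires verifying that the putative parent lies in $D^k_n(q)$, which is again global in $q$. By contrast the paper's small-pattern witnesses come cheaply: for $132$ and $231$ it invokes Lewis's bijection to sub-Young-diagrams (where the strict containment of shapes gives strictness), for $321$ it exhibits two distinct children of each parent, and for the length-$4$ residual cases it gives an explicit second injection. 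Your repetitive-pattern paragraph has the same issue: the paper handles $k \mid n$ by checking that every pattern contains one of $\{123, 213, 312\}$ (or $\{321, 213, 312\}$ for $k>2$) or one of four explicit length-$4$ patterns, and again Lemma~\ref{inductive step between D_n and D_n+1} does the heavy lifting. I recommend reformulating your plan around a reduction-to-minimal-patterns lemma; the witness constructions then become a short finite checklist rather than a construction parameterized by $q$.
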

Our approach will be an inductive one. Lemma~\ref{inductive step between D_n and D_n+1} captures the overall nature of induction from $|D^k_n(q)|$ to $|D^k_{n+1}(q)|$, while the rest of the section more specifically details our algorithm through casework based on the value of $n$.

The following lemma provides the framework for our inductive argument.

\begin{lemma}
		\label{inductive step between D_n and D_n+1}
		Let $q$ and $q'$ be two patterns such that $q$ contains $q'$. If  $|D^k_n(q')| < |D^k_{n+1}(q')|$, it must also be true that $|D^k_n(q)| < |D^k_{n+1}(q)|$.
		\end{lemma}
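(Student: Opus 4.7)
My approach is to deduce the strict inequality for $q$ from the corresponding one for $q'$ by a contradiction argument that leverages the injection $f: D^k_n(q) \hookrightarrow D^k_{n+1}(q)$ guaranteed by Theorem~\ref{nonstrict case theorem}. The starting point is the elementary monotonicity $D^k_m(q') \subseteq D^k_m(q)$ for every $m$, which holds because any permutation that contains $q'$ also contains $q$.

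The crucial structural observation, which I would establish first, is that whenever $w = v \mapsto p$ is a child of $p$, the pattern $p$ appears as a subsequence of $w$. I would verify this by inspecting the three cases in the definition of injection: if the final row of $p$ is incomplete, then omitting the final-row entry of $w$ equal to $v$ recovers the entries of $p$ in order (the remaining final-row entries are in the same relative order as in $p$, since both are sorted); if the final row is complete and $v \le p_n$, then the first $n$ entries of $w$ already form a copy of $p$; and if the final row is complete and $v > p_n$, then after the last-two-entry swap performed by the algorithm, the subsequence of $w$ at positions $1, 2, \ldots, n-1, n+1$ recovers $p$. An immediate corollary is a monotonicity property of the parent relation: if a child $w$ avoids $q'$, then every parent $p$ of $w$ also avoids $q'$, since any copy of $q'$ inside $p$ would lift to a copy of $q'$ inside $w$.

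With this corollary in hand, the rest is short. Suppose for contradiction that $|D^k_n(q)| = |D^k_{n+1}(q)|$, so that the injection $f$ from Theorem~\ref{nonstrict case theorem} is a bijection and $f^{-1}$ is defined on all of $D^k_{n+1}(q)$. Restricting $f^{-1}$ to the subset $D^k_{n+1}(q') \subseteq D^k_{n+1}(q)$ and applying the corollary sends each $w \in D^k_{n+1}(q')$ to a permutation $f^{-1}(w) \in D^k_n(q')$, yielding an injection $D^k_{n+1}(q') \hookrightarrow D^k_n(q')$. This contradicts the hypothesis $|D^k_n(q')| < |D^k_{n+1}(q')|$, so $f$ cannot be surjective and we conclude $|D^k_n(q)| < |D^k_{n+1}(q)|$.

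The main step requiring care is the structural observation in the second paragraph: because the rearrangement step built into $v \mapsto p$ reorders entries of the final row, and the complete-final-row case with $v > p_n$ performs an additional swap, it is not a priori obvious that the pattern $p$ survives in $w$. Once one pins down which $n$ positions of $w$ to extract in each case, the verification is routine, and everything else reduces to the standard subset inclusion and a one-line contradiction.
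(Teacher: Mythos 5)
Your proof is correct and takes essentially the same approach as the paper's: both rely on the facts that a child contains its parent (so a parent of a $q'$-avoider avoids $q'$) and that the injection $f$ of Theorem~\ref{nonstrict case theorem} would have to miss an element of $D^k_{n+1}(q')$. You merely package this as a proof by contradiction, and you re-derive the containment fact that the paper had already stated just above the lemma (``every child of $p$ contains $p$''), so that second paragraph of yours is redundant with the established text.
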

		\begin{proof}
The key idea is that any parent of a permutation that avoids $q'$ also avoids $q'$.
Therefore, under the theorem conditions, the assignment of children of Theorem~\ref{nonstrict case theorem} must
miss a permutation in $D^k_{n+1}(q')$.
Let $f: D^k_n(q) \hookrightarrow D^k_{n+1}(q)$ be defined from Theorem~\ref{nonstrict case theorem}.
If $f(p) \in D^k_{n+1}(q)$,
then a child of $p$ does not contain $q$, and therefore $p$ does contain $q$
and $p \in D^k_n(q)$.  Because  $|D^k_n(q')| < |D^k_{n+1}(q')|$, there is an
element of $D^k_{n+1}(q')$ that is outside $f(D^k_n(q')) \supseteq D^k_{n+1}(q') \cap f(D^k_n(q))$.
Hence, $f$ is not surjective, and therefore $|D^k_n(q)| < |D^k_{n+1}(q)|$.
\end{proof}

\begin{proof}[Proof of Theorem~\ref{theorem: strict case}]
We now shall proceed with casework based on the value of $n$. In Section~\ref{$n = km + s$, $0 < s < k$}, we deal with the case of continuing an incomplete final row, and so investigate permutations that contain at least one
of 321, 132, 231.  We apply Lemma~\ref{inductive step between D_n and D_n+1} for the desired result. In Section~\ref{$n = km$}, when constructing a new row, we continue with the same structure of our proof.
We first consider permutations that avoid at least one of 213, 312; then,
we do casework to finish the argument. 

\subsubsection{$n = km + s$, $0 < s < k$}
\label{$n = km + s$, $0 < s < k$}


We consider non-repetitive patterns, and we revisit repetitive patterns in the Section~\ref{equality case}.
Let $q$ be a non-repetitive pattern, and we do casework on which of 132, 231, 321 that $q$ contains.

\begin{itemize}[leftmargin=*]
\item[132 or 231]

Much of this case has already been shown by Lewis in \cite{Lewis2011}.
Recall that permutations of descent type $k$ are obtaining from
reading skew Young tableaux of a particular shape (see Figure~\ref{DnkSSYT}).
Suppose that permutations of length $n$ of descent type $k$ are identified with tableaux of
shape $\lambda / \mu$ and such permutations of length $n+1$ are identified with tableaux
of shape $\lambda' / \mu'$.  Let $r = \lceil \frac{n}{k}\rceil$;
then $\lambda$ and $\lambda'$ have $r$ rows.  By~\cite[Corollary 7.3]{Lewis2011}, there is a bijection
between $D^k_n(132)$ and the set of Young diagrams $Y \subseteq (\lambda_1-\lambda_r, \lambda_2-\lambda_r,
\cdots, \lambda_{r-1} - \lambda_r)$, and similarly for $D^k_{n+1}(q)$ and $\lambda'$.
However, we have $\lambda'_1 = \lambda_1 + 1$ and $\lambda_i = \lambda'_i$ for all $i > 1$.
Therefore, $(\lambda_1-\lambda_r, \lambda_2-\lambda_r,
\cdots, \lambda_{r-1} - \lambda_r) \subsetneq (\lambda'_1-\lambda'_r, \lambda'_2-\lambda'_r,
\cdots, \lambda'_{r-1} - \lambda'_r)$ and the fact that $|D^k_{n}(q) < |D^k_{n+1}(q)|$ follows.
A similar argument using~\cite[Corollary 7.6]{Lewis2011} settles the 231 case.

\item[321]
Consider a permutation $p$ in $|D^k_n(321)|$. Clearly, we may simply append the value $n+1$ to $p$
to obtain an element of $D^k_{n+1}(321)$. However, we can also replace $p_{km}$ with the value $n+1$, and inject the $p_{km}$ value into the final row. Since the sets of permutations derived from the two procedures are disjoint
due to different locations of $n+1$, we have
$|D^k_{n+1}(321)| \geq 2|D^k_n(321)|$.
	
\end{itemize}

Lemma~\ref{inductive step between D_n and D_n+1} implies that, for all $k \nmid n$
and $q$ non-repetitive, we have $|D^k_n(q)| < |D^k_{n+1}(q)|$.

\subsubsection{$n = km$}
\label{$n = km$}

This case is slightly more complicated than the previous case. When $k = 2$ (alternating permutations), we firstly
 consider patterns that contain at least one of 123, 213, and 312. However, when $k > 2$, we instead consider patterns
 that contain at least one of 321, 213, and 312. For both these triplets of patterns, there are patterns that avoid all three. So, at the end of this section, we address these patterns by considering all such patterns of length 4. It is important to note that unlike the previous subsection, this case includes repetitive patterns as well.

\begin{itemize}[leftmargin = *]
\item[123]

Since we are only considering the $k = 2$ case, we may clearly inject the values 1 and 2 while
preserving 123-avoidance. Since these injections result in distinct permutations, we have $|D^2_{2m}(123)| < |D^2_{2m+1}(123)|$.

\item[213 or 312]

The idea here is very similar to the preceding one. Since the permutation $p$ has descent type $k$, we may clearly inject any value $v \leq k$ into $p$. One may verify that this operation is reversible.
Therefore, we have $|D^k_{km+1}(q)| \geq k|D^k_{km}(q)| > |D^k_{km}(q)|$ for $q$ = 213 or 312.

\item[321]

In this case we prove the following more interesting result.

\begin{prop}
For all $k, m > 1,$ we have
\[|D^k_{km+1}(321)| = \displaystyle\sum\limits_{i = k(m-1) + 2}^{km} |D^k_{i}(321)|.\]
\end{prop}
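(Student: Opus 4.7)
The plan is to construct an explicit bijection between $D^k_{km+1}(321)$ and the disjoint union $\bigsqcup_{i=k(m-1)+2}^{km} D^k_i(321)$, parametrized by an integer $s \in \{1,\ldots,k-1\}$ corresponding to $i = km+1-s$. The key structural observation is that if $p \in D^k_{km+1}(321)$ and $v = p_{km+1}$, then the tail of row $m$ of $p$ (the positions whose values exceed $v$) must consist of the largest values of $[km+1]$, stacked from the top down in a forced configuration.

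The first (and hardest) step is this structure lemma. For $p \in D^k_{km+1}(321)$, let $s$ be the number of entries of row $m$ greater than $v$; since $p_{km} > p_{km+1}$, we have $s \ge 1$. For any position $j \in [k(m-1)+1,km]$ with $p_j > v$, 321-avoidance applied to the triple $(p_i,p_j,p_{km+1})$ for $i<j$ forces $p_j$ to be a left-to-right maximum of $p$. Since row $m$ is increasing, the entries greater than $v$ are exactly the top $s$ positions $p_{km-s+1} < \cdots < p_{km}$, and they are consecutive left-to-right maxima. By induction on $r = 1,2,\ldots,s$, one shows $p_{km-r+1} = km-r+2$ and hence $v \le km-s+1$; this uses that once the top $r-1$ values are placed, the next left-to-right maximum of the remaining prefix must be the largest remaining value. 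Finally, $s = k$ is ruled out because it would force the last entry of row $m-1$ to exceed $p_{k(m-1)+1} = k(m-1)+2$, but only values in $\{1,\ldots,k(m-1)+1\}$ remain available for the first $m-1$ rows. Thus $1 \le s \le k-1$.

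Next I would define $\Phi: D^k_{km+1}(321) \to \bigsqcup_{i} D^k_i(321)$ by deleting positions $km-s+1,\ldots,km$ (equivalently, the $s$ largest values), producing a word of length $km+1-s$ on the value set $[km+1-s]$. No standardization is required. Descent type $k$ is preserved: the only new adjacency is between position $km-s$ and position $km-s+1$ (which now holds $v$), and this is an ascent since $p_{km-s}$ is the largest element of row $m$ that is at most $v$, hence $p_{km-s} < v$. 321-avoidance is inherited since $\Phi(p)$ is a sub-permutation of $p$ on a subset of positions. Conversely, define $\Psi$: for $p' \in D^k_{km+1-s}(321)$, insert the values $km+2-s,\ldots,km+1$ in increasing order at positions $km-s+1,\ldots,km$ and shift the last entry of $p'$ to position $km+1$. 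One checks that the resulting row $m$ is increasing (new values exceed all old row-$m$ values), that $p_{km} = km+1 > v$, and that 321-avoidance holds by casework on where the three indices of a hypothetical 321 pattern lie (if any lie in the inserted range, the forced order of new values or comparison with $v$ gives a contradiction or reduces to a 321 in $p'$).

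The verification that $\Phi\circ\Psi$ and $\Psi\circ\Phi$ are identities is routine from the definitions: $\Psi(p')$ has exactly $s$ entries in row $m$ above $v = p'_{km+1-s}$, namely the inserted ones, so applying $\Phi$ recovers $p'$; conversely $\Phi(p)$ records $s(p)$ implicitly via its length, and $\Psi$ re-inserts precisely the values $km+2-s,\ldots,km+1$ that $\Phi$ removed. The main obstacle is the structure lemma in step one — once the forced staircase $p_{km-s+1},\ldots,p_{km} = km+2-s,\ldots,km+1$ is in hand, the bijection is essentially geometric, and the claim $|D^k_{km+1}(321)| = \sum_{i=k(m-1)+2}^{km} |D^k_i(321)|$ follows by partitioning $D^k_{km+1}(321)$ according to $s \in \{1,\ldots,k-1\}$, which is in bijection with $i \in \{k(m-1)+2,\ldots,km\}$.
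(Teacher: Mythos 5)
Your proposal is correct and matches the paper's approach: the paper's ``Proof Idea'' removes the largest consecutive block (the run of consecutive values $km+2-s,\ldots,km+1$ occupying the top-right corner of row $m$) and collapses the lone final entry down, which is exactly your $\Phi$ with the same deleted set and the same inverse insertion. You supply the forced-staircase structure lemma, the bound $1 \le s \le k-1$, and the 321-avoidance check for $\Psi$ that the paper's sketch leaves implicit, but the bijection itself is identical.
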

\begin{figure}
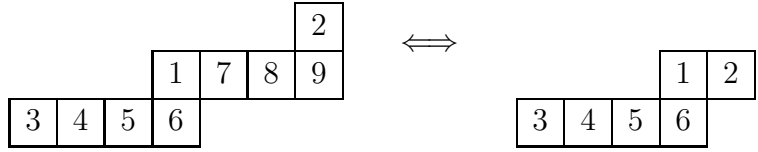

\center{
\begin{ytableau}
\none & \none & \none & \none & \none &\none & 2\\
\none & \none & \none & 1 & 7 & 8 & 9\\
3 & 4 & 5 & 6
\end{ytableau}} \hspace{.5 cm}
$\Longleftrightarrow$ \hspace{.5 cm}
\begin{ytableau}
\none\\
\none & \none & \none & 1 & 2\\
3 & 4 & 5 & 6
\end{ytableau}
\caption{Removing the largest consecutive block (7,8,9) from 345617892 and collapsing the final row into the row beneath it results in another valid permutation avoiding 321. Reversing the deletion of a consecutive block is clear as well, for inserting (7,8,9) into 345612 bumps up the final value 2 into a new row and the consecutive block fills the prior final row.}
\label{consecblock}
\end{figure}

\begin{proof}[Proof Idea]
The essence of this proof is reintroducing the notion of the consecutive block. First, it is important to note that for $p \in D^k_{km+1}(321)$, $p_{km}$ = $km+1$. Thus, a bijection is achieved by simply inserting a consecutive block into a permutation and in the other direction, removing the largest such block from a permutation. We may see this better graphically. Structurally, a consecutive block is a group of consecutive cells of a permutation that are in the same row and also consecutively ascending in value. A graphical example of this bijection is depicted in Figure~\ref{consecblock}.
The proposition follows from this bijection.
\end{proof}

Note that $|D^k_{km+1}(321)|$ = $|D^k_{km}(321)|$ when $k = 2$; however, when $k > 2$, we have $|D^k_{km+1}(321)| > |D^k_{km}(321)|$.

\item[Patterns avoiding 123, 213, and 312]\

Consider the length-4 patterns that avoid 123, 213, and 312; they are 4321, 1432, 2431, and 3421. Note that every pattern that avoids 123, 213, and 312 must contain at least one of these 4 patterns. We shall show that for each of these patterns there exists a second valid injection, distinct from the one provided in Section~\ref{nonstrict case}, for every $p \in D^k_{km}(q)$. For the sake of space, we shall simply provide the second algorithm in each case without proof, for the individual proofs are quite simple.

\begin{itemize}[leftmargin = *]

\item[4321 and 3421]

Inject $n$ if $p_{km} \neq n$. Inject $n-1$ if $p_{km} = n$.

\item[1432 and 2431]

Inject $n+1$.

\end{itemize}

Thus, these length-4 patterns help conclude the case for patterns that avoid 123, 213, and 312.

\item[Patterns avoiding 321, 213, and 312]\

Consider the length-4 patterns that avoid 321, 213, and 312; they are 1234, 1243, 1342, and 2341. Note that every pattern that avoids 321, 213, and 312 must contain at least one of these 4 patterns. As before, we simply provide the algorithm.

\begin{itemize}[leftmargin = *]

\item[1234 and 1243]

Inject 2.

\item[1342]

Inject $p_{km}$

\item[2341]

Inject $n-1$ if $p_{km} = n$. Inject $p_{km}+2$ if $p_{km} \neq n$.

\end{itemize}

Thus, these length 4 patterns help conclude the case for patterns that avoid 321, 213, and 312.
Combining all the cases and applying Lemma~\ref{inductive step between D_n and D_n+1}
yields that $|D^k_n(q)| < |D^k_{n+1}(q)|$ for all $q$ and $k \mid n$. \qedhere
\end{itemize}
\end{proof}
\subsection{Equality Case: $|D^k_n(q)| = |D^k_{n+1}(q)|$ (Repetitive Patterns)}
\label{equality case}

In the previous section, we defined repetitive patterns to be those that avoided 321, 132 and 231 simultaneously. Now,
we determine the structure of such patterns. Fix the location of the ``1''. Since the pattern avoids 231 and 321 concurrently, there can be at most one value to the left of the ``1''. Additionally, since the pattern avoids 132, all values to the right of the ``1'' must be in strictly increasing order. Thus, the pattern $q$ must be the identity pattern, or must be of the form $t123\cdots(t-1)(t+1)\cdots b$, where $q_1 = t$ and $b$ is the length of the pattern.

We consider when $q$ is a non-identity, repetitive pattern. The sequence
$\left\{D^k_n(q)\right\}$ has predictable repetitions among consecutive terms.  We prove the following theorem.

\begin{thm}
	\label{repetitive pattern theorem}
For all $k \ge b-1$ and all non-identity repetitive patterns $q$ of length $b,$
\[|D^k_{km+(b-2)}(q)| = |D^k_{km+(b-1)}(q)| = |D^k_{km+b}(q)| = \cdots = |D^k_{km+k}(q)|.\]
\end{thm}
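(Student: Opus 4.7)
The plan is to upgrade the injection $f\colon D^k_n(q) \hookrightarrow D^k_{n+1}(q)$ of Theorem~\ref{nonstrict case theorem} to a bijection for every $n$ with $km+(b-2) \le n \le km+k-1$; combined with Theorem~\ref{nonstrict case theorem}, this yields the asserted chain of equalities. Since $q$ is a non-identity repetitive pattern, it has one of two forms: either $q = t\,1\,2\cdots(t-1)(t+1)\cdots b$ with $2 \le t \le b-1$ (so $q_b = b$ and $B(q) = b - t$), or $q = b\,1\,2\cdots(b-1)$ (so $q_b = b-1$). Throughout the range $s \in [b-2, k-1]$ one has $s \ge B(q)$ in the first case and $0 < s < k$ in the second, so the algorithm of Theorem~\ref{nonstrict case theorem} injects $p_{n-B(q)+1}$ uniformly in the first case and $n+1$ uniformly in the second.

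To produce an inverse $g\colon D^k_{n+1}(q) \to D^k_n(q)$, write $n+1 = km + s'$ and set $u_0 = s' - B(q)$. In the first case, define $g(p')$ by deleting $p'_{km+u_0}$ and decrementing all larger values; in the second case, simply delete $p'_{n+1}$. The main content of the proof is two structural lemmas showing that these are exactly the values injected by $f$: (A) in the first case, $p'_{km+u_0+1} = p'_{km+u_0} + 1$; (B) in the second case, $p'_{n+1} = n+1$. Once these are established, a routine check shows that $g(p') \in D^k_n(q)$ (the final row remains sorted after deletion and decrement) and that $f$ and $g$ are mutually inverse.

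I would prove lemma (A) by contradiction: if $v = p'_{km+u_0}$ and $w = p'_{km+u_0+1}$ satisfy $w \ge v + 2$, pick any $x$ with $v < x < w$. Since the final row of $p'$ is strictly increasing, $x$ cannot lie in it, so $x = p'_j$ for some $j \le km$. Then the subsequence $p'_j,\; p'_{km+u_0-t+2},\; \ldots,\; p'_{km+u_0},\; p'_{km+u_0+1},\; \ldots,\; p'_{km+s'}$ (well-defined because $u_0 \ge t-1$, which in turn follows from $s' \ge b-1$) places $x$ first as the $t$-th smallest, followed by $t-1$ strictly smaller values in increasing order, then by $b-t$ strictly larger values in increasing order; this is a copy of $q$, contradicting $p' \in D^k_{n+1}(q)$. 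Lemma (B) is similar but easier: if $n+1$ were at some position $j_0 \le km$, then $p'_{j_0} = n+1$ together with the first $b-1$ entries of the sorted final row would form a copy of $q = b\,1\,2\cdots(b-1)$, while for $j_0 > km$ the sortedness of the final row forces $n+1$ (the global maximum) to sit at position $km + s' = n+1$. The main obstacle is precisely lemma (A): the forbidden copy must be assembled by splitting $q$ according to its decomposition into an opening of length $t$ and a final consecutive block of length $B(q) = b-t$, and the hypothesis $k \ge b-1$ is used exactly to guarantee that enough entries lie in the final-row prefix before position $km + u_0$ to play the roles of the $t-1$ smaller values.
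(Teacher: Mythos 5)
Your proposal is correct and mirrors the paper's argument essentially step for step: you split into the same two cases ($q_b = b$ and $q_b \ne b$), and your lemmas (A) and (B) are the paper's Lemmas~\ref{repetitive adj values} and~\ref{lem:first entry max}, proved by assembling the same forbidden copy of $q$ from one early entry plus a prefix of the sorted final row. Your inverse map (delete the injected entry and decrement) is the paper's as well, up to an immaterial choice of deleting $p'_{km+u_0}$ versus the consecutive $p'_{km+u_0+1}$.
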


We divide the proof into two separate cases. Lemmas~\ref{lem:first entry max} and~\ref{repetitive bij in b = t} tackle the case when $q_1 = b$. Similarly, Lemmas~\ref{repetitive adj values} and~\ref{repetitive general case} deal with the general case.

\begin{lemma}
		\label{lem:first entry max}
		For $q$ with $q_1 = b$ and $b-1 \leq x \leq k$, for all $p$ in $D^k_{km+x}(q)$, $p_{km+x} = km+x$.
		\end{lemma}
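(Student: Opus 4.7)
The plan is to argue by contradiction, exploiting the very restrictive form of a non-identity repetitive pattern with $q_1 = b$. Recall from the paragraph preceding the theorem that every non-identity repetitive pattern must have the shape $t\, 1\, 2\, \cdots\, (t-1)(t+1)\cdots b$; specializing to $q_1 = b$ forces $q = b\, 1\, 2\, \cdots\, (b-1)$. Thus the task is to show that the entry $km+x$ must sit at the last cell of the final row whenever $b-1 \le x \le k$.

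Fix $p \in D^k_{km+x}(q)$ and let $n = km+x$ be the maximum entry of $p$. Suppose for contradiction that $n = p_i$ for some $i < km+x$. I would split on whether $i$ lies in the final row or not. If $km < i < km+x$, then $p_i$ and $p_{i+1}$ both lie in the strictly increasing final row, so $p_{i+1} > p_i = n$, which contradicts the maximality of $n$ at once. So we may assume $i \le km$.

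In the remaining case, consider the subsequence
\[
p_i,\ p_{km+x-b+2},\ p_{km+x-b+3},\ \ldots,\ p_{km+x}.
\]
Because $x \ge b-1$, all $b-1$ trailing indices lie in the final row, so the last $b-1$ terms are strictly increasing; because $n = p_i$ is the overall maximum of $p$, each of them is strictly smaller than $p_i$. Hence this length-$b$ subsequence is order-isomorphic to $b\,1\,2\cdots(b-1) = q$, contradicting $p \in D^k_{km+x}(q)$. Therefore $p_{km+x} = km+x$, as desired.

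The argument is essentially bookkeeping; the only mildly subtle point is recognizing that the hypothesis $q_1 = b$ combined with the earlier structural description of repetitive patterns pins $q$ down completely, after which the bound $x \ge b-1$ guarantees enough room in the final row to build the forbidden copy of $q$. I do not foresee any serious obstacle beyond being careful about the two positional cases for the index $i$ that witnesses $n$.
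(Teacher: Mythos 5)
Your proof is correct and takes essentially the same approach as the paper: assume the maximum $km+x$ sits at some position $i < km+x$, observe that it cannot be in the (strictly increasing) final row, and then use $p_i$ together with $b-1$ consecutive entries of the final row to exhibit a copy of $q = b\,1\,2\cdots(b-1)$. The only cosmetic difference is that you use the last $b-1$ entries of the final row ($p_{km+x-b+2},\ldots,p_{km+x}$) while the paper uses the first $b-1$ ($p_{km+1},\ldots,p_{km+(b-1)}$); both work identically under the hypothesis $x \ge b-1$.
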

		\begin{proof}
		Assume, for the sake of contradiction, $km+x$ is not part of the final row in $p$, i.e. it exists in some 	
		earlier block of $p$. Then $\exists i < km+1 $ such that $p_i = km + x$. Since $x \geq t-1$, $p_{km+1} < p_{km+2} <
		\cdots < p_{km + (t-1)}$. However, since $i < km+1$ and $p_i > p_{km+1}$, $p_i, p_{km+1}, p_{km+2}, \cdots , p_{km +
		(b-1)}$ is order-isomorphic to $q$. Thus, $p$ contains $q$. Yet, this is a contradiction since $p$ is defined to
		\emph{avoid} $q$. Consequently,  $km+x$ must be part of the final row in $p$. However, by definition, since
		each block is strictly ordered from least to greatest, $p_{km+1} < p_{km+2} < \cdots < p_{km + x}$, and because $km+x$
		is the largest value in $p$ (and the block as well), $ p_{km+x} = km+x$, thus completing the proof of the lemma.
	\end{proof}
	
\begin{lemma}
		\label{repetitive bij in b = t}
		For $q$ with $q_1 = b$ and $b-2 \leq x \leq k-1$, there exists a bijection from $D^k_{km+x}(q)$ to $D^k_{km+(x+1)}(q)$.
		\end{lemma}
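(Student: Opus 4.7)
The plan is to exhibit an explicit inverse pair. Define $\Phi: D^k_{km+x}(q) \to D^k_{km+(x+1)}(q)$ by appending the value $km+x+1$ at the end, that is,
\[
\Phi(p) \;=\; p_1 p_2 \cdots p_{km+x}(km+x+1),
\]
and define $\Psi: D^k_{km+(x+1)}(q) \to D^k_{km+x}(q)$ by deleting the final entry. I expect that $\Phi$ and $\Psi$ are inverse bijections, and the reason the construction works is precisely the combination of the extreme structure of $q$ (its first entry is its maximum) and Lemma~\ref{lem:first entry max} applied at the shifted index.

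First I would check that $\Phi(p)$ lies in $D^k_{km+(x+1)}(q)$. For descent type $k$: the rows of length $k$ of $\Phi(p)$ are those of $p$, while the (still incomplete, since $x+1 \le k$) final row $p_{km+1} < p_{km+2} < \cdots < p_{km+x} < km+x+1$ is increasing because the appended value is the new maximum. For $q$-avoidance: the appended entry $km+x+1$ is the largest value of $\Phi(p)$, so in any potential copy of $q$ inside $\Phi(p)$, it can only play the role of $q_1 = b$; but $q_1$ requires $b-1 \ge 1$ entries to its right, and $km+x+1$ has none. Hence any copy of $q$ in $\Phi(p)$ already sits inside $p$, contradicting $p \in D^k_{km+x}(q)$.

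Next I would check that $\Psi(p')$ lies in $D^k_{km+x}(q)$ and that $\Phi, \Psi$ are mutual inverses. Since $b-1 \le x+1 \le k$, Lemma~\ref{lem:first entry max} applies to any $p' \in D^k_{km+(x+1)}(q)$ and yields $p'_{km+(x+1)} = km+(x+1)$. Thus $\Psi$ removes the unique maximum, which shortens the final row from $x+1$ to $x$ increasing entries and preserves descent type $k$; and deleting an entry cannot create an occurrence of $q$, so $q$-avoidance is preserved. The identities $\Psi(\Phi(p)) = p$ and $\Phi(\Psi(p')) = p'$ then follow immediately: in the first, we appended and then removed $km+x+1$; in the second, Lemma~\ref{lem:first entry max} guarantees the entry removed by $\Psi$ is exactly the one that $\Phi$ reappends. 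The only subtle point, which is already handled by Lemma~\ref{lem:first entry max}, is that the inverse map is well-defined on the nose (the last entry is always $km+x+1$), so no separate argument about injectivity or surjectivity is needed; this is where the restriction $x \ge b-2$ in the hypothesis is exactly used.
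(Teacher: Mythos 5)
Your proposal is correct and follows essentially the same route as the paper's proof: the forward map appends $km+x+1$ at the end (which coincides with the paper's ``inject $km+x+1$'' since no values need incrementing and the new maximum goes to the end of the final row), and the backward map strips the last entry, with Lemma~\ref{lem:first entry max} applied at length $km+(x+1)$ (legitimate since $b-1\le x+1\le k$) guaranteeing that the stripped entry is $km+(x+1)$, so the two maps are inverse. You spell out the $q$-avoidance check for $\Phi(p)$ more explicitly than the paper does, but the underlying bijection and the role of Lemma~\ref{lem:first entry max} are identical.
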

		\begin{proof}
		First we must show $D^k_{km+x}(q) \to D^k_{km+(x+1)}(q)$. For $p \in D^k_{km+x}(q)$, the
		injection of $p_{km+x+1} = km+x+1$ results in a $p' \in D^{k,l=0}_{km+(x+1)}(q)$, since this injection
		clearly maintains all original relations prior to the injection and is valid because then $p_{km+(x+1)}$ holds the
		largest value in the permutation.
		
		Now we must show $D^k_{km+(x+1)}(q) \to D^k_{km+x}(q)$. From Lemma \ref{lem:first entry max}, For $s \in D^k_{km+(x+1)}(q)$,  $s_{km+(x+1)} = km + (x+1)$. Thus, we may simply strip off $s_{km+(x+1)}$ from $s$ to get an $s'
		\in D^k_{km+x}(q)$.
		
		Thus we have established the bijection from $D^k_{km+x}(q)$ to $D^k_{km+(x+1)}(q)$.
	\end{proof}

Now we tackle the more general case.

\begin{lemma}
		\label{repetitive adj values}
		For $q$ with $q_1 = t$, $t \neq b$ and $b-1 \leq x \leq k$, for all $p$ in $D^k_{km+x}(q)$, $p_{km+(x+t-b+1)} = p_{km+(x+t-b)} + 1$.
		\end{lemma}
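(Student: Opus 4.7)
The plan is to argue by contradiction. Suppose some $p \in D^k_{km+x}(q)$ has $p_{km+(x+t-b+1)} \neq p_{km+(x+t-b)} + 1$; abbreviate $j := x+t-b$. Since the final row $p_{km+1}, \ldots, p_{km+x}$ is strictly increasing, the only way this equality can fail is $p_{km+j+1} > p_{km+j} + 1$, so some value $v$ with $p_{km+j} < v < p_{km+j+1}$ occurs at a position $i$ of $p$; since no intermediate value lives in the final row, one has $i \leq km$.

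Next I exhibit a copy of $q$ in $p$, contradicting $q$-avoidance. From the structural description of repetitive patterns given just before the theorem, the hypotheses $q_1 = t$, $t \neq b$, and $q$ non-identity force $q = t\,1\,2\,\cdots\,(t-1)(t+1)\cdots b$ with $2 \leq t \leq b-1$. A subsequence $p_{i_1}, \ldots, p_{i_b}$ is order-isomorphic to $q$ precisely when $p_{i_2} < p_{i_3} < \cdots < p_{i_b}$ and $p_{i_t} < p_{i_1} < p_{i_{t+1}}$. Set $i_1 := i$, and for $2 \leq \ell \leq b$ set $i_\ell := km + (x - b + \ell)$; in particular $i_t = km + j$ and $i_{t+1} = km + j + 1$. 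The hypothesis $x \geq b-1$ gives $i_2 = km + (x - b + 2) \geq km + 1$, while $x \leq k$ gives $i_b = km + x \leq km + k$, so $i_2, \ldots, i_b$ are consecutive cells of the final row of $p$.

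The required pattern inequalities then all hold. Since $i_2 < i_3 < \cdots < i_b$ are consecutive positions in the strictly increasing final row, $p_{i_2} < \cdots < p_{i_b}$. Since $i_1 = i \leq km < km + 1 \leq i_2$, the indices are strictly increasing. Finally, by the choice of $v$, $p_{i_t} = p_{km+j} < v = p_{i_1} < p_{km+j+1} = p_{i_{t+1}}$. Hence $p_{i_1}, \ldots, p_{i_b}$ is a copy of $q$ in $p$, the desired contradiction.

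The argument is quite direct once the right subsequence is spotted; the only subtle point is checking that the chosen positions genuinely lie in the final row, which uses both $x \geq b-1$ (to push $i_2$ past the end of row $m$) and $x \leq k$ (so that row $m+1$ has at least $x$ cells). The strict monotonicity of the final row then supplies the internal inequalities of the pattern for free, so the main obstacle was in locating the right copy of $q$ rather than in any delicate verification.
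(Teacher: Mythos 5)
Your proof is correct and is essentially the same argument as the paper's: argue by contradiction, observe that the intermediate value $v$ must occur at a position $i \le km$ because the final row is strictly increasing, and then exhibit the subsequence consisting of $p_i$ followed by the last $b-1$ entries of the final row as a copy of $q = t\,1\,2\cdots(t-1)(t+1)\cdots b$. Your write-up is, if anything, a bit more careful than the paper's about checking that $x \geq b-1$ pushes $i_2$ into the final row and that $x \leq k$ keeps $i_b$ within it.
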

		\begin{proof}
		Assume, for the sake of contradiction, $p_{km + (x+t-b)}$ and $p_{km+(x+t-b+1)}$ are not consecutive values. So, $\exists i$ such that
		$p_{km + (x+t-b)} < p_i < p_{km+(x+t-b+1)}$. Clearly, $p_i$ is not in the final block of $p$ (it it was, its ``cell'' would be in
		between those of $p_{km + (x+t-b)}$ and $p_{km+(x+t-b+1)}$, which is impossible since $p_{km + (x+t-b)}$ and $p_{km+(x+t-b+1)}$ are 	
		adjacent cells) and so,  $p_i$ is in an earlier block of $p$. So, $i < km+1$ and $p_i > p_{km+(x-b+2)}$ (since $p_{km+(x-b+2)}$ is in the final block, yet $p_{km+(x-b+2)} \leq p_{km + (x+t-b)}$).
		
		However, since
		$p_{km+(x-b+2)} < p_{km+(x-b+3)} < \cdots < p_{km + (x+t-b)} <  p_{km + (x+t-b+1)} < \cdots < p_{km + k}$, and $x \geq
		t-1$,
		$p_i, p_{km+(x-b+2)}, p_{km+(x-b+3)}, \cdots , p_{km + (x+t-b)}, p_{km + (x+t-b+1)}, \cdots, p_{km + x}$
		is order-isomorphic to $q$ (since $p_{km + (x+t-b)} < p_i < p_{km+(x+t-b+1)}$). To see this more clearly note that
		$p_{km+(x-b+2)}, p_{km+(x-b+3)}, \cdots , p_{km + (x+t-b)}$ are order-isomorphic to  $1, 2, 3, \cdots , t-1$. $p_i$
		comprises the $t$ term, while  $p_{km + (x+t-b+1)}, p_{km + (x+t-b+2)} \cdots, p_{km + (x-1)}, p_{km + x}$ is
		order-isomorphic to $t+1, t+2, t+3, \cdots , b$.
		
		Thus, $p$ \emph{contains} $q$. Yet, this is a contradiction since $p$ is defined
		to
		\emph{avoid} $q$. So, $p_{km + (x+t-b)}$ and $p_{km+(x+t-b+1)}$ are consecutive, and so, $p_{km+(x+t-b+1)} = p_{km+(x+t-b)} + 1$.
	\end{proof}
	
\begin{lemma}
		\label{repetitive general case}
		For $q$ with $q_1 = t$, $t \neq b$ and $b-2 \leq x \leq k-1$, there exists a bijection from $D^k_{km+x}(q)$ to $D^k_{km+(x+1)}(q)$.
		\end{lemma}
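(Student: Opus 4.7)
I will construct inverse bijections $\Phi : D^k_{km+x}(q) \to D^k_{km+(x+1)}(q)$ and $\Psi : D^k_{km+(x+1)}(q) \to D^k_{km+x}(q)$, in the spirit of Lemma~\ref{repetitive bij in b = t}. Set $J = x + t - b + 1$. The hypotheses $b-2 \le x \le k-1$ and (by Theorem~\ref{repetitive pattern theorem}'s non-identity assumption) $2 \le t \le b - 1$ ensure that $1 \le J$ and $J + 1 \le x + 1$, so both $km + J$ and $km + J + 1$ lie in the (incomplete) last row of any length-$(km + x + 1)$ permutation. By Lemma~\ref{repetitive adj values} applied to any $p' \in D^k_{km+(x+1)}(q)$, the entries at these two positions form a pair of consecutive integers $u, u+1$. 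I define $\Psi(p')$ by deleting the entry at position $km+J+1$ and decrementing every surviving value $> u+1$ by one, and I define $\Phi(p)$ as the inverse: given $p$, set $u = p_{km+J}$, increment every value of $p$ that is $\ge u+1$ by one, and insert the new value $u+1$ at position $km+J+1$.

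I would first check that both maps preserve descent type $k$. In the last row of $\Phi(p)$, the two new adjacent entries at positions $km+J$ and $km+J+1$ are $u$ and $u+1$; entries strictly to their left are $\le u-1$ and entries strictly to their right are $\ge u+2$ after the increment, so strict increase within the last row is preserved, and the row-boundary descent at $km$ is maintained because incrementing only enlarges the preceding row entry. The analogous check works for $\Psi$, and $\Phi \circ \Psi = \Psi \circ \Phi = \mathrm{id}$ follows by direct computation.

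The hard part is showing that $\Phi(p)$ avoids $q$. Suppose for contradiction that $\Phi(p)$ contains $q$ at positions $i_1 < \cdots < i_b$ with values $v_\ell$. If the inserted position $km+J+1$ does not appear among the $i_\ell$, the natural pullback (shift positions $> km+J+1$ down by one and decrement values $> u+1$ by one) produces a copy of $q$ in $p$, contradicting $p \in D^k_{km+x}(q)$. If $km+J+1 = i_\ell$ for some $\ell$ (so $v_\ell = u+1$) but $km+J \notin \{i_m\}$, the substitution $i_\ell \mapsto km+J$, $v_\ell \mapsto u$ preserves all relative ranks in the subsequence (no other entry carries value $u$ or $u+1$), and pulling back again yields a copy of $q$ in $p$.

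The main obstacle is the remaining sub-case where both $km+J = i_{\ell-1}$ and $km+J+1 = i_\ell$ belong to the copy, forcing $v_{\ell-1} = u$ and $v_\ell = u+1$. Consecutivity of these values forces $q_{\ell-1}, q_\ell$ to have consecutive ranks in the subsequence, which by the structure of $q = t, 1, 2, \ldots, t-1, t+1, \ldots, b$ restricts $\ell$ to $\{3, \ldots, t\} \cup \{t+2, \ldots, b\}$ (the values $\ell \in \{2, t+1\}$ would require either a descent or a rank-gap of $2$, neither compatible with consecutive values). For such $\ell$, I would build a copy of $q$ directly in $p$ at positions $i_1, \ldots, i_{\ell-2}, km+J, km+J+1, i_{\ell+1}-1, \ldots, i_b-1$, using $p_{km+J}=u$ for the $v_{\ell-1}$-slot and $p_{km+J+1}=u'>u$ (by strict increase of the last row of $p$) for the $v_\ell$-slot; strict increase of the last row of $p$ then supplies the needed rank relations. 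The sub-configuration $i_{\ell+1} = km+J+2$ requires extra care since the naive pullback would reuse position $km+J+1$; it is handled by shifting the affected slots one further down the last row of $p$ and using $p_{km+J+2}, p_{km+J+3}, \ldots$ directly to fill in for $v_{\ell+1}, v_{\ell+2}, \ldots$ as needed. In every sub-case one produces a copy of $q$ in $p$, contradicting $p \in D^k_{km+x}(q)$ and completing the argument.
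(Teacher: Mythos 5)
Your bijection is identical to the paper's: inject the value $p_{km+J}$ (with $J=x+t-b+1$) in the sense of Section~\ref{Generalized alternating permutations}, which coincides with your ``increment everything $\ge u+1$ and insert $u+1$ at position $km+J+1$''; and in the other direction delete position $km+J+1$, with Lemma~\ref{repetitive adj values} (applied with $x+1$ in place of $x$, legal because $b-1 \le x+1 \le k$) guaranteeing well-definedness exactly as you use it. The paper's proof is much shorter because it simply cites Theorem~\ref{nonstrict case theorem} for the claim that $\Phi(p)$ avoids $q$; you re-derive this from scratch.

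Your re-derivation has a genuine gap in the hard case where both $km+J=i_{\ell-1}$ and $km+J+1=i_{\ell}$ lie in the copy. Two points. First, your list $\ell\in\{3,\ldots,t\}\cup\{t+2,\ldots,b\}$ can be cut down: the copy positions $i_{\ell-1}<\cdots<i_b$ are $b-\ell+2$ distinct entries of the interval $\{km+J,\ldots,km+x+1\}$, which has only $b-t+1$ elements, so $\ell\ge t+1$; combined with $\ell\ne t+1$ this forces $\ell\ge t+2$ and renders $\{3,\ldots,t\}$ vacuous (this matters, because in that range your proposed ranks for $p_{km+J+1}$ need not be correct). Second --- the real gap --- your positions $i_1,\ldots,i_{\ell-2},km+J,km+J+1,i_{\ell+1}-1,\ldots,i_b-1$ collide when $i_{\ell+1}=km+J+2$, and your remedy (``shifting the affected slots one further down'') is a gesture, not an argument. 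To close it, let $s\ge\ell$ be maximal with $i_j=km+J+(j-\ell+1)$ for $\ell-1\le j\le s$, and replace the last-row portion $km+J+1,\ldots,km+J+(s-\ell+1)$ of the copy in $\Phi(p)$ by $km+J+2,\ldots,km+J+(s-\ell+2)$. Since $\ell\ge t+2$, the upper endpoint is at most $km+x+1$; these positions lie strictly between $km+J$ and $i_{s+1}$ (or run to the last column when $s=b$); and because the last row of $\Phi(p)$ is strictly increasing and, by $\ell\ge t+2$, all of $v_1,\ldots,v_{\ell-2}$ lie below $u=v_{\ell-1}$, the shifted values have exactly the same ranks relative to $v_1,\ldots,v_{\ell-1},v_{s+1},\ldots,v_b$ as the originals did. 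This produces a copy of $q$ in $\Phi(p)$ that omits position $km+J+1$, which pulls back to a copy in $p$. For what it is worth, the paper's own proof of Theorem~\ref{nonstrict case theorem} contains essentially the same lacuna --- it treats the sub-case ``both in the consecutive block'' as if $p'_{km+J+1}$ were the largest entry of the subsequence (true only for $\ell=b$), and dismisses the other sub-case with ``similarly leads to contradiction'' --- so your gap mirrors one in the source.
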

		\begin{proof}
	First we must show $D^k_{km+x}(q) \to D^k_{km+(x+1)}(q)$. From Section~\ref{nonstrict case}, for $p \in D^k_{km+x}(q)$, the
		injection of $p_{km+(x+t-b+2)} = p_{km+(x+t-b+1)} + 1$ results in a $p' \in D^k_{km+(x+1)}(q)$.
Now we must show $D^k_{km+(x+1)}(q) \to D^k_{km+x}(q)$. This direction is much clearer and straightforward. From Lemma~\ref{repetitive adj values}, $\forall s \in D^k_{km+(x+1)}(q)$,  $p_{km+(x+t-b+2)} = p_{km+(x+t-b+1)} + 1$. Thus, we may simply remove $p_{km+(x+t-b+2)}$ from $s$ to get an $s'
		\in D^k_{km+x}(q)$.
		Thus we have established the bijection from $D^k_{km+x}(q)$ to $D^k_{km+(x+1)}(q)$.
	\end{proof}
	
Combining Lemmas~\ref{repetitive bij in b = t} and~\ref{repetitive general case}, we prove the desired result of Theorem~\ref{repetitive pattern theorem}.

\subsubsection*{The Identity Permutation}

The identity permutation merits mention. When $k = b-1$, the identity pattern has repetitions for the exact same values of $n$ as other repetitive patterns (the argument for this case is identical to the one above). For $n \ge k \geq b$ however, we have $|D^k_n(q)| = 0$. Thus, only short identity patterns behave like other repetitive patterns.

\section{Implications of shape-equivalence for generalized alternating permutations}
\label{sec:ShapeEquivGenAlt}

Proposition~\ref{Shape2Spec} yields inequalities for $12q$ and $21q$-avoiding
generalized alternating permutations.  The following two theorems exploit the generality
of the AD-Young diagram structure.  Their proofs involve considering non-alternating AD-Young diagrams
and applying the key lemmata used in the proof of Theorem~\ref{ShapeExtend}.

\begin{prop}
\label{Extend1}
Let $C$ be an $r\times r$ permutation matrix. If $\mathcal{Y} = (Y, A, D)$ is an AD-Young diagram such that
$Y$ has $n$ rows (columns) and $A \supseteq (D \cap [n-1-r]) + 1$, then we have
\[\left|S_{\mathcal{Y}}\left(\begin{bmatrix}
I_2 & 0 \\
0 & C\end{bmatrix}\right)\right|
\le
\left|S_{\mathcal{Y}}\left(\begin{bmatrix}
J_2 & 0 \\
0 & C\end{bmatrix}\right)\right|.\]
\end{prop}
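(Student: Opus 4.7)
The plan is to mimic the structure of the proof of the Extension Theorem~\ref{ShapeExtend}. I first apply Proposition~\ref{Embed2} with $P = I_2$ and with $P = J_2$, getting
\[
\left|S_{\mathcal{Y}}\left(\begin{bmatrix} I_2 & 0 \\ 0 & C \end{bmatrix}\right)\right|
= \sum_{N \in \mathcal{D}^C} \left|S_{f^C(N)}(I_2)\right|, \qquad
\left|S_{\mathcal{Y}}\left(\begin{bmatrix} J_2 & 0 \\ 0 & C \end{bmatrix}\right)\right|
= \sum_{N \in \mathcal{D}^C} \left|S_{f^C(N)}(J_2)\right|,
\]
and I then compare the two sums term by term. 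Writing $f^C(N) = (Y',A',D')$ with $Y'$ having $k$ rows, Proposition~\ref{Shape2Spec} says each summand is $0$ or $1$: the $I_2$-summand equals $1$ exactly when $Y' \supseteq (k,k-1,\ldots,1)$ and $A' = \emptyset$, while the $J_2$-summand equals $1$ exactly when $Y' \supseteq (k,k-1,\ldots,1)$ and $D' = \emptyset$. Hence the proposition reduces to the set-inclusion claim that for every $N \in \mathcal{D}^C$, if $Y'(N)$ contains the staircase and $A'(N) = \emptyset$, then $D'(N) = \emptyset$ as well.

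To prove the implication, I would argue by contradiction: suppose $i \in D'(N)$, so that $r_i \in D$ and $r_{i+1} = r_i + 1$, and I will produce an element of $A'(N)$. Because the dominant rows satisfy $r_j \ge j$ with $r_k \le n$, and because the copy of $C$ inside $T$ forces at least $r$ non-dominant rows (so $k \le n-r$), I get $r_i \le n-(k-i)$, and hence $r_i \le n-1-r$ as soon as $i \le k-r-1$. In that range the hypothesis $A \supseteq (D \cap [n-1-r]) + 1$ gives $r_i+1 = r_{i+1} \in A$. To promote $r_{i+1} \in A$ into the stronger statement $i+1 \in A'$, I need $r_{i+2} = r_{i+1} + 1$; I plan to obtain this by applying Lemmata~\ref{BW1} and~\ref{ShiftingC} to carry dominance of $(r_{i+1}, b_{r_{i+1}})$ one row further down, exactly as in the proof of Proposition~\ref{AltTechnical}. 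The resulting $i+1 \in A'$ contradicts the assumption $A'(N) = \emptyset$.

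The main obstacle I anticipate is the boundary range $i > k-r-1$, where the hypothesis does not directly constrain $r_i$. I plan to dispatch it by exploiting the rigid geometry of $T$ near the bottom-right copy of $C$: the last $r$ dominant rows of $Y'$ lie against the columns occupied by that copy, and the equal-length-row requirement coming from $r_i \in D$ (together with another application of Lemma~\ref{ShiftingC}) should force $r_{i+1} + 1$ to remain dominant, producing an element of $A'$ by the same route as above. Once this final case is settled, summing the term-by-term inequality over $N \in \mathcal{D}^C$ yields the stated bound.
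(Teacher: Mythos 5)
Your reduction to a term-by-term comparison via Proposition~\ref{Embed2} and Proposition~\ref{Shape2Spec} is exactly the paper's strategy, and your reformulation --- ``if $A'(N)=\emptyset$ then $D'(N)=\emptyset$'' --- is the right target.  However, the step you plan to use to establish it does not work, and the direction of travel matters here.

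Reading the hypothesis literally as $A\supseteq (D\cap[n-1-r])+1$, from $i\in D'$ (so $r_i\in D$, $r_{i+1}=r_i+1$) you extract $r_{i+1}\in A$ and then try to show $i+1\in A'$, i.e., that $(r_{i+1}+1,\,b_{r_{i+1}+1})$ is dominant.  But $r_{i+1}\in A$ forces $b_{r_{i+1}+1}>b_{r_{i+1}}$, so that square sits strictly \emph{southeast} of the dominant square $(r_{i+1},b_{r_{i+1}})$.  By Lemma~\ref{BW1} the dominant squares form a top-left-justified Young diagram, so dominance propagates northwest, not southeast; and Lemma~\ref{ShiftingC} only lets you push dominance down one row when the element in the new row sits to the \emph{left} of the dominant column, which is exactly the opposite of what an ascent gives you.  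So Lemmata~\ref{BW1} and~\ref{ShiftingC} cannot carry you ``one row further down'' here, and the step collapses.  The paper instead moves \emph{up}: from $i\in D'$ it deduces $r_i-1\in A$, so $b_{r_i-1}<b_{r_i}$, the square $(r_i-1,b_{r_i-1})$ lies northwest of a dominant square, Lemma~\ref{BW1} forces it to be dominant, and the second half of Proposition~\ref{AltTechnical} gives $i-1\in A'$.  That argument needs the hypothesis ``every required descent in range is \emph{preceded} by a required ascent,'' i.e., $A\supseteq(D\cap[n-1-r])-1$; the ``$+1$'' in the printed statement is inconsistent with both the accompanying prose and the paper's own proof (and, taken at face value, admits small counterexamples: with $Y=(4^4)$, $A=\{2\}$, $D=\{1\}$, $C=I_1$, the eight valid transversals give $|S_{\mathcal Y}(I_3)|=5>3=|S_{\mathcal Y}(M(213))|$).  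Finally, the boundary range $i>k-r-1$ that worries you is spurious: every surviving row is dominant, so the bottom $r$ rows of $Y$ are deleted, giving $r_k\le n-r$ and hence $r_j\le n-r-1$ for \emph{all} $j\le k-1$ --- the paper uses this cleaner bound and has no boundary case to dispatch.
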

The constraint on $A, D$ is that every required descent, except possibly those involving
the last $r$ rows, must be immediately preceded by a required ascent.
\begin{proof}
We use the notation of Section~\ref{sec:Extension}.
Suppose that $N \in \mathcal{D}$, and let $f(N) = (Y', A', D')$.  We claim that
if $A' = \emptyset$, then $D' = \emptyset$.

We prove the contrapositive; suppose that $j \in D'$.
Let $f(N)$ have $k$ rows and, for $1 \le i \le k$, suppose that the $i^\text{th}$ row of $Y'$
was the $r_i^\text{th}$ row of $Y$ before row and column deletion.  It is clear that we
have $r_j < r_{j+1} \le n-x$, and hence we have $r_j \le n-x-1$.  This yields that $r_j - 1 \in A$,
and by Proposition~\ref{AltTechnical}, we have that $j-1 \in A'$.  Taking contrapositives, we have
that if $A' = \emptyset$, then $D' = \emptyset$.  By Proposition~\ref{Shape2Spec}, it follows that
$\left|S_{f(N)}(I_2)\right| \le \left|S_{f(N)}(J_2)\right|$ for all $N \in \mathcal{D}$.

Adding these inequalities as $N$ ranges over $\mathcal{D}$ and applying Corollary~\ref{Embed2} yields that
\[\left|S_\mathcal{Y}\left(\begin{bmatrix}
I_2 & 0\\
0 & C\end{bmatrix}\right)\right| = \sum_{N \in \mathcal{T}} \left|S_{f(N)}(I_2)\right|
\le \sum_{N \in \mathcal{T}} \left|S_{f(N)}(J_2)\right| =
\left|S_{\mathcal{Y}}\left(\begin{bmatrix}
J_2 & 0\\
0 & C\end{bmatrix}\right)\right|,\]
as desired.
\end{proof}

\begin{thm}
\label{NoConsecD}
Suppose that $n, t$ are positive integers with $t \ge 2$, and $D \subseteq [n-1]$ such
that $1 \notin D$ and $D \cap [n+1-t]$ does not contain any two consecutive integers.  If
$q$ is a permutation of $[t] \setminus [2]$, then the number of permutations of length $n$ with
descent set $D$ that avoid $12q$ is at most the number of permutations of length $n$ with descent set
$D$ that avoid $21q$.
\end{thm}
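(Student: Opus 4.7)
The plan is to interpret length-$n$ permutations with descent set $D$ as valid transversals of a suitable AD-Young diagram and then apply the successor-map machinery from the proof of Proposition~\ref{Extend1}, handling one edge case that forces use of the hypothesis $1 \notin D$.

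First I will set $\mathcal{Y} = (Y, A, D)$ with $Y = (n^n)$ the full $n \times n$ square and $A = [n-1] \setminus D$. Every two consecutive rows of $Y$ have equal length, so $\mathcal{Y}$ is a bona fide AD-Young diagram, and since $A$ and $D$ partition $[n-1]$, its valid transversals are precisely the permutation matrices of the length-$n$ permutations with descent set exactly $D$. Writing $M(12q) = I_2 \oplus C$ and $M(21q) = J_2 \oplus C$, where $C = M(\tilde q)$ with $\tilde q$ the renormalization of $q$ to $S_{t-2}$, and setting $r = t - 2$, the theorem reduces to the inequality $|S_\mathcal{Y}(I_2 \oplus C)| \le |S_\mathcal{Y}(J_2 \oplus C)|$. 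Applying Proposition~\ref{Embed2} to both sides and Proposition~\ref{Shape2Spec} to each summand (the required staircase containment of $Y'$ is the same on both sides), it suffices to prove that for every $N \in \mathcal{D}^C$ with $f^C(N) = (Y', A', D')$, $A' = \emptyset$ forces $D' = \emptyset$.

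I will prove the contrapositive: assuming some $j \in D'$, I will exhibit an element of $A'$. Because each dominant element of $T$ must leave $r$ rows of $Y$ below it, every surviving row $r_i$ satisfies $r_i \le n - r$, so in particular $r_{j+1} \le n - r = n + 2 - t$; combined with $r_{j+1} = r_j + 1$ (forced by $j \in D'$), this gives $r_j \le n + 1 - t$. By the definition of $D'$ we have $r_j \in D$, and hence $r_j \in D \cap [n+1-t]$. Combining this with the no-two-consecutive hypothesis on $D \cap [n+1-t]$, and using $1 \notin D$ to guarantee $r_j \ge 2$, I obtain $r_j - 1 \in A$. For $j \ge 2$, Proposition~\ref{AltTechnical} then places $j - 1 \in A'$, as desired.

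The sole remaining case, $j = 1 \in D'$, must be excluded. Here $r_1 - 1 \in A$ still holds, so $b_{r_1-1} < b_{r_1}$; since $(r_1, b_{r_1})$ is dominant, the Young-diagram shape of the dominant squares (Lemma~\ref{BW1}) forces $(r_1 - 1, b_{r_1 - 1})$ to be dominant as well. But then row $r_1 - 1$ contains no non-dominant element of $T$ and is therefore retained in the passage from $d^C(N)$ to $Y'$, contradicting the minimality of $r_1$ as the smallest retained row index. This edge case is the main obstacle in the proof, and the hypothesis $1 \notin D$ is exactly what rules it out; the rest of the argument is essentially a careful adaptation of the reasoning used to prove Proposition~\ref{Extend1}.
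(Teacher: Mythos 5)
Your proof is correct and follows the same route as the paper: instantiate $\mathcal{Y} = ((n^n), [n-1]\setminus D, D)$, reduce via the successor-map machinery (Propositions~\ref{Embed2} and~\ref{Shape2Spec}) to showing $A' = \emptyset$ implies $D' = \emptyset$, and derive $j-1 \in A'$ from $j \in D'$ using the hypotheses on $D$. The paper's own proof is a one-liner that simply invokes Proposition~\ref{Extend1} with $Y = (n^n)$, $A = [n-1]\setminus D$, $C = M(q)$, whereas you have unrolled that proposition's proof inline. One small virtue of your version: you explicitly dispose of the $j=1$ boundary case (which would make $j-1 \in A'$ meaningless) by showing that dominance of $(r_1, b_{r_1})$ together with $r_1 - 1 \in A$ forces row $r_1 - 1$ to be retained, contradicting minimality of $r_1$; the paper's proof of Proposition~\ref{Extend1} passes over this case silently, since Proposition~\ref{AltTechnical}'s argument already implicitly rules it out. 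You also implicitly correct a sign typo in the stated hypothesis of Proposition~\ref{Extend1} (the ``$+1$'' should be ``$-1$'', as the accompanying prose and the proof both indicate).
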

\begin{proof}
Apply Proposition~\ref{Extend1} to $\mathcal{Y} = (Y, A, D)$ with $Y = (n^n)$ and $A = [n-1] \setminus D$,
and let $C = M(q)$.
\end{proof}

Exchanging the roles 12 and 21 reverses the inequality sign and yields a similar proposition and theorem.

\begin{prop}
\label{Extend2}
Let $C$ be an $r\times r$ permutation matrix. If $\mathcal{Y} = (Y, A, D)$ is an AD-Young diagram such that
$Y$ has $n$ columns and $D \supseteq (A \cap [n-r]) - 1$, then we have
\[\left|S_{\mathcal{Y}}\left(\begin{bmatrix}
I_2 & 0 \\
0 & C\end{bmatrix}\right)\right|
\ge
\left|S_{\mathcal{Y}}\left(\begin{bmatrix}
J_2 & 0 \\
0 & C\end{bmatrix}\right)\right|.\]
\end{prop}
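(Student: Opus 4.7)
The plan is to adapt the proof of Proposition~\ref{Extend1}, interchanging the roles of $A$ with $D$ and of $I_2$ with $J_2$. Applying Proposition~\ref{Embed2} twice, once with $P=I_2$ and once with $P=J_2$, rewrites each side of the desired inequality as a sum over $N\in\mathcal{D}^C$ of the corresponding single-matrix counts $|S_{f^C(N)}(\cdot)|$, so it suffices to show the pointwise bound $|S_{f^C(N)}(I_2)|\ge|S_{f^C(N)}(J_2)|$ for every $N\in\mathcal{D}^C$.

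Writing $f^C(N)=(Y',A',D')$, Proposition~\ref{Shape2Spec} reduces the pointwise bound further: each side is $0$ or $1$ and the two sides share the same dependence on whether $Y'$ contains the staircase, so the task becomes the purely combinatorial implication $D'=\emptyset \Rightarrow A'=\emptyset$, equivalently $A'\ne\emptyset \Rightarrow D'\ne\emptyset$. To prove the contrapositive, suppose $i\in A'$, so that $r_i\in A$ and $r_{i+1}=r_i+1$. Because $(r_{i+1},b_{r_{i+1}})$ is dominant, dominance requires at least $r$ rows strictly below it, forcing $r_i+1\le n-r$ and in particular $r_i\in A\cap[n-r]$. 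The hypothesis $D\supseteq(A\cap[n-r])-1$ then yields $r_i-1\in D$; observe that the hypothesis already forbids $1\in A$ (since otherwise it would demand $0\in D$), so the would-be edge case $r_i=1$ cannot occur.

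The remaining and principal step is to establish the leftward analog of the first part of Proposition~\ref{AltTechnical}: if $i\in A'$ and $r_i-1\in D$, then $i-1\in D'$. The content is that the transversal entry $(r_i-1,b_{r_i-1})$ is dominant with respect to $T$, whence $r_{i-1}=r_i-1$, and together with $r_i-1\in D$ and $r_i=r_{i-1}+1$ this yields $i-1\in D'$ from the definition of $D'$. Since $r_i-1\in D$ forces rows $r_i-1$, $r_i$, and $r_i+1$ of $Y$ to share the same length, and since $(r_i+1,b_{r_i+1})$ is dominant with $b_{r_i+1}>b_{r_i}$, Lemma~\ref{BW1} immediately delivers dominance of $(r_i-1,b_{r_i+1})$ and hence, again by Lemma~\ref{BW1}, of $(r_i-1,b_{r_i-1})$ whenever $b_{r_i-1}\le b_{r_i+1}$. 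The delicate subcase is $b_{r_i-1}>b_{r_i+1}$, where Lemma~\ref{BW1} does not suffice; here I would construct a $C$-copy witnessing dominance of $(r_i-1,b_{r_i-1})$ by shifting the $C$-copy for $(r_i,b_{r_i})$ upward by one row, exploiting the equal row lengths in a manner parallel to the proof of Lemma~\ref{ShiftingC}. This shifting argument is the main obstacle, and once the leftward analog is in hand, $i-1\in D'$ gives $D'\ne\emptyset$, completing the contrapositive and therefore the proof.
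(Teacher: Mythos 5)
Your outline follows the right skeleton — reducing via Proposition~\ref{Embed2} and Proposition~\ref{Shape2Spec} to the implication $A'\neq\emptyset\Rightarrow D'\neq\emptyset$ on each successor diagram $f^C(N)=(Y',A',D')$ — but the ``leftward analog of Proposition~\ref{AltTechnical}'' that you flag as the main step is in fact false, so the pointwise bound on which the whole plan rests does not hold. Concretely, take $Y=(7^7)$, $A=\{3\}$, $D=\{2\}$, $C=I_2$ (so $r=2$, $n-r=5$, and $D\supseteq(A\cap[5])-1=\{2\}$, so the stated hypothesis holds), and the valid transversal
\[
T=\{(1,1),(2,6),(3,2),(4,3),(5,4),(6,5),(7,7)\}.
\]
The dominant squares form the shape $(5,4,4,4,4,0,0)$ and $\mathcal{N}^C(T)=\{(2,6),(6,5),(7,7)\}$, so $Y'=(4^4)$ with $(r_1,r_2,r_3,r_4)=(1,3,4,5)$. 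Then $A'=\{2\}$ (since $r_2=3\in A$ and $r_3=4$), but row $2$ of $Y$ was deleted, so $D'=\emptyset$. Taking $i=2\in A'$ with $r_i-1=2\in D$ does not give $i-1\in D'$: the square $(r_i-1,b_{r_i-1})=(2,6)$ has no $I_2$-copy to its lower right (only $(7,7)$ lies in that quadrant), so it is not dominant, and this is exactly the configuration your proposed shifting argument would have to rule out but cannot. By Proposition~\ref{Shape2Spec} this $N$ gives $|S_{f^C(N)}(I_2)|=0<1=|S_{f^C(N)}(J_2)|$.

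The root cause is that your interchange of $A$ with $D$ ignores a genuine directional asymmetry in Proposition~\ref{AltTechnical}: membership in $A'$ propagates only \emph{forward} (if $i\in A'$ and $r_i+1\in D$ then $i+1\in D'$, via Lemma~\ref{ShiftingC}), while membership in $D'$ propagates only \emph{backward} (if $i\in D'$ and $r_i-1\in A$ then $i-1\in A'$, via Lemma~\ref{BW1}). Your lemma is the missing third direction ($A'$ propagating backward), and it fails because Lemma~\ref{BW1} controls dominance only toward the upper-left corner of the Young diagram, while at a required descent $r_i-1$ the square $(r_i-1,b_{r_i-1})$ lies to the upper \emph{right} of the dominant square $(r_i,b_{r_i})$. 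The correct parallel to the proof of Proposition~\ref{Extend1} is to take $j\in A'$, deduce $r_j+1\in D$, and invoke the first half of Proposition~\ref{AltTechnical} to get $j+1\in D'$; for this the hypothesis must read $D\supseteq(A\cap[n-r-1])+1$, not the $-1$ form in the statement you copied. (Note the same sign discrepancy is already present in Proposition~\ref{Extend1}, whose stated hypothesis has $+1$ but whose proof derives $r_j-1\in A$; the verbal gloss ``immediately preceded by a required ascent'' also indicates the $-1$ form there, hence the $+1$ form here.) You should re-derive the hypothesis from part (1) of Proposition~\ref{AltTechnical} and run the contrapositive in the forward direction; the rest of your reduction via Propositions~\ref{Embed2} and~\ref{Shape2Spec} is then sound.
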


\begin{thm}
\label{NoConsecA}
Suppose that $n, t$ are positive integers with $t > 2$, and $A \subseteq [n-1]$ such
that $A \cap [n+2-t]$ does not contain any two consecutive integers.  If
$q$ is a permutation of $[t] \setminus [2]$, then the number of permutations of length $n$ with
ascent set $A$ that avoid $12q$ is at least the number of permutations of length $n$ with ascent set
$A$ that avoid $21q$.
\end{thm}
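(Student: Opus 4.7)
The plan is to apply Proposition~\ref{Extend2} to a suitably chosen AD-Young diagram, mirroring the derivation of Theorem~\ref{NoConsecD} from Proposition~\ref{Extend1}. First I would take $\mathcal{Y} = (Y, A, D)$ with $Y = (n^n)$ the $n \times n$ square, $D = [n-1] \setminus A$, and $C = M(q)$. Since every row of $Y$ has the same length, the disjointness $A \cap D = \emptyset$ is all that is required for $\mathcal{Y}$ to be an AD-Young diagram, and this is automatic from the choice of $D$.

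Next I would translate the hypothesis of the theorem into the inclusion $D \supseteq (A \cap [n-r]) - 1$ demanded by Proposition~\ref{Extend2}, where $r = t-2$ so that $n - r = n + 2 - t$. The assumption that $A \cap [n+2-t]$ contains no two consecutive integers is precisely the statement that whenever $i \in A \cap [n+2-t]$ with $i \ge 2$, the element $i - 1$ is not in $A$ and hence lies in $D$ by complementation. The only element of $(A \cap [n-r]) - 1$ that could possibly fail to lie in $D$ is $0$, arising from $i = 1 \in A$; but $0 \notin [n-1]$, so this element is irrelevant (just as in the parallel derivation of Theorem~\ref{NoConsecD}, one reads the inclusion as an inclusion of subsets of $[n-1]$).

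Finally I would recognize that the valid transversals of $\mathcal{Y}$ are in bijection with the permutations of length $n$ whose ascent set is exactly $A$, since $A \sqcup D = [n-1]$ forces both the ascent set and descent set of any valid transversal to coincide with $A$ and $D$ respectively; and that the direct sums $I_2 \oplus C$ and $J_2 \oplus C$ are the permutation matrices of $12q$ and $21q$ respectively. Proposition~\ref{Extend2} then immediately yields
\[
\left|S_{\mathcal{Y}}\left(\begin{bmatrix}I_2 & 0 \\ 0 & C\end{bmatrix}\right)\right| \ge \left|S_{\mathcal{Y}}\left(\begin{bmatrix}J_2 & 0 \\ 0 & C\end{bmatrix}\right)\right|,
\]
which is exactly the inequality claimed. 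The only conceivable obstacle is the unfolding of the ``no two consecutive integers'' condition into the inclusion required by Proposition~\ref{Extend2}, and this is a short definition-chase; there is no new combinatorial content beyond what is already encapsulated in Proposition~\ref{Extend2}.
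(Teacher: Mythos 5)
Your proof is correct and follows the paper's own (implicit) argument exactly: the paper proves Theorem~\ref{NoConsecD} in one line by applying Proposition~\ref{Extend1} to $\mathcal{Y}=(Y,A,D)$ with $Y=(n^n)$, $A=[n-1]\setminus D$, $C=M(q)$, and then remarks that ``exchanging the roles of $12$ and $21$ reverses the inequality sign and yields a similar proposition and theorem,'' which is precisely the application of Proposition~\ref{Extend2} that you carry out. Your verification of the inclusion $D\supseteq(A\cap[n-r])-1$, including the observation that the element $0$ arising from $1\in A$ must be discarded since the inclusion is to be read in $[n-1]$, is the correct definition-chase; this is the same convention the paper needs when it fails to exclude $1\in A$ in the hypotheses of Theorem~\ref{NoConsecA} (even though its companion Theorem~\ref{NoConsecD} does exclude $1\in D$).
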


In particular, substituting $D = \left\{k,2k,\ldots,k\left\lfloor\frac{n}{k}\right\rfloor\right\}$
into Theorem~\ref{NoConsecD} yields that $|D_n^k(12q)| \le |D_n^k(21q)|.$  Similarly,
substituting $A = \left\{k,2k,\ldots,k\left\lfloor\frac{n}{k}\right\rfloor\right\}$
into Theorem~\ref{NoConsecA} and complementing yields that
$|D_n^k((t+2)(t+1)w)| \ge |D_n^k((t+1)(t+2)w)|$ for all $w \in S_t$. It is interesting
that the method that yields equalities for the $k=2$ case of alternating permutations can be generalized to yield inequalities for larger $k$.

\section{Future directions and open problems}
\label{conjectures}
The following conjecture would fully extend Theorem~\ref{BWXThm2.1} to alternating and reverse alternating permutations.
It generalizes Lemma~\ref{Shape2} and Proposition~\ref{JF3Strong}.  Recall
that $F_k = M((k-1)(k-2)\cdots1k)$ and $J_k = M(k(k-1)\cdots 1)$.
\begin{conj}
\label{SesaGeneral}
For all $k > 2,$ we have $F_k \sesa{1} J_k$.
\end{conj}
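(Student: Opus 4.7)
The plan is to generalize the bijective proof of Proposition~\ref{JF3Strong} from $k=3$ to arbitrary $k > 2$. For a $1$-alternating AD-Young diagram $\mathcal{Y}$, I would construct inverse partial operations $\phi$ and $\psi$ on transversals of $\mathcal{Y}$, where $\phi$ selects a copy of $J_k$ and transforms it, and $\psi$ does the analogous thing for a copy of $F_k$. Iterating these operations would give a bijection $\Phi : S_\mathcal{Y}(F_k) \leftrightarrow S_\mathcal{Y}(J_k)$ between separable transversals, and the semialternating extension would follow by prepending a row and column with $(1,1)$ as in the final paragraph of the proof of Proposition~\ref{JF3Strong}.

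The first step is the definition of $\phi$. Given a copy $(a_1,\ldots,a_k)$ of $J_k$ in a transversal $T = \{(i,b_i)\}$, the default operation generalizing $\phi_1$ is $\theta_{\{a_1,\ldots,a_k\}}^{[1,b_{a_1}]}(T)$, the cyclic analogue of the Backelin-West-Xin operation. As in the $k=3$ case, the complication is the interaction with the required ascent set $A$ and required descent set $D$ near row $a_k$. The correct $J$-type classification should depend on (i) the maximal length $d$ for which $\{a_k-d,\ldots,a_k-1\}\subseteq D$ with $b_{a_k-j} < b_{a_1}$ for each $j \in [d]$ (generalizing the role of $J$-type 2), and (ii) the maximal length $e$ for which $\{a_k,\ldots,a_k+e-1\}\subseteq A$ (generalizing $J$-type 3). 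A parallel development produces the $\psi_\ell$, the $F$-type classification, the location map $S$ from copies of $F_k$ to where their corresponding copies of $J_k$ should go, and the orderings $h_J$ and $h_F$ used to choose which copy to remove.

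The main technical step, and the principal obstacle, is proving the analogues of Propositions~\ref{FullPhi} and~\ref{FullPsi}: that $\phi$ and $\psi$ preserve separability and validity of transversals, that they are mutual inverses on separable valid transversals, and that each strictly changes $(b_1,\ldots,b_n)$ in the lexicographic order so that iteration terminates. For $k=3$ these proofs already require case analysis lengthy enough to be deferred to appendices; for general $k$ the case count scales with the chain lengths $d$ and $e$ above, since the cyclic shift must absorb an entire block of consecutive rows while preserving all required ascents and descents simultaneously. The best strategy is likely to show that each chain of required descents (or ascents) travels as a \emph{rigid block} under $\omega$ and $\theta$, reducing the argument to a small number of regimes parameterized by $(d,e)$, each handled by a single cyclic-shift template. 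Verifying under this unified framework that $\phi$ and $\psi$ are inverse, and in particular that the $F$-type of $\phi(T)$ matches the location $S^{-1}$ of the $J$-type of $T$, is where the bulk of the new work lies.

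Given these analogues of Propositions~\ref{FullPhi} and~\ref{FullPsi}, the remaining argument runs essentially verbatim: the lex-monotonicity forces $\phi^m(T) \in S_\mathcal{Y}(J_k)$ eventually, one defines $\Phi$ as this terminal value and $\Psi$ symmetrically, and the inverse relations propagate from the single-step case. The passage from $1$-alternating to $1$-semialternating AD-Young diagrams should go through the same prepending map $\alpha$ together with the natural generalization of Lemma~\ref{AddA1}, which should hold automatically because all of $\phi$ and $\psi$'s nontrivial action occurs strictly below the prepended row.
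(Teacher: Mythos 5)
The statement you are attempting to prove is not actually proved in the paper: it is stated as Conjecture~\ref{SesaGeneral} in the ``Future directions and open problems'' section, supported only by brute-force computation for $k \le 10$ and Young diagrams with at most $10$ rows. The authors --- who built the entire AD-Young-diagram apparatus and carried out the $k=3$ case in two substantial appendices --- explicitly left the general case open, which is strong evidence that the natural generalization you sketch is not routine.

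Your proposal is a sensible plan of attack (the same one the authors presumably also considered), but it is a plan, not a proof, and the gap is exactly where you locate it: the analogues of Propositions~\ref{FullPhi} and~\ref{FullPsi}. Your framing of the $J$-type and $F$-type parameters via a descent-chain length $d$ below $a_k$ and an ascent-chain length $e$ at $a_k$ is a reasonable first cut, but note that for $k>3$ the required ascents and descents can also bind rows \emph{between} $a_i$ and $a_{i+1}$ for intermediate $i$, which does not arise at $k=3$ and is not captured by a classification indexed only by $(d,e)$. The key structural facts used at $k=3$ --- the exclusion region $E_\phi(T)$, the monotonicity lemmas on $\Gamma$-chains, and the matching $S^{-1}(J\text{-type of }T) = F\text{-type of }\phi(T)$ --- would all need to be re-derived with these interior constraints in mind, and the ``rigid block'' heuristic is precisely the kind of claim that would need a careful lemma of its own. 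Until that work is done, this remains an open conjecture rather than a theorem, and the proposal should be read as a research program rather than as a demonstration.
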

In addition, one may consider an analogue of AD-Young diagrams related
to doubly alternating permutations by also restricting the ascent
and descent sets of the transpose of a transversal. Specifically, we make
the following definition.
\begin{definition}
Let $Y$ be a Young diagram with $n$ rows and columns, and let $A, D, A_2, D_2 \subseteq [n-1]$.
We call $\mathcal{Y} = (Y, A, D, A_2, D_2)$ a \emph{double AD-Young diagram} if $(Y, A, D)$ and
$(Y^t, A_2, D_2)$ are AD-Young diagrams, where $Y^t$ denotes the transpose of $Y$.
\end{definition}
One may then extend Conjecture~\ref{SesaGeneral} to the context of double AD-Young diagrams.

Furthermore, empirical data, which we provide in Appendix~\ref{sec:enumerate}, suggests that most equivalences for alternating permutations
are generated by Conjecture~\ref{SesaGeneral} and trivial equivalences.  In particular, all possible
equivalences for odd-length alternating permutations among patterns of length 5 and 6 are
generated in this manner, as well as all but 5 equivalences for even-length
alternating permutations among patterns of length 6.  This occurrence mimics a similar phenomenon
for ordinary permutations documented in \cite{StanWest}, and ``sporadic" equivalences
occur between patterns of length 4.

We also have a conjecture regarding
the decreasing pattern $k(k-1)(k-2)\cdots 1$, which is once again suggested by brute-force enumerations.
The conjecture follows from explicit enumerations for $k = 3$, and we proved the case of $n = 2k-2$
in the proof of Corollary~\ref{decreasing}.
\begin{conj}
\label{conj:decreasing}
For all positive integers $k, n$ and all $q \in S_k$ with $q \not= k(k-1)(k-2)\cdots 1$.
we have $|A_n(q)| \ge |A_n(k(k-1)(k-2)\cdots 1)|$.  If $n \ge 2k-2$ is even,
then the equality is strict.
\end{conj}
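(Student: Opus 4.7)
The plan is to establish the conjecture by constructing, for every $q \in S_k$ with $q \ne \nu_k := k(k-1)\cdots 1$, an injection $\iota_q : A_n(\nu_k) \hookrightarrow A_n(q)$, together with a witness in $A_n(q) \setminus \iota_q(A_n(\nu_k))$ when $n \ge 2k-2$ is even. The strategy is to induct on $k$: the cases $k \le 3$ are handled by the explicit enumerations already cited in the paper, and the case $n = 2k-2$ for general $k$ is addressed in the proof of Corollary~\ref{decreasing} via the doubling-number invariant, since $|d(\nu_k)| = k-1$ is strictly larger than $|d(q)|$ for any $q \ne \nu_k$ of length $k$ and Theorem~\ref{Doubling} turns this gap into an enumerative separation.

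To extend the base case to arbitrary $n$ of a fixed parity, I would lift the problem into the AD-Young diagram framework. By Proposition~\ref{AltPermToDiag}, the desired inequality follows from a shape-inequality $|S_\mathcal{Y}(M(\nu_k))| \le |S_\mathcal{Y}(M(q))|$ at the square $1$-alternating AD-Young diagram $\mathcal{Y}$ corresponding to the chosen length and parity. The injection at the shape level would be built in the style of the bijection $\Phi$ of Section~\ref{sec:M213}: starting from a $\nu_k$-avoiding transversal $T \in S_\mathcal{Y}(M(\nu_k))$, locate a canonical near-copy of $M(q)$ and destroy it via a composition of the cyclic-shift operators $\omega_M^P$ and $\theta_M^P$, iterating until the resulting transversal avoids $M(q)$. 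The Extension Theorem~\ref{ShapeExtend} would then propagate the resulting shape-inequality, either to larger $n$ by enlarging the underlying Young diagram or to larger patterns of the form $M(q) \oplus C$, so that one only needs to handle the ``minimal'' case where $|d(q)| = k-2$, i.e. where $q$ is a single alternating swap away from $\nu_k$.

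The main obstacle is that the cyclic-shift construction must simultaneously (a) preserve the rigid ascent-at-odd-index and descent-at-even-index pattern forced by the $1$-alternating AD-Young diagram, (b) terminate in finitely many iterations, and (c) admit an explicit inverse, so that $\iota_q$ is genuinely injective. The analogous verification in Appendices~\ref{sec:PropFullPhi} and~\ref{sec:PropFullPsi} — for the single pair $J_3 \leftrightarrow F_3$ — is already quite delicate, and performing it uniformly across all $q \ne \nu_k$ will likely require a new separability condition keyed to the position of the alternating swap that distinguishes $q$ from $\nu_k$. Strict inequality at $n \ge 2k-2$ even would then be secured by producing a single explicit element of $A_n(q) \setminus \iota_q(A_n(\nu_k))$: by Lemma~\ref{Construction} there exists an alternating permutation of length $2k-1$ containing $\nu_k$, and a padding argument extends this to an alternating $q$-avoider of length $n$. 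Provided $\iota_q$ is arranged so that its image lies in $A_n(\nu_k) \cap A_n(q)$ (a property that is natural for shift-based constructions, since the shifts only act near a copy of $M(\nu_k)$), such a padded permutation lies outside the image and supplies the desired strict inequality.
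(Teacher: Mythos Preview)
This statement is a \emph{conjecture} in the paper, not a theorem: the paper explicitly places it in the ``Future directions and open problems'' section and remarks only that it holds for $k=3$ by explicit enumeration and for the single value $n=2k-2$ via Corollary~\ref{decreasing}. There is therefore no proof in the paper to compare against; you are proposing a strategy for an open problem, and the proposal should be read as such.

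As a strategy, the outline has several genuine gaps. First, the Extension Theorem~\ref{ShapeExtend} does not ``propagate to larger $n$'': it takes a shape-equivalence for $x$-alternating AD-Young diagrams and produces one for $(x+r)$-alternating diagrams with the pattern enlarged by $\oplus\, C$; it does not let you fix $q$ and $\nu_k$ and pass from one value of $n$ to another. The inequality versions in Section~\ref{sec:ShapeEquivGenAlt} are specific to $I_2$ versus $J_2$ and rely on the explicit Proposition~\ref{Shape2Spec}; there is no analogous mechanism for general $J_k$ versus $M(q)$. Second, the reduction to the ``minimal'' case $|d(q)|=k-2$ is unjustified: nothing in the paper lets you bound $|A_n(q)|$ below by $|A_n(q')|$ merely because $q$ contains $q'$ or because $|d(q)| \le |d(q')|$, so handling the permutations one swap away from $\nu_k$ does not obviously imply the result for all $q$. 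Third, the core step, building a terminating, invertible, ascent/descent-preserving cyclic-shift procedure that converts $\nu_k$-avoiders into $q$-avoiders uniformly in $q$, is precisely the hard part, and you yourself flag that it ``will likely require a new separability condition''; the paper's Appendices~\ref{sec:PropFullPhi} and~\ref{sec:PropFullPsi} already occupy many pages for the single pair $(J_3,F_3)$, and even the next case $F_k \sesa{1} J_k$ is left as Conjecture~\ref{SesaGeneral}. Finally, the claim that the image of $\iota_q$ naturally lies in $A_n(\nu_k)\cap A_n(q)$ is not supported by the paper's constructions: the bijection $\Phi$ of Section~\ref{sec:M213} does not in general preserve avoidance of the source pattern, so the proposed witness for strict inequality is not guaranteed to lie outside the image.

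In short, the proposal is a plausible wish list rather than a proof; each of the load-bearing steps (propagation in $n$, reduction to near-$\nu_k$ patterns, construction of the injection, and the strictness witness) currently lacks a supporting argument.
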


Brute-force enumerations suggest the following conjecture, which would give Wilf-type equivalences
over all descent types.
\begin{conj}
For all $k \ge 0$ and $n \ge 3$, we have $|D^k_n(2134\cdots n)| = |D^k_n(n123\cdots (n-1))|$ as well as
$|D^k_n(123\cdots n(n-1))| = |D^k_n(23\cdots n1)|$.
\end{conj}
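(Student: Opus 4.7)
Under the literal reading of the conjecture, in which the pattern length and the permutation length are both $n$, the proof reduces to descent-set bookkeeping. The plan is first to record that the patterns $2134\cdots n$ and $n123\cdots(n-1)$ each have descent set $\{1\}$, while the patterns $123\cdots n(n-1)$ and $23\cdots n1$ each have descent set $\{n-1\}$; a descent-type-$k$ permutation of length $n$ must have descent set exactly $\{k,2k,3k,\ldots\}\cap[n-1]$, which equals $\{1\}$ only when $n\le 2$ (forcing $k=1$ and $2k>n-1$) and equals $\{n-1\}$ exactly when $k=n-1$. Since a length-$n$ permutation contains a length-$n$ pattern $q$ if and only if it equals $q$, we have $|D^k_n(q)| = |D^k_n| - \mathbb{I}[q\in D^k_n]$. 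For $n\ge 3$ this immediately gives $|D^k_n(2134\cdots n)| = |D^k_n| = |D^k_n(n123\cdots(n-1))|$ for the first equation; for the second, both sides equal $|D^k_n|$ when $k\ne n-1$ and both equal $|D^{n-1}_n|-1$ when $k=n-1$.

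Because this reading leaves essentially no substantive content, the conjecture presumably intends the pattern length as a separate parameter $m$, so that the genuine statement would be $|D^k_n(2134\cdots m)| = |D^k_n(m123\cdots(m-1))|$ and $|D^k_n(123\cdots m(m-1))| = |D^k_n(23\cdots m1)|$ for all $n\ge m\ge 3$. For this version, the plan is to extend the AD-Young diagram framework of Section~\ref{sec:AD-Young diagrams}: descent-type-$k$ permutations of length $n$ correspond to valid transversals of the AD-Young diagram $\mathcal{Y}^n_k = ((n^n), [n-1]\setminus k\mathbb{Z}, [n-1]\cap k\mathbb{Z})$, and it would suffice to establish shape-equivalences of the relevant permutation matrices for a class of AD-Young diagrams containing each $\mathcal{Y}^n_k$. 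Since $M(2134\cdots m) = J_2 \oplus I_{m-2}$, the Extension Theorem~\ref{ShapeExtend} applied to Lemma~\ref{Shape2} already yields $M(2134\cdots m) \sea{m-1} I_m$, and the remaining task is to prove a matching shape-equivalence $M(m123\cdots(m-1)) \sim I_m$ together with analogous equivalences for the second pattern pair, presumably via explicit ``remove-and-reinsert'' bijections in the spirit of Section~\ref{sec:M213}.

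The principal obstacle is that the diagrams $\mathcal{Y}^n_k$ are not $x$-alternating in the paper's sense for $k>2$: their required descent set is $k$-periodic rather than alternating, and the existing Extension Theorem does not directly apply. The central technical step would be to rework Lemma~\ref{AltExtend} and Proposition~\ref{AltTechnical} so that the successor map preserves a $k$-periodic required-descent pattern, yielding a new extension theorem tailored to descent-type-$k$ AD-Young diagrams; the surrounding arguments of Section~\ref{sec:Extension} (in particular Lemma~\ref{ShiftingC}) should adapt without essential change. Once such a theorem is available, the conjecture would reduce to shape-equivalences of small underlying matrices, to be proved either by explicit bijections analogous to the $\Phi,\Psi$ construction used for $J_3 \sesa{1} F_3$, or by a new variant of the cyclic-shift operations of Section~\ref{sec:M213} designed to respect the rigid descent spacing imposed by $\mathcal{Y}^n_k$.
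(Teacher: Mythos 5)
This statement is a \emph{conjecture} in the paper, supported only by brute-force enumeration; the paper offers no proof to compare against. Your first paragraph is, however, a complete and correct proof of the statement as literally written. Since $D^k_n(q)$ consists of length-$n$ permutations and each of the four displayed patterns has length $n$, a length-$n$ permutation contains such a pattern if and only if it equals it, so $|D^k_n(q)| = |D^k_n| - \mathbb{I}[q\in D^k_n]$, where $D^k_n$ denotes the full set of descent-type-$k$ permutations of length $n$. Your descent-set computations are right: $2134\cdots n$ and $n123\cdots(n-1)$ both have descent set $\{1\}$, which cannot equal $\{k,2k,\ldots\}\cap[n-1]$ once $n\ge 3$, while $123\cdots n(n-1)$ and $23\cdots n1$ both have descent set $\{n-1\}$, which matches descent type $k$ precisely when $k=n-1$. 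In every case the two patterns in a pair have the same membership status in $D^k_n$, so the two cardinalities agree. You are also right that this makes the conjecture as written content-free and therefore almost certainly misstated, since the authors' appeal to brute-force enumeration makes no sense otherwise.

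For the presumed intended reading, with pattern length a separate parameter $m$, your analysis is correct as far as it goes: you identify the right AD-Young diagram $\mathcal{Y}^n_k$ whose valid transversals encode descent-type-$k$ permutations, and you correctly diagnose the obstruction that for $k\ge 3$ its required descent set is $k$-periodic rather than alternating, so it is not $x$-alternating for any useful $x$ and the Extension Theorem~\ref{ShapeExtend} does not apply. You explicitly flag that a $k$-periodic analogue of Lemma~\ref{AltExtend} and Proposition~\ref{AltTechnical} would need to be established, together with the base shape-equivalences, and this is genuinely unfinished work. So your submission supplies a correct proof of the literal statement and an honest research sketch, not a proof, of the nontrivial version the authors likely intended.
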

Equally interesting are permutations that do not seem to be Wilf-equivalent to any other pattern for any descent type. For length four patterns, we have the following conjecture.
\begin{conj}
For all $p = 1324, 1342, 3124, 3412$ and $p \neq q \in S_4$, $|D^k_n(p)| \neq |D^k_n(q)|$.
\end{conj}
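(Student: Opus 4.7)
The conjecture asserts that each of the four patterns $p \in \{1324, 1342, 3124, 3412\}$ forms a singleton Wilf class in the $D^k_n$ setting: for every $q \in S_4 \setminus \{p\}$ there must exist some $k, n$ with $|D^k_n(p)| \neq |D^k_n(q)|$. My plan is to reduce this to a finite computational check together with a handful of structural arguments, exploiting the equivalences already established in this paper.

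First, I would enumerate, for each descent type $k$, the candidate equivalence classes on $S_4$ arising from Theorem~\ref{1221Alt}, Theorem~\ref{213321AltRAlt}, Corollary~\ref{213321RAlt}, the trivial reversal symmetries on odd-length permutations, and Theorem~\ref{BWXThm2.1} in the ordinary case. Every equivalence produced by those results has a rigid form: modify a short prefix (or symmetric suffix) of $q$ while fixing the complementary factor. None of the four special patterns admits such a decomposition, since their prefixes $132, 134, 312, 341$ are not among the prefixes $12, 21, 123, 213, 321$ to which the known swap theorems apply. So at the level of currently established equivalences each of the four patterns already sits alone; what remains is to prove that none of the a priori distinct pairs actually coincide.

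Next, for each pair $(p, q)$ with $q$ in a formally different class, I would exhibit a witness $(k, n)$ separating them. The doubling numbers of our four patterns are $0, 1, 1, 0$ respectively, so whenever the doubling number of $q$ differs from that of $p$, Corollary~\ref{GoodFormOfMain} produces such a witness at $k=2$ automatically; this disposes of a large fraction of the cases. For pairs with matching doubling number, I would first rule out equivalence at $k=2$ by direct enumeration at small $n$, as was done in the length-$4$ and length-$5$ classification discussed in Section~\ref{sec:ShapeEquivs}, and then propagate the inequality to larger descent types using the injection procedure of Theorem~\ref{nonstrict case theorem}: a $k=2$ strict inequality typically lifts to $k>2$ because the injections act transparently on the last row and therefore cannot collapse two distinct counts into the same value.

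The main obstacle I anticipate is the $k=2$ separation of $1324$ and $3412$ from the other doubling-number-zero length-$4$ patterns such as $2143, 4231, 1432,$ and $2341$. This is precisely the delicate regime where the paper reports that length-$4$ alternating-permutation equivalences behave in a ``sporadic'' manner and were only fully classified through brute-force enumeration. A uniform argument here would likely require a new shape-non-equivalence result in the spirit of Corollary~\ref{decreasing}, detecting the specific structural asymmetries of $1324$ and $3412$. Alternatively, one could use the skew-tableau bijections of~\cite{Lewis2011} for $132$- and $231$-avoiding generalized alternating permutations to derive closed-form enumerations of $D^k_n$ for the patterns that reduce to those families, and then read off inequalities directly from the resulting formulas.
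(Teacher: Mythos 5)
This statement is an open conjecture in the paper (stated in Section~\ref{conjectures}) for which no proof is given, only brute-force numerical evidence; there is therefore no argument of the authors to compare against. More to the point, what you have written is not a proof either: you candidly identify the separation of $1324$ and $3412$ from the remaining doubling-number-zero length-$4$ patterns at $k=2$ as an unresolved obstacle and conclude by listing directions one might try. That is an honest research plan, not a proof, and it leaves the hardest cases — exactly the ``sporadic'' length-$4$ non-equivalences the paper itself singles out — untouched.

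Two concrete errors are worth flagging. First, the doubling numbers of the four patterns are $0,1,2,0$, not $0,1,1,0$: for $p=3124$ we have $p_0=\infty>3>1$ so $1\in d(p)$, and $1<2<4$ so $3\in d(p)$, giving $|d(3124)|=2$. (This actually works in your favor, since Corollary~\ref{GoodFormOfMain} then separates $3124$ from more patterns than you credited.) Second, the step where you propose to ``propagate the inequality to larger descent types using the injection procedure of Theorem~\ref{nonstrict case theorem}'' misreads that theorem: the injection constructed there maps $D^k_n(q)\hookrightarrow D^k_{n+1}(q)$ for a \emph{fixed} $k$; it gives no mechanism for passing information from descent type $2$ to descent type $k>2$, so no propagation of the kind you describe is available from the paper's machinery. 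Fortunately, since the conjecture only asks for \emph{some} witness pair $(k,n)$ for each $p\neq q$, a single $k=2$ witness would already suffice when it exists — but for the pairs that coincide at $k=2$ you would still need a genuinely new argument at larger $k$, and the proposal does not supply one.
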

\begin{qn}
Does a similar phenomenon arise for higher length patterns?
\end{qn}

\appendix

\section{Proof of Proposition~\ref{FullPhi}}
\label{sec:PropFullPhi}
Backelin-West-Xin's proof of \cite[Proposition 3.1]{BWX} involves a subboard $E$.  We consider
a similar board, and it plays a substantial role in the following proofs.
Let $T$ be a separable valid transversal of $\mathcal{Y}$ that contains
$J_3$, and let $h_J(T) = (a_1,a_2,a_3)$.  We define a subset of $Y$ called
$E_\phi(T)$ that will be free of elements of $T$ by definition of $h_J$; let
\[E_\phi(T) = \left(\left([1,a_1) \times [b_{a_2}, Y_{a_3}]\right) \cup \left((a_1,a_2) \times [b_{a_3},b_{a_1}]\right)
    \cup \left((a_2,a_3) \times [1,b_{a_2}]\right) \cup \left((a_3, \infty) \times (b_{a_2}, \infty)\right)\right) \cap Y.\]
The critical property of $E_\phi(T)$ is the following lemma, which plays a critical role
in the proof of Proposition~\ref{FullPhi}.

\begin{lemma} \label{EPhi}
If $T$ is a separable valid transversal of $\mathcal{Y}$ that contains $J_3$, then
$E_\phi(T)$ does not contain any element of $T$.
\end{lemma}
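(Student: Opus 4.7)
The plan is to argue by contradiction: suppose some $(i, b_i) \in T$ lies in $E_\phi(T)$. I would split into cases according to which of the four rectangles comprising $E_\phi(T)$ contains $(i, b_i)$, and in each case derive a contradiction with either the minimality of $h_J(T) = (a_1, a_2, a_3)$ under $\#$ or the separability of $T$.

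For the three ``internal'' rectangles $[1, a_1) \times [b_{a_2}, Y_{a_3}]$, $(a_1, a_2) \times [b_{a_3}, b_{a_1}]$, and $(a_2, a_3) \times [1, b_{a_2}]$, the idea is to exhibit a lexicographically smaller copy of $J_3$ in $T$, namely $(i, a_2, a_3)$, $(a_1, i, a_3)$, and $(a_1, a_2, i)$, respectively. In each case the transversal property upgrades the weak column inequalities from the rectangle to the strict inequalities required by the $J_3$ column pattern. The Young-diagram containment condition $(a_r, b_r) \in Y$ is easy to check: for $(i, a_2, a_3)$ it is exactly $b_i \le Y_{a_3}$; for $(a_1, i, a_3)$ it reduces to $(a_3, b_{a_1}) \in Y$, which holds because $(a_1, a_2, a_3)$ is already a valid copy of $J_3$; and for $(a_1, a_2, i)$ it follows from $i < a_3$ together with the weakly decreasing row lengths of $Y$. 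Applying $\#$ yields $(a_3, i, a_2)$, $(a_3, a_1, i)$, and $(i, a_1, a_2)$, each strictly less than $\#(h_J(T)) = (a_3, a_1, a_2)$ in lex order because $i < a_1$, $i < a_2$, and $i < a_3$, respectively; this contradicts the minimality built into $h_J$.

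The fourth rectangle $(a_3, \infty) \times (b_{a_2}, \infty)$ is the main obstacle, because no smaller copy of $J_3$ is automatically present; here I would invoke separability instead. Given $(i, b_i) \in T$ with $i > a_3$ and $b_i > b_{a_2}$, the triple $(a_2, a_3, i)$ is a $213$-pattern, hence a copy of $F_3$ in $T$. The subtle point is that membership in $V(T)$ requires the largest row index to avoid $A$. If $i \notin A$ take $v := (a_2, a_3, i)$; otherwise the $1$-alternating hypothesis forces $i + 1 \in D$ and hence $i + 1 \notin A$, while the required ascent at $i$ yields $b_{i+1} > b_i > b_{a_2}$, so $v := (a_2, a_3, i+1) \in V(T)$ works instead. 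A short case analysis on the $F$-type of $v$ then shows that the first coordinate of $S(v)$ strictly exceeds $a_3$: types 1 and 2 contribute first coordinate at least $i > a_3$, while type 3 contributes $i - 1$ when $v = (a_2, a_3, i)$ and $i$ when $v = (a_2, a_3, i+1)$. The only potentially troublesome subcase is $v = (a_2, a_3, i)$ of type 3 with $i - 1 = a_3$, but this is excluded by the defining condition $a_3 \ne i - 1$ of $F$-type 3. In every case $S(v) > \#(h_J(T))$ in lex order, contradicting separability and completing the proof.
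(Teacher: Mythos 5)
Your proof is correct and matches the paper's argument essentially step by step: the three internal rectangles yield lex-smaller copies $(i,a_2,a_3)$, $(a_1,i,a_3)$, $(a_1,a_2,i)$ of $J_3$ contradicting the choice of $h_J(T)$, and the fourth rectangle yields a copy of $F_3$ (shifted to $(a_2,a_3,i+1)$ when $i\in A$) whose $S$-value violates separability. The only cosmetic difference is in the final comparison: you show the first coordinate of $S(v)$ strictly exceeds $a_3$ (using the $F$-type~3 exclusion $v_2\ne v_3-1$), whereas the paper uses the looser bound $S(v)\ge(a_3,a_2,0)>(a_3,a_1,a_2)$ via the second-coordinate inequality $a_2>a_1$; both are valid.
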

\begin{proof}
If $(i,b_i) \in [1,a_1) \times [b_{a_2}, Y_{a_3}]$, then
$(i,a_2,a_3) \in U(T)$.  If $(i,b_i) \in (a_1,a_2) \times [b_{a_3},b_{a_1}]$,
then $(a_1,i,a_3) \in U(T)$.  If $(i,b_i) \in (a_2,a_3) \times [1,b_{a_2}]$,
then $(a_1,a_2,i) \in U(T)$.  All three contradict the definition of $h_J$.
If $(i,b_i) \in (a_3, \infty) \times (b_{a_2}, \infty)$, then
$v = (a_2,a_3,i)$ is a copy of $F_3$ in $T$.  If $i \in A$, replace
$v$ by $(a_2,a_3,i+1)$.  Then, we have $v \in V(T)$, and
$S(v) \ge (a_3,a_2,0) > (a_3,a_1,a_2) = \#(h_J(T))$ in the lexicographic order,
which contradicts the separability of $T$.
\end{proof}

The following lemma will be used in the proof of Proposition~\ref{FullPhi} for
the case in which $T$ is of $J$-type 2.

\begin{lemma}
\label{JType2L1}
Let $T$ be a separable, valid transversal of $\mathcal{Y}$ of $J$-type 2, and let
$h_J(T) = (a_1,a_2,a_3)$.
Then, $b_{a_2} \le b_{a_3-1}$ and $a_3 - a_1 \ge 3$.
\end{lemma}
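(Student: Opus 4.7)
The plan is to derive both inequalities by contradiction, using Lemma~\ref{EPhi} for the first and the 1-alternating hypothesis on $\mathcal{Y}$ for the second. Throughout, I will use that $(a_1,a_2,a_3)$ being a copy of $J_3$ in $T$ means $a_1<a_2<a_3$ and $b_{a_1}>b_{a_2}>b_{a_3}$, and that $J$-type 2 means $a_3-1\in D$ and $b_{a_3-1}<b_{a_1}$.

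For the claim $b_{a_2}\le b_{a_3-1}$: If $a_2=a_3-1$ the inequality is an equality and we are done, so I would assume $a_2<a_3-1$ and aim for a contradiction from $b_{a_2}>b_{a_3-1}$. The point is that $(a_3-1,b_{a_3-1})$ lies in the third rectangle appearing in the definition of $E_\phi(T)$: its row $a_3-1$ sits in the open interval $(a_2,a_3)$, and under the assumption $b_{a_3-1}<b_{a_2}$ its column lies in $[1,b_{a_2}]$. Since $(a_3-1,b_{a_3-1})\in T$, this contradicts Lemma~\ref{EPhi}.

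For the claim $a_3-a_1\ge 3$: We already have $a_3-a_1\ge 2$ from $a_1<a_2<a_3$, so I would assume toward contradiction that $a_3-a_1=2$, which forces $a_2=a_1+1$ and $a_3=a_1+2$. The $J$-type 2 hypothesis gives $a_3-1\in D$, and since $\mathcal{Y}$ is 1-alternating this forces $a_3-2=a_1\in A$. Therefore $b_{a_1}<b_{a_1+1}=b_{a_2}$, contradicting the $J_3$ inequality $b_{a_1}>b_{a_2}$.

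There is no real obstacle beyond correctly locating $(a_3-1,b_{a_3-1})$ inside the subboard $E_\phi(T)$; the mild subtlety is that this argument applies only when $a_2<a_3-1$, which is why the case $a_2=a_3-1$ must be dispatched separately by noting that it gives equality $b_{a_2}=b_{a_3-1}$. Both parts are then short, and the lemma follows from combining them.
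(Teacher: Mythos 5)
Your proof is correct and follows essentially the same route as the paper: the first inequality is established by observing that $b_{a_2}>b_{a_3-1}$ would produce a smaller copy of $J_3$ (you phrase this via Lemma~\ref{EPhi}, whereas the paper states directly that $(a_1,a_2,a_3-1)\in U(T)$, but these are the same argument since the proof of Lemma~\ref{EPhi} consists exactly of that observation), and the second inequality is obtained from the 1-alternating condition forcing $a_1\in A$ when $a_3=a_1+2$, contradicting $b_{a_1}>b_{a_2}$.
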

\begin{proof}
If $b_{a_2} > b_{a_3-1}$, then $(a_1,a_2,a_3-1) \in U(T)$, which contradicts the definition of
$h_J$.  If $a_3 - a_1 \le 2$, then we have $a_3 = a_1 + 1$
and $a_2 = a_1 + 1$.  Because $\mathcal{Y}$ is 1-alternating and $a_3-1 \in D$, we have
$a_1 = a_3 - 2 \in A$, which implies that $b_{a_1} < b_{a_2}$, contradiction.
\end{proof}

The following lemma will be used repeatedly in the proof of Proposition~\ref{FullPhi}
for the case in which $T$ is of $J$-type 3.

\begin{lemma}
\label{JType3L1}
Let $T = \{(i,b_i)\}$ be a separable, valid transversal of $\mathcal{Y}$ of $J$-type 3,
and let $\phi(T) = \{(i,c_i)\}$.
\begin{enumerate}[(a)]
\item If $i \in \Gamma_{[1,a_1)}^{[b_{a_3},b_{a_1}]}(T)$, then $b_{a_3+1} < b_i < b_{a_2}.$
Let $\Gamma_{[1,a_1]}^{[b_{a_3},b_{a_1}]}(T) = \{i_1 < i_2 < \cdots < i_k\}$; then $b_{i_1} < b_{i_2} < \cdots < b_{i_k}$ and $c_{i_1} < c_{i_2} < \cdots < c_{i_k}$.
\item Let $\Gamma_{[a_2,a_3)}^{[b_{a_3},b_{a_1}]}(T) = \{i_1 < i_2 < \cdots < i_k\}$, then $b_{i_1} < b_{i_2} < \cdots < b_{i_k}$ and $c_{i_1} < c_{i_2} < \cdots < c_{i_k} < c_{a_3}$.
In particular, if $i \in \Gamma_{[a_2,a_3)}^{[b_{a_3},b_{a_1}]}(T)$, then
$b_{a_2} \le b_i$.
\end{enumerate}
\end{lemma}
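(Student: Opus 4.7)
The plan is to use the minimality of $h_J(T) = (a_1, a_2, a_3)$ in the $\#$-order (ruling out smaller $J_3$-copies in $T$) and the separability of $T$ (ruling out $F_3$-copies $v$ with $S(v) > \#(h_J)$) to control the positions involved, and then to read off the $c$-values directly from the definitions of the two $\omega$'s composing $\phi_3$.

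I first record two auxiliary column bounds that both parts depend on: $b_{a_3} < b_{a_3+1}$, which is immediate from $a_3 \in A$, and $b_{a_3+1} < b_{a_2}$. The second follows from separability: if instead $b_{a_3+1} > b_{a_2}$ then $(a_2, a_3, a_3+1)$ is a $213$-copy in $T$, hence an element of $V(T)$ of $F$-type $2$ (using $a_3 \in A$ and $a_3 = (a_3+1) - 1$), whose $S$-value $(a_3+2, a_2, 0)$ exceeds $\#(h_J) = (a_3, a_1, a_2)$ in the first coordinate, contradicting separability. In particular, $b_{a_3+1} \in [b_{a_3}, b_{a_1}]$.

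For part (a), the bound $b_i < b_{a_2}$ is by minimality (else $(i, a_2, a_3) \in U(T)$ has $\#$-value $(a_3, i, a_2) < \#(h_J)$ since $i < a_1$), and $b_i > b_{a_3+1}$ is by separability (else $b_{a_3} < b_i < b_{a_3+1}$ makes $(i, a_3, a_3+1)$ a $213$-copy in $V(T)$ of $F$-type $2$ with $S$-value $(a_3+2, i, 0) > \#(h_J)$). The $b$-monotonicity among the $i_l$ then follows from minimality applied to $(i_l, i_{l+1}, a_3) \in U(T)$, whose $\#$-value $(a_3, i_l, i_{l+1})$ would be smaller than $\#(h_J)$. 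For the $c$-monotonicity, the inner $\omega_{[a_2,a_3]}^{[b_{a_3},b_{a_1}]}$ of $\phi_3$ does not touch rows in $[1, a_1] \cup \{a_3+1\}$, so on these rows $\phi_3$ reduces to $\omega_{[1,a_1] \cup \{a_3+1\}}^{[b_{a_3},b_{a_1}]}$ applied directly to $T$. Its $\Gamma$-set is $\{i_1 < \cdots < i_k < a_3+1\}$ (since $b_{a_3+1} \in [b_{a_3}, b_{a_1}]$), and because $b_{a_3+1}$ is the smallest column value involved, the cyclic shift gives $c_{i_1} = b_{a_3+1}$ and $c_{i_l} = b_{i_{l-1}}$ for $l \ge 2$, which is strictly increasing by the $b$-monotonicity just established.

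Part (b) is analogous. The bound $b_{a_2} \le b_i$ follows from minimality applied to $(a_1, a_2, i) \in U(T)$ when $i > a_2$ (yielding $\#$-value $(i, a_1, a_2) < \#(h_J)$ since $i < a_3$), with $i = a_2$ trivial. The $b$-monotonicity comes from $(a_1, i_l, i_{l+1}) \in U(T)$ in the same manner. For $c$-monotonicity, the rows $[a_2, a_3]$ are affected only by the inner $\omega_{[a_2,a_3]}^{[b_{a_3},b_{a_1}]}$, whose $\Gamma$-set is $\{i_1 < \cdots < i_k < a_3\}$ with $b_{a_3}$ as smallest column value (since $b_{i_l} \ge b_{a_2} > b_{a_3}$); the cyclic shift then yields $c_{i_1} = b_{a_3}$, $c_{i_l} = b_{i_{l-1}}$ for $2 \le l \le k$, and $c_{a_3} = b_{i_k}$, giving $c_{i_1} < c_{i_2} < \cdots < c_{i_k} < c_{a_3}$. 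The main obstacle, as I see it, is the bookkeeping of $F$-type classifications when deriving $S$-values for the separability arguments; once the correct $\Gamma$-sets and their orderings are identified, the cyclic-shift computations themselves are routine.
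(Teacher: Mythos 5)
Your proof is correct and follows essentially the same route as the paper's: establish the column bounds via minimality of $h_J$ and separability (the paper packages most of these into Lemma~\ref{EPhi}, the subboard $E_\phi(T)$, which you re-derive inline), then read off the $c$-values from the cyclic shift $\omega$ using the disjointness of the two $\omega$-subscripts. The only minor infelicity is the phrasing ``because $b_{a_3+1}$ is the smallest column value involved, the cyclic shift gives $c_{i_1} = b_{a_3+1}$'' --- the identity $c_{i_1} = b_{a_3+1}$ comes purely from $a_3+1$ being the largest \emph{row} index in the $\Gamma$-set, while the smallness of $b_{a_3+1}$ is what you need afterward to start the increasing chain; the conclusion is right, just stated in the wrong causal order.
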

\begin{proof}
First, we prove part (a).  Let $i \in \Gamma_{[1,a_1)}^{[b_{a_3},b_{a_1}]}(T)$.
If $b_i \le b_{a_3+1}$, then $(i,a_3,a_3+1) \in V(T)$ and $S(i,a_3,a_3+1) = (a_3+2,i,0) > (a_3,a_1,a_2)$
in the lexicographic order, which contradicts the separability of $T$.  The fact
that $b_i \ge b_{a_2}$ follows from Lemma~\ref{EPhi}.  If $j < j'$ with $b_{i_j} > b_{i_{j'}}$, then $(i_j,i_{j'},a_3) \in U(T)$, which contradicts the definition of $h_J$.
Because $c_{i_j} = b_{i_{j+1}}$ for $j \in [k-1]$, it suffices to prove that $c_{i_1} < c_{i_2}$.  This follows
from $c_{i_1} = b_{a_3 + 1} < b_{i_1} = c_{i_2}$.

The proof of part (b) is similar.  If $j < j'$ with $b_{i_j} < b_{i_{j'}}$
then $(a_1,i_j,i_{j'}) \in U(T)$, which contradicts the definition of $h_J$.  Once again,
to finish it suffices to prove that $c_{i_1} < c_{i_2}$, but this follows from $c_{i_1} = b_{a_3} < b_{i_1} = c_{i_2}$.
\end{proof}

\begin{proof}[Proof of Proposition~\ref{FullPhi}]
We do casework on the $J$-type of $T$.  Let $h_J(T) = a = (a_1,a_2,a_3)$.

\begin{asparaenum}
\renewcommand{\labelenumi}{\bf{$J$-type \arabic{enumi}.}}
\item \begin{figure}
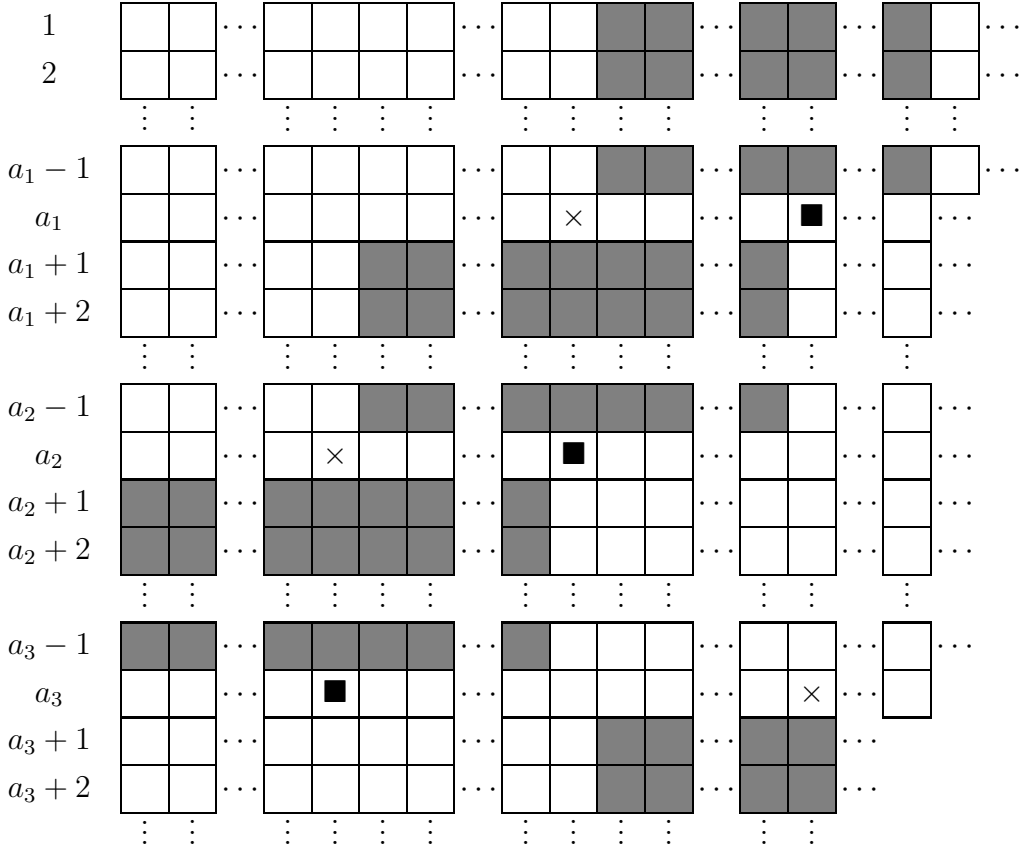

\begin{ytableau}
\none[1] &\none & &&\none[\cdots]& &&&&\none[\cdots] && & *(gray) & *(gray) &\none[\cdots] & *(gray) & *(gray)  & \none[\cdots] & *(gray) & & \none[\cdots]\\
\none[2] &\none & &&\none[\cdots]& &&&&\none[\cdots] && & *(gray) & *(gray) &\none[\cdots] & *(gray) & *(gray)  & \none[\cdots] & *(gray) & & \none[\cdots]\\
\none &\none & \none[\vdots] &\none[\vdots] &\none &\none[\vdots] &\none[\vdots] &\none[\vdots] & \none[\vdots] & \none & \none[\vdots] & \none[\vdots] & \none[\vdots] & \none[\vdots] & \none & \none[\vdots] & \none[\vdots] &\none &\none[\vdots] &\none[\vdots]\\
\none[a_1-1] &\none & &&\none[\cdots]& &&&&\none[\cdots] && & *(gray) & *(gray) &\none[\cdots] & *(gray) & *(gray)  & \none[\cdots] & *(gray) & & \none[\cdots]\\
\none[a_1] &\none &&& \none[\cdots] &&&&&\none[\cdots]&& \times & & & \none[\cdots] & & \blacksquare & \none[\cdots] && \none[\cdots]\\
\none[a_1+1] &\none &&& \none[\cdots] &&& *(gray) & *(gray) &\none[\cdots] & *(gray) & *(gray) & *(gray) & *(gray) & \none[\cdots] & *(gray) & & \none[\cdots] && \none[\cdots]\\
\none[a_1+2] &\none &&& \none[\cdots] &&& *(gray) & *(gray) &\none[\cdots] & *(gray) & *(gray) & *(gray) & *(gray) & \none[\cdots] & *(gray) & & \none[\cdots] && \none[\cdots]\\
\none &\none & \none[\vdots] &\none[\vdots] &\none &\none[\vdots] &\none[\vdots] &\none[\vdots] & \none[\vdots] & \none & \none[\vdots] & \none[\vdots] & \none[\vdots] & \none[\vdots] & \none & \none[\vdots] & \none[\vdots] &\none &\none[\vdots]\\
\none[a_2-1] &\none &&& \none[\cdots] &&& *(gray) & *(gray) &\none[\cdots] & *(gray) & *(gray) & *(gray) & *(gray) & \none[\cdots] & *(gray) & & \none[\cdots] && \none[\cdots]\\
\none[a_2] &\none &&& \none[\cdots] &&\times &&& \none[\cdots] && \blacksquare&&&\none[\cdots] &&& \none[\cdots] && \none[\cdots]\\
\none[a_2+1] &\none &*(gray) &*(gray) & \none[\cdots] &*(gray) &*(gray) &*(gray) &*(gray) & \none[\cdots] &*(gray) &&&&\none[\cdots] &&& \none[\cdots] && \none[\cdots]\\
\none[a_2+2] &\none &*(gray) &*(gray) & \none[\cdots] &*(gray) &*(gray) &*(gray) &*(gray) & \none[\cdots] &*(gray) &&&&\none[\cdots] &&& \none[\cdots] && \none[\cdots]\\
\none &\none & \none[\vdots] &\none[\vdots] &\none &\none[\vdots] &\none[\vdots] &\none[\vdots] & \none[\vdots] & \none & \none[\vdots] & \none[\vdots] & \none[\vdots] & \none[\vdots] & \none & \none[\vdots] & \none[\vdots] &\none &\none[\vdots]\\
\none[a_3-1] &\none &*(gray) &*(gray) & \none[\cdots] &*(gray) &*(gray) &*(gray) &*(gray) & \none[\cdots] &*(gray) &&&&\none[\cdots] &&& \none[\cdots] && \none[\cdots]\\
\none[a_3] &\none &&& \none[\cdots] &&\blacksquare &&&\none[\cdots]&&&&&\none[\cdots] && \times & \none[\cdots] &\\
\none[a_3+1] &\none &&& \none[\cdots] &&&&&\none[\cdots]&& & *(gray) &*(gray) &\none[\cdots] &*(gray) &*(gray) & \none[\cdots] \\
\none[a_3+2] &\none &&& \none[\cdots] &&&&&\none[\cdots]&& & *(gray) &*(gray) &\none[\cdots] &*(gray) &*(gray) & \none[\cdots] \\
\none &\none & \none[\vdots] &\none[\vdots] &\none &\none[\vdots] &\none[\vdots] &\none[\vdots] & \none[\vdots] & \none & \none[\vdots] & \none[\vdots] & \none[\vdots] & \none[\vdots] & \none & \none[\vdots] & \none[\vdots]\\
\end{ytableau}
\caption{The squares marked with a solid black box are the elements of the chosen copy of $J_3$ for a separable, valid transversal $T$ of $J$-type 1.  The crosses mark new elements of $\phi(T)$, i.e. elements of $\phi(T) \setminus T$, while the gray squares are free of elements of $T$ (and $\phi(T)$).}
\label{fig:JType1}
\end{figure}
See Figure~\ref{fig:JType1}.  First, we prove that $\phi(T)$ is a valid transversal of $\mathcal{Y}$.
Because $(a_1,a_2,a_3)$ is a copy of $J_3$ in $T$, we have $(a_3,b_{a_1}) \in Y$,
which implies that $\phi(T)$ is a transversal of $Y$.  If $\{i,i+1\} \cap \{a_1,a_2,a_3\} = \emptyset$,
then we have $b_i = c_i$ and $b_{i+1} = c_{i+1}$, so $i$ is an ascent (resp. descent) of $T$
if and only if $i$ is an ascent (resp. descent) of $\phi(T)$.
By Lemma~\ref{EPhi}, we have that $b_{a_1-1} \notin [b_{a_2}, b_{a_1}]$, which implies
that $a_1-1$ is a ascent (resp. descent) of $\phi(T)$ if and only if it is a (resp. descent) of $T$.
By Lemma~\ref{EPhi},
we have $b_{a_1+1} \notin [b_{a_3}, b_{a_1}]$.  Provided that
$a_2 \not= a_1 + 1,$ this implies
that $a_1$ is an ascent (resp. descent) of $\phi(T)$ if and only if it is an ascent (resp. descent) of $T$; however,
if $a_2 = a_1 + 1$, then it is clear $a_1$ is a descent of both $T$ and $\phi(T)$.
If $a_2 \not= a_1 + 1$, then, by Lemma~\ref{EPhi}, we have
$b_{a_2-1} < b_{a_3} (= c_{a_2}) < b_{a_2}$, which implies that $a_2-1$
is an ascent of both $T$ and $\phi(T)$.  If $a_3 \not= a_2 + 1$, then by Lemma~\ref{EPhi},
we have $b_{a_2+1} > b_{a_2} > b_{a_3} = c_{a_2}$,
which implies that $a_2+1$ is an ascent of both $T$ and $\phi(T)$; if $a_3 = a_2 + 1$,
then we have $a_2 \notin D$ by definition, and because $b_{a_2} > b_{a_3}$, we have
$a_2 \notin A$.
By Lemma~\ref{EPhi}, we have $b_{a_3-1} \ge b_{a_2} > b_{a_3}$, which implies that
$a_3 - 1 \notin A$.
Because $\mathcal{Y}$ is 1-alternating, we have $a_3 \notin D$,
and we also have $a_3 \notin A$ by definition of $J$-type.  It follows
that $\phi(T)$ is a valid transversal of $\mathcal{Y}$.

Next, we prove that $h_F(\phi(T)) = a$.  It is clear that $a \in V(\phi(T))$,
and because $a_3 - 1 \notin A$, we have $S(a) = (a_3, a_1, a_2)$.  Suppose that
$a' = (a'_1,a'_2,a'_3) \in U(T)$ with $S(a') > S(a)$ in the lexicographic order.
Because $a_3 - 1 \notin A$, we must have $a'_3 \ge a_3$.  If $a'_3 > a_3$,
then we have $c_{a'_3} = b_{a'_3} < b_{a_2}$ by Lemma~\ref{EPhi}.  For $i \in [2],$ let
\[d_i = \begin{cases}
a_3 & \text{ if } a'_i = a_2\\
a'_i  & \text{ otherwise.}\end{cases}\]
Because $b_{a'_2} < b_{a_2}$, we have $a'_2 \notin (a_2,a_3)$ by Lemma~\ref{EPhi},
from which it follows that $d_1 < d_2$.  We have $b_{d_i} = b_{a'_i}$ for $i \in [2]$,
which implies that $v = (d_1,d_2,a'_3) \in V(T)$.  Because $a_3 \notin A$, the first
component of $S(v)$ is greater than $a_3$, and this contradicts the definition of $h_F$.
Hence, we may assume that $a'_3 = a_3$, and because
$a_3-1 \notin A$, the first component of $S(a')$ is $a_3$.  If $a'_1 > a_1$, then $(a_1,a'_1,a'_2) \in U(T)$, which contradicts the definition of $h_J$.
If $a'_1 = a_1$, then Lemma~\ref{EPhi} implies that $a'_2 \le a_2$.  The fact
that $h_F(T) = a$ follows by definition of $h_F$.  It is clear that $\phi(T)$
is of $F$-type 1 and that $\psi(\phi(T)) = T$.

We prove that if $e = (e_1,e_2,e_3) \in U(\phi(T))$, then we have $\#(e) > \#(a)$
in the lexicographic order.  If $e_3 < a_3$ and $c_{e_3} > c_{a_1}$, then we have
$c_{e_i} = b_{e_i}$ for all $i$ and thus $(e_1,e_2,e_3) \in U(T)$, which contradicts
the definition of $h_J$.  If $e_3 < a_3$ and $c_{e_3} = c_{a_1}$, then we have
$e_3 = a_1$ and $b_{e_2} = c_{e_2} > Y_{a_3} \ge b_{a_1}$ by Lemma~\ref{EPhi}.  If
$e_3 < a_3$ and $c_{e_3} < c_{a_1}$, then we have $e_3 \le a_2$ by Lemma~\ref{EPhi}.
If $e_3 < a_2$, then we have $b_{e_3} = c_{e_3}$ and thus $(e_1,e_2,e_3) \in U(T)$
(because if $b_{e_i} = c_{e_i}$ for all $e_i \not= a_1$, with $b_{e_1} = c_{e_1}$;
if $e_2 = a_1$, then we have $b_{e_1} \ge Y_{a_3} > b_{a_1}$ by Lemma~\ref{EPhi}),
contradiction.  If $e_3 = a_2$ and $c_{e_1} < c_{a_1}$, then $(e_1,e_2,a_3) \in U(T)$,
contradiction.  If $e_3 = a_2$ and $c_{e_1} > c_{a_1}$, then we have $c_{e_1} = b_{e_1}
> Y_{a_3} \ge b_{a_1}$ by Lemma~\ref{EPhi}, which implies that $(e_1,a_1,a_2) \in U(T)$,
contradiction.  Hence, we may assume that $e_3 = a_3$.  Then, if $e_1 \le a_1$,
we have $c_{e_1} \le c_{a_1} < c_{a_3}$ by Lemma~\ref{EPhi}, contradiction.  The separability
of $\phi(T)$ follows.

We have $b_i = c_i$ for all $i < a_1$, and $b_{a_1} > b_{a_2} = c_{a_1}$.  Therefore,
$(b_1,b_2,\ldots,b_n) > (c_1,c_2,\ldots,c_n)$ in the lexicographic order.

\item\begin{figure}
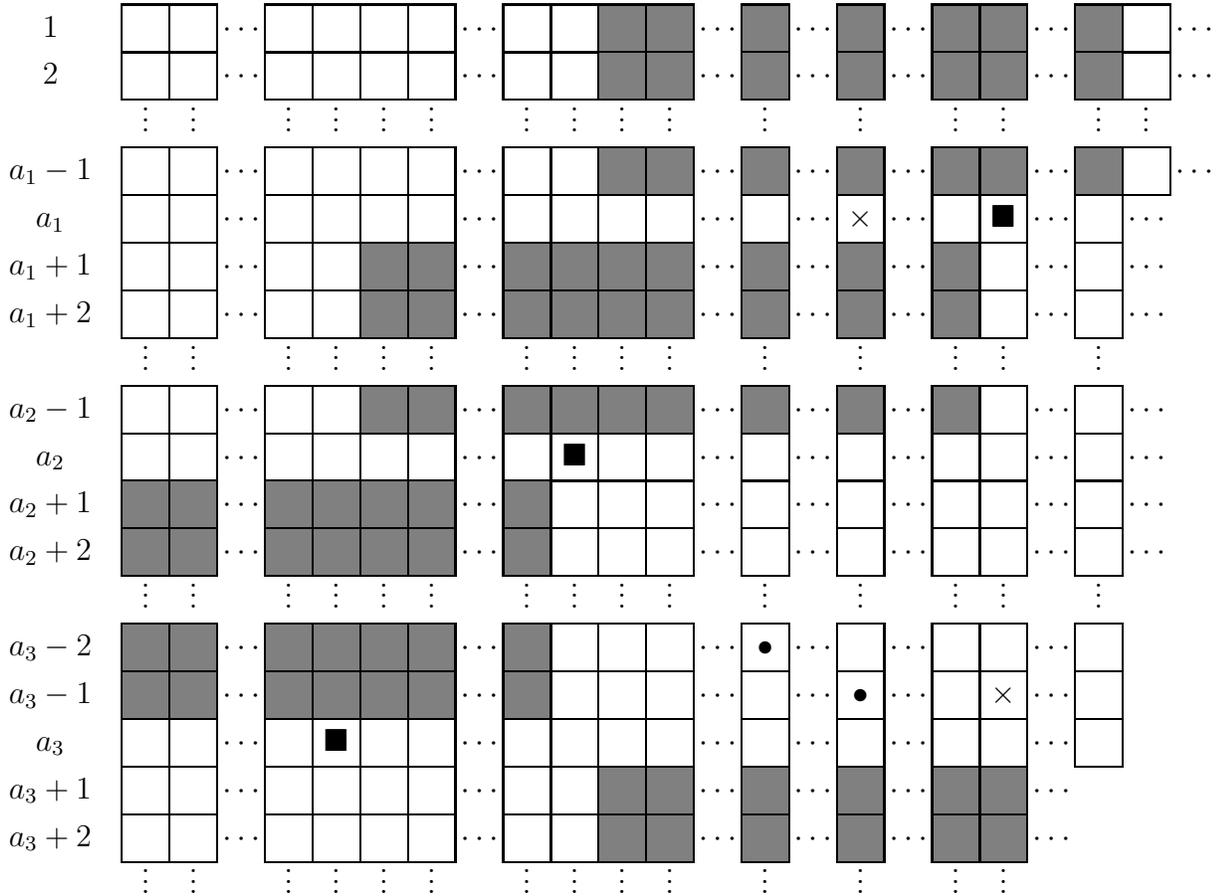

\begin{ytableau}
\none[1] &\none & &&\none[\cdots]& &&&&\none[\cdots] && & *(gray) & *(gray) &\none[\cdots] &*(gray) &\none[\cdots] &*(gray) &\none[\cdots] & *(gray) & *(gray)  & \none[\cdots] & *(gray) & & \none[\cdots]\\
\none[2] &\none & &&\none[\cdots]& &&&&\none[\cdots] && & *(gray) & *(gray) &\none[\cdots] &*(gray) &\none[\cdots] &*(gray) &\none[\cdots] & *(gray) & *(gray)  & \none[\cdots] & *(gray) & & \none[\cdots]\\
\none &\none & \none[\vdots] &\none[\vdots] &\none &\none[\vdots] &\none[\vdots] &\none[\vdots] & \none[\vdots] & \none & \none[\vdots] & \none[\vdots] & \none[\vdots] & \none[\vdots] & \none & \none[\vdots] & \none & \none[\vdots] & \none & \none[\vdots] & \none[\vdots] &\none &\none[\vdots] &\none[\vdots]\\
\none[a_1-1] &\none & &&\none[\cdots]& &&&&\none[\cdots] && & *(gray) & *(gray) &\none[\cdots] &*(gray) &\none[\cdots] &*(gray) &\none[\cdots] & *(gray) & *(gray)  & \none[\cdots] & *(gray) & & \none[\cdots]\\
\none[a_1] &\none &&& \none[\cdots] &&&&&\none[\cdots]&&&&& \none[\cdots] &&\none[\cdots] &\times &\none[\cdots] &&\blacksquare & \none[\cdots] && \none[\cdots]\\
\none[a_1+1] &\none &&& \none[\cdots] &&& *(gray) & *(gray) &\none[\cdots] & *(gray) & *(gray) & *(gray) & *(gray) &\none[\cdots] &*(gray) &\none[\cdots] &*(gray) &\none[\cdots] & *(gray) & & \none[\cdots] && \none[\cdots]\\
\none[a_1+2] &\none &&& \none[\cdots] &&& *(gray) & *(gray) &\none[\cdots] & *(gray) & *(gray) & *(gray) & *(gray) &\none[\cdots] &*(gray) &\none[\cdots] &*(gray) &\none[\cdots] & *(gray) & & \none[\cdots] && \none[\cdots]\\
\none &\none & \none[\vdots] &\none[\vdots] &\none &\none[\vdots] &\none[\vdots] &\none[\vdots] & \none[\vdots] & \none & \none[\vdots] & \none[\vdots] & \none[\vdots] & \none[\vdots] & \none & \none[\vdots] & \none & \none[\vdots] & \none & \none[\vdots] & \none[\vdots] &\none &\none[\vdots]\\
\none[a_2-1] &\none &&& \none[\cdots] &&& *(gray) & *(gray) &\none[\cdots] & *(gray) & *(gray) & *(gray) & *(gray) &\none[\cdots] &*(gray) &\none[\cdots] &*(gray) &\none[\cdots] & *(gray) & & \none[\cdots] && \none[\cdots]\\
\none[a_2] &\none &&& \none[\cdots] &&&&& \none[\cdots] && \blacksquare&&&\none[\cdots] &&\none[\cdots] &&\none[\cdots] &&& \none[\cdots] && \none[\cdots]\\
\none[a_2+1] &\none &*(gray) &*(gray) & \none[\cdots] &*(gray) &*(gray) &*(gray) &*(gray) & \none[\cdots] &*(gray) &&&&\none[\cdots] &&\none[\cdots] &&\none[\cdots] &&& \none[\cdots] && \none[\cdots]\\
\none[a_2+2] &\none &*(gray) &*(gray) & \none[\cdots] &*(gray) &*(gray) &*(gray) &*(gray) & \none[\cdots] &*(gray) &&&&\none[\cdots] &&\none[\cdots] &&\none[\cdots] &&& \none[\cdots] && \none[\cdots]\\
\none &\none & \none[\vdots] &\none[\vdots] &\none &\none[\vdots] &\none[\vdots] &\none[\vdots] & \none[\vdots] & \none & \none[\vdots] & \none[\vdots] & \none[\vdots] & \none[\vdots] & \none & \none[\vdots] & \none & \none[\vdots] & \none & \none[\vdots] & \none[\vdots] &\none &\none[\vdots]\\
\none[a_3-2] &\none &*(gray) &*(gray) & \none[\cdots] &*(gray) &*(gray) &*(gray) &*(gray) & \none[\cdots] &*(gray) &&&&\none[\cdots] &\bullet &\none[\cdots] &&\none[\cdots] &&& \none[\cdots] &\\
\none[a_3-1] &\none &*(gray) &*(gray) & \none[\cdots] &*(gray) &*(gray) &*(gray) &*(gray) & \none[\cdots] &*(gray) &&&&\none[\cdots] &&\none[\cdots] &\bullet &\none[\cdots] &&\times & \none[\cdots] &\\
\none[a_3] &\none &&& \none[\cdots] &&\blacksquare &&&\none[\cdots]&&&&&\none[\cdots] &&\none[\cdots] &&\none[\cdots] &&& \none[\cdots] &\\
\none[a_3+1] &\none &&& \none[\cdots] &&&&&\none[\cdots]&& & *(gray) &*(gray) &\none[\cdots] &*(gray) &\none[\cdots] &*(gray) &\none[\cdots] &*(gray) &*(gray) & \none[\cdots] \\
\none[a_3+2] &\none &&& \none[\cdots] &&&&&\none[\cdots]&& & *(gray) &*(gray) &\none[\cdots] &*(gray) &\none[\cdots] &*(gray) &\none[\cdots] &*(gray) &*(gray) & \none[\cdots] \\
\none &\none & \none[\vdots] &\none[\vdots] &\none &\none[\vdots] &\none[\vdots] &\none[\vdots] & \none[\vdots] & \none & \none[\vdots] & \none[\vdots] & \none[\vdots] & \none[\vdots] & \none & \none[\vdots] & \none & \none[\vdots] & \none & \none[\vdots] & \none[\vdots]\\
\end{ytableau}
\caption{The squares marked with a solid black box are the elements of the chosen copy of $J_3$ for a separable, valid transversal $T$ of $J$-type 2. The bullets mark other elements of $T$, and the crosses mark new elements of $\phi(T)$, i.e. elements of $\phi(T) \setminus T$.  The gray squares are free of elements of $T$ (and $\phi(T)$).}
\label{fig:JType2}
\end{figure}
See Figure~\ref{fig:JType2}.  First, we prove that $\phi(T)$ is a valid transversal of $\mathcal{Y}$.
Because $Y_{a_3-1} \ge Y_{a_3} \ge b_{a_1},$ the set $\phi(T)$ is a transversal of $Y$.
If $\{i,i+1\} \cap \{a_1,a_3-1\} = \emptyset$, then we have $b_i = c_i$ and $b_{i+1} = c_{i+1}$,
and thus $i$ is an ascent (resp. descent) of $T$ if and only if it is an ascent (resp. descent)
of $\phi(T)$.  By Lemma~\ref{EPhi}, we have $b_{a_1-1}, b_{a_1+1} \notin [b_{a_2}, b_{a_1}] \subseteq [b_{a_3-1}, b_{a_1}] = [c_{a_1},b_{a_1}]$, where the subset relation holds by Lemma~\ref{JType2L1}.  By Lemma~\ref{JType2L1}
again, we have $a_3 - 1 > a_1 + 1$, and it follows that $b_{a_1+1} = c_{a_1+1}$
and $b_{a_1-1} = c_{a_1-1}$.  Therefore, $a_1 - 1$ is an ascent (resp. descent) of $\phi(T)$
if and only if it is an ascent (resp. descent) of $T$, and the same for $a_1$.  Furthermore,
we have $c_{a_3-1} = b_{a_1} > b_{a_3-1} > b_{a_3-2},b_{a_3}$.  By Lemma~\ref{JType2L1}, we have that $a_3-2 > a_1$,
which yields that $b_{a_3-2} = c_{a_3-2}$ and $b_{a_3} = c_{a_3}$.  Therefore,
$a_3 - 2$ is an ascent of $\phi(T)$ and $a_3-1$ a descent.  It follows that $\phi(T)$
is a valid transversal of $\mathcal{Y}$, as desired.

Next, we prove that $h_F(\phi(T)) = (a_1, a_3-2, a_3-1)$.  It is clear that $(a_1,a_3-2,a_3-1) \in V(\phi(T))$,
and we have $S(a_1,a_3-2,a_3-1) = (a_3,a_1,0)$.  Suppose for sake of contradiction
that $d = (d_1,d_2,d_3) \in V(T)$ with $S(d) > (a_3,a_1,0)$ in the lexicographic order.
If $d_3 > a_3$, then by Lemma~\ref{EPhi}, we have $c_{d_3} = b_{d_3} < b_{a_2}$; this implies
that $b_{d_i} = c_{d_i}$ for $i = 1, 2$ and it follows that $(d_1,d_2,d_3) \in V(T)$.
If $d_1 < a_1$, then the second component of $S(d)$ is less than $a_1$, which implies that
first component of $S(d)$ is greater than $a_3$; it follows that $S(d) > (a_3,a_1,a_2)$ in
the lexicographic order, which contradicts the separability of $T$.  It is clear that
$d_1 \not= a_1$.  If $d_1 > a_1$, then $(a_1,e_1,e_2) \in U(T)$, which contradicts the
definition of $h_J$.  Thus, we have $h_F(\phi(T)) = (a_1,a_3-2,a_3-1)$.  It follows
that $\phi(T)$ is of $F$-type 2, and it is clear that $\psi(\phi(T)) = T$.

We prove that if $e = (e_1,e_2,e_3) \in h_J(\phi(T))$, then $\#(e) \ge (a_3,a_1,0)$
in the lexicographic order.  If $e_3 < a_3-1$ and $e_2 \not= a_1$,
then $(e_1,e_2,e_3) \in U(T)$ because $b_{e_i} = c_{e_i}$ for $i =2, 3$ and $b_{e_1} \ge c_{e_1}$, contradiction.
If $e_2 = a_1$, the fact that $c_{a_1} < c_{a_3-1}$ implies that $e_3 \not= a_3-1$,
and by Lemma~\ref{EPhi} we have $b_{e_1} > b_{a_1}$, which implies
that $(e_1,e_2,e_3) \in U(T)$, contradiction.  If $e_3 = a_3-1$, then the fact
that $c_{a_1} < c_{a_3}$ implies that $e_1,e_2 \not= a_1$ and $b_{e_i} = c_{e_i}$ for $i = 1, 2$.
Then, because $b_{a_3-1} < c_{a_3-1}$, we have $(e_1,e_2,e_3) \in U(T)$, contradiction.
Hence, we may assume that $e_3 = a_3$.  If $\{e_1,e_2\}$ and $\{a_1,a_3-1\}$ are disjoint,
then clearly we have $(e_1,e_2,e_3) \in U(T)$.  It is impossible for $e_1 = a_3-1$,
and if $e_2 = a_3-1$, then we have $e_1 \not= a_1$ and hence $(e_1,e_2,e_3) \in U(T)$,
contradiction.  If $e_1 = a_1$, then the fact that $b_{a_1} > c_{a_1}$ implies that
$(e_1,e_2,e_3) \in U(T)$, contradiction.  We have already dealt with the case $e_2 = a_1$.
The separability of $T$ follows.

We have $b_i = c_i$ for all $i < a_1$, and $b_{a_1} > b_{a_3-1} = c_{a_1}$.  Therefore,
$(b_1,b_2,\ldots,b_n) > (c_1,c_2,\ldots,c_n)$ in the lexicographic order.

\item
\begin{figure}
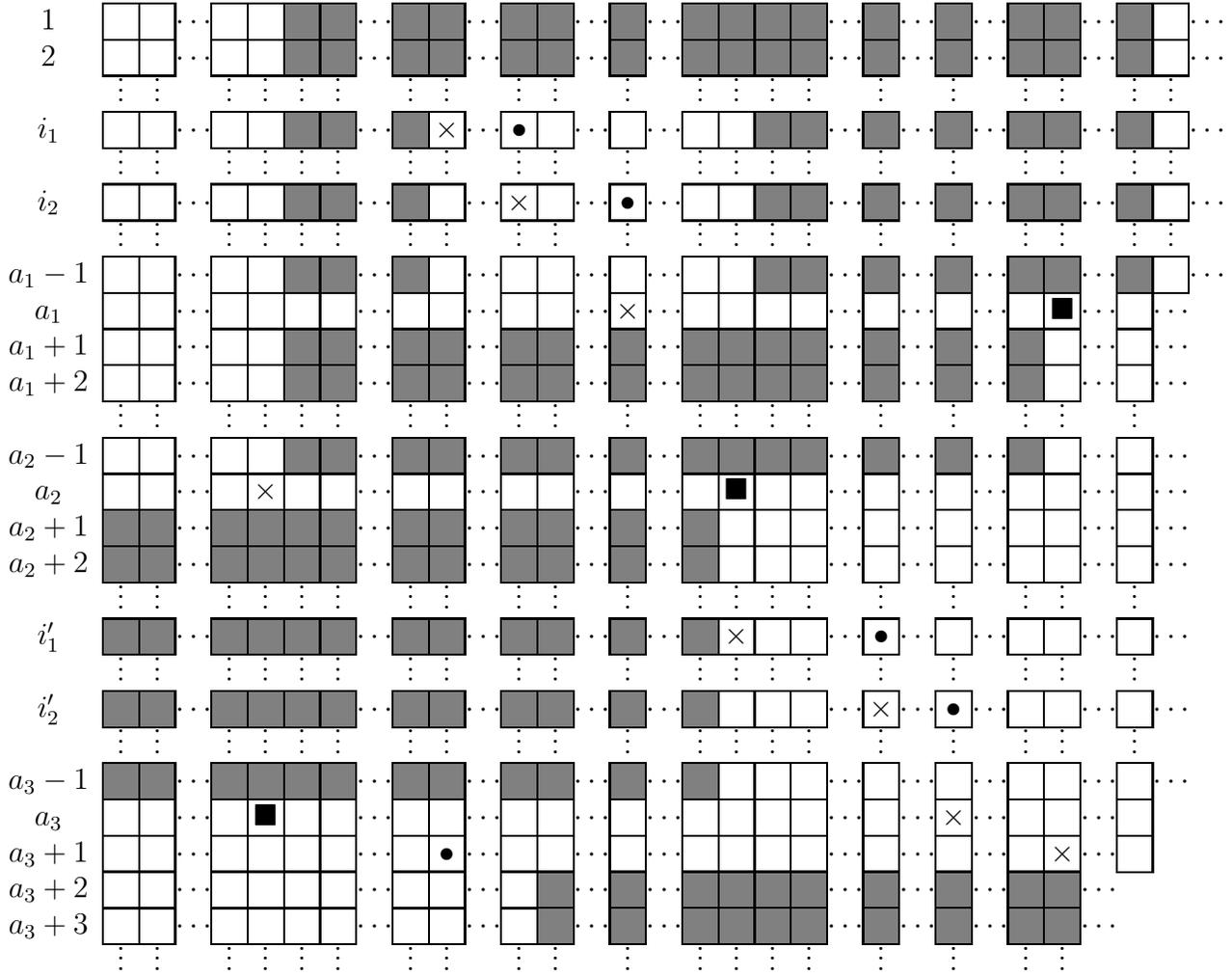

\ytableausetup{boxsize = 1.175em}
\begin{ytableau}
\none[1] &\none & &&\none[\cdots]& &&*(gray)&*(gray)&\none[\cdots] &*(gray)&*(gray)&\none[\cdots]&*(gray)&*(gray)&\none[\cdots]&*(gray)&\none[\cdots] &*(gray)&*(gray) & *(gray) & *(gray) &\none[\cdots] &*(gray) &\none[\cdots] &*(gray) &\none[\cdots] & *(gray) & *(gray)  & \none[\cdots] & *(gray) & & \none[\cdots]\\
\none[2] &\none & &&\none[\cdots]& &&*(gray)&*(gray)&\none[\cdots] &*(gray)&*(gray)&\none[\cdots]&*(gray)&*(gray)&\none[\cdots]&*(gray)&\none[\cdots] &*(gray)&*(gray) & *(gray) & *(gray) &\none[\cdots] &*(gray) &\none[\cdots] &*(gray) &\none[\cdots] & *(gray) & *(gray)  & \none[\cdots] & *(gray) & & \none[\cdots]\\
\none &\none & \none[\vdots] &\none[\vdots] &\none &\none[\vdots] &\none[\vdots] &\none[\vdots] & \none[\vdots] & \none & \none[\vdots]&\none[\vdots]&\none&\none[\vdots]&\none[\vdots]&\none&\none[\vdots]&\none& \none[\vdots] & \none[\vdots] & \none[\vdots] & \none[\vdots] & \none & \none[\vdots] & \none & \none[\vdots] & \none & \none[\vdots] & \none[\vdots] &\none &\none[\vdots] &\none[\vdots]\\
\none[i_1] &\none & &&\none[\cdots]& &&*(gray)&*(gray)&\none[\cdots] &*(gray)&\times &\none[\cdots]&\bullet&&\none[\cdots]&&\none[\cdots] && & *(gray) & *(gray) &\none[\cdots] &*(gray) &\none[\cdots] &*(gray) &\none[\cdots] & *(gray) & *(gray)  & \none[\cdots] & *(gray) & & \none[\cdots]\\
\none &\none & \none[\vdots] &\none[\vdots] &\none &\none[\vdots] &\none[\vdots] &\none[\vdots] & \none[\vdots] & \none & \none[\vdots]&\none[\vdots]&\none&\none[\vdots]&\none[\vdots]&\none&\none[\vdots]&\none& \none[\vdots] & \none[\vdots] & \none[\vdots] & \none[\vdots] & \none & \none[\vdots] & \none & \none[\vdots] & \none & \none[\vdots] & \none[\vdots] &\none &\none[\vdots] &\none[\vdots]\\
\none[i_2] &\none & &&\none[\cdots]& &&*(gray)&*(gray)&\none[\cdots] &*(gray)&& \none[\cdots]&\times&&\none[\cdots]&\bullet&\none[\cdots] && & *(gray) & *(gray) &\none[\cdots] &*(gray) &\none[\cdots] &*(gray) &\none[\cdots] & *(gray) & *(gray)  & \none[\cdots] & *(gray) & & \none[\cdots]\\
\none &\none & \none[\vdots] &\none[\vdots] &\none &\none[\vdots] &\none[\vdots] &\none[\vdots] & \none[\vdots] & \none & \none[\vdots]&\none[\vdots]&\none&\none[\vdots]&\none[\vdots]&\none&\none[\vdots]&\none& \none[\vdots] & \none[\vdots] & \none[\vdots] & \none[\vdots] & \none & \none[\vdots] & \none & \none[\vdots] & \none & \none[\vdots] & \none[\vdots] &\none &\none[\vdots] &\none[\vdots]\\
\none[a_1-1] &\none & &&\none[\cdots]& &&*(gray)&*(gray)&\none[\cdots] &*(gray)&&\none[\cdots]&&&\none[\cdots] &&\none[\cdots] && & *(gray) & *(gray) &\none[\cdots] &*(gray) &\none[\cdots] &*(gray) &\none[\cdots] & *(gray) & *(gray)  & \none[\cdots] & *(gray) & & \none[\cdots]\\
\none[a_1] &\none &&& \none[\cdots] &&&&&\none[\cdots] &&&\none[\cdots]&&&\none[\cdots]&\times&\none[\cdots]&&&&& \none[\cdots] &&\none[\cdots] &&\none[\cdots] &&\blacksquare & \none[\cdots] && \none[\cdots]\\
\none[a_1+1] &\none &&& \none[\cdots] &&& *(gray) & *(gray) &\none[\cdots] &*(gray)&*(gray)&\none[\cdots]&*(gray)&*(gray)&\none[\cdots]&*(gray)&\none[\cdots] & *(gray) & *(gray) & *(gray) & *(gray) &\none[\cdots] &*(gray) &\none[\cdots] &*(gray) &\none[\cdots] & *(gray) & & \none[\cdots] && \none[\cdots]\\
\none[a_1+2] &\none &&& \none[\cdots] &&& *(gray) & *(gray) &\none[\cdots] &*(gray)&*(gray)&\none[\cdots]&*(gray)&*(gray)&\none[\cdots]&*(gray)&\none[\cdots] & *(gray) & *(gray) & *(gray) & *(gray) &\none[\cdots] &*(gray) &\none[\cdots] &*(gray) &\none[\cdots] & *(gray) & & \none[\cdots] && \none[\cdots]\\
\none &\none & \none[\vdots] &\none[\vdots] &\none &\none[\vdots] &\none[\vdots] &\none[\vdots] & \none[\vdots] & \none & \none[\vdots]&\none[\vdots]&\none&\none[\vdots]&\none[\vdots]&\none&\none[\vdots]&\none& \none[\vdots] & \none[\vdots] & \none[\vdots] & \none[\vdots] & \none & \none[\vdots] & \none & \none[\vdots] & \none & \none[\vdots] & \none[\vdots] &\none &\none[\vdots]\\
\none[a_2-1] &\none &&& \none[\cdots] &&& *(gray) & *(gray) &\none[\cdots] &*(gray)&*(gray)&\none[\cdots]&*(gray)&*(gray)&\none[\cdots]&*(gray)&\none[\cdots] & *(gray) & *(gray) & *(gray) & *(gray) &\none[\cdots] &*(gray) &\none[\cdots] &*(gray) &\none[\cdots] & *(gray) & & \none[\cdots] && \none[\cdots]\\
\none[a_2] &\none &&& \none[\cdots] &&\times&&& \none[\cdots] &&&\none[\cdots]&&&\none[\cdots]&&\none[\cdots] && \blacksquare&&&\none[\cdots] &&\none[\cdots] &&\none[\cdots] &&& \none[\cdots] && \none[\cdots]\\
\none[a_2+1] &\none &*(gray) &*(gray) & \none[\cdots] &*(gray) &*(gray) &*(gray) &*(gray) & \none[\cdots] &*(gray)&*(gray)&\none[\cdots]&*(gray)&*(gray)&\none[\cdots]&*(gray)&\none[\cdots] &*(gray) &&&&\none[\cdots] &&\none[\cdots] &&\none[\cdots] &&& \none[\cdots] && \none[\cdots]\\
\none[a_2+2] &\none &*(gray) &*(gray) & \none[\cdots] &*(gray) &*(gray) &*(gray) &*(gray) & \none[\cdots] &*(gray)&*(gray)&\none[\cdots]&*(gray)&*(gray)&\none[\cdots]&*(gray)&\none[\cdots] &*(gray) &&&&\none[\cdots] &&\none[\cdots] &&\none[\cdots] &&& \none[\cdots] && \none[\cdots]\\
\none &\none & \none[\vdots] &\none[\vdots] &\none &\none[\vdots] &\none[\vdots] &\none[\vdots] & \none[\vdots] & \none & \none[\vdots]&\none[\vdots]&\none&\none[\vdots]&\none[\vdots]&\none&\none[\vdots]&\none& \none[\vdots] & \none[\vdots] & \none[\vdots] & \none[\vdots] & \none & \none[\vdots] & \none & \none[\vdots] & \none & \none[\vdots] & \none[\vdots] &\none &\none[\vdots]\\
\none[i'_1] &\none &*(gray) &*(gray) & \none[\cdots] &*(gray) &*(gray) &*(gray) &*(gray) & \none[\cdots] &*(gray)&*(gray)&\none[\cdots]&*(gray)&*(gray)&\none[\cdots]&*(gray)&\none[\cdots] &*(gray) &\times&&&\none[\cdots] &\bullet&\none[\cdots] &&\none[\cdots] &&& \none[\cdots] && \none[\cdots]\\
\none &\none & \none[\vdots] &\none[\vdots] &\none &\none[\vdots] &\none[\vdots] &\none[\vdots] & \none[\vdots] & \none & \none[\vdots]&\none[\vdots]&\none&\none[\vdots]&\none[\vdots]&\none&\none[\vdots]&\none& \none[\vdots] & \none[\vdots] & \none[\vdots] & \none[\vdots] & \none & \none[\vdots] & \none & \none[\vdots] & \none & \none[\vdots] & \none[\vdots] &\none &\none[\vdots]\\
\none[i'_2] &\none &*(gray) &*(gray) & \none[\cdots] &*(gray) &*(gray) &*(gray) &*(gray) & \none[\cdots] &*(gray)&*(gray)&\none[\cdots]&*(gray)&*(gray)&\none[\cdots]&*(gray)&\none[\cdots] &*(gray) &&&&\none[\cdots] &\times&\none[\cdots] &\bullet&\none[\cdots] &&& \none[\cdots] && \none[\cdots]\\
\none &\none & \none[\vdots] &\none[\vdots] &\none &\none[\vdots] &\none[\vdots] &\none[\vdots] & \none[\vdots] & \none & \none[\vdots]&\none[\vdots]&\none&\none[\vdots]&\none[\vdots]&\none&\none[\vdots]&\none& \none[\vdots] & \none[\vdots] & \none[\vdots] & \none[\vdots] & \none & \none[\vdots] & \none & \none[\vdots] & \none & \none[\vdots] & \none[\vdots] &\none &\none[\vdots]\\
\none[a_3-1] &\none &*(gray) &*(gray) & \none[\cdots] &*(gray) &*(gray) &*(gray) &*(gray) & \none[\cdots] &*(gray)&*(gray)&\none[\cdots]&*(gray)&*(gray)&\none[\cdots]&*(gray)&\none[\cdots] &*(gray) &&&&\none[\cdots] &&\none[\cdots] &&\none[\cdots] &&& \none[\cdots] && \none[\cdots]\\
\none[a_3] &\none &&& \none[\cdots] &&\blacksquare &&&\none[\cdots]&&&\none[\cdots]&&&\none[\cdots]&&\none[\cdots] &&&&&\none[\cdots]  &&\none[\cdots] &\times&\none[\cdots] &&& \none[\cdots] &\\
\none[a_3+1] &\none &&& \none[\cdots] &&&&&\none[\cdots]&&\bullet&\none[\cdots]&&&\none[\cdots]&&\none[\cdots] &&&&&\none[\cdots]  &&\none[\cdots] &&\none[\cdots] &&\times& \none[\cdots] &\\
\none[a_3+2] &\none &&& \none[\cdots] &&&&&\none[\cdots]&&&\none[\cdots]&&*(gray)&\none[\cdots]&*(gray)&\none[\cdots]
&*(gray)&*(gray) & *(gray) &*(gray) &\none[\cdots] &*(gray) &\none[\cdots] &*(gray) &\none[\cdots] &*(gray) &*(gray) & \none[\cdots] \\
\none[a_3+3] &\none &&& \none[\cdots] &&&&&\none[\cdots]&&&\none[\cdots]&&*(gray)&\none[\cdots]&*(gray)&\none[\cdots]
&*(gray)&*(gray) & *(gray) &*(gray) &\none[\cdots] &*(gray) &\none[\cdots] &*(gray) &\none[\cdots] &*(gray) &*(gray) & \none[\cdots] \\
\none &\none & \none[\vdots] &\none[\vdots] &\none &\none[\vdots] &\none[\vdots] &\none[\vdots] & \none[\vdots] & \none & \none[\vdots]&\none[\vdots]&\none&\none[\vdots]&\none[\vdots]&\none&\none[\vdots]&\none& \none[\vdots] & \none[\vdots] & \none[\vdots] & \none[\vdots] & \none & \none[\vdots] & \none & \none[\vdots] & \none & \none[\vdots] & \none[\vdots]\\
\end{ytableau}
\caption{The squares marked with a solid black box are the elements of the chosen copy of $J_3$ for a separable, valid transversal $T$ of $J$-type 3.  The bullets mark some other elements of $T$, while the crosses mark new elements of $\phi(T)$, i.e. elements of $\phi(T) \setminus T$.  The gray squares are free of elements of $T$ (and $\phi(T)$).
We suppose that $\Gamma_{[1,a_1)}^{[b_{a_3},b_{a_1}]}(T) = \{i_1,i_2\}$ and $\Gamma_{(a_2,a_3)}^{[b_{a_3},b_{a_1}]}(T) = \{i'_1,i'_2\}$.}
\label{fig:JType3}
\end{figure}
See Figure~\ref{fig:JType3}.  We first prove that $\phi(T)$ is a valid transversal of $\mathcal{Y}$.  Because
$Y_{a_3+1} = Y_{a_3} \ge b_{a_1}$, the set $\phi(T)$ is a transversal of $Y$.
If the sets $\{i,i+1\}$ and $\Gamma_{[1,a_1] \cup [a_2,a_3+1]}^{[b_{a_3},b_{a_1}]}(T)$ are disjoint,
then $b_i = c_i$ and $b_{i+1} = c_{i+1},$ and thus
$i$ is an ascent (resp. descent) of $\phi(T)$ if and only if it is an ascent (resp. descent) of $T$.
If $x, x+1 \in \Gamma_{[1,a_1] \cup [a_2,a_3]}^{[b_{a_3},b_{a_1}]}(T)$ with $x \not= a_1,a_3-1$,
then by Lemma~\ref{JType3L1}, we have $b_x < b_{x+1}$ and $c_x < c_{x+1}$.
If $x \in \Gamma_{[1,a_1] \cup [a_2,a_3]}^{[b_{a_3},b_{a_1}]}(T)$,
but $x+1$ is not (and $x \not= a_1,a_3$), then we have $b_{a_3} < b_x,c_x < b_{a_1}$ and $b_{x+1} \notin [b_{a_3},b_{a_3+1}],$ which implies that $x$ is an ascent (resp. descent) of $\phi(T)$ if and only if
it is (resp. descent) of $T$.  Similar logic holds if we replace $x+1$ by $x-1$ and require that
$x \not= a_2$.  Because $b_{a_1+1}, b_{a_2-1} \notin [b_{a_3},b_{a_1}]$ by Lemma~\ref{EPhi},
we have $b_{a_1+1} < b_{a_1}$ if and only if $b_{a_1+1} < c_{a_1}$, and $b_{a_2-1} < b_{a_2}$
if and only if $b_{a_2-1} < c_{a_2}$.  If $a_2 \not= a_1+1$, then
we have $c_{a_1+1} = b_{a_1+1}$ and $c_{a_2-1} = b_{a_2-1}$, which implies that
that $a_1$ is an ascent (resp. descent) of $\phi(T)$ if and only if it is an ascent (resp. descent)
of $T$, and similarly for $a_2-1$.  If $a_2 = a_1 + 1$, then $a_1 = a_2-1$ is a descent of both $T$ and $\phi(T)$.
Also, we have $a_3 \in A$, and because $\mathcal{Y}$ is 1-alternating, we have $a_3 + 1 \in D$ and $a_3-1 \notin A$.
By definition of $F$-type we have $a_3-1 \notin D$ or ($b_{a_3-1} > b_{a_1} > c_{a_3})$.
Furthermore, we have $c_{a_3} < b_{a_1} = c_{a_3+1}$ and $c_{a_3+1} = b_{a_1} > b_{a_3+1} > b_{a_3+2} = c_{a_3+2}$
by the definition $J$-type and Lemma~\ref{EPhi}.
It follows that $\phi(T)$ is a valid transversal of $\mathcal{Y}$.

Next, we prove that $h_F(\phi(T)) = (a_1,a_2,a_3+1)$.  It is clear
that $(a_1,a_2,a_3+1) \in V(\phi(T))$ and $S(a_1,a_2,a_3+1) = (a_3,a_1,a_2) = \#(a)$.
Let $(d_1,d_2,d_3) \in V(\phi(T))$
and suppose for sake of contradiction that $S(d) > \#(a)$ in the lexicographic order.
If $d_3 > a_3+1$ and $b_{d_3} > b_{a_3+1}$, then $(a_3+1,a_3+2,d_3) \in V(T)$ (because
$a_3 +1 \in D$ due to the fact that $\mathcal{Y}$ is 1-alternating), which contradicts
the separability of $T$ because $S(a_3+1,a_3+2,d_3) \ge (a_3+2,a_1,0)$ in the lexicographic order.
If $d_3 < a_3 + 1$, $b_{d_3} < b_{a_3+1}$, and $d_1,d_2 \not= a_2$, then
we have $b_{d_i} = c_{d_i}$ for all $i$, which implies that $(d_1,d_2,d_3) \in V(T)$, contradiction.
If $d_3 > a_3 + 1$, $b_{d_3} < b_{a_3+1}$, and $d_1 = a_2$, then Lemma~\ref{EPhi}
yields that $d_2 > a_3$, which implies that $(a_3,d_2,d_3) \in V(T)$, contradiction.
If $d_3 > a_3 + 1$, $b_{d_3} < b_{a_3+1}$, and $d_1 = a_2$, then $(d_1,a_3,d_2) \in V(T)$,
contradiction.  Hence, we may assume that $d_3 \le a_3 + 1$.  Because $S(d) > \#(a)$
in the lexicographic order, we have ($d$ is of $F$-type 2, $a_3 - 1 \in D$, $d_2 = a_3 - 2$, and $d_3 = a_3 - 1$) or
$d_3 = a_3 + 1$.  In the former case, Lemma~\ref{JType3L1} implies that
$b_{d_3} \notin [b_{a_3}, b_{a_1}]$, and the fact that $a_3-1 > a_2$ then yields that $b_{d_3} > b_{a_1}$ by Lemma~\ref{EPhi}.  Because $d_2 = a_3 - 2 > a_2$, we have $b_{d_2} \ge b_{a_2}$ by Lemma~\ref{EPhi}.
Regardless, if $b_{d_1} < b_{a_1}$, then we have $d_1 > a_2$, and it follows that
$c_{d_1} < c_{d_2}$ by Lemma~\ref{JType3L1}, contradiction.  Hence, we have $b_{d_1} > b_{a_1}$,
which implies that $b_{d_1} = c_{d_1}$.
It is clear that $b_{d_2} \le \max\{c_{d_2}, b_{a_1}\}$, and therefore, we have $(d_1,d_2,d_3) \in V(T)$,
contradiction.  Hence, we may assume that $d_3 = a_3 + 1$.
If $d_2 = a_3$, then by Lemma~\ref{EPhi} we have $d_1 \ge a_2$, but this implies that $d_1 \in \Gamma_{[a_2,a_3)}^{[b_{a_3},b_{a_1}]}(T)$,
which contradicts Lemma~\ref{JType3L1}.  We may assume that $d_2 \not= a_3$,
which implies that $d$ is of $J$-type 3 and $S(d) = (a_3,d_1,d_2)$.  The fact that
$S(d) > \#(a)$ in the lexicographic order implies that $d_1 \ge a_1$. If $c_{d_1} > b_{a_3},$, then by Lemma~\ref{EPhi}, we have that $d_1 < a_1$, contradiction.  If $c_{d_1} = b_{a_3}$, then
we have $b_{d_2} = c_{d_2} < b_{a_3}$ and $a_2 < d_2 < a_3$, but the existence of such a $d_2$
contradicts Lemma~\ref{EPhi}.  If $c_{d_1} < b_{a_3}$, then we have $c_{d_i} = b_{d_i}$ for $i = 1,2$
and $(d_1,d_2,a_3+1) \in V(T)$, contradiction.  Thus, we have $h_F(\phi(T)) = (a_1,a_2,a_3+1)$.
It is clear that $\phi(T)$ is of $F$-type 3.  We have $c_{a_2} = b_{a_3}$ and $c_{a_3+1} = b_{a_1}$,
which implies that
\[\psi(\phi(T)) = \theta_{[a_2,a_3]}^{[b_{a_3},b_{a_1}]}\left(\theta_{[1,a_1] \cup \{a_3+1\}}^{[b_{a_3},b_{a_1}]}\left( \omega_{[1,a_1] \cup \{a_3+1\}}^{[b_{a_3},b_{a_1}]}\left(\omega_{[a_2,a_3]}^{[b_{a_3},b_{a_1}]}(T)\right)\right)\right) = T,\]
as desired.

We prove that if $e = (e_1,e_2,e_3) \in h_J(\phi(T))$, then $e_3 > a_3$.  Suppose for sake
of contradiction that $e_3 \le a_3$.
If $c_{e_3} > b_{a_1}$, then $(e_1,e_2,e_3) \in U(T)$, contradiction.
If $b_{a_2} \le c_{e_3} < b_{a_1},$ then by Lemma~\ref{EPhi},
we have $a_2 < e_3 < a_3$.  By Lemma~\ref{JType3L1} and because $d_2 > d_3$ with $c_{d_2} > c_{d_3}$,
we have $b_{d_2} > b_{a_1}$ or $d_2 < a_2$; however, the latter case implies that $b_{d_2} > b_{a_1}$
by Lemma~\ref{EPhi} again.  Then, $(d_1,d_2,d_3) \in U(T)$, contradiction./
If $b_{a_3} < c_{e_3} < b_{a_2}$, then by Lemma~\ref{EPhi} we have $e_3 \le a_1$.
By Lemma~\ref{JType3L1}, we have $b_{d_2} > b_{a_1}$, which implies that
$(d_1,d_2,d_3) \in U(T)$, contradiction.
If $c_{e_3} = b_{a_3}$, then
by Lemma~\ref{JType3L1}, we have $b_{d_1} > b_{a_1}$ (because if $b_{d_1} < b_{a_1},$
then $c_{d_1} < c_{d_2}$ by Lemma~\ref{JType3L1}, contradiction).  This implies
that $(d_1,a_1,a_2) \in U(T)$, contradiction.
If $c_{e_3} < b_{a_3}$, then by Lemma~\ref{EPhi}, we have $e_3 < a_2$.
By Lemma~\ref{JType3L1} and because $e_1,e_2 < a_2$, at most one of $c_{e_1},c_{e_2}$ can be an element
of $[b_{a_3},b_{a_1}]$, which implies that $(e_1,e_2,e_3) \in U(T)$, contradiction.  The separability
of $\phi(T)$ follows.

Let $m = \min \Gamma_{[1,a_1]}^{[b_{a_3},b_{a_1}]}(T)$.
We have $b_i = c_i$ for all $i < m$.  By Lemma~\ref{JType3L1} and because $b_{a_1} > b_{a_2} > b_{a_3+1}$ by Lemma~\ref{EPhi}, we have $b_m > b_{a_3+1} = c_{i_1}$, and therefore,
$(b_1,b_2,\ldots,b_n) > (c_1,c_2,\ldots,c_n)$ in the lexicographic order, as desired.
\end{asparaenum}
\end{proof}

\section{Proof of Proposition~\ref{FullPsi}}
\label{sec:PropFullPsi}

The proof is similar to the proof of Proposition~\ref{FullPhi}.  First, we define
$E_\psi(T) \subseteq Y$, which is the analogue $E_\phi(T)$.  Let $T = \{(i,b_i)\}$
be a separable, valid transversal of $\mathcal{Y}$ that contains $F_3$, and let
$h_F(T) = (a_1,a_2,a_3).$  Then, let
\[E_\psi(T) = \left(\left([1,a_1) \times [b_{a_1}, Y_{a_3}]\right) \cup \left((a_1,a_2) \times [b_{a_2},b_{a_3}]\right)
    \cup \left((a_2,a_3) \times [1,b_{a_1}]\right) \cup \left((a_3, \infty) \times (b_{a_1}, \infty)\right)\right) \cap Y.\]
Once again, the critical property of $E_\psi(T)$ is the following lemma.

\begin{lemma} \label{EPsi}
If $T$ is a separable valid transversal of $\mathcal{Y}$ that contains $F_3$, then
$E_\psi(T)$ does not contain any element of $T$.
\end{lemma}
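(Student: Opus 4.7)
The plan is to mirror the proof of Lemma~\ref{EPhi}: for each of the four regions comprising $E_\psi(T)$, I will assume for contradiction that some element $(i,b_i) \in T$ lies in that region, and then extract either (a) a triple in $V(T)$ whose $S$-value strictly dominates $S(h_F(T))$, contradicting the maximality of $h_F(T)$ under $\triangleright$, or (b) an element $u \in U(T)$ with $\#(u) < S(h_F(T))$ in the lexicographic order, contradicting the separability of $T$. Write $a = h_F(T) = (a_1,a_2,a_3)$, so $b_{a_2} < b_{a_1} < b_{a_3}$. A recurring observation is that the first coordinate of $S(a)$ is strictly greater than $a_2$: it equals $a_3$, $a_3+1$, or $a_3-1$ in $F$-types $1$, $2$, $3$ respectively, and the type-$3$ restriction $a_2 \ne a_3-1$ combined with $a_2 < a_3$ forces $a_2 < a_3-1$.

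For the region $[1,a_1) \times [b_{a_1}, Y_{a_3}]$, an element $(i,b_i)$ satisfies $i < a_1 < a_2$ and, by column uniqueness, $b_i > b_{a_1} > b_{a_2}$, so $(i,a_1,a_2)$ is a copy of $J_3$ in $T$. Then $\#(i,a_1,a_2) = (a_2,i,a_1)$ has first coordinate $a_2$, strictly less than the first coordinate of $S(a)$, contradicting separability. For the region $(a_1,a_2) \times [b_{a_2}, b_{a_3}]$, I split on whether $b_i < b_{a_1}$ or $b_i > b_{a_1}$. In the first subcase $(a_1,i,a_2) \in U(T)$ and the same first-coordinate argument applies. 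In the second subcase $(i,a_2,a_3)$ is a copy of $F_3$ in $V(T)$ (since $a_3 \notin A$) of the same $F$-type as $a$, and since its first index $i$ exceeds $a_1$, the value $S(i,a_2,a_3)$ strictly dominates $S(a)$.

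For the region $(a_2,a_3) \times [1,b_{a_1}]$, column uniqueness gives $b_i < b_{a_1}$, and the triple $(a_1,i,a_3)$ is a copy of $F_3$ in $V(T)$. A case analysis across the three $F$-types of $a$ shows $S(a_1,i,a_3) > S(a)$: in type $1$ the triples share type and their third coordinates satisfy $i > a_2$; in type $2$ the range $a_2 < i < a_3$ is empty; in type $3$, either $i = a_3-1$ (so $(a_1,i,a_3)$ jumps to type $2$ with first coordinate $a_3+1$) or $i \ne a_3-1$ (so both are type $3$ and $i > a_2$). For the region $(a_3,\infty) \times (b_{a_1},\infty)$, if $i \notin A$ then $(a_1,a_2,i) \in V(T)$; if $i \in A$, then $1$-alternation of $\mathcal{Y}$ yields $i+1 \in D$, hence $i+1 \notin A$, and $b_{i+1} > b_i > b_{a_1}$, so $(a_1,a_2,i+1) \in V(T)$. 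A type-by-type check of first (and, on ties, third) coordinates, using $a_3 \notin A$ to rule out the degenerate coincidence $i = a_3+1$ with $i-1 = a_3 \in A$, shows the $S$-value of the new triple strictly exceeds $S(a)$.

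The hard part is the subcase $i \in A$ in the last region: the shift to row $i+1$ requires the $1$-alternating hypothesis on $\mathcal{Y}$ and a fresh $F$-type computation for $(a_1,a_2,i+1)$, and one must separately dispose of the tie-breaking situations in which first coordinates of $S$ coincide across types, which is exactly where the constraint $a_3 \notin A$ intervenes. All remaining verifications are routine coordinate comparisons, organized in direct parallel with the four bullet-point cases in the proof of Lemma~\ref{EPhi}.
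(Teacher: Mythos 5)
Your proof is correct and mirrors the paper's region-by-region strategy, extracting a $J_3$-copy whose $\#$-value is too small or an $F_3$-copy whose $S$-value is too large. A few observations on the comparison. In the second region your split on $b_i$ versus $b_{a_1}$ is unnecessary: the single observation that $(i,a_2,a_3)\in V(T)$ (which is what you use in your second subcase) actually works regardless of the sign of $b_i - b_{a_1}$, because the hypothesis $b_i\in(b_{a_2},b_{a_3})$ already makes $(i,a_2,a_3)$ an $F_3$-copy, and its $F$-type depends only on the unchanged indices $a_2,a_3$, so $S(i,a_2,a_3)$ differs from $S(a)$ only in the second coordinate $i>a_1$. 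This is the argument the paper gives, and it is shorter. In the fourth region you use the triple $(a_1,a_2,i)$ (or $(a_1,a_2,i+1)$ when $i\in A$); the paper's written proof instead names $(a_2,a_3,i)$, which is not a copy of $F_3$ here since $b_{a_2}<b_{a_3}$ — this looks like a transcription from the proof of Lemma~\ref{EPhi}, where $h_J(T)$ is a $J_3$-copy and that triple is correct. Your $(a_1,a_2,i)$ is the correct one for the $E_\psi$ setting, and your use of the $1$-alternating hypothesis to shift to $i+1\in D\setminus A$ when $i\in A$, together with the observation that $a_3\notin A$ prevents $i-1=a_3$ from being a required ascent, closes all the remaining $S$-comparisons.
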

\begin{proof}
If $(i,b_i) \in [1,a_1) \times [b_{a_1}, Y_{a_3}]$, then
$(i,a_1,a_2) \in U(T)$, which contradicts the separability of $T$.
If $(i,b_i) \in (a_1,a_2) \times [b_{a_2},b_{a_3}]$,
then $(i,a_2,a_3) \in V(T)$, and $S(i,a_2,a_3) > S(a_1,a_2,a_3)$ in the lexicographic order.
If $(i,b_i) \in (a_2,a_3) \times [1,b_{a_1}]$,
then $(a_1,i,a_3) \in V(T)$, and $S(a_1,i,a_3) > S(a_1,a_2,a_3)$ in the lexicographic order.
Both contradict the definition of $h_F$.
If $(i,b_i) \in (a_3, \infty) \times (b_{a_1}, \infty)$, then
$v = (a_2,a_3,i)$ is a copy of $F_3$ in $T$.  If $i \in A$, replace
$v$ by $(a_2,a_3,i+1)$.  Then, we have $v \in V(T)$, and
$S(v) > S(h_F(T))$ in the lexicographic order,
which contradicts the definition of $h_F$.
\end{proof}

The analogue of Lemma~\ref{JType3L1} is the following lemma, which will be used repeatedly
in the proof of Proposition~\ref{FullPsi} for the case in which $T$ has $F$-type 3.

\begin{lemma}
\label{FType3L1}
Let $T = \{(i,b_i)\}$ be a separable, valid transversal of $\mathcal{Y}$ of $F$-type 3,
and let $\psi(T) = \{(i,c_i)\}$.
\begin{enumerate}[(a)]
\item
Let $\Gamma_{[1,a_1]}^{[b_{a_3},b_{a_1}]}(T) = \{i_1 < i_2 < \cdots < i_k\}$; then $b_{i_1} < b_{i_2} < \cdots < b_{i_k}$ and $c_{i_1} < c_{i_2} < \cdots < c_{i_k}$.  In particular, if $i \in \Gamma_{[1,a_1]}^{[b_{a_2},b_{a_3}]}(T)$, then $b_i \le b_{a_1}.$
\item Let $\Gamma_{[a_2,a_3)}^{[b_{a_2},b_{a_3})}(T) = \{i_1 < i_2 < \cdots < i_k\}$, then $b_{a_1} < b_{i_1} < b_{i_2} < \cdots < b_{i_k}$ and $c_{i_1} < c_{i_2} < \cdots < c_{i_k}$.  Furthermore, we have
    $b_{a_1} < c_{i_2} < \cdots < c_{i_k} < c_{a_3}$.
\end{enumerate}
\end{lemma}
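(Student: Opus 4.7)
The proof will parallel that of Lemma~\ref{JType3L1}, swapping the roles of $h_J$ and $h_F$, of $U(T)$ and $V(T)$, and of the elimination operations $\phi$ and $\psi$. Throughout I exploit two consequences of the hypothesis that $T$ is of $F$-type $3$: we have $a_3-1 \in A$ and $a_2 \ne a_3-1$, so $a_2 \le a_3-2$ and $S(h_F(T)) = (a_3-1, a_1, a_2)$. Separability of $T$ then gives $\#(u) \ge S(h_F(T))$ for every $u \in U(T)$, while the maximality of $h_F(T)$ in $V(T)$ under $\triangleright$ gives $S(v) \le S(h_F(T))$ for every $v \in V(T)$.

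For part (a), the $b$-monotonicity is obtained by contradiction. Suppose $j < j'$ with $b_{i_j} > b_{i_{j'}}$. Since $i_{j'} \in [1, a_1]$ and $a_1 < a_2$ we have $i_{j'} \ne a_2$, hence $b_{i_{j'}} > b_{a_2}$; the triple $(i_j, i_{j'}, a_2)$ is then a copy of $J_3$ in $T$, and separability forces $(a_2, i_j, i_{j'}) = \#(u) \ge (a_3-1, a_1, a_2)$ in the lexicographic order, contradicting $a_2 \le a_3-2$. The ``in particular'' claim is analogous: if some $i$ in the indicated $\Gamma$ satisfied $b_i > b_{a_1}$, then necessarily $i < a_1$ and $(i, a_1, a_2) \in U(T)$, and the same separability bound again contradicts $a_2 \le a_3-2$. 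The $c$-monotonicity follows by tracing $\psi_3 = \theta_{[a_2, a_3-1]}^{[b_{a_2}, b_{a_3}]} \circ \theta_{[1, a_1] \cup \{a_3\}}^{[b_{a_2}, b_{a_3}]}$: the support of the second $\theta$ is disjoint from $[1, a_1]$, so the values at the positions $i_j$ are set by the first $\theta$ only, which cyclically shifts through $\{i_1 < \cdots < i_k < a_3\}$ to give $c_{i_j} = b_{i_{j+1}}$ for $j < k$ and $c_{i_k} = b_{a_3}$; the chain then follows from the $b$-monotonicity together with $b_{i_k} < b_{a_3}$.

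For part (b), I first establish the lower bound $b_{a_1} < b_{i_j}$. If instead $b_{i_j} \le b_{a_1}$, then $b_{i_j} \in (b_{a_2}, b_{a_1})$ and the triple $(a_1, i_j, a_3)$ is a copy of $F_3$ in $T$; its $F$-type is $2$ when $i_j = a_3-1$ and $3$ otherwise, and in either case $S(a_1, i_j, a_3) > S(h_F(T))$ in the lexicographic order: in $F$-type $2$ because the first coordinate of $S$ is $a_3+1$, and in $F$-type $3$ because $i_j > a_2$ makes the third coordinate exceed $a_2$, contradicting the maximality of $h_F$. The $b$-monotonicity is analogous: assuming $b_{i_j} > b_{i_{j'}}$ for $j < j'$, the triple $(i_j, i_{j'}, a_3) \in V(T)$ again has $S$-value exceeding $S(h_F(T))$, this time because $i_j > a_2 > a_1$ forces the second coordinate of $S$ (under $F$-type $3$) to strictly exceed $a_1$. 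The $c$-monotonicity and the auxiliary bounds $b_{a_1} < c_{i_j}$ and $c_{i_k} < c_{a_3}$ then follow by tracing how the two cyclic shifts of $\psi_3$ interact with positions in the indicated $\Gamma$, using the $b$-chain together with the part~(a) description of the first $\theta$'s shift through $[1, a_1] \cup \{a_3\}$, which determines $c_{a_3}$.

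The main obstacle is the bookkeeping required to convert the $b$-monotonicity into the $c$-monotonicity: one must parse precisely which positions lie in the support of each of the two $\theta$'s of $\psi_3$, how their cyclic shifts interleave, and how the boundary values (in particular $c_{a_2}$, $c_{a_3}$, and the first and last entries of each chain) relate back to values in the original transversal $T$. This parallels the analysis in Lemma~\ref{JType3L1}, but the asymmetry of $F$-type $3$ versus $J$-type $3$ produces slightly different boundary behaviour at each end of the chains.
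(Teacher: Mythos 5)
Your argument takes a genuinely different route from the paper's. For the $b$-monotonicity in part~(a), you derive a $J_3$-copy $(i_j,i_{j'},a_2)$ and a separability violation with first coordinate $a_2 < a_3-1$, whereas the paper invokes the triple $(i_j,i_{j'},a_3-1)$; since Lemma~\ref{EPsi} forces $b_{a_3-1} > b_{a_1}$ while the $i$'s here have $b_i \le b_{a_1}$, that triple cannot in fact be a copy of $J_3$, so your choice is the one that closes the argument cleanly. In part~(b) the divergence is even sharper: you build $F_3$-copies $(a_1,i_j,a_3)$ and $(i_j,i_{j'},a_3)$ and contradict the $\triangleright$-maximality of $h_F(T)$, while the paper again invokes $(i_j,i_{j'},a_3-1) \in U(T)$ and separability; your version has the advantage that it also yields the lower bound $b_{a_1} < b_{i_j}$ directly, rather than folding $a_1$ into the chain by fiat as ``$i_0 = a_1$''. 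You also read the direction of the cyclic shift correctly: $\theta$ gives $c_{i_j} = b_{i_{j+1}}$ with $c_{i_k} = b_{a_3}$ in part~(a), whereas the paper's write-up uses $c_{i_j} = b_{i_{j-1}}$, $c_{i_1} = b_{a_3}$, which is the $\omega$ behaviour and does not match the definition of $\psi_3$ (and is inconsistent with Figure~\ref{fig:psi3}).

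That said, there is a genuine gap: you do not actually verify the $c$-chain or the ``Furthermore'' bounds for part~(b), and this is precisely where the subtlety lives. The second $\theta$ in $\psi_3$ cyclically shifts through a set that necessarily contains \emph{both} $a_2$ and $a_3-1$ (the latter because $a_3-1 \in A$ forces $b_{a_3-1} \in (b_{a_1},b_{a_3}) \subseteq [b_{a_2},b_{a_3}]$), so the boundary behaviour at both ends of the $(a_2,a_3)$-chain must be pinned down before the chain of inequalities, and the comparison $c_{i_k} \lessgtr c_{a_3}$, can be concluded; the naive trace gives $c$-values $b_{i_2},\dots,b_{i_k},b_{a_2}$ or $b_{i_2},\dots,b_{a_3-1}$ depending on exactly which rows are excluded, and in either reading the last inequality does not follow trivially. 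Sketching this as ``bookkeeping that parallels Lemma~\ref{JType3L1}'' leaves unaddressed the one place where the asymmetry between $F$-type~3 and $J$-type~3 actually bites, so the proof as written is incomplete there.
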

\begin{proof}
First, we prove part (a).  If $j < j'$ with $b_{i_j} > b_{i_j'}$, then $(i_j,i_{j'},a_3-1) \in U(T)$,
which contradicts the separability of $T$.
Because $c_{i_j} = b_{i_{j-1}}$, to prove that $c_{i_1} < c_{i_2} < \cdots < c_{i_k}$
it suffices to prove that $c_{i_1} < c_{i_2}$.  But, if $c_{i_1} > c_{i_2}$, we have
$b_{a_3} = c_{i_1} > c_{i_2} = b_{i_1}$, and therefore $(i_1,a_3-1,a_3) \in V(T)$, but
$S(i_1,a_3-1,a_3) = (a_3+1,i_1+1,0)$, which contradicts the definition of $h_F$.
The last sentence follows because $b_{i_k} = b_{a_1}$.

The proof of part (b) is similar.  Let $i_0 = a_1$.  If $j < j'$ with $b_{i_j} > b_{i_{j'}}$, then
$(i_j,i_{j'},a_3-1) \in U(T)$, which contradicts the separability of $T$.  To prove that $c_{i_1} < c_{i_2} < \cdots < c_{i_k}$, it suffices to prove that $c_{i_1} < c_{i_2}$, but this is clear because $c_{i_1} = b_{a_3} < b_{i_1} = c_{i_2}$.  Let $i_{k+1} = a_3$.  The last sentence follows because $c_{i_{j+1}} = b_{i_j}$ for $j \in [k]$.
\end{proof}

The following additional lemma will be also used in proof of Proposition~\ref{FullPsi}
for $T$ of $F$-type 3.

\begin{lemma}
\label{FType3L2}
Let $T$ be a separable, valid transversal of $\mathcal{Y}$ of $F$-type 3,
let $h_F(T) = (a_1,a_2,a_3)$,
and let $m = \min \Gamma_{[1,a_1]}^{[b_{a_2},b_{a_3}]}(T)$.  Then,
the set $(a_3,\infty) \times (b_m,\infty) \cap Y$ does not contain any element of $T$.
\end{lemma}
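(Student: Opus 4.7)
The plan is to derive a contradiction by constructing a copy of $F_3$ in $T$ whose associated $S$-value strictly exceeds $S(h_F(T)) = (a_3,a_1,a_2)$ in the lexicographic order, contradicting the maximality of $h_F(T)$ in $V(T)$ under $\triangleright$. Suppose for the sake of contradiction that some element $(i,b_i) \in T$ satisfies $i > a_3$ and $b_i > b_m$. Since $F_3 = M(213)$ forces $b_{a_2} < b_{a_1} < b_{a_3}$, we have $a_1 \in \Gamma_{[1,a_1]}^{[b_{a_2},b_{a_3}]}(T)$, so the minimum $m$ exists and $m \le a_1$. By the ``in particular'' clause of Lemma~\ref{FType3L1}(a), $b_m \le b_{a_1}$; moreover $m \le a_1 < a_2$ forces $m \ne a_2$, so the containment $b_m \in [b_{a_2},b_{a_3}]$ gives $b_m > b_{a_2}$ strictly.

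Consider the triple $(m,a_2,i)$. The positions satisfy $m < a_2 < i$ (using $m \le a_1 < a_2 < a_3 < i$), and the values satisfy $b_m > b_{a_2}$, $b_m < b_i$ (since $b_i > b_m$), and hence $b_{a_2} < b_i$; thus they are arranged in the pattern $213$ and form a copy of $F_3$ in $T$. If $i \notin A$, then $(m,a_2,i) \in V(T)$. If $i \in A$, then by the 1-alternating hypothesis on $\mathcal{Y}$ we have $i+1 \in D$, so $i+1 \notin A$, and $b_{i+1} > b_i > b_m$ ensures that $(m,a_2,i+1)$ is likewise a copy of $F_3$ in $T$ lying in $V(T)$. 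In either case, set $v = (m,a_2,i')$ for the appropriate $i' \in \{i,i+1\}$, so that $i' \ge a_3 + 1$.

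It remains to compute $S(v)$. Since $(a_1,a_2,a_3) \in V(T)$ we have $a_3 \notin A$. If $i' - 1 \notin A$, then $v$ is of $F$-type~1 and $S(v) = (i', m, a_2)$ with $i' \ge a_3 + 1 > a_3$. If $i' - 1 \in A$, then $F$-type~2 would require $a_2 = i' - 1$, which is impossible since $a_2 < a_3 \le i' - 1$; so $v$ is of $F$-type~3 with $S(v) = (i' - 1, m, a_2)$. The subcase $i' = a_3 + 1$ cannot occur here, for it would force $i' - 1 = a_3 \in A$, contradicting $a_3 \notin A$; hence $i' - 1 \ge a_3 + 1 > a_3$. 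Either way the first coordinate of $S(v)$ strictly exceeds $a_3$, giving $S(v) > (a_3,a_1,a_2) = S(h_F(T))$ in the lexicographic order and contradicting $h_F(T) = \max_\triangleright V(T)$. The main obstacle is this final type-by-type verification, which is delicate precisely when $i'$ is close to $a_3 + 1$, but it hinges cleanly on $a_3 \notin A$ combined with 1-alternation.
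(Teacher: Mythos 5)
Your proof is correct and follows the same route as the paper's: exhibit the triple $(m,a_2,i)$ (or $(m,a_2,i+1)$) as an element of $V(T)$ whose $S$-value exceeds $S(h_F(T))$ in lexicographic order, contradicting the maximality of $h_F(T)$. You are more careful than the paper's one-line argument in that you explicitly address the case $i \in A$ (by bumping to $i+1$ via the 1-alternating hypothesis, exactly as in the proofs of Lemmata~\ref{EPhi} and~\ref{EPsi}), a step the paper's own proof silently skips before asserting $(m,a_2,i) \in V(T)$. One small slip: since $T$ is of $F$-type~3, $S(h_F(T)) = (a_3-1,a_1,a_2)$, not $(a_3,a_1,a_2)$; this only makes the threshold you need to beat easier, so your computation that the first coordinate of $S(v)$ is at least $a_3+1$ still delivers the contradiction.
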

\begin{proof}
Suppose for sake of contradiction that $(i, b_i) \in (a_3,\infty) \times (b_m,\infty) \cap Y$.
Then, $(m,a_2,i) \in V(T)$ and $S(m,a_2,i) \ge (a_3,m,0) > (a_3-1,a_1,a_2) = S(h_F(T))$ in the lexicographic
order, which contradicts the definition of $h_F$.
\end{proof}

\begin{proof}[Proof of Proposition~\ref{FullPsi}]
If $x = (x_1,x_2,x_3)$ is a copy of $F_3$ in $T$, then either $x_3 \notin A$
and $x \in V(T)$ or $x_3 \in A$ and $(x_1,x_2,x_3+1) \in V(T)$.  Thus, if $T$
contains $F_3$, then $\psi(T)$ is defined.
We do casework on the $F$-type of $T$.  Let $h_F(T) = a = (a_1,a_2,a_3)$.

\begin{asparaenum}
\renewcommand{\labelenumi}{\bf{$F$-type \arabic{enumi}.}}
\item
\begin{figure}
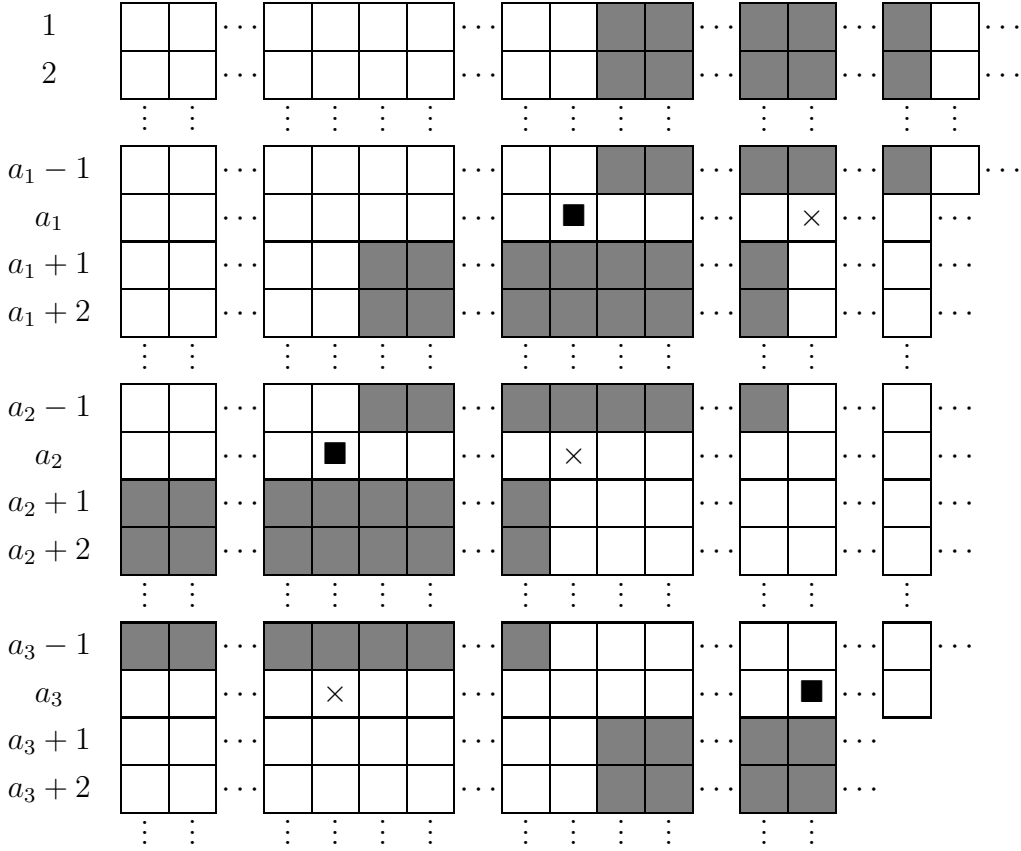

\ytableausetup{boxsize=normal}
\begin{ytableau}
\none[1] &\none & &&\none[\cdots]& &&&&\none[\cdots] && & *(gray) & *(gray) &\none[\cdots] & *(gray) & *(gray)  & \none[\cdots] & *(gray) & & \none[\cdots]\\
\none[2] &\none & &&\none[\cdots]& &&&&\none[\cdots] && & *(gray) & *(gray) &\none[\cdots] & *(gray) & *(gray)  & \none[\cdots] & *(gray) & & \none[\cdots]\\
\none &\none & \none[\vdots] &\none[\vdots] &\none &\none[\vdots] &\none[\vdots] &\none[\vdots] & \none[\vdots] & \none & \none[\vdots] & \none[\vdots] & \none[\vdots] & \none[\vdots] & \none & \none[\vdots] & \none[\vdots] &\none &\none[\vdots] &\none[\vdots]\\
\none[a_1-1] &\none & &&\none[\cdots]& &&&&\none[\cdots] && & *(gray) & *(gray) &\none[\cdots] & *(gray) & *(gray)  & \none[\cdots] & *(gray) & & \none[\cdots]\\
\none[a_1] &\none &&& \none[\cdots] &&&&&\none[\cdots]&& \blacksquare & & & \none[\cdots] & & \times & \none[\cdots] && \none[\cdots]\\
\none[a_1+1] &\none &&& \none[\cdots] &&& *(gray) & *(gray) &\none[\cdots] & *(gray) & *(gray) & *(gray) & *(gray) & \none[\cdots] & *(gray) & & \none[\cdots] && \none[\cdots]\\
\none[a_1+2] &\none &&& \none[\cdots] &&& *(gray) & *(gray) &\none[\cdots] & *(gray) & *(gray) & *(gray) & *(gray) & \none[\cdots] & *(gray) & & \none[\cdots] && \none[\cdots]\\
\none &\none & \none[\vdots] &\none[\vdots] &\none &\none[\vdots] &\none[\vdots] &\none[\vdots] & \none[\vdots] & \none & \none[\vdots] & \none[\vdots] & \none[\vdots] & \none[\vdots] & \none & \none[\vdots] & \none[\vdots] &\none &\none[\vdots]\\
\none[a_2-1] &\none &&& \none[\cdots] &&& *(gray) & *(gray) &\none[\cdots] & *(gray) & *(gray) & *(gray) & *(gray) & \none[\cdots] & *(gray) & & \none[\cdots] && \none[\cdots]\\
\none[a_2] &\none &&& \none[\cdots] &&\blacksquare &&& \none[\cdots] && \times&&&\none[\cdots] &&& \none[\cdots] && \none[\cdots]\\
\none[a_2+1] &\none &*(gray) &*(gray) & \none[\cdots] &*(gray) &*(gray) & *(gray) & *(gray) & \none[\cdots] & *(gray) &&&&\none[\cdots] &&& \none[\cdots] && \none[\cdots]\\
\none[a_2+2] &\none &*(gray) &*(gray) & \none[\cdots] &*(gray) &*(gray) & *(gray) & *(gray) & \none[\cdots] & *(gray) &&&&\none[\cdots] &&& \none[\cdots] && \none[\cdots]\\
\none &\none & \none[\vdots] &\none[\vdots] &\none &\none[\vdots] &\none[\vdots] &\none[\vdots] & \none[\vdots] & \none & \none[\vdots] & \none[\vdots] & \none[\vdots] & \none[\vdots] & \none & \none[\vdots] & \none[\vdots] &\none &\none[\vdots]\\
\none[a_3-1] &\none &*(gray) &*(gray) & \none[\cdots] &*(gray) &*(gray) & *(gray) & *(gray) & \none[\cdots] & *(gray) &&&&\none[\cdots] &&& \none[\cdots] && \none[\cdots]\\
\none[a_3] &\none &&& \none[\cdots] &&\times &&&\none[\cdots]&&&&&\none[\cdots] && \blacksquare & \none[\cdots] &\\
\none[a_3+1] &\none &&& \none[\cdots] &&&&&\none[\cdots]&& & *(gray) &*(gray) &\none[\cdots] &*(gray) &*(gray) & \none[\cdots] \\
\none[a_3+2] &\none &&& \none[\cdots] &&&&&\none[\cdots]&& & *(gray) &*(gray) &\none[\cdots] &*(gray) &*(gray) & \none[\cdots] \\
\none &\none & \none[\vdots] &\none[\vdots] &\none &\none[\vdots] &\none[\vdots] &\none[\vdots] & \none[\vdots] & \none & \none[\vdots] & \none[\vdots] & \none[\vdots] & \none[\vdots] & \none & \none[\vdots] & \none[\vdots]\\
\end{ytableau}
\caption{The squares marked with a solid black box are the elements of the chosen copy of $F_3$ for a separable, valid transversal $T$ of $F$-type 1.  The crosses mark new elements of $\psi(T)$, i.e. elements of $\psi(T) \setminus T$, and the gray squares are free of elements of $T$ (and $\psi(T)$).}
\label{fig:FType1}
\end{figure}
See Figure~\ref{fig:FType1}.
First, we prove that $\psi(T)$ is a valid transversal of $\mathcal{Y}$.
Because $Y_{a_3} \ge b_{a_1}$, the set $\psi(T)$ is a transversal of $Y$.
If the sets
$\{i,i+1\}$ and $\{a_1,a_3\}$ are disjoint, then $b_i = c_i$ and $b_{i+1} = c_{i+1}$,
which implies that $i$ is an ascent (resp. descent) of $\psi(T)$ if and only if
it is an ascent (resp. descent) of $T$.  By Lemma~\ref{EPsi}, we have
$b_{a_1-1},b_{a_1+1}\notin (b_{a_1},b_{a_3}) = (b_{a_1},c_{a_1}))$, and
$b_{a_2-1},b_{a_2+1} \notin (b_{a_2},c_{a_2})$.  Thus, $t = a_1-1$ is an ascent
(resp. descent) of $\psi(T)$ if and only if it is an ascent (resp. descent) of $T$.
If $a_2 \not= a_1 + 1$, then the same holds for $t = a_1$ and $t = a_2 - 1$,
and if $a_2 = a_1 + 1$, then $a_1 = a_2 - 1$ is a descent of both $T$ and $\psi(T)$.
If $a_2 \not= a_3 - 1$, then $a_2$ is an ascent (resp. descent) of $\psi(T)$ if and only if
it is an ascent (resp. descent) of $T$.  By definition of $F$-type and $V(T)$, we have $a_3-1, a_3 \notin A$,
and the fact that $\mathcal{Y}$ is 1-alternating implies that $a_3-1,a_3 \notin D$.  Furthermore,
if $a_2 = a_3 - 1$, then $a_2 \notin A, D$.  It follows that $\psi(T)$ is a valid transversal of $\mathcal{Y}$.

Next, we prove that $h_J(\psi(T)) = (a_1,a_2,a_3)$.  It is clear that $(a_1,a_2,a_3) \in U(\psi(T))$,
and suppose for sake of contradiction that $d \in U(\psi(T))$ with $\#(d) < \#(a)$ in the lexicographic
order.  If $d_3 < a_1$ or $b_{d_3} > b_{a_3}$, then $b_{d_i} = c_{d_i}$ for all $i \in [3]$
and $d \in U(T)$, contradiction.  If $d_3 = a_1$, then $c_{d_3} > b_{d_3}$ and
$b_{d_i} = c_{d_i}$ for all $i\in [2]$, which implies that $d \in U(T)$, contradiction.
If $d_3 = a_2$ and $d_1 = a_1$, then $b_{a_1} < b_{d_2} = c_{d_2} < b_{a_3}$ with $a_1 < d_2  < a_2$,
which contradicts Lemma~\ref{EPsi}.  Hence, if $d_3 = a_2$, we have $b_{d_1} = c_{d_1}$,
as well as $c_{d_2} \ge b_{d_2} > b_{a_2}$, which implies that $d \in U(T)$, contradiction.
If $a_1 < d_3 < a_2$ with $b_{d_3} < b_{a_3}$, then Lemma~\ref{EPsi} yields that $b_{d_3} < b_{a_2}$.
In this case, if $d_1 = a_1$, then Lemma~\ref{EPsi} yields that $b_{d_2} < b_{a_2}$ and therefore
$d \in U(T)$, contradiction.  Furthermore, if $a_1 < d_3 < a_2$ with $b_{d_3} < b_{a_3}$
and $d_1 \not= a_1$, then we have $b_{d_1} = c_{d_1}$ and \[c_{b_2} \ge b_{d_2} \ge \min\{b_{a_1},c_{d_2}\} > c_{d_3} = b_{d_3},\]
which implies that $d \in U(T)$, contradiction.  If $a_2 < d_3 < a_3$ with $b_{d_3} < b_{a_3}$
and $d_1 = a_1$, then by Lemma~\ref{EPsi}
we have $c_{d_3} = b_{d_3} > b_{a_1} = c_{d_2}$, which yields that $d_2 \not= a_2$.  Furthermore,
by Lemma~\ref{EPsi} again and because $b_{a_1} < b_{d_2} < c_{a_1} = b_{a_3}$, we have $d_2 > a_2$,
but the fact that $(a_1,d_2,a_3) \in V(T)$ contradicts the definition of $h_F$.  If $a_2 < d_3 < a_3$ with $d_2 = a_1$, then we have $(d_1,a_1,a_2) \in U(T)$, contradiction.  If $a_2 < d_3 < a_3$ with $a_1 \notin \{d_1,d_2\}$,
then $d \in U(T)$, contradiction.  It is then clear that $\psi(T)$ is of $J$-type 1, and the fact
that $\phi(\psi(T)) = T$ follows.

We prove that if $e = (e_1,e_2,e_3) \in V(\psi(T))$, then $S(e) \le S(a)$ in the lexicographic order.
Suppose for sake of contradiction that $S(e) > S(a)$ in the lexicographic order.
First, suppose that $e_3 > a_3$ in the lexicographic order;
it is clear that $b_{e_3} = c_{e_3}$.  Furthermore, because $a_3 \notin A$, the first
component of $S(e)$ must be greater than $a_3$.
By Lemma~\ref{EPsi}, we have $b_{e_3} < b_{a_1}$.  If $e_1 = a_3$, then
we have $(a_2,e_2,e_3) \in V(T)$, but the first component of $S(a_2,e_2,e_3)$
is greater than $a_3$, which contradicts the definition of $h_F$.  If $e_2 = a_3$,
then we have $b_{a_2} < c_{e_1} < b_{a_1}$, which yields that $b_{e_1} = c_{e_1}$.
By Lemma~\ref{EPsi}, we have $e_1 < a_1$, and hence $(e_1,a_2,e_3) \in V(T)$, but
the first component of $S(e_1,e_2,e_3)$ is greater than $a_3$, contradiction.  If $a_3 \notin \{e_1,e_3\}$,
then $b_{e_i} = c_{e_i}$ for all $i$ and thus $e \in V(T)$, contradiction.  Hence, we may assume
that $e_3 \le a_3$.  Because $S(e) > S(a)$ in the lexicographic order, either
($F$ is of $F$-type 2, $a_3 - 2 \in A$, $e_3 = a_3 - 1$ and $e_2 = a_3 - 2$) or ($F$ is of
$F$-type 1 and $e_3 = a_3$).  In the former case, because $a_3 - 2 \ge a_2$, we have $a_3 - 1 \in D$, we have $b_{a_3} < b_{a_3-1}$, which implies that $b_{a_3-1} = c_{a_3-1}$.
Additionally, by Lemma~\ref{EPsi} and because $c_{a_2} = b_{a_1}$, we have
$c_{a_3 - 2} \ge b_{a_1}$, and because $S(e) > S(a)$ in the lexicographic order, we have
$e_1 > a_1$.  Therefore, we have $c_{e_1} > b_{a_1}$ and thus $c_{e_1} = b_{e_1}$; it is also
clear that $b_{e_2} \le c_{e_2}$.  It follows that $e \in U(T)$, contradiction.
Hence, we may assume that $e_3 = a_3$ and $F$ is of $F$-type 1.  Because $b_{e_i} < c_{e_3} = b_{a_2}$
for $i \in [2]$, we have $b_{e_i} = c_{e_i}$ for $i \in [2]$, which implies that $e \in U(T)$, contradiction.
The separability of $\psi(T)$ follows.

For $i < a_1$, we have $b_i = c_i$, and $c_{a_1} = b_{a_3} > b_{a_1}$.  Thus, we have
$(b_1,b_2,\ldots,b_n) < (c_1,c_2,\ldots,c_n)$ in the lexicographic order, as desired.

\item
\begin{figure}
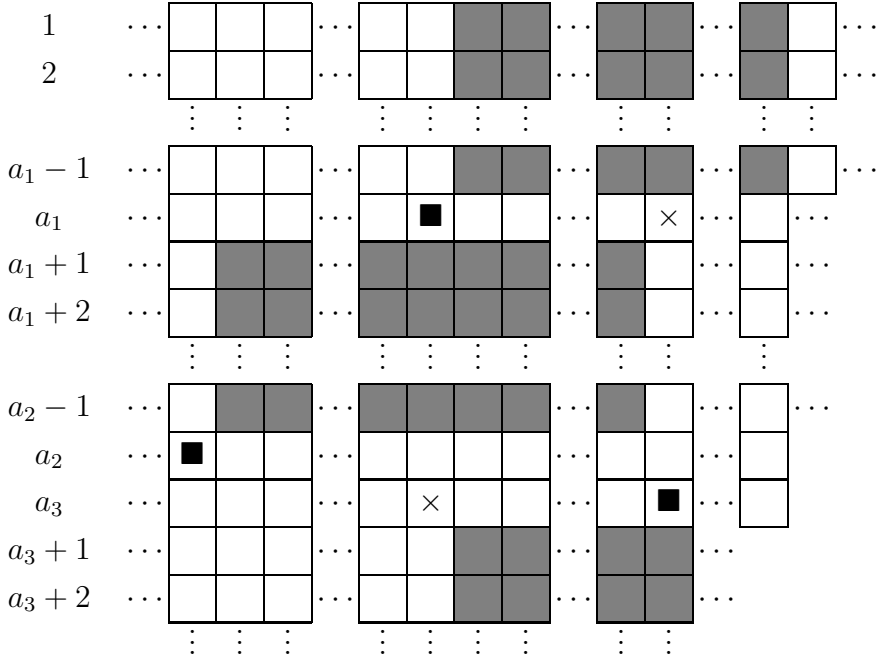

\begin{ytableau}
\none[1] &\none & \none[\cdots] &&&&\none[\cdots] && & *(gray) & *(gray) &\none[\cdots] & *(gray) & *(gray)  & \none[\cdots] & *(gray) & & \none[\cdots]\\
\none[2] &\none & \none[\cdots] &&&&\none[\cdots] && & *(gray) & *(gray) &\none[\cdots] & *(gray) & *(gray)  & \none[\cdots] & *(gray) & & \none[\cdots]\\
\none &\none & \none &\none[\vdots] &\none[\vdots] &\none[\vdots] &\none &\none[\vdots] & \none[\vdots] & \none[\vdots] & \none[\vdots] & \none & \none[\vdots] & \none[\vdots] & \none & \none[\vdots] & \none[\vdots] & \none\\
\none[a_1-1] &\none & \none[\cdots] &&&&\none[\cdots] && & *(gray) & *(gray) &\none[\cdots] & *(gray) & *(gray)  & \none[\cdots] & *(gray) & & \none[\cdots]\\
\none[a_1] &\none & \none[\cdots] &&&&\none[\cdots]&& \blacksquare & & & \none[\cdots] & & \times & \none[\cdots] && \none[\cdots]\\
\none[a_1+1] &\none & \none[\cdots] && *(gray) & *(gray) &\none[\cdots] & *(gray) & *(gray) & *(gray) & *(gray) & \none[\cdots] & *(gray) & & \none[\cdots] && \none[\cdots]\\
\none[a_1+2] &\none & \none[\cdots] && *(gray) & *(gray) &\none[\cdots] & *(gray) & *(gray) & *(gray) & *(gray) & \none[\cdots] & *(gray) & & \none[\cdots] && \none[\cdots]\\
\none &\none & \none &\none[\vdots] &\none[\vdots] &\none[\vdots] &\none &\none[\vdots] & \none[\vdots] & \none[\vdots] & \none[\vdots] & \none & \none[\vdots] & \none[\vdots] & \none & \none[\vdots]\\
\none[a_2-1] &\none & \none[\cdots] && *(gray) & *(gray) &\none[\cdots] & *(gray) & *(gray) & *(gray) & *(gray) & \none[\cdots] & *(gray) & & \none[\cdots] && \none[\cdots]\\
\none[a_2] &\none & \none[\cdots] &\blacksquare &&& \none[\cdots] &&&&&\none[\cdots] &&& \none[\cdots] &\\
\none[a_3] &\none & \none[\cdots] &&&&\none[\cdots]&& \times &&&\none[\cdots] && \blacksquare & \none[\cdots] &\\
\none[a_3+1] &\none & \none[\cdots] &&&&\none[\cdots]&& & *(gray) &*(gray) &\none[\cdots] &*(gray) &*(gray) & \none[\cdots]\\
\none[a_3+2] &\none & \none[\cdots] &&&&\none[\cdots]&& & *(gray) &*(gray) &\none[\cdots] &*(gray) &*(gray) & \none[\cdots]\\
\none &\none & \none &\none[\vdots] &\none[\vdots] &\none[\vdots] &\none &\none[\vdots] & \none[\vdots] & \none[\vdots] & \none[\vdots] & \none & \none[\vdots] & \none[\vdots] \\
\end{ytableau}
\caption{The squares marked with a solid black box are the elements of the chosen copy of $F_3$ for a separable, valid transversal $T$ of $F$-type 2, and the crosses mark new elements of $\psi(T)$, i.e. elements of $\psi(T) \setminus T$.  The gray squares are free of elements of $T$ (and $\psi(T)$).}
\label{fig:FType2}
\end{figure}
See Figure~\ref{fig:FType2}.
First, we prove that $\psi(T)$ is a valid transversal of $\mathcal{Y}$.
It is clear that $\psi(T)$ is a transversal of $Y$.  If the sets
$\{i,i+1\}$ and $\{a_1,a_3\}$ are disjoint, then $b_i = c_i$ and $b_{i+1} = c_{i+1}$,
which implies that $i$ is an ascent (resp. descent) of $\psi(T)$ if and only if
it is an ascent (resp. descent) of $T$.  By Lemma~\ref{EPsi}, we have $b_{a_1-1}, b_{a_1+1}
\notin (b_{a_2},b_{a_3}) \supseteq (b_{a_1},c_{a_1})$.  It follows that $a_1-1$
is an ascent (resp. descent) of $\phi(T)$ if and only if it is an ascent (resp. descent)
of $T$, and the same for $a_1$.  We also have $a_3 - 1 \in A$, but we also have
$c_{a_3-1} = b_{a_2} < b_{a_1} = c_{a_3}$ and thus $a_3-1$ is an ascent of $\psi(T)$.
Because $\mathcal{Y}$ is 1-alternating,
we have $a_3 \in D$.  However, by Lemma~\ref{EPsi}, we have $c_{a_3+1} = b_{a_3+1} < b_{a_1} = c_{a_3}$,
and thus $a_3$ is a descent of $\psi(T)$.  It follows that $\psi(T)$ is a valid transversal of $\mathcal{Y}$,
as desired.

Next, we prove that there is an integer $y$ such that $h_J(T) = (a_1,y,a_3+1)$.
First, because $Y_{a_3+1} = Y_{a_3} \ge b_{a_3} = c_{a_1}$ and $c_{a_3} = b_{a_1} > c_{a_3+1}$
(which follows from Lemma~\ref{EPsi}),
we have $(a_1,a_3,a_3+1) \in U(\psi(T))$.
Suppose for sake of contradiction that $d = (d_1,d_2,d_3) \in U(\psi(T))$ with $\#(d) < (a_3+1,a_1,0)$
in the lexicographic order. If $d_3 < a_1$ or ($a_1 < d_3 < a_3-1$ and $a_1 \notin\{d_1,d_2\}$), then we
have $b_{d_i} = c_{d_i}$ for all $i \in [3]$, and thus $d \in U(T)$, which contradicts the separability
of $T$.  If $d_3 = a_1$, then because $b_{a_1} < b_{a_3} = c_{a_1}$, we have $d \in U(T)$,
contradiction.  If $d_i = a_1$ for some $i \in [2]$ and $a_3 \notin \{d_1,d_2,d_3\}$, then by Lemma~\ref{EPsi}, we have $b_{d_{i+1}} \notin (b_{a_3},b_{a_1})$, which implies that $(d_1,d_2,d_3) \in U(T)$, contradiction.
If $d_3 = a_3$ and $a_1 \notin \{d_1,d_2\}$, it is clear that
$d_2 \not= a_3 - 1$.  Thus, we have $(d_1,d_2,a_3-1) \in U(T)$ because
$b_{d_i} = c_{d_i}$ for $i \in [2]$ and $c_{a_3} > c_{a_3-1} = b_{a_3-1}$, contradiction.
If $d_3 = a_3$, it is impossible that $d_1 = a_1$ because if $d_1 = a_1$, then
$a_1 < d_2 < a_3$ with $b_{d_2} \in (b_{a_2},b_{a_3})$, which contradicts Lemma~\ref{EPsi}.
If $d_3 = a_3$ and $d_2 = a_1$, then we have \[b_{d_1} = c_{d_1} > c_{a_1} = b_{a_3} > b_{a_1} = c_{a_3} > c_{a_3-1} = b_{a_3-1},\] where the last inequality follows from Lemma~\ref{EPsi}.  Hence, we have $(d_1,d_2,a_3-1) \in U(T)$, contradiction.  Hence, we may assume
that $d_3 = a_3 + 1$.  Because $\#(d) < (a_3+1,a_1,0)$, we may also assume that $d_1 < a_1$,
which implies that $b_{d_1} = c_{d_1}$.  Lemma~\ref{EPsi} yields that $b_{d_1} < b_{a_1}$,
and thus $b_{d_i} = c_{d_i}$ for all $i \in [3]$.  It follows that $d \in U(T)$, contradiction.
It is clear that $\psi(T)$ is of $J$-type 2 and that $\phi(\psi(T)) = T$.

We prove that if $e = (e_1,e_2,e_3) \in V(\psi(T))$, then $S(e) \ge S(h_F(T))$ in the lexicographic
order.  If $e_3 = a_3$, then $b_{e_i} = c_{e_i}$ for all $i \in [2]$, and $b_{e_3} > c_{e_3}$.
It follows that $e \in V(T)$, which contradicts the definition of $h_F$.  Hence, we may
assume that $e_3 > a_3$, and it follows that $b_{e_3} = c_{e_3}$.  Lemma~\ref{EPsi} yields
that $b_{e_3} < b_{a_1}$, and thus $c_{e_i} < b_{a_1}$ for all $i$, which yields that
$b_{e_i} = c_{e_i}$ for all $i$. This implies that $e \in V(T)$, contradiction.  The separability
of $\psi(T)$ follows.

For $i < a_1$, we have $b_i = c_i$.  Because $b_{a_1} < b_{a_3} = c_{a_1}$, we have
$(b_1,b_2,\ldots,b_n) < (c_1,c_2,\ldots,c_n)$ in the lexicographic order, as desired.

\item
\begin{figure}
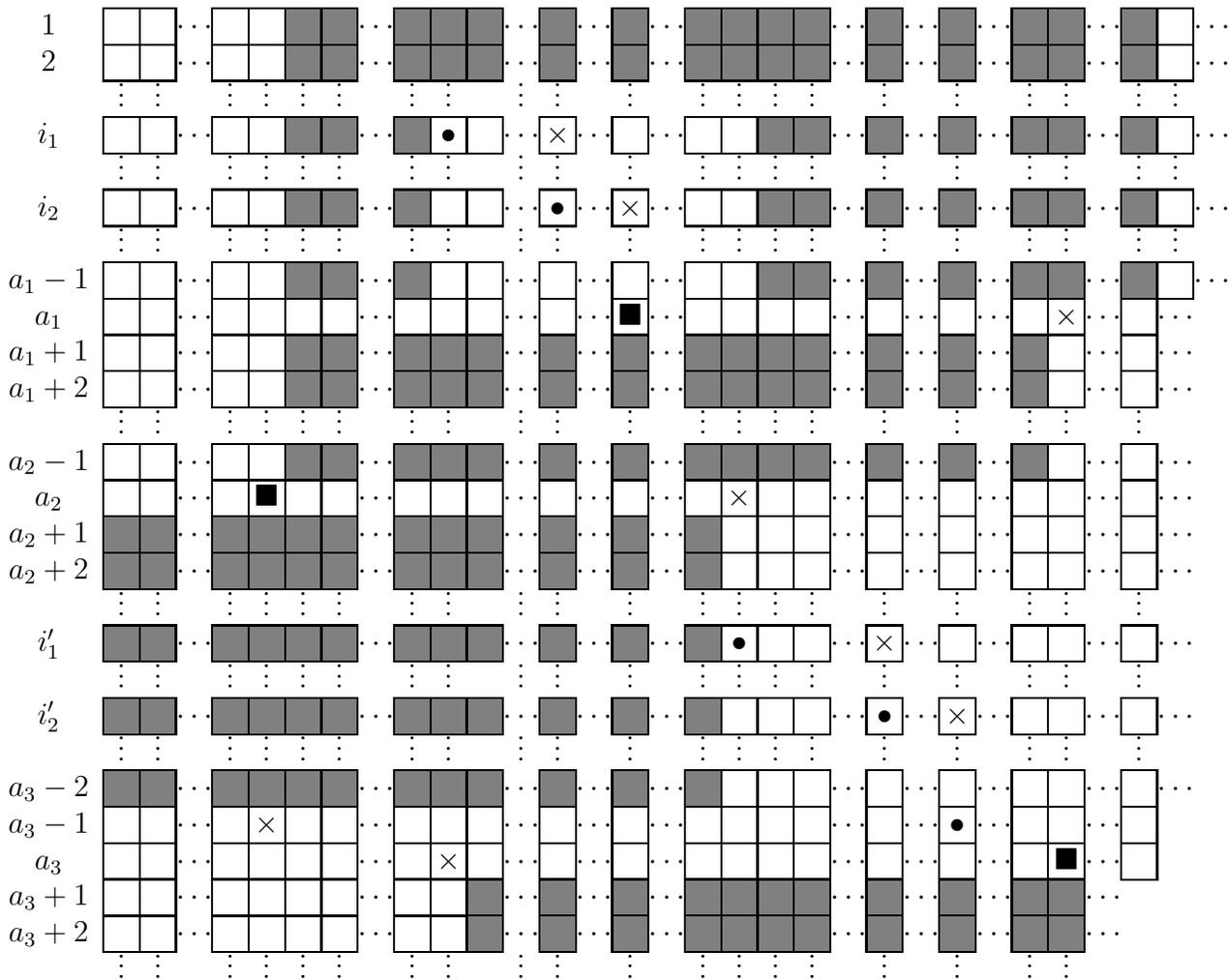

\ytableausetup{boxsize = 1.175em}
\begin{ytableau}
\none[1] &\none & &&\none[\cdots]& &&*(gray)&*(gray)&\none[\cdots] &*(gray)&*(gray)&*(gray)&\none[\cdots]&*(gray)&\none[\cdots]&*(gray)&\none[\cdots] &*(gray)&*(gray) & *(gray) & *(gray) &\none[\cdots] &*(gray) &\none[\cdots] &*(gray) &\none[\cdots] & *(gray) & *(gray)  & \none[\cdots] & *(gray) & & \none[\cdots]\\
\none[2] &\none & &&\none[\cdots]& &&*(gray)&*(gray)&\none[\cdots] &*(gray)&*(gray)&*(gray)&\none[\cdots]&*(gray)&\none[\cdots]&*(gray)&\none[\cdots] &*(gray)&*(gray) & *(gray) & *(gray) &\none[\cdots] &*(gray) &\none[\cdots] &*(gray) &\none[\cdots] & *(gray) & *(gray)  & \none[\cdots] & *(gray) & & \none[\cdots]\\
\none &\none & \none[\vdots] &\none[\vdots] &\none &\none[\vdots] &\none[\vdots] &\none[\vdots] & \none[\vdots] & \none & \none[\vdots]&\none[\vdots]&\none&\none[\vdots]&\none[\vdots]&\none&\none[\vdots]&\none& \none[\vdots] & \none[\vdots] & \none[\vdots] & \none[\vdots] & \none & \none[\vdots] & \none & \none[\vdots] & \none & \none[\vdots] & \none[\vdots] &\none &\none[\vdots] &\none[\vdots]\\
\none[i_1] &\none & &&\none[\cdots]& &&*(gray)&*(gray)&\none[\cdots] &*(gray)&\bullet& &\none[\cdots]&\times&\none[\cdots]&&\none[\cdots] && & *(gray) & *(gray) &\none[\cdots] &*(gray) &\none[\cdots] &*(gray) &\none[\cdots] & *(gray) & *(gray)  & \none[\cdots] & *(gray) & & \none[\cdots]\\
\none &\none & \none[\vdots] &\none[\vdots] &\none &\none[\vdots] &\none[\vdots] &\none[\vdots] & \none[\vdots] & \none & \none[\vdots]&\none[\vdots]&\none&\none[\vdots]&\none[\vdots]&\none&\none[\vdots]&\none& \none[\vdots] & \none[\vdots] & \none[\vdots] & \none[\vdots] & \none & \none[\vdots] & \none & \none[\vdots] & \none & \none[\vdots] & \none[\vdots] &\none &\none[\vdots] &\none[\vdots]\\
\none[i_2] &\none & &&\none[\cdots]& &&*(gray)&*(gray)&\none[\cdots] &*(gray)&&& \none[\cdots]&\bullet&\none[\cdots]&\times&\none[\cdots] && & *(gray) & *(gray) &\none[\cdots] &*(gray) &\none[\cdots] &*(gray) &\none[\cdots] & *(gray) & *(gray)  & \none[\cdots] & *(gray) & & \none[\cdots]\\
\none &\none & \none[\vdots] &\none[\vdots] &\none &\none[\vdots] &\none[\vdots] &\none[\vdots] & \none[\vdots] & \none & \none[\vdots]&\none[\vdots]&\none&\none[\vdots]&\none[\vdots]&\none&\none[\vdots]&\none& \none[\vdots] & \none[\vdots] & \none[\vdots] & \none[\vdots] & \none & \none[\vdots] & \none & \none[\vdots] & \none & \none[\vdots] & \none[\vdots] &\none &\none[\vdots] &\none[\vdots]\\
\none[a_1-1] &\none & &&\none[\cdots]& &&*(gray)&*(gray)&\none[\cdots] &*(gray)&&&\none[\cdots]&&\none[\cdots] &&\none[\cdots] && & *(gray) & *(gray) &\none[\cdots] &*(gray) &\none[\cdots] &*(gray) &\none[\cdots] & *(gray) & *(gray)  & \none[\cdots] & *(gray) & & \none[\cdots]\\
\none[a_1] &\none &&& \none[\cdots] &&&&&\none[\cdots] &&&&\none[\cdots]&&\none[\cdots]&\blacksquare&\none[\cdots]&&&&& \none[\cdots] &&\none[\cdots] &&\none[\cdots] &&\times & \none[\cdots] && \none[\cdots]\\
\none[a_1+1] &\none &&& \none[\cdots] &&& *(gray) & *(gray) &\none[\cdots] &*(gray)&*(gray)&*(gray)&\none[\cdots]&*(gray)&\none[\cdots]&*(gray)&\none[\cdots] & *(gray) & *(gray) & *(gray) & *(gray) &\none[\cdots] &*(gray) &\none[\cdots] &*(gray) &\none[\cdots] & *(gray) & & \none[\cdots] && \none[\cdots]\\
\none[a_1+2] &\none &&& \none[\cdots] &&& *(gray) & *(gray) &\none[\cdots] &*(gray)&*(gray)&*(gray)&\none[\cdots]&*(gray)&\none[\cdots]&*(gray)&\none[\cdots] & *(gray) & *(gray) & *(gray) & *(gray) &\none[\cdots] &*(gray) &\none[\cdots] &*(gray) &\none[\cdots] & *(gray) & & \none[\cdots] && \none[\cdots]\\
\none &\none & \none[\vdots] &\none[\vdots] &\none &\none[\vdots] &\none[\vdots] &\none[\vdots] & \none[\vdots] & \none & \none[\vdots]&\none[\vdots]&\none&\none[\vdots]&\none[\vdots]&\none&\none[\vdots]&\none& \none[\vdots] & \none[\vdots] & \none[\vdots] & \none[\vdots] & \none & \none[\vdots] & \none & \none[\vdots] & \none & \none[\vdots] & \none[\vdots] &\none &\none[\vdots]\\
\none[a_2-1] &\none &&& \none[\cdots] &&& *(gray) & *(gray) &\none[\cdots] &*(gray)&*(gray)&*(gray)&\none[\cdots]&*(gray)&\none[\cdots]&*(gray)&\none[\cdots] & *(gray) & *(gray) & *(gray) & *(gray) &\none[\cdots] &*(gray) &\none[\cdots] &*(gray) &\none[\cdots] & *(gray) & & \none[\cdots] && \none[\cdots]\\
\none[a_2] &\none &&& \none[\cdots] &&\blacksquare&&& \none[\cdots] &&&&\none[\cdots]&&\none[\cdots]&&\none[\cdots] && \times&&&\none[\cdots] &&\none[\cdots] &&\none[\cdots] &&& \none[\cdots] && \none[\cdots]\\
\none[a_2+1] &\none &*(gray) &*(gray) & \none[\cdots] &*(gray) &*(gray) &*(gray) &*(gray) & \none[\cdots] &*(gray)&*(gray)&*(gray)&\none[\cdots]&*(gray)&\none[\cdots]&*(gray)&\none[\cdots] &*(gray) &&&&\none[\cdots] &&\none[\cdots] &&\none[\cdots] &&& \none[\cdots] && \none[\cdots]\\
\none[a_2+2] &\none &*(gray) &*(gray) & \none[\cdots] &*(gray) &*(gray) &*(gray) &*(gray) & \none[\cdots] &*(gray)&*(gray)&*(gray)&\none[\cdots]&*(gray)&\none[\cdots]&*(gray)&\none[\cdots] &*(gray) &&&&\none[\cdots] &&\none[\cdots] &&\none[\cdots] &&& \none[\cdots] && \none[\cdots]\\
\none &\none & \none[\vdots] &\none[\vdots] &\none &\none[\vdots] &\none[\vdots] &\none[\vdots] & \none[\vdots] & \none & \none[\vdots]&\none[\vdots]&\none&\none[\vdots]&\none[\vdots]&\none&\none[\vdots]&\none& \none[\vdots] & \none[\vdots] & \none[\vdots] & \none[\vdots] & \none & \none[\vdots] & \none & \none[\vdots] & \none & \none[\vdots] & \none[\vdots] &\none &\none[\vdots]\\
\none[i'_1] &\none &*(gray) &*(gray) & \none[\cdots] &*(gray) &*(gray) &*(gray) &*(gray) & \none[\cdots] &*(gray)&*(gray)&*(gray)&\none[\cdots]&*(gray)&\none[\cdots]&*(gray)&\none[\cdots] &*(gray) &\bullet&&&\none[\cdots] &\times&\none[\cdots] &&\none[\cdots] &&& \none[\cdots] && \none[\cdots]\\
\none &\none & \none[\vdots] &\none[\vdots] &\none &\none[\vdots] &\none[\vdots] &\none[\vdots] & \none[\vdots] & \none & \none[\vdots]&\none[\vdots]&\none&\none[\vdots]&\none[\vdots]&\none&\none[\vdots]&\none& \none[\vdots] & \none[\vdots] & \none[\vdots] & \none[\vdots] & \none & \none[\vdots] & \none & \none[\vdots] & \none & \none[\vdots] & \none[\vdots] &\none &\none[\vdots]\\
\none[i'_2] &\none &*(gray) &*(gray) & \none[\cdots] &*(gray) &*(gray) &*(gray) &*(gray) & \none[\cdots] &*(gray)&*(gray)&*(gray)&\none[\cdots]&*(gray)&\none[\cdots]&*(gray)&\none[\cdots] &*(gray) &&&&\none[\cdots] &\bullet&\none[\cdots] &\times&\none[\cdots] &&& \none[\cdots] && \none[\cdots]\\
\none &\none & \none[\vdots] &\none[\vdots] &\none &\none[\vdots] &\none[\vdots] &\none[\vdots] & \none[\vdots] & \none & \none[\vdots]&\none[\vdots]&\none&\none[\vdots]&\none[\vdots]&\none&\none[\vdots]&\none& \none[\vdots] & \none[\vdots] & \none[\vdots] & \none[\vdots] & \none & \none[\vdots] & \none & \none[\vdots] & \none & \none[\vdots] & \none[\vdots] &\none &\none[\vdots]\\
\none[a_3-2] &\none &*(gray) &*(gray) & \none[\cdots] &*(gray) &*(gray) &*(gray) &*(gray) & \none[\cdots] &*(gray)&*(gray)&*(gray)&\none[\cdots]&*(gray)&\none[\cdots]&*(gray)&\none[\cdots] &*(gray) &&&&\none[\cdots] &&\none[\cdots] &&\none[\cdots] &&& \none[\cdots] && \none[\cdots]\\
\none[a_3-1] &\none &&& \none[\cdots] &&\times &&&\none[\cdots]&&&&\none[\cdots]&&\none[\cdots]&&\none[\cdots] &&&&&\none[\cdots]  &&\none[\cdots] &\bullet&\none[\cdots] &&& \none[\cdots] &\\
\none[a_3] &\none &&& \none[\cdots] &&&&&\none[\cdots]&&\times&&\none[\cdots]&&\none[\cdots]&&\none[\cdots] &&&&&\none[\cdots]  &&\none[\cdots] &&\none[\cdots] &&\blacksquare& \none[\cdots] &\\
\none[a_3+1] &\none &&& \none[\cdots] &&&&&\none[\cdots]&&&*(gray)&\none[\cdots]&*(gray)&\none[\cdots] &*(gray)&\none[\cdots]&*(gray)&*(gray)& *(gray) &*(gray) &\none[\cdots] &*(gray) &\none[\cdots] &*(gray) &\none[\cdots] &*(gray) &*(gray) & \none[\cdots] \\
\none[a_3+2] &\none &&& \none[\cdots] &&&&&\none[\cdots]&&&*(gray)&\none[\cdots]&*(gray)&\none[\cdots] &*(gray)&\none[\cdots]&*(gray)&*(gray)& *(gray) &*(gray) &\none[\cdots] &*(gray) &\none[\cdots] &*(gray) &\none[\cdots] &*(gray) &*(gray) & \none[\cdots] \\
\none &\none & \none[\vdots] &\none[\vdots] &\none &\none[\vdots] &\none[\vdots] &\none[\vdots] & \none[\vdots] & \none & \none[\vdots]&\none[\vdots]&\none&\none[\vdots]&\none[\vdots]&\none&\none[\vdots]&\none& \none[\vdots] & \none[\vdots] & \none[\vdots] & \none[\vdots] & \none & \none[\vdots] & \none & \none[\vdots] & \none & \none[\vdots] & \none[\vdots]\\
\end{ytableau}
\caption{The squares marked with a solid black box are the elements of the chosen copy of $F_3$ for a separable, valid transversal $T = \{(i,b_i)\}$ of $F$-type 3.  The bullets mark some other elements of $T$, while the crosses mark new elements of $\psi(T)$, i.e. elements of $\phi(T) \setminus T$.  The gray squares are free of elements of $T$ (and $\psi(T)$). We suppose that $\Gamma_{[1,a_1)}^{[b_{a_2},b_{a_3}]}(T) = \{i_1,i_2\}$ and $\Gamma_{(a_2,a_3-1)}^{[b_{a_2},b_{a_3}]}(T) = \{i'_1,i'_2\}$.}
\label{fig:FType3}
\end{figure}
See Figure~\ref{fig:FType3}.
First, we prove that $\psi(T)$ is a valid transversal of $\mathcal{Y}$.
This paragraph is similar to the first paragraph of the proof of Proposition~\ref{FullPhi}
for the case of $J$-type 3.
If $b_i > b_{a_3}$ or $i > a_3$, it is clear that $b_i = c_i$, and therefore $T$
is a transversal of $Y$.
If the sets $\{i,i+1\}$ and $\Gamma_{[1,a_1] \cup [a_2,a_3]}^{[b_{a_2},b_{a_3}]}(T)$ are disjoint,
then $b_i = c_i$ and $b_{i+1} = c_{i+1},$ and thus
$i$ is an ascent (resp. descent) of $\phi(T)$ if and only if it is an ascent (resp. descent) of $T$.
If $x, x+1 \in \Gamma_{[1,a_1] \cup [a_2,a_3-1]}^{[b_{a_2},b_{a_3}]}(T)$ with $x \not= a_1,a_3-2$.
then by Lemma~\ref{FType3L1}, we have $b_x < b_{x+1}$ and $c_x < c_{x+1}$.
If $x \in \Gamma_{[1,a_1] \cup [a_2,a_3-1]}^{[b_{a_2},b_{a_3}]}(T)$,
but $x+1$ is not (and $x \not= a_1,a_3-1$), then we have $b_{a_2} < b_x,c_x < b_{a_1}$ and $b_{x+1} \notin [b_{a_3},b_{a_3+1}],$ which implies that $x$ is an ascent (resp. descent) of $\phi(T)$ if and only if
it is (resp. descent) of $T$.  Similar logic holds if we replace $x+1$ by $x-1$ and require that
$x \not= a_2$.
Because $b_{a_1+1}, b_{a_2-1} \notin [b_{a_2},b_{a_3}]$ by Lemma~\ref{EPsi},
we have $b_{a_1+1} < b_{a_1}$ if and only if $b_{a_1+1} < c_{a_1}$, and $b_{a_2-1} < b_{a_2}$
if and only if $b_{a_2-1} < c_{a_2}$.
If $a_2 \not= a_1+1$, then
we have $c_{a_1+1} = b_{a_1+1}$ and $c_{a_2-1} = b_{a_2-1}$, which implies that
that $a_1$ is an ascent (resp. descent) of $\psi(T)$ if and only if it is an ascent (resp. descent)
of $T$, and similarly for $a_2-1$.
If $a_2 = a_1 + 1$, then $a_1 = a_2-1$ is a descent of both $T$ and $\psi(T)$.
Also, we have $a_3-1 \in A$, and
because $\mathcal{Y}$ is 1-alternating, we have $a_3 \in D$ and $a_3-2 \notin A$.
If $a_3-2 \in D$, we have $b_{a_3-2} > b_{a_3-1}$, and Lemma~\ref{FType3L1} implies that $a_3-2 \notin \Gamma_{[a_2,a_3-1]}^{[b_{a_2},b_{a_3}]}$.  Therefore, we have $c_{a_3-2} = b_{a_3-2} > b_{a_3-1} > c_{a_3-1}$.
Regardless, we have $c_{a_3-1} = b_{a_2}< c_{a_3+1}$.  By Lemma~\ref{FType3L2} and the definition of $\psi$,
we have $c_{a_3+2} = b_{a_3+2} < c_{a_3+1}$.
It follows that $\phi(T)$ is a valid transversal of $\mathcal{Y}$.

Next, we prove that $h_J(\psi(T)) = (a_1,a_2,a_3-1)$.  It is clear that
$(a_1,a_2,a_3-1) \in U(\psi(T))$.  Suppose for sake of contradiction that $(d_1,d_2,d_3) \in U(\psi(T))$
with $\#(d) < (a_3-1,a_1,a_2)$ in the lexicographic order.
If $b_{d_3} > b_{a_3}$, then we have $b_{d_i} = c_{d_i}$ for all $i \in [3]$,
and thus $d \in U(T)$, contradiction.
If $d_3 < a_3 - 1$ and $b_{a_2} < b_{d_3} < b_{a_3}$, or $d_3 = a_2$, then by Lemma~\ref{EPsi},
we have $d_3 \ge a_2$, and Lemma~\ref{FType3L1} yields that that $c_{d_3} > b_{a_1}$.
Then, by Lemma~\ref{FType3L1}, we have $d_2 < a_2$ or  $b_{d_2} > b_{a_1}$.
In the former case, because $c_{d_2} > c_{d_3} > b_{a_1}$, we have $d_2 = a_1$
or $b_{d_2} > b_{a_1}$, and if $d_2 = a_1$, then $(d_1,a_1,a_2) \in U(T)$, contradiction.
If $d_3 < a_3 - 1$ and $b_{d_2} > b_{a_1}$, then we have $(d_1,d_2,d_3) \in U(T)$, contradiction.
If $d_3 < a_3 - 1$ and $b_{a_2} < b_{d_3} < b_{a_1}$, then by Lemma~\ref{EPsi} we have
$d_3 \le a_1$.  Lemma~\ref{FType3L1} implies that $b_{d_2} > b_{a_3}$
it follows that $c_{d_2} = b_{d_2} > b_{a_3} > b_{d_3}$.  Therefore, we have $(d_1,d_2,d_3) \in U(T)$,
contradiction.  If $d_3 < a_3 - 1$ and $b_{d_3} < b_{a_2}$, then by Lemma~\ref{EPsi} we have $d_3 < a_2$.
By Lemma~\ref{FType3L1}, at most one of $d_1,d_2$ can be in $\Gamma_{[1,a_1]}^{[b_{a_2},b_{a_3}]}(T)$,
while by Lemma~\ref{EPsi}, any $d_i \notin \Gamma_{[1,a_1]}^{[b_{a_2},b_{a_3}]}(T)$ must satisfy
$b_{d_i} \notin [b_{a_2},b_{a_3}]$.  It follows that $(d_1,d_2,d_3) \in U(T)$, contradiction.  Hence,
we may assume that $d_3 = a_3 - 1$.  If $d_1 < a_1$, then it follows from Lemma~\ref{FType3L1}
that $c_{d_1} \notin (b_{a_1},b_{a_3})$, and by Lemma~\ref{EPsi}, this implies that
$c_{d_1} \notin (b_{a_1},Y_{a_3})$.  Therefore, we have $c_{d_1} \le b_{a_1}$, which implies
that $c_{d_2} \in (b_{a_2},b_{a_1})$.  By Lemma~\ref{FType3L1}, we have $b_{d_2} \in (b_{a_2},b_{a_1})$,
and Lemma~\ref{EPsi} yields that $d_2 < a_1$.  Applying Lemma~\ref{FType3L1} again yields that
$b_{d_2} > b_{d_1}$, contradiction.  Hence, we may assume that $d_1 = a_1$.  The fact that
$d_2 \ge a_2$ follows from Lemma~\ref{EPsi}.  It is clear that $\psi(T)$ is of $J$-type 3.
Because $c_{a_1} = b_{a_3+1}$ and $c_{a_3} = b_{a_2}$, we have
\[\phi(\psi(T)) = \omega_{[1,a_1] \cup \{a_3\}}^{[b_{a_2},b_{a_3}]}\left(\omega_{[a_2,a_3-1]}^{[b_{a_2},b_{a_3}]}\left(\theta_{[a_2,a_3-1]}^{[b_{a_2},b_{a_3}]}\left(\theta_{[1,a_1] \cup \{a_3\}}^{[b_{a_2},b_{a_3}]}(T)\right)\right)\right) = T,\]
as desired.

We prove that if $e = (e_1,e_2,e_3) \in V(\psi(T))$, then $S(e) \ge S(h_F(T))$ in the lexicographic
order.  First, we prove that $e_3 \le a_3$.  Suppose for sake of contradiction
that $e_3 > a_3$.  Let $m = \min \Gamma_{[1,a_1]}^{[b_{a_2},b_{a_3}]}(T)$;
by Lemma~\ref{FType3L2}, we have $c_{e_3} = b_{e_3} < b_m = c_{a_3+1}$.
If $e_1 > a_3$, then $e \in V(T)$, contradiction, and thus we may assume
that $e_1 \le a_3$, and Lemmata~\ref{EPsi} and~\ref{FType3L1} imply
that $e_1 = a_3$ or $e_1 < a_2$.  In the former case, $(a_2,e_2,e_3) \in V(T)$, contradiction.
In the latter case, by Lemma~\ref{EPsi} and because $b_m < c_x$ for all $x \in \Gamma_{[1,a_1]}^{[b_{a_2},b_{a_1}]}$,
we have $b_{e_1} < b_{a_2}$.  This implies that $b_{e_i} = c_{e_i}$ for all $i$, and thus $(e_1,e_2,e_3) \in U(T)$,
contradiction.  Thus, we have $e_3 \le a_3$, as desired.  Because $e_3 \not= a_3$ and
the first component of $S(e)$ is at least $a_3$, either ($e$ is of $F$-type 2, $a_3 - 2 \in D$, $e_2 = a_3 - 3$ and $e_3 = a_3 - 2$)
or $e_3 = a_3$.  In the former case, because $b_{a_3-2} > b_{a_3-1}$,
Lemma~\ref{FType3L1} implies that $b_{a_3-2} > b_{a_3}$ and therefore $b_{a_3-2} = c_{a_3-2}$. By Lemma~\ref{EPsi}, we have $b_{a_3-3} \ge b_{a_1}$,
and it follows that $b_{a_3-3} \le \max\{b_{a_3},c_{a_3-3} < c_{a_3-2} = b_{a_3-2}$.  Lemma~\ref{EPsi}
implies that $b_{e_1} > b_{a_3}$ or $e_1 > d_2$, but in the latter case, the fact that $c_{e_1} > c_{e_2}$
implies that $b_{e_1} > b_{a_3}$ as well.  Thus, $b_{e_1} = c_{e_1}$ and $(e_1,e_2,e_3) \in V(T)$,
which contradicts the definition of $h_F$.  Hence, we may assume that $e_3 = a_3$.  If $e_2 = a_3 - 1$,
then we have $b_m = c_{a_3} > c_{e_1} > c_{e_2} = b_{a_2}$, and by Lemma~\ref{EPsi} and the definition
of $\psi$, we have $e_1 < a_1$.  Thus, $e_1 \in \Gamma_{[1,a_1]}^{[b_{a_2},b_{a_3}]}$ with
$c_{e_1} < b_m < c_m$, which contradicts Lemma~\ref{FType3L1}.  Therefore, we may assume
that $e_2 < a_3 - 1$ and $e$ is of $F$-type 3.  The fact that $S(e) > S(h_F(T))$ in the lexicographic
order implies that $e_1 \ge a_1$, but $c_{a_1} > c_{a_3}$ and thus we may in fact assume that
$e_1 > a_1$.  By Lemma~\ref{EPsi} and the definition of $\psi$, we have $c_{e_1} < b_{a_2}$,
which implies that $b_{e_i} = c_{e_i}$ for $i \in [2]$, and thus $(e_1,e_2,e_3) \in V(T)$, contradiction.
The separability of $\psi(T)$ follows.

For all $i < m$, we have $b_i = c_i$.
If $m < a_1$, then let $m' = \min \Gamma_{(m,a_1]}^{[b_{a_3},b_{a_1}]}(T)$.  By Lemma~\ref{FType3L1},
we have $c_m = b_{m'} > b_m$.  If $m = a_1$, then we have $c_m = b_{a_3} > b_m$.  It follows
that $(b_1, b_2, \ldots, b_n) > (c_1,c_2,\ldots, c_n)$ in the lexicographic order, as desired.
\end{asparaenum}
\end{proof}

\section{Brute-Force Enumerations}
\label{sec:enumerate}

We computed $|A_n(q)|$ for small $n$ and short-length $q$ by brute-force computer enumeration.
This data, shown in Tables~\ref{table:6Even} and~\ref{table:6Odd}, formed the basis of our results and conjectures.  In an attempt to check
if Conjecture~\ref{SesaGeneral} is true, we computed $|S_{\mathcal{Y}}(F_k)$ and $|S_\mathcal{Y}(J_k)|$
for $k \le 10$ and all $\mathcal{Y}$ whose Young diagram has at most 10 rows.  The equality
$|S_{\mathcal{Y}}(F_k) = |S_\mathcal{Y}(J_k)|$ held for all such $\mathcal{Y}$ and $k$;
since the output data is very large, we do not give it in this paper.

We also computed $|D_n^k(q)|$ for small $n, k$ and short-length $q$ by similar enumerations.  This
data, shown in Table~\ref{table:4Rep}, forms the basis for our theorems of Section~\ref{Generalized
alternating permutations}.

\begin{table}
\center{
\begin{tabular}{|l|c|c|c|c|c|c|}
\hline
Patterns & 2 & 4 & 6 & 8 & 10 & 12\\
\hline
(634521, 652341), (534621, 651342) & 1 & 5 & 61 & 1385 & 47860 & 2202236\\
\hline
(564321, 654312), 645321, 653421, & 1 & 5 & 61 & 1385 & 47860 & 2201540\\
(456321, 654123), (345621, 651234), &&&&&&\\
(234561, 612345), (165432, 543216), &&&&&&\\
(216543, 432165), (126543, 432156), &&&&&&\\
321654, (213654, 321465), 123456 &&&&&&\\
(123654, 321456), (213465, 213465), &&&&&&\\
(123465, 213456) &&&&&&\\
\hline
(312654, 321564), (213564, 312465) & 1 & 5 & 61 & 1385 & 47860 & 2198859\\
(123564, 312456) &&&&&&\\
\hline
(215643, 431265), (125643, 431256) & 1 & 5 & 61 & 1385 & 47860 & 2197690\\
\hline
(214563, 412365), (124563, 412356) & 1 & 5 & 61 & 1385 & 47860 & 2197299\\
\hline
(214653, 421365), (124653, 421356) & 1 & 5 & 61 & 1385 & 47860 & 2195798\\
\hline
(143265, 215436), (125436, 143256) & 1 & 5 & 61 & 1344 & 44386 & 1954114\\
\hline
(132654, 321546), (124365, 214356), & 1 & 5 & 61 & 1344 & 44377 & 1951843\\
 (132465, 213546), (123546, 132456) &&&&&&\\
 (124356, 124356), 214365 &&&&&&\\
\hline
(564231, 645312), (456231, 645123) & 1 & 5 & 61 & 1344 & 44377 & 1951757\\
\hline
(564312, 564312), (456312, 564123), & 1 & 5 & 61 & 1344 & 44377 & 1951429\\
(345612, 561234), 456123 &&&&&&\\
\hline
(465312, 564213), (456213, 465123) & 1 & 5 & 61 & 1344 & 44342 & 1943735\\
\hline
(215634, 341265), (125634, 341256) & 1 & 5 & 61 & 1344 & 44333 & 1940841\\
\hline
(216534, 342165), (126534, 342156) & 1 & 5 & 61 & 1344 & 44333 & 1940623\\
\hline
(546312, 564132), (456132, 546123) & 1 & 5 & 61 & 1344 & 44324 & 1940209\\
\hline
(231654, 321645), (213645, 231465), & 1 & 5 & 61 & 1344 & 44306 & 1937196\\
 (123645, 231456) &&&&&&\\
\hline
(216453, 423165), (126453, 423156) & 1 & 5 & 61 & 1344 & 44306 & 1936673\\
\hline
(216345, 234165), (126345, 234156) & 1 & 5 & 61 & 1344 & 44306 & 1935009\\
\hline
(142365, 214536), (124536, 142356) & 1 & 5 & 61 & 1344 & 44289 & 1935152\\
\hline
(134265, 215346), (125346, 134256) & 1 & 5 & 61 & 1344 & 44289 & 1934933\\
\hline
(214635, 241365), (124635, 241356) & 1 & 5 & 61 & 1344 & 44280 & 1932468\\
\hline
(216435, 243165), (126435, 243156) & 1 & 5 & 61 & 1344 & 44280 & 1931424\\
\hline
(215364, 314265), (125364, 314256) & 1 & 5 & 61 & 1344 & 44271 & 1930657\\
\hline
(215463, 413265), (125463, 413256) & 1 & 5 & 61 & 1344 & 44271 & 1929874\\
\hline
(216354, 324165), (126354, 324156) & 1 & 5 & 61 & 1344 & 44253 & 1926893\\
\hline
\end{tabular}}
\caption{The size of $A_{2n}(q)$ is given for all patterns $q \in S_6$ that
participate in a nontrivial equivalence for even-length alternating permutations.
Parentheses indicate trivial equivalences.}
\label{table:6Even}
\end{table}

\begin{table}
\center{
\begin{tabular}{|l|c|c|c|c|c|c|c|}
\hline
Patterns & 1 & 3 & 5 & 7 & 9 & 11 & 13\\
\hline
(654321, 123456), (654312, 213456), & 1 & 2 & 16 & 272 & 7936 & 329098 & 17316208\\
(654123, 321456), (651234, 432156), &&&&&&&\\
(612345, 543216) &&&&&&&\\
\hline
(634521, 125436), (634512, 215436) & 1 & 2 & 16 & 272 & 7622 & 300499 & 15125692\\
\hline
(653421, 124356), (653412, 214356) & 1 & 2 & 16 & 272 & 7622 & 300430 & 15106854\\
\hline
(645321, 123546), (645312, 213546), & 1 & 2 & 16 & 272 & 7622 & 300430 & 15106113\\
(645123, 321546) &&&&&&&\\
\hline
(564321, 123465), (456321, 123654), & 1 & 2 & 16 & 272 & 7622 & 300430 & 15102362\\
(345621, 126543), (234561, 165432), &&&&&&&\\
(564312, 213465), (456312, 213654), &&&&&&&\\
(345612, 216543), (564123, 321465), &&&&&&&\\
(456123, 321654), (561234, 432165) &&&&&&&\\
\hline
(564213, 312465), (456213, 312654) & 1 & 2 & 16 & 272 & 7622 & 300172 & 15038858\\
\hline
(435621, 126534), (435612, 216534) & 1 & 2 & 16 & 272 & 7622 & 300103 & 15012608\\
\hline
(465321, 123564), (465312, 213564), & 1 & 2 & 16 & 272 & 7622 & 300094 & 15023874\\
(465123, 321564) &&&&&&&\\
\hline
(346521, 125643), (346512, 215643) & 1 & 2 & 16 & 272 & 7622 & 300025 & 15004212\\
\hline
(436521, 125634), (436512, 215634) & 1 & 2 & 16 & 272 & 7622 & 300025 & 14998611\\
\hline
(546321, 123645), (546312, 213645), & 1 & 2 & 16 & 272 & 7622 & 299916 & 14987084\\
 (546123, 321645) &&&&&&&\\
\hline
(365421, 124563), (365412, 214563) & 1 & 2 & 16 & 272 & 7622 & 299897 & \\
\hline
(543621, 126345), (543612, 216345) & 1 & 2 & 16 & 272 & 7622 & 299768 & \\
\hline
(635421, 124536), (635412, 214536) & 1 & 2 & 16 & 272 & 7622 & 299708 & \\
\hline
(356421, 124653), (356412, 214653) & 1 & 2 & 16 & 272 & 7622 & 299698 & \\
\hline
(643521, 125346), (643512, 215346) & 1 & 2 & 16 & 272 & 7622 & 299668 & \\
\hline
(534621, 126435), (534612, 216435) & 1 & 2 & 16 & 272 & 7622 & 299658 & \\
\hline
(536421, 124635), (536412, 214635) & 1 & 2 & 16 & 272 & 7622 & 299639 & \\
\hline
(563421, 124365), (563412, 214365) & 1 & 2 & 16 & 266 & 7164 & 270463 & 13077672\\
\hline
(564231, 132465), (456231, 132654) & 1 & 2 & 16 & 266 & 7164 & 270463 & 13077275\\
\hline
(564132, 231465), (456132, 231654) & 1 & 2 & 16 & 266 & 7156 & 268940 & 12868164\\
\hline
(354621, 126453), (354612, 216453) & 1 & 2 & 16 & 266 & 7156 & 268876 & \\
\hline
(463521, 125364), (463512, 215364) & 1 & 2 & 16 & 266 & 7148 & 267642 & \\
\hline
(453621, 126354), (453612, 216354) & 1 & 2 & 16 & 266 & 7148 & 267590 & \\
\hline
(364521, 125463), (364512, 215463) & 1 & 2 & 16 & 266 & 7148 & 267539 & \\
\hline
\end{tabular}}
\caption{The size of $A_{2n+1}(q)$ is given for all patterns $q \in S_6$ that
participate in a nontrivial equivalence for odd-length alternating permutations.
Parentheses indicate trivial equivalences.}
\label{table:6Odd}
\end{table}

\begin{table}
\center{
\begin{tabular}{|l|c|c|c|c|c|c|c|c|c|}
\hline
Patterns & 1 & 2 & 3 & 4 & 5 & 6 & 7 & 8 & 9\\
\hline
1342 & 1 & 1 & 1 & 2 & 5 & 9& 20 & 64 & 143\\
\hline
1243 & 1 & 1 & 1 & 2 & 5 & 9& 21 & 68 & 153\\
\hline
1423 & 1 & 1 & 1 & 3 & 6 & 9& 42 & 93 & 143\\
\hline
3124 & 1 & 1 & 1 & 3 & {\bf 9} & {\bf 9}& 44 & {\bf 143} & {\bf 143}\\
\hline
2134 & 1 & 1 & 1 & 3 & {\bf 9} & {\bf 9}& 44 & {\bf 153} & {\bf 153}\\
4123 &&&&&&&&&\\
\hline
\end{tabular}}
\caption{This table shows $|D_n^3(q)|$ for selected $q \in S_4$.  Note in particular
the repeated values of $|D^3_n(q)|$ for $q = 2134, 4123$.}
\label{table:4Rep}
\end{table}

\bibliography{finalpaper}

\begin{thebibliography}{10}

\bibitem{BW}
E.~Babson and J.~West.
\newblock The permutations $123p_4\ldots p_m$ and $321p_4\ldots p_m$ are
  {W}ilf-equivalent.
\newblock {\em Graphs and Combinatorics}, 16(4):373--380, 2000.

\bibitem{BWX}
J.~Backelin, J.~West, and G.~Xin.
\newblock Wilf-equivalence for singleton classes.
\newblock {\em Advances in Applied Mathematics}, 38(2):133--148, 2007.

\bibitem{BilleyWarrington}
S.~C. Billey and G.~S. Warrington.
\newblock Kazhdan-{L}usztig polynomials for 321-hexagon-avoiding permutations.
\newblock {\em J. Algebraic Combinatorics}, 13(2):111--136, 2001.

\bibitem{Bona1221}
M.~B\'{o}na.
\newblock On a family of conjectures of {J}oel {L}ewis on alternating
  permutations.
\newblock arXiv:1205.1778.

\bibitem{BonaBook}
M.~B\'{o}na.
\newblock {\em Combinatorics of Permutations}.
\newblock Discrete Mathematics and its Applications. CRC Press, 2004.

\bibitem{Knuth}
D.~E. Knuth.
\newblock {\em The Art of Computer Programming}, volume~3.
\newblock Addison-Wesley, 1969.

\bibitem{SchubertV}
V.~Lakshmibai and B.~Sandhya.
\newblock Criterion for smoothness of {S}chubert varieties in ${SL}(n)/{B}$.
\newblock {\em Proc. Indian Acad. Sci. (Math. Sci.)}, 100(1):45--52, 1990.

\bibitem{Lewis2011}
J.~B. Lewis.
\newblock Pattern avoidance for alternating permutations and {Y}oung tableaux.
\newblock {\em J. Combinatorial Theory, Series A}, 118(4):1436--1450, 2011.

\bibitem{Lewis2012}
J.~B. Lewis.
\newblock Generating trees and pattern avoidance in alternating permutations.
\newblock {\em Electronic J. Combinatorics}, 19(1):\#P21, 2012.

\bibitem{MacD}
I.~Macdonald.
\newblock {\em Notes on {S}chubert Polynomials}.
\newblock Publications du Laboratoire de combinatoire et d'informatique
  math{\'e}matique. D{\'e}partement de math{\'e}matiques et d'informatique,
  Universit{\'e} du Qu{\'e}bec a Montr{\'e}al, 1991.

\bibitem{Mansour}
T.~Mansour.
\newblock Restricted 132-alternating permutations and {C}hebyshev polynomials.
\newblock {\em Annals of Combinatorics}, 7(2):201--227, 2003.

\bibitem{Ouch}
E.~Ouchterlony.
\newblock Pattern avoiding doubly alternating permutations.
\newblock In {\em Proc. FPSAC}, 2006.
\newblock arXiv:0908.0255v1.

\bibitem{SimionSchmidt}
R.~Simion and F.~W. Schmidt.
\newblock Restricted permutations.
\newblock {\em European J. Combinatorics}, 6:383--407, 1985.

\bibitem{StanWest}
Z.~Stankova and J.~West.
\newblock A new class of {W}ilf-equivalent permutations.
\newblock {\em J. Algebraic Combinatorics}, 15:271--290, 2002.

\bibitem{CatalanEC}
R.~P. Stanley.
\newblock Catalan addendum to \emph{{E}numerative {C}ombinatorics}.
\newblock \url{http://www-math.mit.edu/~rstan/ec/catadd.pdf}, July 2012.

\bibitem{West1995}
J.~West.
\newblock Generating trees and the {C}atalan and {S}chr{\"{o}}der numbers.
\newblock {\em Discrete Mathematics}, 146:247--262, 1995.

\bibitem{XuYan}
Y.~Xu and S.~H.~F. Yan.
\newblock Alternating permutations with restrictions and standard {Y}oung
  tableaux.
\newblock {\em Electronic J. Combinatorics}, 19(2):\#P49, 2012.

\end{thebibliography}
\bibliographystyle{abbrv}

\end{document}